\numberwithin{equation}{section}
\newtheorem{Theorem}{Theorem}[section]
\newtheorem{Corollary}[Theorem]{Corollary}
\newtheorem{Lemma}[Theorem]{Lemma}
\newtheorem{Proposition}[Theorem]{Proposition}
 { \theoremstyle{definition}
\newtheorem{Example}[Theorem]{Example}
\newtheorem{Remark}[Theorem]{Remark} }
\newcommand{\SP}[2]{\left\langle#1,#2\right\rangle} 
\newcommand{\set}[1]{{\mbox{\boldmath{$#1$}}}}
\newcommand{\seq}[1]{\underline{\mbox{\boldmath{$#1$}}}}
\newcommand\DS{\displaystyle}
\newcommand\CC{\mathbb{C}}
\newcommand\RR{\mathbb{R}}
\newcommand\NN{\mathbb{N}}
\newcommand\TT{\mathbb{T}}
\newcommand\DD{\mathbb{D}}
\newcommand\EE{\mathbb{E}}
\renewcommand\P{\mathcal{P}}
\renewcommand\L{\mathcal{L}}
\newcommand\diag{\mathrm{diag}}
\renewcommand\d{\mathrm{d}}
\renewcommand\epsilon{\varepsilon}
\newcommand\G{\mathcal{G}}
\newcommand\R{\mathcal{R}}
\newcommand\M{\mathcal{M}}
\newcommand\A{\mathcal{A}}
\newcommand\C{\mathcal{C}}
\newcommand\I{\mathcal{I}}
\newcommand\T{\mathcal{T}}
\newcommand\U{\mathcal{U}}
\newcommand\V{\mathcal{V}}
\newcommand\N{\mathcal{N}}
\newcommand\cS{\mathcal{S}}
\newcommand\cH{\mathcal{H}}
\newcommand\supp{\mathrm{supp}\,}
\newcommand\Span{\mathop{\rm span}\nolimits}
\newcommand{\oa}{\overline{\alpha}}
\newcommand{\ob}{\overline{\beta}}
\newcommand\dB{\dot{B}}
\newcommand\dvarpi{\dot{\varpi}}
\newcommand\dzeta{\dot{\zeta}}
\newcommand\dvarsigma{\dot{\varsigma}}
\newcommand\dpi{\dot{\pi}}
\newcommand\dL{\dot{\mathcal{L}}}
\newcommand\ed{\mathrel{\vcenter{\baselineskip0.5ex \lineskiplimit0pt
 \hbox{\scriptsize.}\hbox{\scriptsize.}}}%
 =}
\newcommand\Dprod{{%
 \setbox0\hbox{$\DS\mathop\prod$}%
 \rlap{\hbox to \wd0{\hss$\cdot$\hss}}\box0
}}
\newcommand\dprod{{%
 \setbox0\hbox{$\mathop\prod$}%
 \rlap{\hbox to \wd0{\hss$\cdot$\hss}}\box0
}}
\let\tilde\widetilde
\let\hat\widehat
\begin{document}

\allowdisplaybreaks

\newcommand{\arXivNumber}{1707.09748}

\renewcommand{\thefootnote}{}

\renewcommand{\PaperNumber}{090}

\FirstPageHeading

\ShortArticleName{Orthogonal Rational Functions on the Unit Circle}
\ArticleName{Orthogonal Rational Functions on the Unit Circle\\ with Prescribed Poles not on the Unit Circle\footnote{This paper is a~contribution to the Special Issue on Orthogonal Polynomials, Special Functions and Applications (OPSFA14). The full collection is available at \href{https://www.emis.de/journals/SIGMA/OPSFA2017.html}{https://www.emis.de/journals/SIGMA/OPSFA2017.html}}}

\Author{Adhemar BULTHEEL~$^\dag$, Ruyman CRUZ-BARROSO~$^\ddag$ and Andreas LASAROW~$^\S$}

\AuthorNameForHeading{A.~Bultheel, R.~Cruz-Barroso and A. Lasarow}

\Address{$^\dag$~Department of Computer Science, KU Leuven, Belgium}
\EmailD{\href{mailto:adhemar.bultheel@cs.kuleuven.be}{adhemar.bultheel@cs.kuleuven.be}}
\URLaddressD{\url{https ://people.cs.kuleuven.be/~adhemar.bultheel/}}

\Address{$^\ddag$~Department of Mathematical Analysis, La Laguna University, Tenerife, Spain}
\EmailD{\href{mailto:rcruzb@ull.es}{rcruzb@ull.es}}

\Address{$^\S$~Fak.~Informatik, Mathematik \& Naturwissenschaften, HTWK Leipzig, Germany}
\EmailD{\href{lasarow@imn.htwk-leipzig.de}{lasarow@imn.htwk-leipzig.de}}

\ArticleDates{Received August 01, 2017, in f\/inal form November 20, 2017; Published online December 03, 2017}

\Abstract{Orthogonal rational functions (ORF) on the unit circle generalize orthogonal polynomials (poles at inf\/inity) and Laurent polynomials (poles at zero and inf\/inity). In this paper we investigate the properties of and the relation between these ORF when the poles are all outside or all inside the unit disk, or when they can be anywhere in the extended complex plane outside the unit circle. Some properties of matrices that are the product of elementary unitary transformations will be proved and some connections with related algorithms for direct and inverse eigenvalue problems will be explained.}

\Keywords{orthogonal rational functions; rational Szeg\H{o} quadrature; spectral method; rational Krylov method; AMPD matrix}
\Classification{30D15; 30E05; 42C05; 44A60}

\renewcommand{\thefootnote}{\arabic{footnote}}
\setcounter{footnote}{0}

\section{Introduction}
Orthogonal rational functions (ORF) on the unit circle are well known as generalizations of orthogonal polynomials on the unit circle (OPUC).
The pole at inf\/inity of the polynomials is replaced by poles ``in the neighborhood'' of inf\/inity, i.e., poles outside the closed unit disk.
The recurrence relations for the ORF generalize the Szeg\H{o} recurrence
relations for the polynomials.

If $\mu$ is the orthogonality measure supported on the unit circle, and
$L^2_\mu$ the corresponding Hilbert space, then the shift operator
$\T_\mu\colon L^2_\mu\to L^2_\mu\colon f(z)\mapsto zf(z)$ restricted to the polynomials
has a representation
with respect to the orthogonal polynomials that is a Hessenberg matrix.
However, if instead of a polynomial basis, one uses a basis of orthogonal
Laurent polynomials by alternating between poles at inf\/inity and poles at the
origin, a full unitary representation of~$\T_\mu$ with respect to this basis
is a f\/ive-diagonal CMV matrix \cite{MISCCMV05}.

The previous ideas have been generalized to the rational case by Vel\'azquez
in \cite{MISCVel07}. He showed that the representation of the shift operator
with respect to the classical ORF
is not a Hessenberg matrix but a matrix M\"obius transform of a
Hessenberg matrix. However, a full unitary representation
can be obtained if the shift is represented with respect to a rational analog
of the Laurent polynomials by alternating between a pole inside and a pole
outside the unit disk. The resulting matrix is a matrix M\"obius transform of
a f\/ive-diagonal matrix.

Orthogonal Laurent polynomials on the real line, a half-line, or an interval were
introduced by Jones et al.\ \cite{PADJNT83,NPJNT84} in the context of moment problems, Pad\'e approximation and quadrature and this was elaborated by many authors.
Gonz\'alez-Vera and his co-workers were in particular involved in extending the
theory where the poles zero and inf\/inity alternate (the so-called balanced situation) to a more general case where in each step
either inf\/inity or zero can be chosen as a pole in any arbitrary order
\cite{ArtBDGO98, MISCDMGVJP06}.
They also identify the resulting orthogonal Laurent polynomials as shifted
versions of the orthogonal polynomials.
Hence the orthogonal Laurent polynomials
satisfy the same recurrence as the classical orthogonal
polynomials after an appropriate shifting and normalization is embedded.

The corresponding case of orthogonal Laurent polynomials on the unit
circle was introduced by Thron in \cite{MISCThr88}
and has been studied more recently in
for example \cite{MISCBRGV06,NPCRGV05}.
Papers traditionally deal with the balanced situation like in
\cite{NPCRGV05} but in \cite{MISCBRGV06} also an
arbitrary ordering was considered. Only in
\cite{MISCBD07} Cruz-Barroso and Delvaux investigated the structure of the matrix representation with respect
to the basis of the resulting orthogonal Laurent polynomials on the circle.
They called it a
``snake-shaped'' matrix which generalizes the f\/ive diagonal matrix.

The purpose of this paper is to generalize these ideas valid for
Laurent polynomials on the circle to the rational case.
That is to choose the poles of the
ORF in an arbitrary order either inside or outside the unit disk. We relate
the resulting ORF with the ORF having all their poles outside or all their
poles inside the disk, and study the corresponding recurrence relations.
With respect to this new orthogonal rational basis, the shift operator will
be represented by a matrix M\"obius transformation of a snake-shaped matrix.

In the papers by Lasarow and coworkers
(e.g., \cite{NPFKL02,NPFKL03c,NPFKL12, NPLas00})
matrix versions of the ORF are considered. In these papers
also an arbitrary choice of the poles is allowed but only with the
restrictive condition that if $\alpha$ is used as a pole, then $1/\oa$ cannot
be used anymore. This means that for example the ``balanced situation'' is
excluded. One of the goals of this paper is to remove this restriction on
the poles.

\looseness=1 In the context of quadrature formulas, an arbitrary sequence of poles
not on the unit circle was also brief\/ly
discussed in \cite{MISCCYGV07}.
The sequence of poles considered there need not be Newtonian, i.e.,
the poles for the ORF of degree $n$ may depend on $n$.
Since our approach will emphasize the role of the recurrence relation for
the ORF, we do need a Newtonian sequence, although some of the results
may be generalizable to the situation of a non-Newtonian sequence of poles.

One of the applications of the theory of ORF is the construction of quadrature
formulas on the unit circle that form rational generalizations of the Szeg\H{o}
quadrature. They are exact in spaces of rational functions having poles inside
and outside the unit disk. The nodes of the quadrature formula are zeros of
para-orthogonal rational functions (PORF) and the weights are all positive
numbers.
These nodes and weights can (like in Gaussian quadrature) be derived from
the eigenvalue decomposition of a unitary truncation of the shift operator
to a f\/inite-dimensional subspace.
One of the results of the paper is that there is no gain in considering
an arbitrary sequence of poles inside and outside the unit disk unless in a balanced situation.
When all the poles are chosen outside the closed unit disk or when some
of them are ref\/lected in the circle, the same
quadrature formula will be obtained.
The computational ef\/fort for the general case will not increase but neither can it reduce
the cost.

In network applications or dif\/ferential equations one often has to work with functions of large sparse
matrices. If $A$ is a matrix and the matrix function $f(A)$ allows the Cauchy representation
$f(A)=\int_\Gamma f(z)(z-A)^{-1}\d\mu(z)$,
where $\Gamma$ is a contour encircling all the eigenvalues of $A$
then numerical quadrature is a possible technique to obtain an approximation for $f(A)$.
If for example~$\Gamma$ is the unit circle, then expressions like $u^*f(A)u$ for some
vector $u$ can be approximated by quadrature formulas discussed
in this paper which will be implemented disguised as Krylov subspace methods (see for example \cite{MISCGM09b,MISCGut13,MISCKS10}).

The purpose of the paper though is not to discuss quadrature in particular.
It is just an example application that does not require much extra introduction of new terminology and notation.
The main purpose however is to give a general framework on which to build for the many
applications of ORFs.
Just like orthogonal polynomials are used in about every branch of mathematics,
ORFs can be used with the extra freedom to exploit the location of the poles.
For example, it can be shown that the ORFs can be used to solve multipoint moment problems
as well as more general rational interpolation problems where locations of the poles
inside and outside the circle are important for the engineering applications like
system identif\/ication, model reduction, f\/iltering, etc. When modelling the transfer function
of a linear system,
poles should be chosen inside as well as outside the disk to guarantee that the transient
as well as the steady state of the system is well modelled. It would lead us too far to also
include the interpolation properties of multipoint Pad\'e
approximation and the related applications in several branches of engineering.
We only provide the basics in this paper so that it can be used in the context of more applied papers.

The interpretation of the recursion for the ORFs as a factorization of a matrix
into elementary unitary transformations illustrates that the spectrum of the
resulting matrix is independent of the order in which the elementary factors are
multiplied. As far as we know, this fact was previously unknown in the linear algebra community,
unless in particular cases like unitary Hessenberg matrices.
As an illustration, we develop some preliminary results in Section~\ref{secAMPD}
in a~linear algebra setting that is slightly more general than the ORF situation.

In the last decades, many papers appeared on inverse eigenvalue problems for unitary
Hessenberg matrices and rational Krylov methods. Some examples are
\cite{MISCBG12,MISCGK13,MISCMVBVB13,MISCMVBVB14,MISCMer16,MISCMV14,MISCBMM13}.
These use elementary operations that are very closely related to
the recurrence that will be discussed in this paper.
However, they are not the same and often miss the f\/lexibility discussed here.
We shall illustrate some of these connections with certain algorithms from the literature in Section~\ref{secAF}.

The outline of the paper is as follows.
In Section~\ref{secnot} we introduce the main notations used in this paper.
The linear spaces and the ORF bases are given in Section~\ref{secLSOB}.
Section~\ref{secCD} brings the Christof\/fel--Darboux relations
and the reproducing kernels which form an essential element to obtain
the recurrence relation given in Section~\ref{secRR} but also for the
PORF in Section~\ref{secPORF} to be used for quadrature formulas
in Section~\ref{secQF}.
The alternative representation of the shift operator is given in
Section~\ref{secHB} and its factorization in elementary $2\times2$ blocks in
the subsequent Section~\ref{secMF}.
We end by drawing some conclusions about the spectrum of the shift operator
and about the computation of rational Szeg\H{o} quadrature formulas in
Section~\ref{secSA}.
The ideas that we present in this paper, especially the factorization of
unitary Hessenberg matrices in elementary unitary factors is also used
in the linear algebra literature mostly in the f\/inite-dimensional situation.
These elementary factors and what can be said about the spectrum of their product
is the subject of Section~\ref{secAMPD}.
These elementary unitary transformations are intensively used in numerical algorithms such as
Arnoldi-based Krylov methods where they are known as core transformations.
Several variants of these rational Krylov methods exist.
The algorithms are quite similar yet dif\/ferent from our ORF recursion as we explain brief\/ly in Section~\ref{secAF} illustrating why we believe the version presented in this paper has superior advantages.

\section{Basic def\/initions and notation}\label{secnot}
We use the following notation.
$\CC$ denotes the complex plane, $\hat{\CC}$ the extended complex plane (one point compactif\/ication),
$\RR$ the real line, $\hat{\RR}$ the closure of $\RR$ in $\hat{\CC}$,
$\TT$ the unit circle,
$\DD$ the open unit disk, $\hat{\DD}=\DD\cup\TT$, and
$\EE=\hat{\CC}\setminus\hat{\DD}$.
For any number $z\in\hat{\CC}$ we def\/ine $z_*=1/\overline{z}$ (and set $1/0=\infty$, $1/\infty=0$)
and for any complex function $f$, we def\/ine $f_*(z)=\overline{f(z_*)}$.

To approximate an integral
\begin{gather*}
I_\mu(f)=\int_{\TT}f(z)\d\mu(z),
\end{gather*}
where $\mu$ is a probability measure on $\TT$ one may use Szeg\H{o} quadrature formulas.
The nodes of this quadrature can be computed by using the Szeg\H{o} polynomials.
Orthogonality in this paper will always be with respect to the inner product
\begin{gather*}
\SP{f}{g}=\int_{\TT} \overline{f(z)}g(z)\d\mu(z).
\end{gather*}
The weights of
the $n$-point quadrature are all positive, the nodes are on $\TT$ and the formula is exact for all
Laurent polynomials $f\in\Span\{z^k\colon |k|\le n-1\}$.

This has been generalized to rational functions with a set of predef\/ined poles.
The corresponding quadrature formulas are then rational Szeg\H{o} quadratures.
This has been discussed in many papers and some of the earlier results
were summarized in the book \cite{BooksBGHN99}.
We brief\/ly recall some of the results that are derived there.
The idea is the following.
Fix a sequence $\seq{\alpha}=(\alpha_k)_{k\in\NN}$ with $\set{\alpha}=\{\alpha_k\}_{k\in\NN}\subset\DD$,
and consider the subspaces of rational functions def\/ined by
\begin{gather*}
\L_0=\CC,\qquad \L_n=\left\{\frac{p_n(z)}{\pi_n(z)}\colon p_n\in\P_n,\ \pi_n(z)=\prod_{k=1}^n(1-\oa_kz)\right\},\qquad n\ge1,
\end{gather*}
where $\P_n$ is the set of polynomials of degree at most $n$.
These rational functions have their poles among the points
in $\set{\alpha}_*=\{\alpha_{j*}=1/\oa_j\colon \alpha_j\in\set{\alpha}\}$.
We denote the corresponding sequence as $\seq{\alpha}_*=(\alpha_{j*})_{j\in\NN}$.
Let $\phi_n\in\L_n\setminus\L_{n-1}$, and $\phi_n\perp\L_{n-1}$ be the
$n$th orthogonal rational basis function (ORF) in a nested sequence.
It is well known that these functions have all their zeros in $\DD$
(see, e.g., \cite[Corollary~3.1.4]{BooksBGHN99}).
However, the quadrature formulas we
have in mind should have their nodes on the circle $\TT$.
Therefore, para-orthogonal rational functions (PORF) are introduced. They are def\/ined by
\begin{gather*}
Q_n(z,\tau)=\phi_n(z)+\tau\phi_n^*(z),\qquad \tau\in\TT,
\end{gather*}
where besides the ORF $\phi_n(z)=\frac{p_n(z)}{\pi_n(z)}$, also the
``reciprocal''
function $\phi_n^*(z)=\frac{p_n^*(z)}{\pi_n(z)}=\frac{z^np_{n*}(z)}{\pi_n(z)}$ is introduced.
These PORF have $n$ simple zeros $\{\xi_{nk}\}_{k=1}^n\subset\TT$ (see, e.g., \cite[Theorem~5.2.1]{BooksBGHN99})
so that they can be used as nodes for the quadrature formulas
\begin{gather*}
I_n(f)=\sum_{k=1}^n w_{nk}f(\xi_{nk})
\end{gather*}
and the weights are all positive, given by $w_{nk}=1/\sum\limits_{j=0}^{n-1}|\phi_j(\xi_{nj})|^2$ (see, e.g., \cite[Theorem~5.4.2]{BooksBGHN99}).
These quadrature formulas are exact for all functions of the form $\{f=g_*h\colon\! g,h\in\L_{n-1}\}=\L_{n-1}\L_{(n-1)*}$ (see, e.g., \cite[Theorem~5.3.4]{BooksBGHN99}).

The purpose of this paper is to generalize the situation where the $\alpha_j$ are all in $\DD$
to the situation where they are anywhere in the extended complex plane outside $\TT$.
This will require the introduction of some new notation.

So consider a sequence $\seq{\alpha}$ with $\set{\alpha}\subset\DD$ and its ref\/lection in the circle
$\seq{\beta}=(\beta_j)_{j\in\NN}$ where $\beta_j=1/\oa_j=\alpha_{j*}\in\EE$.
We now construct a new sequence $\seq{\gamma}=(\gamma_j)_{j\in\NN}$ where each $\gamma_j$ is
either equal to $\alpha_j$ or $\beta_j$.

Partition $\{1,2,\ldots,n\}$ ($n\in\hat{\NN}=\NN\cup\{\infty\}$) into two disjoint index sets: the ones where $\gamma_j=\alpha_j$ and the indices where $\gamma_j=\beta_j$:
\begin{gather*}
\mathbbm{a}_n=\{j\colon \gamma_j=\alpha_j\in\DD,~1\le j\le n\}\qquad \mbox{and}\qquad
\mathbbm{b}_n=\{j\colon \gamma_j=\beta_j\in\EE,~1\le j\le n\},
\end{gather*}
and def\/ine
\begin{gather*}
\set{\alpha}_n=\{\alpha_j\colon j\in\mathbbm{a}_n\}\qquad \mbox{and}\qquad
\set{\beta}_n=\{\beta_j\colon j\in\mathbbm{b}_n\}.
\end{gather*}

It will be useful to prepend the sequence $\seq{\alpha}$ with an extra point $\alpha_0=0$.
That means that $\seq{\beta}$ is preceded by $\beta_0=1/\oa_0=\infty$.
For $\seq{\gamma}$, the initial point can be $\gamma_0=\alpha_0=0$ or $\gamma_0=\beta_0=\infty$.

With each of the sequences $\seq{\alpha}$, $\seq{\beta}$, and $\seq{\gamma}$ we can associate
orthogonal rational functions. They will be closely related as we shall show.
The ORF for the $\seq{\gamma}$ sequence can be derived from the ORF
for the $\seq{\alpha}$ sequence
by multiplying with a Blaschke product just like the orthogonal
Laurent polynomials are essentially shifted versions of the orthogonal polynomials (see, e.g., \cite{MISCBRGV06}).

To def\/ine the denominators of our rational functions, we introduce the
following elementary factors:
\begin{gather*}
\varpi_j^\alpha(z)=1-\oa_jz,
\qquad\!
\varpi_j^\beta(z)=
\begin{cases}
 1-\ob_jz, & \text{if }\beta_j\ne\infty,\\
 -z, & \text{if }\beta_j=\infty,
 \end{cases}
\qquad\!
\varpi_j^\gamma(z)=
\begin{cases}
 \varpi_j^\alpha(z), & \text{if }\gamma_j=\alpha_n,\\
 \varpi_j^\beta(z), & \text{if }\gamma_j=\beta_n.
 \end{cases}
\end{gather*}
Note that if $\alpha_j=0$ and hence $\beta_j=\infty$ then $\varpi_j^\alpha(z)=1$ but $\varpi_j^\beta(z)=-z$.

To separate the $\alpha$ and the $\beta$-factors in a product, we also def\/ine
\begin{gather*}
 \dvarpi_j^\alpha(z)= \begin{cases}
 \varpi_j^\alpha, & \text{if }\gamma_j=\alpha_j,\\
 1, & \text{if }\gamma_j=\beta_j ,
 \end{cases}
\qquad\mbox{and}\qquad
 \dvarpi_j^\beta(z)= \begin{cases}
 \varpi_j^\beta, & \text{if }\gamma_j=\beta_j,\\
 1, & \text{if }\gamma_j=\alpha_j.
 \end{cases}
\end{gather*}
Because the sequence $\seq{\gamma}$ is our main focus, we simplify the notation by removing the super\-script~$\gamma$ when not needed, e.g., $\varpi_j=\varpi_j^\gamma=\dvarpi_j^\alpha\dvarpi_j^\beta$ etc.

We can now def\/ine for $\nu\in\{\alpha,\beta,\gamma\}$
\begin{gather*}
\pi_n^\nu(z)=\prod_{j=1}^n\varpi_j^\nu(z)
\end{gather*}
and the reduced products separating the $\alpha$ and the $\beta$-factors
\begin{gather*}
\dpi_n^\alpha(z)=\prod_{j=1}^n\dvarpi_j^\alpha(z)=\prod_{j\in\mathbbm{a}_n}\varpi_j(z),\qquad
\dpi_n^\beta(z)=\prod_{j=1}^n\dvarpi_j^\beta(z)=\prod_{j\in\mathbbm{b}_n}\varpi_j(z),
\end{gather*}
so that
\begin{gather*}
\pi_n(z)=\prod_{j=1}^n \varpi_j(z)=\dpi_n^\alpha(z)\dpi_n^\beta(z).
\end{gather*}
We assume here and in the rest of the paper that products over $j\in\varnothing$ equal 1.

The Blaschke factors are def\/ined for $\nu\in\{\alpha,\beta,\gamma\}$ as
\begin{alignat*}{4}
&\zeta^\nu_j(z)=\sigma_j^\nu\DS\frac{z-\nu_j}{1-\overline{\nu}_jz},\qquad &&
\sigma^\nu_j=\frac{\overline{\nu}_j}{|\nu_j|},\qquad &&
\text{if }\nu_j\not\in\{0,\infty\},&\\
&\zeta^\nu_j(z)=\sigma^\nu_jz=z,\qquad && \sigma^\nu_j=1,\qquad && \text{if }\nu_j=0,&\\
&\zeta^\nu_j(z)=\sigma^\nu_j/z=1/z,\qquad && \sigma^\nu_j=1,\qquad && \text{if }\nu_j=\infty.&
\end{alignat*}
Thus
\begin{gather*}
\sigma^\nu_j=
	\begin{cases}
		\dfrac{\overline{\nu}_j}{|\nu_j|}, &\text{for}~~\nu_j\not\in\{0,\infty\},\\
		1,&\text{for}~~\nu_j\in\{0,\infty\}.
	\end{cases}
\end{gather*}
Because $\sigma_n^\alpha=\sigma_n^\beta$, we can remove the superscript and just write $\sigma_n$.
If we also use the following notation which maps complex numbers onto $\TT$
\begin{gather*}
\mathfrak{u}(z)= \begin{cases}
\dfrac{\overline{z}}{|z|}\in\TT, & z\in\CC\setminus\{0\},\\
1, & z\in\{0,\infty\},
\end{cases}
\end{gather*}
then $\sigma_j=\mathfrak{u}(\alpha_j)=\mathfrak{u}(\beta_j)=\mathfrak{u}(\gamma_j)$.

Set $(\varpi_j^\nu)^*(z)=\varpi_j^{\nu*}(z)=z\varpi^\nu_{j*}(z)$
(e.g., $(1-\oa_j z)^*=z-\alpha_j$ if $ \nu=\alpha$),
then $\zeta_j^\nu=\sigma_j\frac{\varpi_j^{\nu*}}{\varpi_j^\nu}$.
Later we shall also use $\pi_n^{\nu*}$ to mean $\prod\limits_{j=1}^n \varpi_j^{\nu*}$.
Note that
 $\zeta_j^\alpha=\zeta_{j*}^\beta=1/\zeta_j^\beta$.
Moreover if $\alpha_j=0$ and hence $\beta_j=\infty$, then $\varpi_j^{\alpha*}(z)=z$ and
$\varpi_j^{\beta*}(z)=-1$.

Next def\/ine the f\/inite Blaschke products for $\nu\in\{\alpha,\beta\}$
\begin{gather*}
B_0^\nu=1,\qquad \mbox{and}\qquad B_n^\nu(z)=\prod_{j=1}^n\zeta_j^\nu(z),\qquad
n=1,2,\ldots.
\end{gather*}
{\it It is important to note that here $\nu\ne\gamma$. For the definition of $B_n^\gamma=B_n$ see below.}

Like we have split up the denominators $\pi_n=\dpi_n^\alpha\dpi_n^\beta$ in the $\alpha$-factors and the $\beta$-factors, we also
def\/ine for $n\ge1$
\begin{gather*}
\dzeta_j^\alpha= \begin{cases}\zeta_j^\alpha,&\mbox{if~~}\gamma_j=\alpha_j,\\1,&\mbox{if~~}\gamma_j=\beta_j,\end{cases}\qquad
\dzeta_j^\beta= \begin{cases}\zeta_j^\beta,&\mbox{if~~}\gamma_j=\beta_j,\\1,&\mbox{if~~}\gamma_j=\alpha_j,\end{cases}
\end{gather*}
and
\begin{gather*}
\dB_n^\alpha(z)=\prod_{j=1}^n\dzeta_j^\alpha(z)=\prod_{j\in\mathbbm{a}_n}\zeta_j(z),\qquad\mbox{and}\qquad
\dB_n^\beta(z)=\prod_{j=1}^n\dzeta_j^\beta(z)=\prod_{j\in\mathbbm{b}_n}\zeta_j(z),
\end{gather*}
so that we can def\/ine the f\/inite Blaschke products for the $\seq{\gamma}$ sequence:
\begin{gather*}
B_n(z)= \begin{cases}
\dB_n^\alpha(z), & \mbox{if~~}\gamma_n=\alpha_n,\\
\dB_n^\beta(z), & \mbox{if~~}\gamma_n=\beta_n.
\end{cases}
\end{gather*}

Note that the ref\/lection property of the factors also holds for the products:
$B_n^\alpha=(B_{n}^\beta)_*=1/B_n^\beta$, $B_{n*}= 1/B_n$, and
$(\dB_n^\alpha\dB_n^\beta)_*=1/(\dB_n^\beta\dB_n^\alpha)$.
However,
\begin{gather*}
\dB_n^\alpha
= \prod\limits_{j\in\mathbbm{a}_n}\zeta_{j}^\alpha
= \prod\limits_{j\in\mathbbm{a}_n}\zeta_{j*}^\beta
= \prod\limits_{j\in\mathbbm{a}_n}1/\zeta_{j}^\beta
\ne \prod\limits_{j\in\mathbbm{b}_n}1/\zeta_{j}^\beta=1/\dB_n^\beta.
\end{gather*}

\section{Linear spaces and ORF bases}\label{secLSOB}
We can now introduce our spaces of rational functions for $n\ge 0$:
\begin{gather*}
\L_n^\nu=\Span\{B_0^\nu,B_1^\nu,\ldots,B_n^\nu\},\qquad \nu\in\{\alpha,\beta,\gamma\},\qquad \mbox{and}\\
\dL_n^\nu=\Span\{\dB_0^\nu,\dB_1^\nu,\ldots,\dB_n^\nu\},\qquad \nu\in\{\alpha,\beta\}.
\end{gather*}
The dimension of $\L_n^\nu$ is $n+1$ for $\nu\in\{\alpha,\beta,\gamma\}$, but
note that the dimension of $\dL_n^\nu$ for $\nu\in\{\alpha,\beta\}$ can be less than $n+1$. Indeed some of the $\dB_j^\nu$ may be repeated so that
for example the dimension of $\dL_n^\alpha$ is only $|\mathbbm{a}_n|+1$ with $|\mathbbm{a}_n|$
the cardinality of $\mathbbm{a}_n$ and similarly for $\nu=\beta$.
Hence for $\nu=\gamma$:
\begin{gather*}
\L_n=\Span\{B_0,\ldots,B_n\}=\Span\big\{\dB_0,\dB_1^\alpha,\ldots,\dB_n^\alpha, \dB_1^\beta,\ldots,\dB_n^\beta\big\}=
\dL_n^\alpha+\dL_n^\beta=\dL_n^\alpha\dL_n^\beta.
\end{gather*}

Because for $n\ge1$
\begin{gather*}
\dB_n^\alpha=
\prod_{j\in\mathbbm{a}_n}\zeta_j^\alpha=
\prod_{j\in\mathbbm{a}_n}\frac{1}{\zeta_j^\beta}\qquad\mbox{and}\qquad
\dB_n^\beta=
\prod_{j\in\mathbbm{b}_n}\zeta_j^\beta=
\prod_{j\in\mathbbm{b}_n}\frac{1}{\zeta_j^\alpha},
\end{gather*}
 it should be clear that $B_k^\alpha=\dB_k^\alpha/\dB_k^\beta$ and $B_k^\beta=\dB_k^\beta/\dB_k^\alpha$, hence that
\begin{gather*}
\L_n^\alpha=\Span\left\{\dB_0,\frac{\dB_1^\alpha}{\dB_1^\beta},\ldots,\frac{\dB_n^\alpha}{\dB_n^\beta}\right\}
\qquad \mbox{and}\qquad
\L_n^\beta=\Span\left\{\dB_0,\frac{\dB_1^\beta}{\dB_1^\alpha},\ldots,\frac{\dB_n^\beta}{\dB_n^\alpha}\right\}.
\end{gather*}
Occasionally we shall also need the notation
\begin{gather*}
\dvarsigma_n^\alpha=\prod_{j\in\mathbbm{a}_n}\sigma_j\in\TT,\qquad
\dvarsigma_n^\beta=\prod_{j\in\mathbbm{b}_n}\sigma_j\in\TT,\qquad\mbox{and}\qquad
\varsigma_n=\prod_{j=1}^n \sigma_j\in\TT.
\end{gather*}

\begin{Lemma}\label{lem1}
If $f\in\L_n$ then $f/\dB_n^\beta\in\L_n^\alpha$ and $f/\dB_n^\alpha\in\L_n^\beta$.
In other words $\L_n=\dB_n^\beta\L_n^\alpha=\dB_n^\alpha\L_n^\beta$.
This is true for all $n\ge0$ if we set $\dB_0^\alpha=\dB_0^\beta=1$.
\end{Lemma}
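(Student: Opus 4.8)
The plan is to reduce everything to a single pole--zero cancellation coming from the reflection of the factors $\varpi_j^\beta$ onto the factors $\varpi_j^\alpha$. I will only prove $\L_n=\dB_n^\beta\L_n^\alpha$; since $f\in\L_n$ then factors as $f=\dB_n^\beta g$ with $g=f/\dB_n^\beta\in\L_n^\alpha$, this gives both assertions about $\dB_n^\beta$, and the assertions about $\dB_n^\alpha$ follow by interchanging the roles of $\seq{\alpha}$ and $\seq{\beta}$ (the construction is symmetric under the reflection $\zeta_j^\alpha=1/\zeta_j^\beta$). The case $n=0$ is immediate, since $\dB_0^\beta=1$ and $\L_0=\L_0^\alpha=\CC$, so I assume $n\ge1$.

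First I would record the two denominator descriptions I want to play against each other. On the one hand, $\L_n^\alpha$ is precisely the space $\{p/\pi_n^\alpha\colon p\in\P_n\}$ of rational functions with poles among $\set{\alpha}_*$ considered in the Introduction, where
\begin{gather*}
\pi_n^\alpha=\prod_{j=1}^n\varpi_j^\alpha .
\end{gather*}
On the other hand I claim $\L_n=\{p/\pi_n\colon p\in\P_n\}$ with $\pi_n=\dpi_n^\alpha\dpi_n^\beta$. For the inclusion $\subseteq$ it suffices to treat the basis functions: each $B_k$ equals $\dB_k^\alpha$ or $\dB_k^\beta$, and rewriting it over the common denominator $\pi_n$ by means of $\zeta_j^\nu=\sigma_j\varpi_j^{\nu*}/\varpi_j^\nu$ exhibits it as a quotient whose denominator divides $\pi_n$ and whose numerator is a product of the degree-$\le1$ factors $\varpi_j^{\nu*}$; a short degree count (the number of poles, counting the point $\infty$, being $n$) shows this numerator lies in $\P_n$. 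Since $\dim\L_n=n+1=\dim\{p/\pi_n\colon p\in\P_n\}$, the inclusion is an equality.

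The heart of the matter is the behaviour of $\dB_n^\beta$ against the denominator $\pi_n^\alpha$. Using $\zeta_j^\beta=\sigma_j\varpi_j^{\beta*}/\varpi_j^\beta$ together with the reflection $\varpi_j^{\beta*}=c_j\varpi_j^\alpha$, $c_j\in\CC\setminus\{0\}$ (indeed $\varpi_j^{\beta*}(z)=z-\beta_j=-\oa_j^{-1}(1-\oa_jz)=-\oa_j^{-1}\varpi_j^\alpha(z)$ when $\alpha_j\ne0$, while $\varpi_j^{\beta*}=-1=-\varpi_j^\alpha$ when $\alpha_j=0$, $\beta_j=\infty$), I obtain
\begin{gather*}
\dB_n^\beta=\prod_{j\in\mathbbm{b}_n}\zeta_j^\beta=\frac{\dvarsigma_n^\beta\prod_{j\in\mathbbm{b}_n}\varpi_j^{\beta*}}{\dpi_n^\beta}=c\,\frac{\prod_{j\in\mathbbm{b}_n}\varpi_j^\alpha}{\dpi_n^\beta},\qquad c=\dvarsigma_n^\beta\prod_{j\in\mathbbm{b}_n}c_j\ne0 .
\end{gather*}
Since $\pi_n^\alpha=\dpi_n^\alpha\prod_{j\in\mathbbm{b}_n}\varpi_j^\alpha$, multiplying any $p/\pi_n^\alpha\in\L_n^\alpha$ by $\dB_n^\beta$ cancels the factor $\prod_{j\in\mathbbm{b}_n}\varpi_j^\alpha$ and leaves
\begin{gather*}
\dB_n^\beta\cdot\frac{p}{\pi_n^\alpha}=c\,\frac{p}{\dpi_n^\alpha\dpi_n^\beta}=c\,\frac{p}{\pi_n}.
\end{gather*}
As $p$ ranges over $\P_n$ the right-hand side ranges over $\{p/\pi_n\colon p\in\P_n\}=\L_n$, whence $\dB_n^\beta\L_n^\alpha=\L_n$.

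I expect the only genuine obstacle to be bookkeeping at the degenerate points rather than the main line of argument: when $\alpha_j=0$ (so $\beta_j=\infty$) the factor $\varpi_j^\alpha$ collapses to $1$ and $\varpi_j^\beta$ to $-z$, and one must check that the reflection $\varpi_j^{\beta*}=c_j\varpi_j^\alpha$ and the degree count for the inclusion $\L_n\subseteq\{p/\pi_n\colon p\in\P_n\}$ persist there. Phrasing the whole cancellation through $\zeta_j^\beta=\sigma_j\varpi_j^{\beta*}/\varpi_j^\beta$ and through the quantities $\varpi_j^{\beta*}$, rather than through $z-\beta_j$ directly, keeps all these cases uniform, which is why I would set the computation up that way from the start.
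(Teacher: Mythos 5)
Your proof is correct and follows essentially the same route as the paper's: both rest on the representation of the spaces as $\P_n$ over the fixed denominators $\pi_n^\alpha$ and $\pi_n=\dpi_n^\alpha\dpi_n^\beta$, together with the reflection identity $\varpi_j^{\beta*}=c_j\varpi_j^\alpha$ (i.e., $z-\beta_j=-\oa_j^{-1}(1-\oa_jz)$, with $\varpi_j^{\beta*}=-1$ when $\beta_j=\infty$), which produces exactly the pole--zero cancellation the paper carries out. The only cosmetic differences are that you multiply $\L_n^\alpha$ by $\dB_n^\beta$ instead of dividing $f\in\L_n$, and that you justify $\L_n=\{p/\pi_n\colon p\in\P_n\}$ by a dimension count where the paper takes this description for granted.
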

\begin{proof}
This is trivial for $n=0$ since then $\L_n=\CC$.
If $f\in\L_n$, and $n\ge1$ then it is of the form
\begin{gather*}
f(z)=\frac{p_n(z)}{\pi_n(z)}=\frac{p_n(z)}{\dpi_n^\alpha(z)\dpi_n^\beta(z)},\qquad p_n\in\P_n.
\end{gather*}
Therefore
\begin{gather*}
\frac{f(z)}{\dB_n^\beta(z)}=
\overline{\dvarsigma}_n^\beta\frac{p_n(z)\dpi_n^\beta(z)}{\dpi_n^\alpha(z)\dpi_n^\beta(z)\dpi_n^{\beta*}(z)}=
\overline{\dvarsigma}_n^\beta\frac{p_n(z)}{\dpi_n^\alpha(z)\dpi_n^{\beta*}(z)}.
\end{gather*}
Recall that $\varpi_j^{\beta*}=-1$ and $\sigma_j=1$ if $\beta_j=\infty$ (and hence $\alpha_j=0$), we can leave these
factors out and we shall write $\dprod$ for the product instead of $\prod$,
the dot meaning that we leave out
all the factors for which $\alpha_j=1/\overline{\beta}_j=0$.
\begin{gather*}
\frac{\overline{\dvarsigma}_n^\beta}{\dpi_n^{\beta*}(z)}=
\mathop\Dprod_{j\in\mathbbm{b}_n}\frac{\beta_j}{|\beta_j|(z-\beta_j)}=
\mathop\Dprod_{j\in\mathbbm{b}_n
}\frac{|\alpha_j|}{\oa_j(z-1/\oa_j)}=
\mathop\Dprod_{j\in\mathbbm{b}_n}\frac{-|\alpha_j|}{1-\oa_jz},
\end{gather*}
and thus
\begin{gather*}
\frac{f(z)}{\dB_n^\beta(z)}=c_n\frac{p_n(z)}{\prod\limits_{j=1}^n(1-\oa_jz)}\in\L_n^\alpha,\qquad
c_n=\mathop\Dprod_{j\in\mathbbm{b}_n}(-|\alpha_j|)\ne0.
\end{gather*}
The second part is similar.
\end{proof}

\begin{Lemma}\label{lem2}
With our previous definitions we have for $n\ge1$
\begin{gather*}
\dB_n^\beta\L_{n-1}^\alpha=
\Span\left\{B_k^\alpha\dB_n^\beta=\frac{\dB_k^\alpha}{\dB_k^\beta}\dB_n^\beta\colon k=0,\ldots,n-1\right\}\\
\hphantom{\dB_n^\beta\L_{n-1}^\alpha} =
\dzeta_n^\beta\Span\{B_0,B_1,\ldots,B_{n-1}\}=\dzeta_n^\beta\L_{n-1},
\end{gather*}
and similarly
\begin{gather*}
\dB_n^\alpha\L_{n-1}^\beta=
\Span\left\{B_k^\beta\dB_n^\alpha=\frac{\dB_k^\beta}{\dB_k^\alpha}\dB_n^\alpha\colon k=0,\ldots,n-1\right\}\\
\hphantom{\dB_n^\alpha\L_{n-1}^\beta}{}=
\dzeta_n^\alpha\Span\{B_0,B_1,\ldots,B_{n-1}\}=\dzeta_n^\alpha\L_{n-1}.
\end{gather*}
\end{Lemma}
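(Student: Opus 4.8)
The plan is to reduce the two claimed identities to Lemma~\ref{lem1} by a direct index-shifting argument, handling only the first equality chain and remarking that the second is symmetric under $\alpha\leftrightarrow\beta$. The key observation is that the statement couples two different levels of the nested spaces: on the left we multiply $\L_{n-1}^\alpha$ by the degree-$n$ factor $\dB_n^\beta$, and on the right we want to recognize this as $\dzeta_n^\beta\L_{n-1}$. So the heart of the matter is to relate $\dB_n^\beta$ at level $n$ to $\dB_{n-1}^\beta$ at level $n-1$, and to see that the discrepancy is exactly the single factor $\dzeta_n^\beta$.

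First I would establish the spanning-set description. By definition $\L_{n-1}^\alpha=\Span\{B_0^\alpha,\ldots,B_{n-1}^\alpha\}$, so multiplying through by $\dB_n^\beta$ immediately gives that $\dB_n^\beta\L_{n-1}^\alpha$ is the span of the $B_k^\alpha\dB_n^\beta$ for $k=0,\ldots,n-1$; and using the already-established identity $B_k^\alpha=\dB_k^\alpha/\dB_k^\beta$ rewrites each generator as $(\dB_k^\alpha/\dB_k^\beta)\dB_n^\beta$, which is the first displayed line. This part is purely notational. The substance is in the second line.

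The central step is the factorization $\dB_n^\beta=\dzeta_n^\beta\,\dB_{n-1}^\beta$, which is immediate from the product definition $\dB_n^\beta=\prod_{j=1}^n\dzeta_j^\beta$. Substituting this into the generators, I would rewrite
\begin{gather*}
\frac{\dB_k^\alpha}{\dB_k^\beta}\,\dB_n^\beta
=\dzeta_n^\beta\,\frac{\dB_k^\alpha}{\dB_k^\beta}\,\dB_{n-1}^\beta
=\dzeta_n^\beta\,B_k^\alpha\,\dB_{n-1}^\beta,
\end{gather*}
so that $\dB_n^\beta\L_{n-1}^\alpha=\dzeta_n^\beta\,\dB_{n-1}^\beta\L_{n-1}^\alpha$. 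Now Lemma~\ref{lem1}, applied at level $n-1$, states precisely that $\dB_{n-1}^\beta\L_{n-1}^\alpha=\L_{n-1}$ (in the form $\L_{n-1}=\dB_{n-1}^\beta\L_{n-1}^\alpha$). Feeding this in yields $\dB_n^\beta\L_{n-1}^\alpha=\dzeta_n^\beta\L_{n-1}=\dzeta_n^\beta\Span\{B_0,\ldots,B_{n-1}\}$, which is the desired second line.

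The one point requiring care, and the step I expect to be the main obstacle, is the bookkeeping when $\gamma_n=\alpha_n$ versus $\gamma_n=\beta_n$, since $\dzeta_n^\beta$ equals $\zeta_n^\beta$ in the latter case but equals $1$ in the former. When $\dzeta_n^\beta=1$ we have $\dB_n^\beta=\dB_{n-1}^\beta$, so the claimed identity collapses to $\dB_{n-1}^\beta\L_{n-1}^\alpha=\L_{n-1}$, which is again just Lemma~\ref{lem1} at level $n-1$; thus both cases are subsumed by the same computation and no genuine case split is needed. I would note this explicitly to reassure the reader that the factor $\dzeta_n^\beta$ is handled uniformly. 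The second identity, with $\alpha$ and $\beta$ interchanged throughout and Lemma~\ref{lem1} invoked in its other form $\L_{n-1}=\dB_{n-1}^\alpha\L_{n-1}^\beta$, follows by the identical argument.
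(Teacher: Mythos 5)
Your proposal is correct and follows essentially the same route as the paper's own proof: the paper also factors $\dB_n^\beta=\dzeta_n^\beta\dB_{n-1}^\beta$ and then invokes Lemma~\ref{lem1} at level $n-1$ to get $\dB_n^\beta\L_{n-1}^\alpha=\dzeta_n^\beta\dB_{n-1}^\beta\L_{n-1}^\alpha=\dzeta_n^\beta\L_{n-1}$, with the second identity by symmetry. Your additional remarks (the explicit rewriting of the generators and the observation that the case $\gamma_n=\alpha_n$, where $\dzeta_n^\beta=1$, needs no separate treatment) are just expanded bookkeeping of the same argument.
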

\begin{proof}
By our previous lemma $\dB_n^\beta\L_{n-1}^\alpha=\dzeta_n^\beta\dB_{n-1}^\beta\L_{n-1}^\alpha=\dzeta_n^\beta\L_{n-1}$.
The second relation is proved in a similar way.
\end{proof}

To introduce the sequences of orthogonal rational functions (ORF)
for the dif\/ferent sequen\-ces~$\seq{\nu}$, $\nu\in\{\alpha,\beta,\gamma\}$
recall the inner product that we can write with our $(\ )_*$-notation as
$\SP{f}{g}=\int_{\TT} f_*(z)g(z)\d\mu(z)$ where $\mu$ is assumed to be
a probability measure positive a.e.\ on~$\TT$.

Then the orthogonal rational functions (ORF) with respect to the sequence $\seq{\nu}$ with $\nu\in\{\alpha,\beta,\gamma\}$ are def\/ined by
$\phi_n^\nu\in\L_n^\nu\setminus \L_{n-1}^\nu$ with $\phi_n^\nu\perp\L_{n-1}^\nu$ for $n\ge1$ and we choose $\phi_0^\nu=1$.

\begin{Lemma}\label{lem3}
The function
$\phi_n^\alpha \dB_n^\beta$ belongs to $\L_n$ and it is orthogonal to the $n$-dimensional subspace $\dzeta_n^\beta\L_{n-1}$ for all $n\ge1$.

Similarly, the function
$\phi_n^\beta \dB_n^\alpha$ belongs to $\L_n$ and it is orthogonal to the $n$-dimensional subspace $\dzeta_n^\alpha\L_{n-1}$, $n\ge1$.
\end{Lemma}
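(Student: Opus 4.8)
The plan is to reduce both assertions to the defining orthogonality $\phi_n^\alpha\perp\L_{n-1}^\alpha$, using the subspace identifications already established in Lemma~\ref{lem1} and Lemma~\ref{lem2} together with one extra fact: multiplication by a finite Blaschke product is an isometry of $L^2_\mu$. The membership claim is immediate. Since $\phi_n^\alpha\in\L_n^\alpha$ and Lemma~\ref{lem1} states $\L_n=\dB_n^\beta\L_n^\alpha$, the product $\phi_n^\alpha\dB_n^\beta$ lies in $\L_n$; interchanging the roles of $\alpha$ and $\beta$ gives $\phi_n^\beta\dB_n^\alpha\in\L_n$ at once.

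For the orthogonality I would first record the isometry. Every Blaschke factor $\zeta_j$ has modulus one on $\TT$ (for the finite poles this is the standard identity $|(z-\nu_j)/(1-\overline{\nu}_jz)|=1$, and for $\nu_j\in\{0,\infty\}$ one has $\zeta_j=z$ or $\zeta_j=1/z$), so the product $\dB_n^\beta$ satisfies $|\dB_n^\beta(z)|=1$ for $z\in\TT$. Restricting the inner product to $\TT$, where $f_*(z)=\overline{f(z)}$, this yields $\SP{\dB_n^\beta f}{\dB_n^\beta g}=\int_{\TT}|\dB_n^\beta(z)|^2\overline{f(z)}g(z)\,\d\mu(z)=\SP{f}{g}$ for all $f,g$. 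This is the crux of the argument.

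Next, Lemma~\ref{lem2} rewrites the target subspace as $\dzeta_n^\beta\L_{n-1}=\dB_n^\beta\L_{n-1}^\alpha$. Hence for an arbitrary $g\in\L_{n-1}^\alpha$ the isometry gives $\SP{\phi_n^\alpha\dB_n^\beta}{g\dB_n^\beta}=\SP{\phi_n^\alpha}{g}$, and this vanishes because $\phi_n^\alpha\perp\L_{n-1}^\alpha$ by definition of the ORF. Thus $\phi_n^\alpha\dB_n^\beta\perp\dzeta_n^\beta\L_{n-1}$. The dimension count is routine: $\dzeta_n^\beta$ is a nonzero rational function (equal to $\zeta_n^\beta$ or to $1$), so multiplication by it is injective, whence $\dim(\dzeta_n^\beta\L_{n-1})=\dim\L_{n-1}=n$. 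The second statement follows by the symmetric argument with $\alpha$ and $\beta$ exchanged, using $\L_n=\dB_n^\alpha\L_n^\beta$ and $\dzeta_n^\alpha\L_{n-1}=\dB_n^\alpha\L_{n-1}^\beta$.

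I do not expect a genuine obstacle here; the proof is a clean assembly of the two preceding lemmas with the isometry observation. The only point needing a little care is ensuring that Lemma~\ref{lem2} is invoked correctly, so that the subspace against which orthogonality is claimed coincides \emph{exactly} with the image under $\dB_n^\beta$ of the space $\L_{n-1}^\alpha$ to which $\phi_n^\alpha$ is already known to be orthogonal.
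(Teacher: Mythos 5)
Your proof is correct and follows essentially the same route as the paper's: membership via Lemma~\ref{lem1}, orthogonality by combining the defining property $\phi_n^\alpha\perp\L_{n-1}^\alpha$ with the identification $\dB_n^\beta\L_{n-1}^\alpha=\dzeta_n^\beta\L_{n-1}$ from Lemma~\ref{lem2} and the isometry $\SP{\dB_n^\beta f}{\dB_n^\beta g}=\SP{f}{g}$, then symmetry for the second claim. The only difference is that you spell out why the isometry holds (unimodularity of $\dB_n^\beta$ on $\TT$) and verify the dimension count, both of which the paper leaves implicit.
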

\begin{proof}
First note that $\phi_n^\alpha \dB_n^\beta\in\L_n$ by Lemma~\ref{lem1}.

By def\/inition $\phi_n^\alpha\perp \L_{n-1}^\alpha$.
Thus by Lemma~\ref{lem2} and because $\SP{f}{g}=\SP{\dB_n^\nu f}{\dB_n^\nu g}$,
\begin{gather*}
\dB_n^\beta\phi_n^\alpha\perp \dB_n^\beta \L_{n-1}^\alpha=\dzeta_n^\beta\L_{n-1}.
\end{gather*}
The second claim follows by symmetry.
\end{proof}

Note that $\dzeta_n^\beta\L_{n-1}=\L_{n-1}$ if $\gamma_n=\alpha_n$.
Thus, up to normalization, $\phi_n^\alpha \dB_n^\beta$ is the same as $\phi_n$ and similarly, if $\gamma_n=\beta_n$ then $\phi_n$ and $\phi_n^\beta \dB_n^\alpha$ are the same up to normalization.

\begin{Lemma}\label{lem4}
For $n\ge1$ the function $\dB_n^\alpha(\phi_n^\alpha)_*$ belongs to $\L_n$ and it is orthogonal to $\dzeta_n^\alpha\L_{n-1}$.

Similarly, for $n\ge 1$ the function $\dB_n^\beta(\phi_n^\beta)_*$ belongs to $\L_n$ and it is orthogonal to $\dzeta_n^\beta\L_{n-1}$.
\end{Lemma}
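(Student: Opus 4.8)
The plan is to mirror the proof of Lemma~\ref{lem3}, replacing the pair $\big(\phi_n^\alpha,\dB_n^\beta\big)$ by $\big((\phi_n^\alpha)_*,\dB_n^\alpha\big)$ and carefully tracking how the involution $(\ )_*$ acts on the spaces and on the inner product. For the membership claim I would first observe that $\phi_n^\alpha\in\L_n^\alpha$ and that $(\ )_*$ carries $\L_n^\alpha$ onto $\L_n^\beta$: indeed $(B_k^\alpha)_*=B_k^\beta$ by the reflection property recorded earlier, and since $(\ )_*$ is a conjugate-linear involution it sends $\Span\{B_0^\alpha,\ldots,B_n^\alpha\}$ onto $\Span\{B_0^\beta,\ldots,B_n^\beta\}$. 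Hence $(\phi_n^\alpha)_*\in\L_n^\beta$, and Lemma~\ref{lem1} in the form $\L_n=\dB_n^\alpha\L_n^\beta$ gives $\dB_n^\alpha(\phi_n^\alpha)_*\in\L_n$ at once.

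For the orthogonality I would use Lemma~\ref{lem2} to rewrite the target space as $\dzeta_n^\alpha\L_{n-1}=\dB_n^\alpha\L_{n-1}^\beta$. Since $\dB_n^\alpha$ is a finite Blaschke product, it is unimodular on $\TT$, so $\SP{f}{g}=\SP{\dB_n^\alpha f}{\dB_n^\alpha g}$; applying this with $g$ ranging over $\L_{n-1}^\beta$ reduces the assertion $\dB_n^\alpha(\phi_n^\alpha)_*\perp\dB_n^\alpha\L_{n-1}^\beta$ to the simpler statement $(\phi_n^\alpha)_*\perp\L_{n-1}^\beta$.

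The remaining point is the reflection duality of the inner product. Because $z_*=z$ for $z\in\TT$, one has $f_*(z)=\overline{f(z)}$ there, whence $\SP{f_*}{g_*}=\overline{\SP{f}{g}}$ for all $f,g$. Writing an arbitrary element of $\L_{n-1}^\beta$ as $h_*$ with $h\in(\L_{n-1}^\beta)_*=\L_{n-1}^\alpha$, I would then compute $\SP{(\phi_n^\alpha)_*}{h_*}=\overline{\SP{\phi_n^\alpha}{h}}=0$ by the defining orthogonality $\phi_n^\alpha\perp\L_{n-1}^\alpha$. This establishes $(\phi_n^\alpha)_*\perp\L_{n-1}^\beta$ and hence the first assertion; the second follows by interchanging the roles of $\alpha$ and $\beta$ throughout. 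I expect the main obstacle to be purely the bookkeeping: keeping the conjugate-linear involution $(\ )_*$ tracked consistently on $\L_\bullet^\alpha$, $\L_\bullet^\beta$ and on the sesquilinear form, since a single misplaced conjugation would break the reduction to the already known orthogonality of $\phi_n^\alpha$.
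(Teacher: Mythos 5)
Your proof is correct, and it is organized differently from the paper's. The paper starts from Lemma~\ref{lem3}: from $\phi_n^\alpha\dB_n^\beta\perp\dzeta_n^\beta\L_{n-1}$ it applies $(\ )_*$ to get $(\phi_n^\alpha\dB_n^\beta)_*\perp\dzeta_{n*}^\beta\L_{(n-1)*}$, and then multiplies through by the product $\dB_n^\alpha\dB_n^\beta$ (unimodular on $\TT$); to identify the resulting right-hand space as $\dzeta_n^\alpha\L_{n-1}$ it needs the identity $\dB_{n-1}^\alpha\dB_{n-1}^\beta\L_{(n-1)*}=\L_{n-1}$, which it verifies by a computation with the numerator space $\P_{(n-1)*}$ and the reduced denominators. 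You instead start from the defining orthogonality $\phi_n^\alpha\perp\L_{n-1}^\alpha$, transfer it through $(\ )_*$ using the observation that conjugate-linearity together with $(B_k^\alpha)_*=B_k^\beta$ gives $(\L_{n-1}^\alpha)_*=\L_{n-1}^\beta$, and only then multiply by the single Blaschke product $\dB_n^\alpha$, invoking Lemma~\ref{lem2} for $\dB_n^\alpha\L_{n-1}^\beta=\dzeta_n^\alpha\L_{n-1}$. Both routes rest on the same two mechanisms --- the duality $\SP{f_*}{g_*}=\overline{\SP{f}{g}}$, valid because $z_*=z$ on $\TT$, and invariance of the inner product under multiplication by Blaschke products --- but your decomposition is slightly more economical: the space identification $(\L_{n-1}^\alpha)_*=\L_{n-1}^\beta$ is immediate from the spanning bases, so the numerator/denominator computation disappears, and you also make the membership claim $\dB_n^\alpha(\phi_n^\alpha)_*\in\L_n$ explicit via Lemma~\ref{lem1}, a point the paper's proof leaves implicit. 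What the paper's arrangement buys in exchange is that Lemma~\ref{lem4} appears directly as the $(\ )_*$-image of Lemma~\ref{lem3}, which emphasizes the $\alpha$--$\beta$ symmetry between the two shifted families of ORFs.
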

\begin{proof}
Since $\phi_n^\alpha\dB_n^\beta\perp \dzeta_n^\beta\L_{n-1}$,
\begin{gather*}
(\phi_n^\alpha\dB_n^\beta)_*\perp \dzeta_{n*}^\beta\L_{(n-1)*},
\end{gather*}
and thus by Lemma~\ref{lem2} and because
\begin{gather*}
\dB_{n-1}^\alpha\dB_{n-1}^\beta\L_{(n-1)*}=\dB_{n-1}^\alpha\dB_{n-1}^\beta\frac{\P_{(n-1)*}}{\dpi_{(n-1)*}^\alpha\dpi_{(n-1)*}^\beta}
=\frac{\P_{n-1}}{\dpi_{n-1}^\alpha\dpi_{n-1}^\beta}=\L_{n-1}
\end{gather*}
it follows that
\begin{gather*}
\dB_n^\alpha\phi_{n*}^\alpha=
\dB_n^\alpha\dB_n^\beta(\phi_n^\alpha\dB_n^\beta)_*\perp\dzeta_n^\alpha\dB_{n-1}^\alpha\dB_{n-1}^\beta\L_{(n-1)*}=\dzeta_n^\alpha\L_{n-1}.
\end{gather*}
The other claim follows by symmetry.
\end{proof}

We now def\/ine the reciprocal ORFs by (recall $f_*(z)=\overline{f(1/\overline{z})}$)
\begin{gather*}
(\phi_n^\nu)^*=B_n^\nu (\phi_n^\nu)_*,\qquad \nu\in\{\alpha,\beta\}.
\end{gather*}
For the ORF in $\L_n$ however we set
\begin{gather*}
\phi_n^* = \dB_n^\alpha\dB_n^\beta (\phi_n)_*.
\end{gather*}
Note that by def\/inition $B_n$ is either $\dB_n^\alpha$ or $\dB_n^\beta$
depending on $\gamma_n$ being $\alpha_n$ or $\beta_n$,
while in the previous def\/inition we do not multiply with $B_n$ but with the product $\dB_n^\alpha\dB_n^\beta$. The reason is that we want the operation $(\ )^*$
to be a map from $\L_n^\nu$ to $\L_n^\nu$ for all $\nu\in\{\alpha,\beta,\gamma\}$.
\begin{Remark}\rm\label{rem3.1}
As the operation $(\ )^*$ is a map from $\L_n^\nu$ to $\L_n^\nu$, it
depends on $n$ and on $\nu$. So to make the notation unambiguous we should in fact use something like $f^{[\nu,n]}$ if $f\in\L_n^\nu$. However, in order not to
overload our notation, we shall stick to the notation $f^*$ since it should
always be clear from the context what the space is to which $f$ will belong.
Note that we also used the same notation to transform polynomials.
This is just a special case of the general def\/inition.
Indeed, a polynomial of degree $n$ belongs to $\L_n^\alpha$
for a sequence $\seq{\alpha}$ where all $\alpha_j=0$, $j=0,1,2,\ldots$
and for this sequence $B_n^\alpha(z)=z^n$.

Note that for a constant $a\in\L_0=\hat{\CC}$ we have $a^*=\overline{a}$.
Although $(\ )^*$ is mostly used for scalar expressions, we shall occasionally use $A^*$ where $A$
is a matrix whose elements are all in~$\L_n$. Then the meaning is that we take the $(\ )^*$
conjugate of each element in its transpose. Thus if $A$ is a~constant matrix, then
$A^*$ has the usual meaning of the adjoint or complex conjugate transpose of the matrix.
We shall need this in Section~\ref{secHB}.
\end{Remark}

\begin{Remark}\rm\label{rem3.2}
It might also be helpful for the further computations to note the following.
If~$p_n$ is a polynomial of degree $n$ with a zero at $\xi$,
then $p_n^*$ will have a zero at $\xi_*=1/\overline{\xi}$.
Hence, if $\nu\in\{\alpha,\beta,\gamma\}$ and $\phi_n^\nu=\frac{p_n^\nu}{\pi_n^\nu}$,
then $\phi_{n*}^{\nu}=\frac{p_{n*}^{\nu}}{\pi_{n*}^\nu} = \frac{p_{n}^{\nu*}}{\pi_{n}^{\nu*}}$.
We know by \cite[Corollary~3.1.4]{BooksBGHN99} that~$\phi_n^\alpha$ has all its zeros in $\DD$, hence
$p_n^\alpha$ does not vanish in $\EE$ and $p_n^{\alpha*}$ does not vanish in $\DD$.
By symmet\-ry~$\phi_n^\beta$ has all its zeros in $\EE$ and $p_n^{\beta*}$ does not vanish in $\EE$.
For the general $\phi_n$, it depends on $\gamma_n$ being~$\alpha_n$ or~$\beta_n$.
However from the relations between $\phi_n$ and $\phi_n^\alpha$ or $\phi_n^\beta$ that will be derived below,
we will be able to guarantee that at least for $z=\nu_n$ we have $\phi_n^{\nu*}(\nu_n)\ne0$ and $p_n^{\nu*}(\nu_n)\ne0$ for all $\nu\in\{\alpha,\beta,\gamma\}$
(see Corollary~\ref{cor1} below).
\end{Remark}

The orthogonality conditions def\/ine $\phi_n$ and $\phi_n^*$ uniquely up to normalization.
So let us now make the ORFs unique by imposing an appropriate normalization.
First assume that from now on the $\phi_n^\nu$ refer to orthonormal functions
in the sense that $\|\phi_n^\nu\|=1$. This makes them unique up to
a unimodular constant. Def\/ining this constant is what we shall do now.

Suppose $\gamma_n=\alpha_n$, then $\phi_n$ and $\phi_n^\alpha\dB_n^\beta$ are both in $\L_n$
and orthogonal to $\L_{n-1}$ (Lemma~\ref{lem3}).
If we assume $\|\phi_n\|=1$ and $\|\phi_n^\alpha\|=1$, hence
$\|\phi_n^\alpha\dB_n^\beta\|=\|\phi_n^\alpha\|=1$, it follows that
there must be some unimodular constant $s_n^\alpha\in\TT$ such that
$\phi_n=s_n^\alpha \phi_n^\alpha\dB_n^\beta$.
Of course, we have by symmetry that for $\gamma_n=\beta_n$, there is some $s_n^\beta\in\TT$ such that $\phi_n=s_n^\beta \phi_n^\beta\dB_n^\alpha$.

To def\/ine the unimodular factors $s_n^\alpha$ and $s_n^\beta$,
we f\/irst f\/ix $\phi_n^\alpha$ and $\phi_n^\beta$ uniquely as follows.

We know that $\phi_n^\alpha$ has all its zeros in $\DD$ and hence
$\phi_n^{\alpha*}$ has all its zeros in $\EE$ so that
$\phi_n^{\alpha*}({\alpha}_n)\ne0$.
Thus we can take $\phi_n^{\alpha*}({\alpha}_n)>0$ as a normalization for $\phi_n^\alpha$.
Similarly for $\phi_n^\beta$ we can normalize by $\phi_n^{\beta*}({\beta}_n)>0$.
In both cases, we have made the leading coef\/f\/icient with respect to the basis
$\{B_j^\nu\}_{j=0}^n$ positive since
$\phi_n^\alpha(z) = \overline{\phi_n^{\alpha*}(\alpha_n)}B_n^\alpha(z)+\psi_{n-1}^\alpha(z)$ with $\psi_{n-1}^\alpha\in\L_{n-1}^\alpha$ and
$\phi_n^\beta(z) = \overline{\phi_n^{\beta*}(\beta_n)}B_n^\beta(z)+\psi_{n-1}^\beta(z)$ with $\psi_{n-1}^\beta\in\L_{n-1}^\beta$.
Before we def\/ine the normalization for the $\seq{\gamma}$ sequence,
we prove the following lemma which is a consequence of the normalization
of the $\phi_n^\alpha$ and the $\phi_n^\beta$.

\begin{Lemma}\label{lem5}
For the orthonormal ORFs, it holds that $\phi_n^\alpha=\phi_{n*}^\beta$ and $(\phi_n^\alpha)^*\dB_n^\beta=\phi_n^\beta\dB_n^\alpha$ and hence also $(\phi_n^\beta)^*\dB_n^\alpha=\phi_n^\alpha\dB_n^\beta$ for all $n\ge0$.
\end{Lemma}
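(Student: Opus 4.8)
The plan is to prove the first identity $\phi_n^\alpha=\phi_{n*}^\beta$ and then deduce the other two by pure algebra. The case $n=0$ is immediate since every function in sight equals the constant~$1$, so I would assume $n\ge1$.

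First I would record two structural facts that follow at once from the reflection relation $B_k^\alpha=(B_k^\beta)_*$ and from the fact that $(\ )_*$ is a conjugate-linear involution fixing $\TT$ pointwise (indeed $z_*=z$ on $\TT$). The first is that $(\L_n^\beta)_*=\L_n^\alpha$ for every $n$, so that applying $(\ )_*$ to $\phi_n^\beta\in\L_n^\beta\setminus\L_{n-1}^\beta$ places $\phi_{n*}^\beta$ in $\L_n^\alpha\setminus\L_{n-1}^\alpha$. The second is the adjoint identity $\SP{u_*}{v_*}=\SP{v}{u}$, which holds because $z_*=z$ on $\TT$ makes $\int_\TT (u_*)_*\,v_*\,\d\mu=\int_\TT v_*\,u\,\d\mu$. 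Together with $\|f_*\|=\|f\|$ (same reason: $|f_*|=|f|$ on $\TT$), these reduce the first identity to matching normalizations.

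The key step is the orthogonality transfer. Given $g\in\L_{n-1}^\alpha$, I would write $g=h_*$ with $h\in\L_{n-1}^\beta$, which is legitimate by the first structural fact, and compute $\SP{\phi_{n*}^\beta}{g}=\SP{\phi_{n*}^\beta}{h_*}=\SP{h}{\phi_n^\beta}=0$, the last equality being the defining orthogonality $\phi_n^\beta\perp\L_{n-1}^\beta$. Hence $\phi_{n*}^\beta$ and $\phi_n^\alpha$ are both unit-norm members of $\L_n^\alpha$ orthogonal to $\L_{n-1}^\alpha$, so they coincide up to a unimodular factor. To fix that factor I would compare leading coefficients with respect to the basis element $B_n^\alpha$: expanding $\phi_n^\beta=\overline{\phi_n^{\beta*}(\beta_n)}\,B_n^\beta+\psi_{n-1}^\beta$ and applying $(\ )_*$ term by term (using $(B_k^\beta)_*=B_k^\alpha$) shows the leading coefficient of $\phi_{n*}^\beta$ is $\phi_n^{\beta*}(\beta_n)>0$, which is real and positive exactly like the leading coefficient $\phi_n^{\alpha*}(\alpha_n)>0$ of $\phi_n^\alpha$. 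A unimodular factor relating two quantities with equal positive leading coefficients must be $1$, giving $\phi_n^\alpha=\phi_{n*}^\beta$.

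The remaining two identities are then formal. Since $(\ )_*$ is an involution, $\phi_n^\alpha=\phi_{n*}^\beta$ is equivalent to $(\phi_n^\alpha)_*=\phi_n^\beta$. Substituting into $(\phi_n^\alpha)^*=B_n^\alpha(\phi_n^\alpha)_*$ gives $(\phi_n^\alpha)^*=B_n^\alpha\phi_n^\beta$, and multiplying by $\dB_n^\beta$ while using $B_n^\alpha=\dB_n^\alpha/\dB_n^\beta$ produces $(\phi_n^\alpha)^*\dB_n^\beta=\phi_n^\beta\dB_n^\alpha$; the same computation with $\alpha$ and $\beta$ interchanged and $B_n^\beta=\dB_n^\beta/\dB_n^\alpha$ yields the third identity. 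I expect the only genuine obstacle to be the bookkeeping in the orthogonality transfer and in keeping the normalizing constant manifestly real and positive; once the two structural facts above are in place, the rest is formal.
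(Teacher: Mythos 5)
Your proof is correct, and it takes a genuinely different route from the paper's. The paper works inside the mixed space $\L_n$: by Lemmas~\ref{lem3} and~\ref{lem4}, both $\phi_n^\alpha\dB_n^\beta$ and $\dB_n^\beta(\phi_n^\beta)_*$ lie in $\L_n$ and are orthogonal to $\L_{n-1}$ (this needs the case split $\gamma_n=\alpha_n$ versus $\gamma_n=\beta_n$), hence they agree up to some $s_n\in\TT$, which is forced to equal $1$ by multiplying with $B_n^\beta$ and evaluating at $\beta_n$ --- in substance the same leading-coefficient comparison you perform. You bypass the $\seq{\gamma}$-machinery entirely: your two structural facts, $(\L_{n-1}^\beta)_*=\L_{n-1}^\alpha$ and $\SP{u_*}{v_*}=\SP{v}{u}$, transfer the defining orthogonality $\phi_n^\beta\perp\L_{n-1}^\beta$ directly to $\phi_{n*}^\beta\perp\L_{n-1}^\alpha$, so the whole comparison takes place in $\L_n^\alpha$, uniformly in $n$ and with no reference to the $\gamma$-sequence. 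That is a real advantage: it exposes the lemma as a statement purely about the $\alpha$--$\beta$ duality, independent of how the poles are interleaved. What the paper's detour buys is economy within its own development: Lemmas~\ref{lem3} and~\ref{lem4} are needed anyway for Theorem~\ref{thm1}, and the paper's proof simultaneously exhibits $\phi_n$ as proportional to the two functions being compared, which is exactly what the subsequent normalization of $\phi_n$ builds on. Your closing algebra (deducing the second and third identities from $B_n^\alpha=\dB_n^\alpha/\dB_n^\beta$ and involutivity of $(\ )_*$) coincides with the paper's final line, and your normalization step is sound, though stated a bit loosely: the coefficient of $B_n^\alpha$ in $\phi_{n*}^\beta$ is $\phi_n^{\beta*}(\beta_n)>0$, the one in $\phi_n^\alpha$ is $\overline{\phi_n^{\alpha*}(\alpha_n)}>0$, and a unimodular number that equals a ratio of two positive numbers must be $1$ (the coefficients are then equal a posteriori, not assumed equal beforehand).
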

\begin{proof}
For $n=0$, this is trivial since $\phi_0,\phi_0^\alpha,\phi_0^\beta,\dB_0^\alpha$ and $\dB_0^\beta$ are all equal to 1.

We give the proof for $n\ge1$ and $\gamma_n=\alpha_n$ (for $\gamma_n=\beta_n$, the proof is similar).
Since by previous lemmas $\dB_n^\beta(\phi_n^\beta)_*$ and $\phi_n^\alpha\dB_n^\beta$ are both
in $\L_n$ and orthogonal to $\L_{n-1}$, and since $\|\dB_n^\beta(\phi_n^\beta)_*\|=\|\phi_n^\beta\|=1$
and $\|\phi_n^\alpha\dB_n^\beta\|=\|\phi_n^\alpha\|=1$,
there must be some $s_n\in\TT$ such that
\begin{gather*}
s_n\phi_n^\alpha\dB_n^\beta=\phi_{n*}^\beta \dB_n^\beta \qquad \text{or}\qquad s_n\phi_n^\alpha=\phi_{n*}^\beta.
\end{gather*}
Multiply with $B_n^\beta=B_{n*}^\alpha$ and evaluate at ${\beta}_n$ to get
$s_n\phi_n^\alpha({\beta}_n)B_{n*}^\alpha({\beta}_n)=\phi_n^{\beta*}({\beta}_n)>0$.
Thus $s_n$ should arrange for{\samepage
\begin{gather*}
0<s_n\phi_n^\alpha(1/\overline{{\alpha}}_n)B_{n*}^\alpha(1/\overline{{\alpha}}_n)
= s_n\overline{\phi_{n*}^\alpha({\alpha}_n)}\overline{B_n^\alpha({\alpha}_n)}=
s_n\overline{\phi_n^{\alpha*}({\alpha}_n)},
\end{gather*}
and since $\phi_n^{\alpha*}({\alpha}_n)>0$, it follows that $s_n=1$.}

Because $(\phi_n^\alpha)^*=B_n^\alpha\phi_{n*}^\alpha=B_n^\alpha\phi_n^\beta$ and $B_n^\alpha=\dB_n^\alpha/\dB_n^\beta$, also the other claims follow.
\end{proof}

For the normalization of the $\phi_n$, we can do two things: either we make the
normalization of~$\phi_n$ simple and choose for example $\phi_n^*({\gamma}_n)>0$, similar to what we did for $\phi_n^\alpha$ and $\phi_n^\beta$ (but this is somewhat problematic as we shall see below), or we can insist on keeping the relation with~$\phi_n^\alpha$ and $\phi_n^\beta$ simple as in the previous lemma and arrange that $s_n^\alpha=s_n^\beta=1$. We choose for the second option.

Let us assume that $\gamma_n=\alpha_n$.
Denote
\begin{gather*}
\phi_n(z)=\frac{p_n(z)}{\dpi_n^\alpha(z)\dpi_n^\beta(z)}\qquad \text{and}\qquad
\phi_n^\alpha(z)=\frac{p_n^\alpha(z)}{\pi_n^\alpha(z)},
\end{gather*}
with $p_n$ and $p_n^\alpha$ both polynomials in $\P_n$. Then
\begin{gather*}
\phi_n^*(z)=\frac{\varsigma_n\, p_n^*(z)}{\dpi_n^\alpha(z)\dpi_n^\beta(z)}
\qquad\mbox{and}\qquad
\phi_n^{\alpha*}(z)=\frac{\varsigma_n\, p_n^{\alpha*}(z)}{\pi_n^\alpha(z)},\qquad \varsigma_n=\prod_{j=1}^n \sigma_j.
\end{gather*}
We already know that there is some $s_n^\alpha\in\TT$
such that $\phi_n=s_n^\alpha\dB_n^\beta\phi_n^\alpha$.
Take the $(\ )_*$ conjugate and multiply with $\dB_n^\alpha\dB_n^\beta$
to get $\phi_n^*=\overline{s_n^\alpha}\dB_n^\beta\phi_n^{\alpha*}$.

It now takes some simple algebra to reformulate
$\phi_n^*=\overline{s_n^\alpha}\dB_n^\beta\phi_n^{\alpha*}$ as
\begin{gather*}
\phi_n^*(z)=\frac{\varsigma_n\,p_n^*(z)}{\dpi_n^\alpha(z)\dpi_n^\beta(z)}=
\overline{s_n^\alpha}
\frac{\varsigma_n\,p_n^{\alpha*}(z)}{\dpi_n^\alpha(z)\dpi_n^\beta(z)}\mathop\Dprod_{j\in\mathbbm{b}_n}(-|\beta_j|).
\end{gather*}
This implies that $p_n^*(z)=\overline{s_n^\alpha}p_n^{\alpha*}(z)\dprod_{j\in\mathbbm{b}_n}(-|\beta_j|)$
and thus that $p_n^*(z)$ has the same zeros as~$p_n^{\alpha*}(z)$, none of which is in $\DD$.
Thus the numerator of $\phi_n^*$ will not vanish at ${\alpha}_n\in\DD$ but
one of the factors $(1-\ob_j{\alpha}_n)$ from $\dpi_n^\beta(\alpha_n)$ could be zero.
Thus a normalization $\phi_n^*(\alpha_n)>0$ is not an option in general.
We could however make $s_n^\alpha=1$ when we choose $p_n^*(\alpha_n)/p_n^{\alpha*}(\alpha_n)>0$ or, since
$\phi_n^{\alpha*}(\alpha_n)>0$, this is equivalent with $\varsigma_n\,p_n^*(\alpha_n)/\pi_n^\alpha(\alpha_n)>0$.
Yet another way to put this is requiring that $\phi_n^*(z)/\dB_n^\beta(z)$ is positive at $z=\alpha_n$.
This does not give a problem with~0 or~$\infty$ since
\begin{gather}\label{eqstar}
\dB_n^\alpha(z)\phi_{n*}(z)=
\frac{\phi_n^*(z)}{\dB_n^\beta(z)}=\frac{\dvarsigma_n^\alpha\,p_n^*(z)}{\dpi_n^\alpha(z)\dprod_{j\in\mathbbm{b}_n}{(z-\beta_j)}},\qquad
\dvarsigma_n^\alpha=\prod_{j\in\mathbbm{a}_n}\sigma_j.
\end{gather}
It is clear that neither the numerator nor the denominator will vanish for $z=\alpha_n$.

Of course a similar argument can be given if $\gamma_n=\beta_n$.
Then we choose $\phi_n^*(z)/\dB^\alpha_n(z)$ to be positive at $z=\beta_n$ or equivalently
$\varsigma_n\,p_n^*(\beta_n)/\pi_n^\beta(\beta_n)\dprod_{j\in\mathbbm{a}_n}(-|\alpha_j|)>0$.

Let us formulate the result about the numerators as a lemma for further reference.
\begin{Lemma}\label{lemnum}
With the normalization that we just imposed the numerators $p_n^\nu$ of $\phi_n^\nu=p_n^\nu/\pi_n^\nu$,
$\nu\in\{\alpha,\beta,\gamma\}$ and $n\ge 1$ are related by
\begin{gather*}
p_n(z)=p_n^\alpha(z)\mathop\Dprod_{j\in\mathbbm{b}_n}(-|\beta_j|)
=p_n^{\beta*}(z)\varsigma_n\mathop\Dprod_{j\in\mathbbm{a}_n}(-|\alpha_j|),\qquad \text{if}\quad\gamma_n=\alpha_n
\end{gather*}
and
\begin{gather*}
p_n(z)=p_n^\beta(z)\mathop\Dprod_{j\in\mathbbm{a}_n}(-|\alpha_j|)
=p_n^{\alpha*}(z)\varsigma_n\mathop\Dprod_{j\in\mathbbm{b}_n}(-|\beta_j|),\qquad \text{if}\quad \gamma_n=\beta_n,
\end{gather*}
where as before $\varsigma_n=\prod_{j=1}^n \sigma_j$.
\end{Lemma}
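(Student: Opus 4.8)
The plan is to read Lemma~\ref{lemnum} as nothing more than the numerator-level transcription of two normalized proportionalities already in hand: the choice that fixes $s_n^\alpha=1$ gives $\phi_n=\dB_n^\beta\phi_n^\alpha$ when $\gamma_n=\alpha_n$ (and symmetrically $\phi_n=\dB_n^\alpha\phi_n^\beta$ when $\gamma_n=\beta_n$), while Lemma~\ref{lem5} gives $\phi_n^\alpha=\phi_{n*}^\beta$. Since the whole construction is symmetric under reflection in $\TT$ (swapping $\alpha\leftrightarrow\beta$ and $\mathbbm{a}_n\leftrightarrow\mathbbm{b}_n$), I would establish only the two identities for $\gamma_n=\alpha_n$ and recover the $\gamma_n=\beta_n$ pair by this symmetry. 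In each case the work is to substitute the rational expressions $\phi_n=p_n/(\dpi_n^\alpha\dpi_n^\beta)$ and $\phi_n^\alpha=p_n^\alpha/\pi_n^\alpha$, together with the factorization $\dB_n^\beta=\dvarsigma_n^\beta\,\dpi_n^{\beta*}/\dpi_n^\beta$ coming from $\zeta_j^\beta=\sigma_j\varpi_j^{\beta*}/\varpi_j^\beta$, then cancel denominators and read off the constant multiplying the fixed numerator.

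For the first equality I would start from $\phi_n=\dB_n^\beta\phi_n^\alpha$, clear the common factor $\dpi_n^\beta$, and split $\pi_n^\alpha=\dpi_n^\alpha\prod_{j\in\mathbbm{b}_n}(1-\oa_jz)$ so that the surviving denominator is exactly $\dpi_n^\alpha\dpi_n^\beta$. This reduces the identity to $p_n=p_n^\alpha\prod_{j\in\mathbbm{b}_n}\frac{\sigma_j\varpi_j^{\beta*}}{1-\oa_jz}$, and each factor is a constant: for $\beta_j\ne\infty$ one has $\varpi_j^{\beta*}=z-\beta_j$ and $1-\oa_jz=-\oa_j(z-\beta_j)$, so with $\sigma_j=\oa_j/|\alpha_j|$ and $|\beta_j|=1/|\alpha_j|$ the $j$-th factor collapses to $-|\beta_j|$, giving $p_n=p_n^\alpha\Dprod_{j\in\mathbbm{b}_n}(-|\beta_j|)$. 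For the second equality I would feed Lemma~\ref{lem5} in, writing $\phi_n=\dB_n^\beta\phi_{n*}^\beta$ with $\phi_{n*}^\beta=p_n^{\beta*}/\pi_n^{\beta*}$, split $\pi_n^{\beta*}=\dpi_n^{\beta*}\prod_{j\in\mathbbm{a}_n}\varpi_j^{\beta*}$, and cancel to reach $p_n=\dvarsigma_n^\beta\,p_n^{\beta*}\prod_{j\in\mathbbm{a}_n}\frac{1-\oa_jz}{\varpi_j^{\beta*}}$, where each factor over $\mathbbm{a}_n$ equals $-\oa_j$. Recombining the leftover unimodular factors via $\varsigma_n=\dvarsigma_n^\alpha\dvarsigma_n^\beta$ and $\sigma_j(-|\alpha_j|)=-\oa_j$ turns the product into $\varsigma_n\Dprod_{j\in\mathbbm{a}_n}(-|\alpha_j|)$, the claimed form.

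The only genuinely delicate point, and where I expect the bookkeeping to be error-prone, is the treatment of the degenerate indices, i.e.\ those $j$ with $\alpha_j=0$ (equivalently $\beta_j=\infty$). There the symbol $-|\beta_j|$ is meaningless, $\varpi_j^{\beta*}=-1$ rather than $z-\beta_j$, and $\sigma_j=1$, so a direct evaluation of $\frac{\sigma_j\varpi_j^{\beta*}}{1-\oa_jz}$ (and likewise of the $\mathbbm{a}_n$-factor in the second identity) returns $-1$, not $1$. This is precisely the contribution the dotted-product notation $\Dprod$ is meant to absorb, so the proof must check that reading $-|\beta_j|$ (resp.\ $-|\alpha_j|$) through the $\Dprod$ convention reproduces this $-1$ for every degenerate factor rather than silently dropping it. Once that sign accounting is pinned down consistently with Lemma~\ref{lem1}, the remaining manipulations are routine and the $\gamma_n=\beta_n$ identities follow by the $\alpha\leftrightarrow\beta$ symmetry.
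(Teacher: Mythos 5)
Your proof is correct and follows essentially the same route as the paper: the first identity (for $\gamma_n=\alpha_n$) is obtained there in the normalization discussion preceding the lemma by exactly your denominator-clearing computation, applied to the $*$-conjugated relation $\phi_n^*=\dB_n^\beta\phi_n^{\alpha*}$, and the second identity is proved from $\phi_n=\phi_n^{\beta*}\dB_n^\alpha$ --- which is literally your starting point $\dB_n^\beta\phi_{n*}^\beta$, since $\phi_n^{\beta*}=B_n^\beta\phi_{n*}^\beta$ and $B_n^\beta=\dB_n^\beta/\dB_n^\alpha$ --- by collapsing each factor over $\mathbbm{a}_n$ to $\sigma_j(-\alpha_j)=-|\alpha_j|$, with $\gamma_n=\beta_n$ dismissed as similar.

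The one place where you go beyond the paper is your third paragraph, and the concern you raise there is genuine. The paper's proof silently pushes all degenerate indices into the $\Dprod$ notation, whose literal definition (``leave out all the factors for which $\alpha_j=0$'') would make the stated formulas false by a sign $(-1)^m$, with $m$ the number of omitted factors. A minimal check: take Lebesgue measure and $\gamma_1=\beta_1=\infty$; then $\phi_1=1/z$, $\pi_1=\varpi_1^\beta=-z$, so $p_1=-1$, while $p_1^{\alpha*}=1$ and $\varsigma_1=1$, so the identity $p_1=p_1^{\alpha*}\varsigma_1\Dprod_{j\in\mathbbm{b}_1}(-|\beta_j|)$ holds only if the degenerate factor is read as $-1$ rather than dropped. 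Your resolution --- each $j$ with $\beta_j=\infty$ contributes exactly $-1$, as your direct evaluation of $\sigma_j\varpi_j^{\beta*}/(1-\oa_jz)$ shows --- is precisely the interpretation under which the lemma (and the paper's related uses of $\dprod$, e.g., in the proofs of Lemma~\ref{lem1} and Theorem~\ref{thmcon}) is correct; the paper never makes this explicit, so your proof is on this point more careful than the original.
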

\begin{proof}
The f\/irst expression for $\gamma_n=\alpha_n$ has been proved above. The second one follows in a similar way
from the relation $\phi_n(z)=\phi_n^{\beta*}(z)\dB_n^\alpha(z)$.
Indeed
\begin{gather*}
\frac{p_n(z)}{\pi_n(z)}=\frac{\varsigma_n p_n^{\beta*}(z)}
{\prod\limits_{j\in\mathbbm{a}_n}\varpi_j^\beta(z)\prod\limits_{j\in\mathbbm{b}_n}\varpi_j^\beta(z)}
\prod\limits_{j\in\mathbbm{a}_n}\sigma_j\frac{z-\alpha_j}{1-\oa_j z}\\
\hphantom{\frac{p_n(z)}{\pi_n(z)}}{}= \frac{\varsigma_n p_n^{\beta*}(z)}
{\prod\limits_{j\in\mathbbm{a}_n}\varpi_j^\alpha(z)\prod\limits_{j\in\mathbbm{b}_n}\varpi_j^\beta(z)}
\prod\limits_{j\in\mathbbm{a}_n}\sigma_j\frac{z-\alpha_j}{1-\ob_j z}=
\frac{\varsigma_n p_n^{\beta*}(z)}{\pi_n(z)}\mathop\Dprod_{j\in\mathbbm{a}_n}\sigma_j(-\alpha_j)\frac{z-\alpha_j}{z-\alpha_j}.
\end{gather*}
With $-\sigma_j\alpha_j=-|\alpha_j|$ the result follows.

The case $\gamma_n=\beta_n$ is similar.
\end{proof}

Note that this normalization again means that we take the leading coef\/f\/icient of $\phi_n$ to be positive
in the following sense.
If $\gamma_n=\alpha_n$ then $\phi_n(z)=\overline{(\dB_n^\alpha\phi_{n*})(\alpha_n)}\dB_n^\alpha(z)+\psi_{n-1}(z)$ with $\psi_{n-1}\in\L_{n-1}$, while $\dB_n^\alpha\phi_{n*}=\phi_{n}^{\alpha*}$ and $\phi_{n}^{\alpha*}(\alpha_n)>0$.
If $\gamma_n=\beta_n$ then $\phi_n(z)=\overline{(\dB_n^\beta\phi_{n*})(\beta_n)}\dB_n^\beta(z)+\psi_{n-1}(z)$ with $\psi_{n-1}\in\L_{n-1}$ and the conclusion follows similarly.

Whenever we use the term orthonormal, we assume this normalization and $\{\phi_n\colon n=0,1,$ $2,\ldots\}$ will denote this orthonormal system.

Thus we have proved the following Theorem.
It says that if $\gamma_n=\alpha_n$, then $\phi_n$ is a `shifted' version of $\phi_n^\alpha$ where `shifted' means multiplied by $\dB_n^\beta$:
\begin{gather*}
\dB_n^\beta(z)\phi_n(z)=\dB_n^\beta(z)[a_0B_0^\alpha+\cdots+a_nB_n^\alpha(z)]=
a_0\dB_n^\beta(z)+\cdots+a_n\dB_n^\alpha(z),
\end{gather*}
and a similar interpretation if $\gamma_n=\beta_n$.
We summarize this in the following theorem.
\begin{Theorem}\label{thm1}
Assume all ORFs $\phi_n^\nu$, $\nu\in\{\alpha,\beta,\gamma\}$ are orthonormal with positive leading coefficient, i.e.,
\begin{gather*}
\phi_n^{\alpha*}({\alpha}_n)>0\qquad\mbox{and}\qquad
\phi_n^{\beta*}({\beta}_n)>0
\qquad \text{and}\qquad
\begin{cases}
(\phi_n^*/\dB_n^\beta)(\alpha_n)>0 & \mbox{if}~~\gamma_n=\alpha_n,\\
(\phi_n^*/\dB_n^\alpha)(\beta_n)>0 & \mbox{if}~~\gamma_n=\beta_n.
\end{cases}
\end{gather*}
Then for all $n\ge0$
\begin{gather*}
\phi_n=
(\phi_n^\alpha)\dB_n^\beta=
(\phi_n^\beta)^*\dB_n^\alpha
\qquad \mbox{and}\qquad \phi_n^*=
(\phi_n^\alpha)^*\dB_n^\beta=
(\phi_n^\beta)\dB_n^\alpha\qquad \mbox{if~~$\gamma_n=\alpha_n$},
\end{gather*}
while
\begin{gather*}
\phi_n=(\phi_n^\beta)\dB_n^\alpha=
(\phi_n^\alpha)^*\dB_n^\beta
\qquad \mbox{and}\qquad \phi_n^*=
(\phi_n^\beta)^*\dB_n^\alpha=
(\phi_n^\alpha)\dB_n^\beta\qquad \mbox{if~~$\gamma_n=\beta_n$}.
\end{gather*}
\end{Theorem}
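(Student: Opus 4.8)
The plan is to treat Theorem~\ref{thm1} as the bookkeeping that assembles the normalization identity established in the paragraph just preceding it together with Lemma~\ref{lem5}. The case $n=0$ is immediate, because $\phi_0,\phi_0^\alpha,\phi_0^\beta,\dB_0^\alpha,\dB_0^\beta$ are all $1$, so each of the four displayed equalities reads $1=1$. For $n\ge1$ I would prove only the block $\gamma_n=\alpha_n$ in full and then obtain the block $\gamma_n=\beta_n$ by the symmetry that interchanges the roles of $\alpha$ and $\beta$ (swapping $\mathbbm{a}_n\leftrightarrow\mathbbm{b}_n$ and $\dB_n^\alpha\leftrightarrow\dB_n^\beta$ throughout), since every ingredient used below is stated symmetrically in the two superscripts.

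For the case $\gamma_n=\alpha_n$ the starting point is the relation $\phi_n=s_n^\alpha\,\phi_n^\alpha\dB_n^\beta$ with $s_n^\alpha=1$, which the discussion preceding the theorem has pinned down from the chosen sign condition $(\phi_n^*/\dB_n^\beta)(\alpha_n)>0$. This gives the first equality $\phi_n=\phi_n^\alpha\dB_n^\beta$ outright. For the second expression for $\phi_n$ I would invoke the identity $(\phi_n^\beta)^*\dB_n^\alpha=\phi_n^\alpha\dB_n^\beta$ of Lemma~\ref{lem5}, which turns $\phi_n^\alpha\dB_n^\beta$ into $(\phi_n^\beta)^*\dB_n^\alpha$, hence $\phi_n=(\phi_n^\beta)^*\dB_n^\alpha$. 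For the reciprocal I would start from the definition $\phi_n^*=\dB_n^\alpha\dB_n^\beta(\phi_n)_*$, substitute $\phi_n=\phi_n^\alpha\dB_n^\beta$, use multiplicativity of $(\ )_*$, and cancel one factor with the reflection identity $(\dB_n^\beta)_*=1/\dB_n^\beta$ (immediate from $\zeta_{j*}^\beta=1/\zeta_j^\beta$):
\begin{gather*}
\phi_n^*=\dB_n^\alpha\dB_n^\beta\,(\phi_n^\alpha)_*(\dB_n^\beta)_*=\dB_n^\alpha(\phi_n^\alpha)_*.
\end{gather*}
Since $(\phi_n^\alpha)^*=B_n^\alpha(\phi_n^\alpha)_*$ and $B_n^\alpha=\dB_n^\alpha/\dB_n^\beta$, this is exactly $(\phi_n^\alpha)^*\dB_n^\beta$, giving the first expression for $\phi_n^*$. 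Finally the remaining identity $(\phi_n^\alpha)^*\dB_n^\beta=\phi_n^\beta\dB_n^\alpha$ of Lemma~\ref{lem5} rewrites this as $\phi_n^*=\phi_n^\beta\dB_n^\alpha$, completing the block.

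I expect no deep obstacle: the only point requiring care is confirming that the three normalizations listed in the hypothesis are mutually consistent and genuinely force $s_n^\alpha=1$ (and, in the symmetric block, $s_n^\beta=1$) rather than merely some unimodular constant. Concretely, applying $(\ )_*$ to $\phi_n=s_n^\alpha\phi_n^\alpha\dB_n^\beta$ and multiplying by $\dB_n^\alpha\dB_n^\beta$ yields $\phi_n^*=\overline{s_n^\alpha}\,\phi_n^{\alpha*}\dB_n^\beta$, so $(\phi_n^*/\dB_n^\beta)(\alpha_n)=\overline{s_n^\alpha}\,\phi_n^{\alpha*}(\alpha_n)$; positivity of both sides together with $\phi_n^{\alpha*}(\alpha_n)>0$ forces $s_n^\alpha=1$. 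Everything else is purely the reflection algebra $(\dB_n^\beta)_*=1/\dB_n^\beta$ and $B_n^\alpha=\dB_n^\alpha/\dB_n^\beta$ feeding into Lemma~\ref{lem5}, so the theorem is essentially a clean restatement of those lemmas under the fixed normalization.
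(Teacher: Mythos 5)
Your proposal is correct and follows essentially the same route as the paper: existence of the unimodular constant $s_n^\alpha$ (from Lemma~\ref{lem3} and unit norms), forcing $s_n^\alpha=1$ via the sign condition $(\phi_n^*/\dB_n^\beta)(\alpha_n)>0$ together with $\phi_n^{\alpha*}(\alpha_n)>0$, the reflection algebra $(\dB_n^\beta)_*=1/\dB_n^\beta$, $B_n^\alpha=\dB_n^\alpha/\dB_n^\beta$ to get the $\phi_n^*$ identities, and Lemma~\ref{lem5} for the cross-relations, with the $\gamma_n=\beta_n$ block by symmetry. This is exactly how the paper assembles the theorem from the discussion preceding it.
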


\begin{Corollary}\label{cor0}
We have for all $n\ge1$ that $(\phi_n^{\nu})^*\perp\zeta_n^\nu\L_{n-1}^\nu$, $\nu\in\{\alpha,\beta,\gamma\}$.
\end{Corollary}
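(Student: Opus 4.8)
The plan is to reduce every case to the defining orthogonality $\phi_n^\alpha\perp\L_{n-1}^\alpha$ (and its $\beta$-analogue) by exploiting two elementary properties of the inner product: invariance under multiplication by any function $u$ that is unimodular on $\TT$, i.e.\ $\SP{uf}{ug}=\SP{f}{g}$ (in particular for finite Blaschke products and their reciprocals, as already used in the proof of Lemma~\ref{lem3}), and the substar adjointness $\SP{f_*}{h}=\overline{\SP{f}{h_*}}$, which follows at once from $f_*=\overline f$ and $h_*=\overline h$ on $\TT$.

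First I would treat $\nu=\alpha$. Fix $g\in\L_{n-1}^\alpha$ and expand $(\phi_n^\alpha)^*=B_n^\alpha\phi_{n*}^\alpha$. Multiplying both arguments by $1/B_n^\alpha$ (unimodular on $\TT$) and using $B_n^\alpha=\zeta_n^\alpha B_{n-1}^\alpha$ gives $\SP{(\phi_n^\alpha)^*}{\zeta_n^\alpha g}=\SP{\phi_{n*}^\alpha}{g/B_{n-1}^\alpha}$. Applying substar turns this into $\overline{\SP{\phi_n^\alpha}{(g/B_{n-1}^\alpha)_*}}=\overline{\SP{\phi_n^\alpha}{B_{n-1}^\alpha g_*}}$, where I used $B_{n-1*}^\alpha=1/B_{n-1}^\alpha$. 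The key observation is then that $B_{n-1}^\alpha g_*=g^*$ is precisely the reciprocal of $g$ in $\L_{n-1}^\alpha$, so it again lies in $\L_{n-1}^\alpha$ (this is exactly the reason the superstar is defined with the factor $B_n^\nu$, cf.\ Remark~\ref{rem3.1}). Since $\phi_n^\alpha\perp\L_{n-1}^\alpha$, the right-hand side vanishes. The case $\nu=\beta$ is identical after swapping $\alpha\leftrightarrow\beta$.

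Finally I would obtain $\nu=\gamma$ by transporting the $\alpha$- (resp.\ $\beta$-) case through Theorem~\ref{thm1}. Assume $\gamma_n=\alpha_n$, so that $\zeta_n^\gamma=\zeta_n^\alpha$, $\phi_n^*=(\phi_n^\alpha)^*\dB_n^\beta$, and $\dB_n^\beta=\dB_{n-1}^\beta$; by Lemma~\ref{lem1} this last identity gives $\L_{n-1}=\dB_n^\beta\L_{n-1}^\alpha$. Writing an arbitrary element of $\zeta_n^\alpha\L_{n-1}$ as $\dB_n^\beta(\zeta_n^\alpha g)$ with $g\in\L_{n-1}^\alpha$ and stripping off the unimodular factor $\dB_n^\beta$ from both arguments yields $\SP{\phi_n^*}{\zeta_n^\alpha\dB_n^\beta g}=\SP{(\phi_n^\alpha)^*}{\zeta_n^\alpha g}=0$ by the case already proved; the subcase $\gamma_n=\beta_n$ follows symmetrically.

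I expect the only genuinely delicate point to be the bookkeeping in the $\gamma$-case, namely verifying that $\zeta_n^\gamma$ really collapses to $\zeta_n^\alpha$ and that $\dB_n^\beta=\dB_{n-1}^\beta$ (so that Lemma~\ref{lem1} applies to $\L_{n-1}$ rather than to $\L_n$) when $\gamma_n=\alpha_n$; everything else is the routine interplay of the substar and superstar maps with the Blaschke products that the earlier lemmas have already packaged.
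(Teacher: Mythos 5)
Your proof is correct: each step checks out, including the two inner-product identities you rely on ($\SP{uf}{ug}=\SP{f}{g}$ for $u$ unimodular on $\TT$, which the paper itself invokes in the proof of Lemma~\ref{lem3}, and $\SP{f_*}{h}=\overline{\SP{f}{h_*}}$), the fact that $g\mapsto B_{n-1}^\alpha g_*$ maps $\L_{n-1}^\alpha$ into itself, and the bookkeeping $\zeta_n^\gamma=\zeta_n^\alpha$, $\dB_n^\beta=\dB_{n-1}^\beta$ when $\gamma_n=\alpha_n$. The route differs somewhat from the paper's, which states the corollary with no proof because it is meant to drop out of the machinery just established: when $\gamma_n=\alpha_n$, Theorem~\ref{thm1} identifies $\phi_n^*=\phi_n^\beta\dB_n^\alpha$ (equivalently $\dB_n^\alpha(\phi_n^\alpha)_*$), and Lemma~\ref{lem3} (or Lemma~\ref{lem4}) already asserts that this function is orthogonal to $\dzeta_n^\alpha\L_{n-1}=\zeta_n\L_{n-1}$, so the $\nu=\gamma$ case is a one-line citation; the $\nu\in\{\alpha,\beta\}$ cases then follow by specializing the sequence $\seq{\gamma}$ to the pure $\seq{\alpha}$ (resp.\ $\seq{\beta}$) sequence, or by citing the classical result. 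Your argument instead reproves the $\alpha$/$\beta$ cases from scratch by the substar--unimodular manipulation (in effect re-deriving the content of Lemma~\ref{lem4} in its pure-sequence specialization) and then transports to $\nu=\gamma$ through the \emph{first} expressions of Theorem~\ref{thm1} ($\phi_n^*=(\phi_n^\alpha)^*\dB_n^\beta$) together with Lemma~\ref{lem1}, reducing the $\gamma$ case to the $\alpha$ case of the corollary rather than to Lemma~\ref{lem3} directly. What your version buys is self-containedness --- it exposes exactly which two elementary properties of the inner product carry the statement --- at the cost of duplicating work the paper has already packaged in Lemmas~\ref{lem3} and~\ref{lem4}.
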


\begin{Corollary}\label{cor1}
The rational functions $\phi_n^\alpha$ and $\phi_n^{\alpha*}$ are in $\L_n^\alpha$ and hence have all their poles in $\{\beta_j\colon j=1,\ldots,n\}\subset\EE$ while
the zeros of $\phi_n^\alpha$ are all in $\DD$ and the zeros of $\phi_n^{\alpha*}$ are all in $\EE$.

The rational functions $\phi_n^\beta$ and $\phi_n^{\beta*}$ are in $\L_n^\beta$ and hence have all their poles in $\{\alpha_j\colon j=1,\ldots,n\}\subset\DD$ while
the zeros of $\phi_n^\beta$ are all in $\EE$ and the zeros of $\phi_n^{\beta*}$ are all in $\DD$.

The rational functions $\phi_n$ and $\phi_n^*$ are in $\L_n$ and hence have all their poles in $\{\beta_j\colon j\in\mathbbm{a}_n\} \cup \{\alpha_j\colon j\in\mathbbm{b}_n\}$.

The zeros of $\phi_n$ are the same as the zeros of
$\phi_n^\alpha$ and thus are all in $\DD$ if $\gamma_n=\alpha_n$ and they are the same as the zeros of $\phi_n^\beta$ and thus they are all in $\EE$
 if $\gamma_n=\beta_n$.
\end{Corollary}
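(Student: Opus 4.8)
The plan is to dispose of the four assertions in order, reducing each to the structure of the spaces $\L_n^\nu$ together with two facts already available: the classical localization of the zeros of $\phi_n^\alpha$ in \cite[Corollary~3.1.4]{BooksBGHN99}, and the explicit numerator relations of Lemma~\ref{lemnum}. All the membership claims are immediate from the definitions and from the fact that $(\ )^*$ maps each $\L_n^\nu$ into itself, so the genuine content is the location of zeros and poles.

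For the first two assertions I would argue as follows. By construction $\phi_n^\alpha\in\L_n^\alpha$, and since $(\ )^*$ preserves $\L_n^\alpha$ also $\phi_n^{\alpha*}\in\L_n^\alpha$. Every element of $\L_n^\alpha$ has the form $p/\pi_n^\alpha$ with $p\in\P_n$ and $\pi_n^\alpha(z)=\prod_{j=1}^n(1-\oa_jz)$, so its poles lie among the zeros $1/\oa_j=\beta_j$ of $\pi_n^\alpha$, all in $\EE$ because $\alpha_j\in\DD$. The zeros of $\phi_n^\alpha$ lie in $\DD$ by \cite[Corollary~3.1.4]{BooksBGHN99} (as recorded in Remark~\ref{rem3.2}); writing $\phi_n^{\alpha*}=\varsigma_n\,p_n^{\alpha*}/\pi_n^\alpha$ shows that the zeros of $\phi_n^{\alpha*}$ are the reflections $\xi_*$ of the zeros $\xi$ of $p_n^\alpha$ (Remark~\ref{rem3.2}), hence all in $\EE$. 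The statements for $\phi_n^\beta$ and $\phi_n^{\beta*}$ then follow by the symmetry $\phi_n^\beta=\phi_{n*}^\alpha$ of Lemma~\ref{lem5}: reflection in $\TT$ carries the poles $\beta_j\in\EE$ to the poles $\alpha_j\in\DD$ and the zeros in $\DD$ to zeros in $\EE$, and likewise for the reciprocals.

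For the third assertion, $\phi_n\in\L_n$ by definition and $\phi_n^*\in\L_n$ because $(\ )^*$ preserves $\L_n$. Here every element has denominator $\pi_n=\dpi_n^\alpha\dpi_n^\beta$, and I would read off the admissible poles factor by factor: for $j\in\mathbbm{a}_n$ the factor $\varpi_j=1-\oa_jz$ contributes the pole $\beta_j$, while for $j\in\mathbbm{b}_n$ the factor $\varpi_j^\beta=1-\ob_jz$ contributes the pole $\alpha_j$. This yields exactly $\{\beta_j\colon j\in\mathbbm{a}_n\}\cup\{\alpha_j\colon j\in\mathbbm{b}_n\}$.

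The fourth assertion is where care is needed, since writing $\phi_n=\phi_n^\alpha\dB_n^\beta$ (Theorem~\ref{thm1}) appears to introduce the extra zeros of $\dB_n^\beta$ at the points $\beta_j\in\EE$; seeing that these are exactly cancelled by poles of $\phi_n^\alpha$, while no zero of $\phi_n^\alpha$ is destroyed, is the only real obstacle. The clean way to bypass this bookkeeping is to invoke Lemma~\ref{lemnum} directly: when $\gamma_n=\alpha_n$ it gives $p_n=p_n^\alpha\mathop\Dprod_{j\in\mathbbm{b}_n}(-|\beta_j|)$, so the numerator of $\phi_n$ is a nonzero constant multiple of that of $\phi_n^\alpha$ and the two share the same zeros, all in $\DD$; when $\gamma_n=\beta_n$ the same lemma gives $p_n=p_n^\beta\mathop\Dprod_{j\in\mathbbm{a}_n}(-|\alpha_j|)$, so the zeros of $\phi_n$ coincide with those of $\phi_n^\beta$, all in $\EE$. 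Thus Lemma~\ref{lemnum} settles the equality of zero sets without any separate cancellation argument, completing the proof.
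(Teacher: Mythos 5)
Your proof is correct, and its skeleton matches the paper's own: both cite \cite[Corollary~3.1.4]{BooksBGHN99} for the zeros of $\phi_n^\alpha$, transfer to $\phi_n^\beta$ via the symmetry $\phi_n^\beta=\phi_{n*}^\alpha$ of Lemma~\ref{lem5}, and treat the membership and pole statements as immediate from the structure of the spaces. The genuine divergence is in the crux step for $\phi_n$. The paper stays at the level of functions: starting from $\phi_n=\phi_n^\alpha\dB_n^\beta=\phi_n^\alpha/\prod_{j\in\mathbbm{b}_n}\zeta_j^\alpha$ it argues that multiplication by $\dB_n^\beta$ merely exchanges the poles $\beta_j$, $j\in\mathbbm{b}_n$, for poles at $\alpha_j$ while leaving the zeros of $\phi_n^\alpha$ unaltered. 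You instead descend to numerators and invoke Lemma~\ref{lemnum}, so that $p_n=p_n^\alpha\dprod_{j\in\mathbbm{b}_n}(-|\beta_j|)$ exhibits $p_n$ as a nonzero constant multiple of $p_n^\alpha$ and the equality of the zero sets is immediate. This is legitimate — Lemma~\ref{lemnum} precedes Corollary~\ref{cor1} in the paper and does not depend on it — and it buys you a cleaner argument: the cancellation bookkeeping, including the exceptional factors with $\beta_j=\infty$, was already absorbed into the proof of Lemma~\ref{lemnum}, so you never need to discuss poles at all. What the paper's route buys is the geometric picture of what multiplication by $\dB_n^\beta$ does (swap poles, keep zeros), which is reused later, e.g., for the PORF in Corollary~\ref{corzeros} and for the weights in Section~\ref{secQF}. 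One caveat applies equally to both arguments: if a zero of $p_n^\alpha$ happens to coincide with some $\alpha_j$, $j\in\mathbbm{b}_n$ (as happens in the Laurent-polynomial case with Lebesgue measure, where $\phi_n^\alpha=z^n$), that zero cancels against a pole in $\phi_n$, so ``the same zeros'' holds only up to multiplicity; the containments in $\DD$ resp.\ $\EE$, which are what the corollary is actually used for, are unaffected.
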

\begin{proof}
It is well known that the zeros of $\phi_n^\alpha$ are all in $\DD$ \cite[Corollary~3.1.4]{BooksBGHN99}, and because $\phi_n^\beta=\phi_{n*}^\alpha$, this means that the zeros of $\phi_n^\beta$ are all in $\EE$.

Because $\phi_n=(\phi_n^\alpha)\dB_n^\beta=(\phi_n^\alpha)/\prod_{j\in\mathbbm{b}_n}\zeta_j^\alpha$
if $\gamma_n=\alpha_n$, i.e., $n\in\mathbbm{a}_n$, and the product with $\dB_n^\beta$ will only exchange the poles $1/\oa_j=\beta_j$, $j\in\mathbbm{b}_n$ in $\phi_n$ for poles $\alpha_j=1/\ob_j$, the zeros of $\phi_n^\alpha$
are left unaltered.

The proof for $n\in\mathbbm{b}_n$ is similar.
\end{proof}

One may summarize that for $f\in\L_n^\nu$ the $f_*$ transform ref\/lects both zeros and poles in $\TT$ since $z\mapsto z_*=1/\overline{z}$,
while the transform $f\to f^*$ as it is def\/ined in the spaces $\L_n^\nu$, $\nu\in\{\alpha,\beta,\gamma\}$,
keeps the poles but ref\/lects the zeros since
the multiplication with the respective factors $B_n^\alpha$, $B_n^\beta$ and $\dB_n^\alpha\dB_n^\beta$ will only undo the ref\/lection of the poles
that resulted from the $f_*$ operation.

\section{Christof\/fel--Darboux relations and reproducing kernels}\label{secCD}
For $\nu\in\{\alpha,\beta,\gamma\}$, one may def\/ine the reproducing kernels
for the space $\L_n^\nu$. Given the ORF $\phi_k^\nu$, the kernels are def\/ined by
\begin{gather*}
k_n^\nu(z,w)=\sum_{k=0}^n \phi_k^\nu(z)\overline{\phi_k^\nu(w)}.
\end{gather*}
They reproduce $f\in\L_n^\nu$ by $\SP{k_n^\nu(\cdot,z)}{f}=f(z)$.

The proof of the Christof\/fel--Darboux relations goes exactly like in the classical case and we shall not repeat it here (see, e.g., \cite[Theorem~3.1.3]{BooksBGHN99}).
\begin{Theorem}
The Christoffel--Darboux relations
\begin{gather*}
k_n^\nu(z,w)=
\frac{\phi_n^{\nu*}(z)\overline{\phi_n^{\nu*}(w)}-
\zeta_n^\nu(z)\overline{\zeta_n^\nu(w)}
 \phi_n^{\nu}(z)\overline{\phi_n^{\nu}(w)}}
{1-\zeta_n^\nu(z)\overline{\zeta_n^\nu(w)}}\\
\hphantom{k_n^\nu(z,w)}{}
=
\frac{\phi_{n+1}^{\nu*}(z)\overline{\phi_{n+1}^{\nu*}(w)}-
 \phi_{n+1}^{\nu}(z)\overline{\phi_{n+1}^{\nu}(w)}}
{1-\zeta_{n+1}^\nu(z)\overline{\zeta_{n+1}^\nu(w)}}
\end{gather*}
hold for $n\ge0$, $\nu\in\{\alpha,\beta,\gamma\}$ and $z$, $w$ not among the poles of $\phi_n^\nu$ and not on $\TT$.
\end{Theorem}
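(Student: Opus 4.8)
The plan is to prove the identities \emph{without} invoking any recurrence relation (those are only derived later, in Section~\ref{secRR}, and in fact rely on these Christoffel--Darboux formulas). Instead I would exploit two different orthogonal decompositions of $\L_n^\nu$ together with the fact that $\zeta_n^\nu$ is unimodular on $\TT$ and hence acts isometrically on $L^2_\mu$. The one structural fact I would record first is that reproducing kernels are additive over orthogonal direct sums: if a finite-dimensional subspace splits as $V=V_1\oplus V_2$ with $V_1\perp V_2$, then its reproducing kernel is the sum of those of $V_1$ and $V_2$, and the kernel of a line $\CC e$ spanned by a unit vector $e$ is $e(z)\overline{e(w)}$. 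This follows by concatenating orthonormal bases.

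The first decomposition is $\L_n^\nu=\L_{n-1}^\nu\oplus\CC\phi_n^\nu$, which holds by the defining orthogonality $\phi_n^\nu\perp\L_{n-1}^\nu$ and $\|\phi_n^\nu\|=1$. Kernel additivity then gives
\[
k_n^\nu(z,w)=k_{n-1}^\nu(z,w)+\phi_n^\nu(z)\overline{\phi_n^\nu(w)}.
\]
The second decomposition is $\L_n^\nu=\zeta_n^\nu\L_{n-1}^\nu\oplus\CC\phi_n^{\nu*}$. To justify it I would check that $\zeta_n^\nu\L_{n-1}^\nu\subseteq\L_n^\nu$ (immediate from $\zeta_n^\nu=\sigma_n\varpi_n^{\nu*}/\varpi_n^\nu$ and the shape $p/\pi_{n-1}^\nu$, $p\in\P_{n-1}$, of elements of $\L_{n-1}^\nu$), that this image is $n$-dimensional (multiplication by $\zeta_n^\nu$ is injective, having no zeros on $\TT$), that $\phi_n^{\nu*}\in\L_n^\nu$ with $\|\phi_n^{\nu*}\|=1$, and crucially that $\phi_n^{\nu*}\perp\zeta_n^\nu\L_{n-1}^\nu$, which is exactly Corollary~\ref{cor0}. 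A dimension count against $\dim\L_n^\nu=n+1$ then forces the direct sum.

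Because $|\zeta_n^\nu|=1$ on $\TT$, multiplication by $\zeta_n^\nu$ preserves $\SP{\cdot}{\cdot}$, so $\{\zeta_n^\nu\phi_k^\nu\}_{k=0}^{n-1}$ is an orthonormal basis of $\zeta_n^\nu\L_{n-1}^\nu$ and the kernel of that subspace is $\zeta_n^\nu(z)\overline{\zeta_n^\nu(w)}\,k_{n-1}^\nu(z,w)$. Kernel additivity applied to the second decomposition then yields
\[
k_n^\nu(z,w)=\zeta_n^\nu(z)\overline{\zeta_n^\nu(w)}\,k_{n-1}^\nu(z,w)+\phi_n^{\nu*}(z)\overline{\phi_n^{\nu*}(w)}.
\]
Eliminating $k_{n-1}^\nu$ between the two kernel recursions at level $n$ (solve the first for $k_{n-1}^\nu$, substitute into the second, and collect the $k_n^\nu$ terms) gives $\big(1-\zeta_n^\nu(z)\overline{\zeta_n^\nu(w)}\big)k_n^\nu=\phi_n^{\nu*}(z)\overline{\phi_n^{\nu*}(w)}-\zeta_n^\nu(z)\overline{\zeta_n^\nu(w)}\,\phi_n^\nu(z)\overline{\phi_n^\nu(w)}$, which is the first stated identity after dividing by the denominator. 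For the second identity I would instead write both recursions at level $n+1$ and subtract them to eliminate the level-$(n+1)$ pieces, isolating $\big(1-\zeta_{n+1}^\nu(z)\overline{\zeta_{n+1}^\nu(w)}\big)k_n^\nu=\phi_{n+1}^{\nu*}(z)\overline{\phi_{n+1}^{\nu*}(w)}-\phi_{n+1}^\nu(z)\overline{\phi_{n+1}^\nu(w)}$; this form is valid for all $n\ge0$, while the first is obtained this way for $n\ge1$ with the $n=0$ case following by convention.

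The main obstacle is entirely contained in the second decomposition: everything else is linear-algebra bookkeeping. The orthogonality $\phi_n^{\nu*}\perp\zeta_n^\nu\L_{n-1}^\nu$ is handed to us by Corollary~\ref{cor0}, so the genuine work is verifying the inclusion and dimension claims for $\zeta_n^\nu\L_{n-1}^\nu$ and the isometry of multiplication by $\zeta_n^\nu$ --- routine, but this is where the rational (rather than polynomial) nature enters and where the degenerate cases $\nu_n\in\{0,\infty\}$ must be checked against the definitions of $\varpi_n^\nu$ and $\zeta_n^\nu$. Finally, the hypothesis that $z,w\notin\TT$ and are not poles of $\phi_n^\nu$ is precisely what guarantees $1-\zeta_n^\nu(z)\overline{\zeta_n^\nu(w)}\neq0$ and the finiteness of all terms, after which the equality propagates as an identity of rational functions.
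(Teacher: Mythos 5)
Your proof is correct and is essentially the argument the paper relies on: the paper does not reprove the relations but defers to the classical proof (Theorem~3.1.3 of the cited ORF book), which is precisely this two-orthonormal-bases/reproducing-kernel computation, with Corollary~\ref{cor0} supplying the orthogonality $\phi_n^{\nu*}\perp\zeta_n^\nu\L_{n-1}^\nu$ that makes the classical argument carry over verbatim to $\nu=\gamma$. The only quibble is that the stated hypotheses do not quite force $1-\zeta_n^\nu(z)\overline{\zeta_n^\nu(w)}\neq0$ (the denominator vanishes when $z=1/\overline{w}$, even off $\TT$, where both sides must be read as an identity of rational functions after cancellation), but that caveat is inherited from the theorem statement itself, not introduced by your argument.
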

As an immediate consequence we have:
\begin{Theorem}
The following relations hold true:
\begin{gather*}
k_n^\alpha(z,w)\dB_n^\beta(z)\overline{\dB_n^\beta(w)}=k_n(z,w)=
k_n^\beta(z,w)\dB_{n}^\alpha(z)\overline{\dB_{n}^\alpha(w)}
\end{gather*}
for $n\ge0$ and $z,w\not\in(\TT\cup\{\beta_j\colon j\in\mathbbm{a}_n\}\cup\{\alpha_j\colon j\in\mathbbm{b}_n\})$.
\end{Theorem}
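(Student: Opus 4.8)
The plan is to exploit Lemma~\ref{lem1}, which identifies $\L_n$ with $\dB_n^\beta\L_n^\alpha$, together with the fact that $\dB_n^\beta=\prod_{j\in\mathbbm{b}_n}\zeta_j$ is a f\/inite Blaschke product, so that $|\dB_n^\beta(z)|=1$ for $z\in\TT$. Consequently the multiplication map $U\colon\L_n^\alpha\to\L_n$, $g\mapsto\dB_n^\beta g$, is a unitary isomorphism for the inner product, since $\SP{Ug_1}{Ug_2}=\int_\TT|\dB_n^\beta|^2\,\overline{g_1}g_2\,\d\mu=\SP{g_1}{g_2}$, and it is a bijection because $\dim\L_n^\alpha=n+1=\dim\L_n$. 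Under such a unitary multiplication the reproducing kernel must pick up exactly the factor $\dB_n^\beta(z)\overline{\dB_n^\beta(w)}$, which is the claimed identity on the $\alpha$-side; the $\beta$-side is identical, using instead $\L_n=\dB_n^\alpha\L_n^\beta$ from the same lemma.

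Concretely, I would verify directly that the candidate kernel $K(z,w)=\dB_n^\beta(z)\overline{\dB_n^\beta(w)}\,k_n^\alpha(z,w)$ reproduces $\L_n$. Given $f\in\L_n$, write $f=\dB_n^\beta g$ with $g\in\L_n^\alpha$ (Lemma~\ref{lem1}). Then
\begin{gather*}
\SP{K(\cdot,z)}{f}
=\int_\TT\overline{\dB_n^\beta(\zeta)}\,\dB_n^\beta(z)\,\overline{k_n^\alpha(\zeta,z)}\,\dB_n^\beta(\zeta)g(\zeta)\,\d\mu(\zeta)\\
=\dB_n^\beta(z)\int_\TT\big|\dB_n^\beta(\zeta)\big|^2\,\overline{k_n^\alpha(\zeta,z)}\,g(\zeta)\,\d\mu(\zeta).
\end{gather*}
Since $|\dB_n^\beta|=1$ on $\TT$ the Blaschke factor drops out, and the reproducing property $\SP{k_n^\alpha(\cdot,z)}{g}=g(z)$ of $k_n^\alpha$ on $\L_n^\alpha$ gives $\SP{K(\cdot,z)}{f}=\dB_n^\beta(z)g(z)=f(z)$. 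By uniqueness of the reproducing kernel of $\L_n$ we conclude $K=k_n$, which is the f\/irst equality; the third follows verbatim with the roles of $\alpha$ and $\beta$ interchanged.

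An alternative, matching the ``immediate consequence'' framing, substitutes the Christof\/fel--Darboux formula of the preceding theorem. When $\gamma_n=\alpha_n$ one has $\zeta_n=\zeta_n^\alpha$ and, by Theorem~\ref{thm1}, $\phi_n=\phi_n^\alpha\dB_n^\beta$ and $\phi_n^*=(\phi_n^\alpha)^*\dB_n^\beta$; plugging these into the CD expression for $k_n$ pulls out $\dB_n^\beta(z)\overline{\dB_n^\beta(w)}$ and leaves precisely the CD expression for $k_n^\alpha$. When $\gamma_n=\beta_n$ one has instead $\zeta_n=\zeta_n^\beta=1/\zeta_n^\alpha$ and $\phi_n=(\phi_n^\alpha)^*\dB_n^\beta$, $\phi_n^*=\phi_n^\alpha\dB_n^\beta$, so after pulling out $\dB_n^\beta(z)\overline{\dB_n^\beta(w)}$ one clears the common factor $\zeta_n^\alpha(z)\overline{\zeta_n^\alpha(w)}$ from numerator and denominator to recover the $k_n^\alpha$ formula.

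The one point that needs care---and the reason a naive ``sum the orthonormal basis term by term'' argument fails---is that the relation between $\phi_k$ and $\phi_k^\alpha$ involves $\dB_k^\beta$ depending on $k$, and moreover alternates between $\phi_k^\alpha$ and $(\phi_k^\alpha)^*$ according to whether $\gamma_k=\alpha_k$ or $\gamma_k=\beta_k$; hence the common factor $\dB_n^\beta$ does not emerge from the summands individually. Both routes above sidestep this: the isometry argument uses only that the whole space $\L_n$ is a single Blaschke multiple of $\L_n^\alpha$, and the CD route uses only the top-index functions $\phi_n,\phi_n^*$. I expect the only genuine computation to be the $\gamma_n=\beta_n$ case of the CD route, where the inversion $\zeta_n^\beta=1/\zeta_n^\alpha$ must be used to rewrite the quotient.
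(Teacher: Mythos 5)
Your proposal is correct, and your primary argument takes a genuinely different route from the paper's. The paper derives the identity as an immediate consequence of the Christof\/fel--Darboux relations: it substitutes the factorizations of Theorem~\ref{thm1} (e.g., $\phi_n=\phi_n^\alpha\dB_n^\beta$, $\phi_n^*=(\phi_n^\alpha)^*\dB_n^\beta$, $\zeta_n=\zeta_n^\alpha$ when $\gamma_n=\alpha_n$) into the CD formulas and pulls out the factor $\dB_n^\beta(z)\overline{\dB_n^\beta(w)}$, with a case analysis on whether $\gamma_n$ (first CD form) or $\gamma_{n+1}$ (second CD form) is an $\alpha$- or a $\beta$-point --- this is exactly your ``alternative'' route, and your version of it is in fact tidier: handling $\gamma_n=\beta_n$ directly in the first CD form via $\zeta_n=\zeta_n^\beta=1/\zeta_n^\alpha$ and clearing the common factor $\zeta_n^\alpha(z)\overline{\zeta_n^\alpha(w)}$ disposes of both cases at index $n$, whereas the paper's write-up spells out the symmetric ``alternative'' computation only for the second equality. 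Your primary route --- the multiplication map $g\mapsto\dB_n^\beta g$ is a unitary bijection from $\L_n^\alpha$ onto $\L_n$ because $|\dB_n^\beta|\equiv1$ on $\TT$, hence the reproducing kernel must transform by the factor $\dB_n^\beta(z)\overline{\dB_n^\beta(w)}$ --- uses only Lemma~\ref{lem1} and the unimodularity of Blaschke products on the circle; it bypasses the CD relations, the normalization conventions of Theorem~\ref{thm1}, and all case distinctions on $\gamma_n$, and it makes transparent that the identity is a statement about the spaces $\L_n=\dB_n^\beta\L_n^\alpha=\dB_n^\alpha\L_n^\beta$ rather than about any particular orthonormal basis (reproducing kernels being basis-independent). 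The one step you should make explicit in that route is that the candidate kernel $K(\cdot,z)=\dB_n^\beta(\cdot)\overline{\dB_n^\beta(z)}\,k_n^\alpha(\cdot,z)$ actually lies in $\L_n$ --- immediate from Lemma~\ref{lem1} since $k_n^\alpha(\cdot,z)\in\L_n^\alpha$ --- because membership in the space, not just the reproducing property, is what the uniqueness argument for reproducing kernels requires; with that noted, both of your routes are complete, and your closing observation that a term-by-term comparison of the two orthonormal sums must fail (the factor $\dB_k^\beta$ and the alternation between $\phi_k^\alpha$ and $(\phi_k^\alpha)^*$ depend on $k$) correctly identifies why some such global argument is needed.
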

\begin{proof}
The f\/irst relation was directly shown above for the case $\gamma_n=\alpha_n$.
It also follows in the case $\gamma_{n+1}=\alpha_{n+1}$ and using in the second CD-relation the f\/irst expressions from Theorem~\ref{thm1} for $\phi_{n+1}$ and $\phi_{n+1}^*$. The relation is thus valid independent of $\gamma_n=\alpha_n$ or $\gamma_n=\beta_n$.

Similarly the second expression was derived before in the case $\gamma_n=\beta_n$, but again, it also follows from the second CD-relation and the f\/irst expressions from Theorem~\ref{thm1} for~$\phi_{n+1}$ and~$\phi_{n+1}^*$ in the case $\gamma_{n+1}=\beta_{n+1}$.
Again the relation holds independently of $\gamma_n=\alpha_n$ or $\gamma_n=\beta_n$.

Alternatively, the second relation can also be derived from the second CD-relation in the case $\gamma_{n+1}=\alpha_{n+1}$ but using the second expressions from
Theorem~\ref{thm1} for $\phi_{n+1}$ and $\phi_{n+1}^*$.
\end{proof}

Evaluation of the CD-relation in $\nu_n$ for $\nu\in\{\alpha,\beta\}$ results in
another useful corollary.
\begin{Corollary}\label{corCCD}
For $\nu\in\{\alpha,\beta\}$ we have for $n\ge0$
\begin{gather*}
k_n^\nu(z,\nu_n)=\phi_n^{\nu*}(z)\overline{\phi_n^{\nu*}(\nu_n)}
\qquad\text{and}\qquad
k_n^\nu(\nu_n,\nu_n)=|\phi_n^{\nu*}(\nu_n)|^2.
\end{gather*}
\end{Corollary}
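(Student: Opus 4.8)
The plan is to specialize the first Christoffel--Darboux relation (the displayed identity in the Christoffel--Darboux theorem above) to the case $w=\nu_n$ and exploit the fact that the Blaschke factor $\zeta_n^\nu$ vanishes at its own base point. The entire argument rests on the observation that $\zeta_n^\nu(\nu_n)=0$ for every admissible $\nu_n$, which one checks directly from the three cases in the definition of $\zeta_n^\nu$: if $\nu_n\not\in\{0,\infty\}$ then $\zeta_n^\nu(\nu_n)=\sigma_n\frac{\nu_n-\nu_n}{1-\overline{\nu}_n\nu_n}=0$; if $\nu_n=0$ then $\zeta_n^\nu(z)=z$ gives $\zeta_n^\nu(0)=0$; and if $\nu_n=\infty$ then $\zeta_n^\nu(z)=1/z$ gives $\zeta_n^\nu(\infty)=0$.

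Granting this, I would substitute $w=\nu_n$ into the first CD relation. The factor $\overline{\zeta_n^\nu(\nu_n)}$ then annihilates the second term in the numerator, and the denominator $1-\zeta_n^\nu(z)\overline{\zeta_n^\nu(\nu_n)}$ collapses to $1$, leaving exactly
\begin{gather*}
k_n^\nu(z,\nu_n)=\phi_n^{\nu*}(z)\,\overline{\phi_n^{\nu*}(\nu_n)}.
\end{gather*}
Setting $z=\nu_n$ in this identity immediately yields $k_n^\nu(\nu_n,\nu_n)=|\phi_n^{\nu*}(\nu_n)|^2$, giving both claimed formulas at once. The base case $n=0$ is consistent with the same computation, since $\alpha_0=0$ and $\beta_0=\infty$ both force $\zeta_0^\nu(\nu_0)=0$, and $\phi_0^{\nu*}=1$.

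The one point that requires care is the admissibility of the evaluation: the CD relation is valid only when the arguments are neither poles of $\phi_n^\nu$ nor on $\TT$. For $\nu=\alpha$ this is guaranteed by Corollary~\ref{cor1}, which places all poles of $\phi_n^\alpha$ in $\{\beta_j\colon j=1,\dots,n\}\subset\EE$, whereas $\alpha_n\in\DD$; moreover $\alpha_n\not\in\TT$ by hypothesis on the sequence. The symmetric statement of Corollary~\ref{cor1} handles $\nu=\beta$. The same corollary (via Remark~\ref{rem3.2}) also ensures $\phi_n^{\nu*}(\nu_n)\ne0$, so the second formula is a genuine positive quantity. I expect the verification that $\zeta_n^\nu(\nu_n)=0$ uniformly across the degenerate cases $\nu_n\in\{0,\infty\}$ to be the only step demanding attention; once it is in place the corollary follows with no further computation.
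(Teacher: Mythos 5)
Your proposal is correct and follows exactly the route the paper intends: the paper's entire ``proof'' is the sentence preceding the corollary, namely that one evaluates the first Christoffel--Darboux relation at $w=\nu_n$, where $\zeta_n^\nu(\nu_n)=0$ kills the second numerator term and reduces the denominator to $1$. Your added checks --- that $\zeta_n^\nu(\nu_n)=0$ holds uniformly in the cases $\nu_n\notin\{0,\infty\}$, $\nu_n=0$, $\nu_n=\infty$, and that $\nu_n$ is an admissible evaluation point since the poles of $\phi_n^\alpha$ lie in $\EE$ while $\alpha_n\in\DD$ (and symmetrically for $\nu=\beta$) --- are precisely the details the paper leaves implicit.
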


The latter corollary cannot immediately be used when $\nu=\gamma$ because
$\gamma_n$ could be equal to some pole of $\phi_n$ if it equals some $1/\overline{\gamma}_j$ for $j<n$. In that case we can remove the denominators in the CD relation and only keep the numerators.
Hence setting
\begin{gather*}
k_n(z,w)=\frac{C_n(z,w)}{\pi_n(z)\overline{\pi_n(w)}},\qquad
\phi_n(z)=\frac{p_n(z)}{\pi_n(z)},\qquad
\phi_n^*(z)=\frac{\varsigma_n p_n^*(z)}{\pi_n(z)},\qquad\varsigma_n\in\TT,
\end{gather*}
the CD relation becomes
\begin{gather}
C_n(z,w)=\frac{p_n^*(z)\overline{p_n^*(w)}-\zeta_n(z)\overline{\zeta_n(w)}p_n(z)\overline{p_n(w)}}{1-\zeta_n(z)\overline{\zeta_n(w)}}\nonumber\\
\hphantom{C_n(z,w)}{}
=
\frac{p_{n+1}^*(z)\overline{p_{n+1}^*(w)}-p_{n+1}(z)\overline{p_{n+1}(w)}}{(1-\zeta_{n+1}(z)\overline{\zeta_{n+1}(w)})\varpi_{n+1}(z)\overline{\varpi_{n+1}(w)}}.
\label{eqCDN}\end{gather}
Thus, the f\/irst form gives
\begin{gather*}
C_n(z,\gamma_n)=p_n^*(z)\overline{p_n^*(\gamma_n)}\qquad
\text{and}\qquad
C_n(\gamma_n,\gamma_n)=|p_n^*(\gamma_n)|^2.
\end{gather*}
Evaluating a polynomial at inf\/inity means taking its highest degree coef\/f\/icient, i.e., if $q_n(z)$
is a~polynomial of degree $n$, then $q_n(\infty)$ stands for its coef\/f\/icient of $z^n$.

The second form of (\ref{eqCDN}) gives for $\gamma_{n+1}\ne\infty$ and $\gamma_n\ne\infty$
\begin{gather*}
C_n(z,\gamma_n)=\frac{p_{n+1}^*(z)\overline{p_{n+1}^*(\gamma_n)}-p_{n+1}(z)\overline{p_{n+1}(\gamma_n)}}{(1-\overline{\gamma}_nz)(1-|\gamma_{n+1}|^2)}
\qquad \text{and}\\
C_n(\gamma_n,\gamma_n)=\frac{|p_{n+1}^*(\gamma_n)|^2-|p_{n+1}(\gamma_n)|^2}{(1-|\gamma_{n}|^2)(1-|\gamma_{n+1}|^2)}.
\end{gather*}
Coupling the f\/irst and the second form in (\ref{eqCDN}) gives
\begin{gather*}
\frac{|p_{n+1}^*(\gamma_n)|^2-|p_{n+1}(\gamma_n)|^2}{(1-|\gamma_{n}|^2)(1-|\gamma_{n+1}|^2)}=|p_n^*(\gamma_n)|^2.
\end{gather*}
For $\gamma_{n+1}=\infty$ and $\gamma_n\ne\infty$ we get
\begin{gather*}
C_n(z,\gamma_n)=\frac{p_{n+1}^*(z)\overline{p_{n+1}^*(\gamma_n)}-p_{n+1}(z)\overline{p_{n+1}(\gamma_n)}}{-(1-\overline{\gamma}_nz)}=p_n^*(z)\overline{p_n^*(\gamma_n)}
\end{gather*}
and
\begin{gather*}
C_n(\gamma_n,\gamma_n)=\frac{|p_{n+1}^*(\gamma_n)|^2-|p_{n+1}(\gamma_n)|^2}{-(1-|\gamma_{n}|^2)}=|p_n^*(\gamma_n)|^2.
\end{gather*}
If $\gamma_{n+1}=\infty$ and $\gamma_n=\infty$, the denominators in (\ref{eqCDN}) have to be replaced by 1, which gives
\begin{gather*}
C_n(z,\gamma_n)={p_{n+1}^*(z)\overline{p_{n+1}^*(\gamma_n)}-p_{n+1}(z)\overline{p_{n+1}(\gamma_n)}}=p_n^*(z)\overline{p_n^*(\gamma_n)}
\end{gather*}
and
\begin{gather*}
C_n(\gamma_n,\gamma_n)={|p_{n+1}^*(\gamma_n)|^2-|p_{n+1}(\gamma_n)|^2}=|p_n^*(\gamma_n)|^2.
\end{gather*}
For $\gamma_n=\infty$ and $\gamma_{n+1}\ne\infty$ we obtain in a similar way
\begin{gather*}
C_n(z,\gamma_n)=\frac{p_{n+1}^*(z)\overline{p_{n+1}^*(\gamma_n)}-p_{n+1}(z)\overline{p_{n+1}(\gamma_n)}}{z(1-|\gamma_{n+1}|^2)}=p_n^*(z)\overline{p_n^*(\gamma_n)}
\end{gather*}
and
\begin{gather*}
C_n(\gamma_n,\gamma_n)=\frac{|p_{n+1}^*(\gamma_n)|^2-|p_{n+1}(\gamma_n)|^2}{(1-|\gamma_{n+1}|^2)}=|p_n^*(\gamma_n)|^2.
\end{gather*}
To summarize, the relations of Corollary~\ref{corCCD} may not hold for the ORF if $\nu=\gamma$,
but similar relations do hold for the numerators as stated in the next corollary.
\begin{Corollary}\label{rorCCD2}
If $C_n(z,w)$ is the numerator in the CD relation and $p_n(z)$ is the numerator
of the ORF $\phi_n$ for the sequence $\seq{\gamma}$ then we have for $n\ge0$
\begin{gather*}
C_n(z,\gamma_n)=p_n^*(z)\overline{p_n^*(\gamma_n)}
\qquad\mbox{and}\qquad
C_n(\gamma_n,\gamma_n)=|p_n^*(\gamma_n)|^2.
\end{gather*}
\end{Corollary}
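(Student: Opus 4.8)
The plan is to exploit the fact that, although the reproducing kernel $k_n(z,w)$ may blow up when $w=\gamma_n$ coincides with one of the poles of $\phi_n$, its numerator $C_n(z,w)=\pi_n(z)\overline{\pi_n(w)}\,k_n(z,w)$ is a polynomial in $z$ and $\overline{w}$ and is therefore perfectly well-defined at $w=\gamma_n$ regardless of any pole collision. Accordingly, I would abandon the kernel form and argue entirely with the polynomial identity (\ref{eqCDN}), which is the very reason the denominators were cleared in passing from $k_n$ to $C_n$.

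First I would record the elementary but decisive observation that the Blaschke factor $\zeta_n=\zeta_n^\gamma$ vanishes at its own base point: reading off the three defining cases one checks directly that $\zeta_n(\gamma_n)=0$, whether $\gamma_n$ is finite and nonzero (so $\zeta_n(z)=\sigma_n\frac{z-\gamma_n}{1-\overline{\gamma}_nz}$), equal to $0$ (so $\zeta_n(z)=z$), or equal to $\infty$ (so $\zeta_n(z)=1/z$). Substituting $w=\gamma_n$ into the \emph{first} form of (\ref{eqCDN}) then collapses the denominator $1-\zeta_n(z)\overline{\zeta_n(\gamma_n)}$ to $1$ and annihilates the second term of the numerator, yielding at once $C_n(z,\gamma_n)=p_n^*(z)\overline{p_n^*(\gamma_n)}$; setting $z=\gamma_n$ gives $C_n(\gamma_n,\gamma_n)=|p_n^*(\gamma_n)|^2$.

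The only point demanding care, and the main obstacle, is the interpretation at $\gamma_n=\infty$ (and likewise in the mixed situations involving $\gamma_{n+1}=\infty$): there the symbols $p_n(\gamma_n)$ and $p_n^*(\gamma_n)$ must be read as leading coefficients, and one must verify that the product $\zeta_n(\infty)\,p_n(\infty)$ still vanishes and that no spurious power of $z$ survives in the limit. To confirm that the naive substitution into the first form is legitimate in these degenerate cases, I would cross-check against the \emph{second} form of (\ref{eqCDN}), splitting into the four subcases according to whether each of $\gamma_n$ and $\gamma_{n+1}$ is finite or $\infty$ and replacing the factors $\varpi_{n+1}$ and $1-\zeta_{n+1}(z)\overline{\zeta_{n+1}(w)}$ by their appropriate limiting expressions at $w=\gamma_n$. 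In each subcase the second form reduces to the same $p_n^*(z)\overline{p_n^*(\gamma_n)}$, so the two forms agree and the stated relations hold for all $n\ge0$ and every admissible location of $\gamma_n$, including the problematic coincidences with poles that motivated introducing $C_n$ in the first place.
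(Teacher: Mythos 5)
Your proposal is correct and follows essentially the same route as the paper: after clearing denominators to pass from $k_n$ to $C_n$, you substitute $w=\gamma_n$ into the first form of (\ref{eqCDN}) using $\zeta_n(\gamma_n)=0$, read evaluation at $\infty$ as taking the leading coefficient, and confirm consistency via the second form in the four subcases according to whether $\gamma_n$ and $\gamma_{n+1}$ are finite or infinite. This matches the paper's own derivation preceding the corollary.
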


\section{Recurrence relation}\label{secRR}

The recurrence for the $\phi_n^\alpha$ is well known.
For a proof see, e.g., \cite[Theorem~4.1.1]{BooksBGHN99}.
For $\phi_n^\beta$ the proof can be copied by symmetry.
However, also for $\nu=\gamma$ the same recurrence and its proof can be copied, with the
exception that the derivation fails when $p_n^{*}(\gamma_{n-1})=0$ where $p_n=\phi_n\pi_n$.
This can (only) happen
if $(1-|\gamma_n|)(1-|\gamma_{n-1}|)<0$ (i.e., one of these $\gamma$'s is in $\DD$ and the other is in $\EE$).
We shall say that $\phi_n$ is {\it regular} if $p_n^*(\gamma_{n-1})\ne0$.
If $\nu=\alpha$ or $\nu=\beta$ then the whole sequence $(\phi_n^\nu)_{n\ge0}$ will be automatically
regular.
Thus we have the following theorem:
\begin{Theorem}\label{thmrec}
Let $\nu\in\{\alpha,\beta,\gamma\}$
and if $\nu=\gamma$ assume moreover that $\phi_n^\nu$ is regular,
then the following recursion holds with initial condition $\phi_0^\nu=\phi_0^{\nu*}=1$
\begin{gather*}
\left[\begin{matrix}\phi_n^\nu(z)\\\phi_n^{\nu*}(z)\end{matrix}\right]=
H_n^\nu\frac{\varpi_{n-1}^\nu(z)}{\varpi_n^\nu(z)}
\left[\begin{matrix}1&{\lambda}_n^\nu\\\overline{\lambda}_n^\nu & 1\end{matrix}\right]
\left[\begin{matrix}\zeta_{n-1}^\nu(z)&0\\0 & 1\end{matrix}\right]
\left[\begin{matrix}\phi_{n-1}^\nu(z)\\\phi_{n-1}^{\nu*}(z)\end{matrix}\right],
\end{gather*}
where $H_n^\nu$ is a nonzero constant times a unitary matrix:
\begin{gather*}
H_n^\nu=e_n^\nu\left[\begin{matrix}\eta_{n1}^\nu&0\\0&\eta_{n2}^\nu\end{matrix}\right],\qquad
e_n^\nu\in\CC\setminus\{0\},\qquad \eta_{n1}^\nu, \eta_{n2}^\nu\in\TT.
\end{gather*}
The constant $\eta_{n1}^\nu$ is chosen such that the normalization condition for the ORFs is maintained.
The other constant $\eta_{n2}^\nu$ is then automatically related to
$\eta_{n1}^\nu$ by $\eta_{n2}^\nu=\overline{\eta}_{n1}^\nu\overline{\sigma}_{n-1}\sigma_n$.
The Szeg\H{o} parameter $\lambda_n^\nu$ is given by
\begin{gather*}
\lambda_n^\nu=\eta_n^\nu\frac{p_n^\nu(\nu_{n-1})}{\overline{p_n^{\nu*}(\nu_{n-1})}}
\qquad\mbox{with}\qquad
\eta_n^\nu=\overline{\varsigma}_{n-2}\frac{\overline{p_{n-1}^{\nu*}(\nu_{n-1})}}{p_{n-1}^{\nu*}(\nu_{n-1})}\in\TT,
\end{gather*}
where $\phi_n^\nu(z)=p_n^\nu(z)/\pi_n^\nu(z)$.
\end{Theorem}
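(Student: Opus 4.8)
The plan is to reduce the recurrence for $\phi_n^\nu$ to the known case $\nu=\alpha$ (or equivalently $\nu=\beta$), whose proof is cited from \cite[Theorem~4.1.1]{BooksBGHN99}, and then to verify that the same proof goes through verbatim when $\nu=\gamma$ \emph{provided} the regularity hypothesis $p_n^{\nu*}(\nu_{n-1})\ne0$ holds. First I would recall the structure of the classical argument: one observes that both $\varpi_n^\nu\phi_n^\nu/\varpi_{n-1}^\nu$ and $\varpi_n^\nu\phi_n^{\nu*}/\varpi_{n-1}^\nu$ lie in $\L_n^\nu$ after clearing the single new Blaschke factor, and that $\zeta_{n-1}^\nu\phi_{n-1}^\nu$, $\phi_{n-1}^{\nu*}$ together span the same two-dimensional increment. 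The recursion is then forced by the orthogonality conditions $\phi_n^\nu\perp\L_{n-1}^\nu$ and $(\phi_n^\nu)^*\perp\zeta_n^\nu\L_{n-1}^\nu$ (Corollary~\ref{cor0}), which pin down the $2\times2$ transfer matrix up to the unitary freedom absorbed into $H_n^\nu$.

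The key steps, in order, would be the following. I would first establish that the four functions appearing on the two sides live in the correct spaces, using the elementary factor identities $\zeta_j^\nu=\sigma_j\varpi_j^{\nu*}/\varpi_j^\nu$ and $(\varpi_j^\nu)^*=z\varpi_{j*}^\nu$ from Section~\ref{secnot}; this is the bookkeeping that the prefactor $\varpi_{n-1}^\nu/\varpi_n^\nu$ and the shift $\zeta_{n-1}^\nu$ are exactly what is needed to map $\L_{n-1}^\nu$ into $\L_n^\nu$ and to account for the change of denominator from $\pi_{n-1}^\nu$ to $\pi_n^\nu$. Second, I would impose orthogonality against $\L_{n-1}^\nu$ on the first row and against $\zeta_n^\nu\L_{n-1}^\nu$ on the second, which yields the off-diagonal entry $\lambda_n^\nu$ and its conjugate as a single Szeg\H{o} parameter. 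Third, I would compute $\lambda_n^\nu$ explicitly by evaluating the recurrence at $z=\nu_{n-1}$, where $\zeta_{n-1}^\nu(\nu_{n-1})=0$ kills one term, leaving $\phi_n^\nu(\nu_{n-1})$ expressible through $\phi_{n-1}^{\nu*}(\nu_{n-1})$; writing everything in terms of numerators $p_n^\nu$ and using $\phi_n^{\nu*}=\varsigma_n p_n^{\nu*}/\pi_n^\nu$ gives the stated formula $\lambda_n^\nu=\eta_n^\nu p_n^\nu(\nu_{n-1})/\overline{p_n^{\nu*}(\nu_{n-1})}$. Fourth, the normalization of Theorem~\ref{thm1} (and of the preceding discussion fixing $\phi_n^{\nu*}(\nu_n)>0$) determines $\eta_{n1}^\nu$, and the reflection relation $\zeta_j^\alpha=1/\zeta_j^\beta$ together with $\sigma_n^\alpha=\sigma_n^\beta=\sigma_n$ forces the companion phase $\eta_{n2}^\nu=\overline{\eta}_{n1}^\nu\overline{\sigma}_{n-1}\sigma_n$.

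The main obstacle is precisely the point where the generic proof can fail, namely the division by $p_n^{\nu*}(\nu_{n-1})$ in the evaluation step. For $\nu\in\{\alpha,\beta\}$ Corollary~\ref{cor1} guarantees all zeros of $\phi_n^\alpha$ lie in $\DD$ (resp.\ $\phi_n^\beta$ in $\EE$), so $p_n^{\nu*}$ never vanishes at $\nu_{n-1}$ and regularity is automatic. For $\nu=\gamma$, however, $\phi_n$ inherits its zeros from either $\phi_n^\alpha$ or $\phi_n^\beta$ depending on whether $\gamma_n=\alpha_n$ or $\gamma_n=\beta_n$ (Corollary~\ref{cor1}), and when $\gamma_n$ and $\gamma_{n-1}$ lie on opposite sides of $\TT$, i.e.\ $(1-|\gamma_n|)(1-|\gamma_{n-1}|)<0$, the quantity $p_n^{*}(\gamma_{n-1})$ may genuinely be zero. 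This is exactly why the statement carries the explicit regularity assumption, and I would simply flag that the argument is valid verbatim on the regular set and defer the singular case; the hard part is therefore not a new computation but recognizing that the only place the classical derivation breaks is this denominator, and isolating it through the hypothesis rather than attempting to repair it.
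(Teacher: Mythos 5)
Your proposal is correct and follows essentially the same route as the paper: the paper likewise transplants the classical argument of \cite[Theorem~4.1.1]{BooksBGHN99} to a general $\nu$ (treating $\nu=\gamma$ directly, with $\alpha$, $\beta$ as special cases), by choosing constants so that a combination of $\varpi_n^\nu\phi_n^\nu/\varpi_{n-1}^{\nu*}$, $\phi_{n-1}^\nu$ and $\phi_{n-1}^{\nu*}$ lands in $\L_{n-2}^\nu$ and is killed by the orthogonality relations (including Corollary~\ref{cor0}), and the sole new obstruction is exactly the one you isolate, namely $p_n^{\nu*}(\nu_{n-1})\ne0$, automatic for $\nu\in\{\alpha,\beta\}$ by Corollary~\ref{cor1} and imposed as regularity for $\nu=\gamma$. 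The only cosmetic deviations are that the paper pins down $\lambda_n$ and the prefactor via the two vanishing conditions $N(\gamma_{n-1})=N(1/\overline{\gamma}_{n-1})=0$ of the numerator rather than a single evaluation at $\nu_{n-1}$, and it obtains the second row, and with it the relation $\eta_{n2}^\nu=\overline{\eta}_{n1}^\nu\overline{\sigma}_{n-1}\sigma_n$, by applying the $(\ )^*$ transform to the first row rather than by the $\alpha$--$\beta$ reflection identities you invoke.
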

\begin{proof}
We immediately concentrate on the general situation $\nu=\gamma$.
Of course $\nu=\alpha$ and $\nu=\beta$ will drop out as special cases.
For simplicity assume that $\gamma_n$ and $\gamma_{n-1}$ are not 0 and not~$\infty$.
The technicalities when this is not true are left as an exercise.
It is easy because formally it follows the steps of the proof below
but one has to
replace a linear factor involving inf\/inity by the coef\/f\/icient of inf\/inity
(like $1-\infty z=-z$ and $z-\infty=-1$)
and evaluating a polynomial at~$\infty$ means taking its leading coef\/f\/icient.

First we show that there are some numbers $c_n$ and $d_n$ such that
\begin{gather*}
\phi(z):=\frac{1-\overline{\gamma}_nz}{z-\gamma_{n-1}}\phi_n(z)-d_n\phi_{n-1}(z)-c_n\frac{1-\overline{\gamma}_{n-1}z}{z-\gamma_{n-1}}\phi_{n-1}^*\in\L_{n-2}.
\end{gather*}
This can be written as $N(z)/[(z-\gamma_{n-1})\pi_{n-1}(z)]$. Thus the $c_n$ and $d_n$ are def\/ined by the
conditions $N(\gamma_{n-1})=N(1/\overline{\gamma}_{n-1})=0$.
If we denote $\phi_k=p_k/\pi_k$ and thus $\phi_k^*=p_k^*\varsigma_k/\pi_k$, it is clear that
\begin{gather*}
N(z)=p_n(z)-d_n(z-\gamma_{n-1})p_{n-1}(z)-c_n(1-\overline{\gamma}_{n-1}z)p_{n-1}^*(z)\varsigma_{n-1}.
\end{gather*}
Thus the f\/irst condition gives
\begin{gather*}
c_n=\frac{\overline{\varsigma}_{n-1}p_n(\gamma_{n-1})}{(1-|\gamma_{n-1}|^2)p_{n-1}^*(\gamma_{n-1})}
\end{gather*}
and the second one
\begin{gather*}
d_n=\frac{p_n(1/\overline{\gamma}_{n-1})}{(1/\overline{\gamma}_{n-1}-\gamma_{n-1})p_{n-1}(1/\overline{\gamma}_{n-1})}
=\frac{\overline{p_n^*(\gamma_{n-1})}}{(1-|\gamma_{n-1}|^2)\overline{p_{n-1}^*(\gamma_{n-1})}}.
\end{gather*}
Note that $p_{n-1}^*(\gamma_{n-1})$ cannot be zero by Corollary~\ref{cor1}, and that also
$p_{n}^{*}(\gamma_{n-1})$ does not vanish by our assumption of regularity.

Furthermore, by using the orthognality of $\phi_n$ and $\phi_{n-1}$ and Corollary~\ref{cor0},
it is not dif\/f\/icult to show that $\phi\perp\L_{n-2}$ so that it must be identically zero.
Thus
\begin{gather*}
\phi_n(z)=d_n\overline{\sigma}_{n-1}\frac{1-\overline{\gamma}_{n-1}z}{1-\overline{\gamma}_nz}
[\zeta_{n-1}(z)\phi_{n-1}(z)+\lambda_n\phi_{n-1}^*(z)],
\end{gather*}
with
\begin{gather*}
\lambda_n=\eta_n\frac{p_n(\gamma_{n-1})}{\overline{p_n^*(\gamma_{n-1})}},\qquad
\eta_n=\overline{\varsigma}_{n-1}\sigma_{n-1}\frac{\overline{p_{n-1}^*(\gamma_{n-1})}}{p_{n-1}^*(\gamma_{n-1})}.
\end{gather*}
By taking the $(\ )^*$ transform (in $\L_n$) we obtain
\begin{gather*}
\phi_n^*(z)=\overline{d}_n{\sigma}_{n}\frac{1-\overline{\gamma}_{n-1}z}{1-\overline{\gamma}_nz}
[\overline{\lambda}_n\zeta_{n-1}(z)\phi_{n-1}(z)+\phi_{n-1}^*(z)].
\end{gather*}
This proves the recurrence by taking $e_n=|d_n|$ and $\overline{\eta}_{n1}={\sigma}_{n-1}\mathfrak{u}({d}_n)$.

It remains to show that the initial step for $n=1$ is true.
Since $\phi_0=\phi_0^*=1$, then in case $\gamma_0=\alpha_0=0$, hence $\zeta_0=z$, we have
\begin{gather*}
\phi_1(z)=e_1\eta_{11}\frac{z+\lambda_1}{1-\overline{\gamma}_1z}
\qquad \text{and}\qquad
\phi_1^*(z)=e_1{\eta}_{12}\frac{\overline{\lambda}_1z+1}{1-\overline{\gamma}_1z}.
\end{gather*}
Thus
\begin{gather*}
p_1(z)=e_1\eta_{11}(z+\lambda_1)
\qquad \text{and}\qquad
p_1^*(z)=e_1\overline{\eta}_{11}(\overline{\lambda}_1z+1).
\end{gather*}
This implies that $\lambda_1$ is indeed given by the general formula because
\begin{gather*}
\lambda_1=\eta_1\frac{p_1(\gamma_0)}{\overline{p_1^*(\gamma_0)}}=
\frac{p_1(0)}{\overline{p_1^*(0)}}=\frac{e_1\eta_{11}\lambda_1}{e_1\eta_{11}}.
\end{gather*}
In case $\gamma_0=\beta_0=\infty$, then $\zeta_0=1/z$, so that
\begin{gather*}
\phi_1(z)=e_1\eta_{11}\frac{-1-\lambda_1z}{1-\overline{\gamma}_1z}
\qquad\text{and}\qquad
\phi_1^*(z)=e_1{\eta}_{12}\frac{-\overline{\lambda}_1-z}{1-\overline{\gamma}_1z},
\end{gather*}
and thus
\begin{gather*}
p_1(z)=-e_1\eta_{11}(1+\lambda_1z)
\qquad\text{and}\qquad
p_1^*(z)=-e_1\overline{\eta}_{11}\overline{\sigma}_1(\overline{\lambda}_1+z),
\end{gather*}
and again $\lambda_1$ is given by the general formula
\begin{gather*}
\lambda_1=\eta_1\frac{p_1(\gamma_0)}{\overline{p_1^*(\gamma_0)}}=
1\frac{p_1(\infty)}{\overline{p_1^*(\infty)}}=\frac{e_1\eta_{11}\lambda_1}{e_1\eta_{11}}.
\end{gather*}
This proves the theorem.
\end{proof}
\begin{Remark}
If $\nu\in\{\alpha,\beta\}$ we could rewrite $\lambda_n^\nu$ in terms of $\phi_n^\nu$ because
by dividing and multiplying with the appropriate denominators $\pi_n^\nu$ one gets
\begin{gather*}
\lambda_n^\nu=\eta_n^\nu \frac{\phi_n^\nu(\nu_{n-1})}{\overline{\phi_n^{\nu*}(\nu_{n-1})}},\qquad
\eta_n^\nu=\sigma_{n-1}\overline{\sigma}_n\frac{(1-\overline{\nu}_n\nu_{n-1})}{(1-\nu_n\overline{\nu}_{n-1})}
\frac{\overline{\phi_{n-1}^{\nu*}(\nu_{n-1})}}{\phi_{n-1}^{\nu*}(\nu_{n-1})},\qquad n\ge1.
\end{gather*}
Note that also this $\eta_n^\nu\in\TT$, but it dif\/fers from the $\eta_n^\nu$
in the previous theorem.
However if $\nu=\gamma$, then this expression has the disadvantage
that $\gamma_{n-1}$ could be equal to $1/\overline{\gamma}_n$ or
it could be equal to a pole of $\phi_n$ in which case it would not make sense
to evaluate these expressions in $\gamma_{n-1}$.
The latter expressions only make sense if we interpret them as limiting values
\begin{gather*}
\frac{{\phi_n^\nu(\nu_{n-1})}}{\overline{\phi_n^{\nu*}(\nu_{n-1})}}=\lim_{z\to\nu_{n-1}}
\frac{{\phi_n^\nu(z)}}{\overline{\phi_n^{\nu*}(z)}}
\qquad \text{and}\\
\frac{(1-\overline{\nu}_n\nu_{n-1})}{(1-\nu_n\overline{\nu}_{n-1})}
\frac{\overline{\phi_{n-1}^{\nu*}(\nu_{n-1})}}{\phi_{n-1}^{\nu*}(\nu_{n-1})}=
\lim_{z\to\nu_{n-1}}
\frac{(1-\overline{\nu}_nz)}{(1-\nu_n\overline{z})}
\frac{\overline{\phi_{n-1}^{\nu*}(z)}}{\phi_{n-1}^{\nu*}(z)},
\end{gather*}
where one has to assume that $\lim\limits_{\xi\to0}[\xi/\overline{\xi}]=1$.
We shall from now on occasionally use these expressions with this
interpretation, but the expressions for $\lambda_n^\nu$ from
Theorem~\ref{thmrec} using the numerators are more direct since they immediately give the limiting values.
\end{Remark}

Note that $\lambda_n^\alpha$ is the value of a Blaschke product with all its zeros in $\DD$ evaluated at $\alpha_{n-1}\in\DD$ and therefore $\lambda_n^\alpha\in\DD$.
Similarly, $\lambda_n^\beta$ is the value of a Blaschke product with all its zeros in $\EE$, evaluated at
$\beta_{n-1}\in\EE$ so that $\lambda_n^\beta\in\DD$.
Since the zeros of $\phi_n$ are the zeros of $\phi_n^\alpha$ if $n\in\mathbbm{a}_n$ and they are the
zeros of $\phi_n^\beta$ if $n\in\mathbbm{b}_n$, it follows that if $n$ and $n-1$ are both in $\mathbbm{a}_n$ or both in $\mathbbm{b}_n$, then $\lambda_n\in\DD$ but if $n\in\mathbbm{a}_n$ and
$n-1\in\mathbbm{b}_n$ or vice versa, then $\lambda_n\in\EE$.
Therefore
\begin{gather}\label{eqe2}
(e_n^\nu)^2=\frac{1-|\nu_n|^2}{1-|\nu_{n-1}|^2}\frac{1}{1-|\lambda_n^\nu|^2}>0
\end{gather}
and we can choose $e_n$ as the positive square root of this expression.
The above expression is derived in \cite[Theorem~4.1.2]{BooksBGHN99}
for the case $\nu=\alpha$ by using the CD relations. By symmetry, this also holds
for $\nu=\beta$. For $\nu=\gamma$, the same relation can be obtained
by stripping the denominators as we explained after the proof of the CD-relation in Section~\ref{secCD}.

What goes wrong with the recurrence relation when $\phi_n$ is not regular?
From the proof of Theorem~\ref{thmrec}, it follows that then $d_n=0$.
We still have the relation
\begin{gather*}
\phi_n(z)=\frac{\overline{\sigma}_{n-1}}{(1-|\gamma_{n-1}|^2)\overline{p_{n-1}^*(\gamma_{n-1})}}
\left[
p_n^*(\gamma_{n-1})\zeta_{n-1}(z)\phi_{n-1}(z)+s_{n-1}p_n(\gamma_{n-1})\phi_{n-1}^*(z)
\right]
\end{gather*}
with $s_{n-1}=\frac{\varsigma_{n-1}p_{n-1}^*(\gamma_{n-1})}{\overline{\sigma}_{n-1}\overline{p_{n-1}^*(\gamma_{n-1})}}\in\TT$ and $p_n^*(\gamma_{n-1})=0$.
Thus, there is some positive constant $e_n$ and some $\eta_{n1}\in\TT$ such that
\begin{gather*}
\phi_n(z)=e_n\eta_{n1}\frac{\varpi_{n-1}(z)}{\varpi_n(z)}\big[0\;\zeta_{n-1}(z)\phi_{n-1}(z)+\phi_{n-1}^*(z)\big],
\end{gather*}
i.e., the f\/irst term in the sum between square brackets vanishes.
Applying Theorem~\ref{thmrec} in this case would give $\lambda_n=\infty$, and the
previous relations show that we only have to replace in Theorem~\ref{thmrec}
the matrix
\begin{gather*}
\left[\begin{matrix}1&\lambda_n^\nu\\\overline{\lambda_n^\nu}&1\end{matrix}\right]
\qquad\text{by}\qquad
\left[\begin{matrix}0&1\\1&0\end{matrix}\right].
\end{gather*}
This is in line with how we have dealt with $\infty$ so far where the
rule of thumb was to set $a-\nu b=-b$ if $\nu=\infty$.
So let us therefore also use Theorem~\ref{thmrec} with this interpretation
when~$\phi_n$ is not regular and thus $\lambda_n=\infty$.
With the expressions at the end of Section~\ref{secCD}, it can also be shown that
in this case
\begin{gather*}
e_n^2=-\frac{1-|\gamma_n|^2}{1-|\gamma_{n-1}|^2}>0.
\end{gather*}
Note that this corresponds to replacing $1-|\lambda_n|^2$ when $\lambda_n=\infty$ by $-1$.
Since this non-regular situation can only occur when $(1-|\gamma_n|)(1-|\gamma_{n-1}|)<0$, this expression for $e_n^2$ is indeed positive. A similar rule can be applied if $\gamma_n$ or $\gamma_{n-1}$ is inf\/inite, just replace in this or previous expression $1-|\infty|^2$ by $-1$.
The positivity of the expressions for $e_n^2$ also follows from the following result.

\begin{Theorem}\label{thmcon}
The Szeg\H{o} parameters satisfy for all $n\ge1$:

If $\gamma_n=\alpha_n$ and $\gamma_{n-1}=\alpha_{n-1}$ then $\lambda_n = \lambda_n^\alpha=\overline{\lambda_n^\beta}\in\DD$.

If $\gamma_n=\beta_n$ and $\gamma_{n-1}=\beta_{n-1}$ then $\lambda_n = \lambda_n^\beta=\overline{\lambda_n^\alpha}\in\DD$.

If $\gamma_n=\alpha_n$ and $\gamma_{n-1}=\beta_{n-1}$ then $\lambda_n = 1/\overline{\lambda_n^\beta}=1/\lambda_n^\alpha\in\EE$.

If $\gamma_n=\beta_n$ and $\gamma_{n-1}=\alpha_{n-1}$ then $\lambda_n=
1/\overline{\lambda_n^\alpha}=1/\lambda_n^\beta\in\EE$.
\end{Theorem}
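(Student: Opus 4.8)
The strategy is to split each of the four assertions into two equalities: a case-independent identity $\lambda_n^\beta=\overline{\lambda_n^\alpha}$ relating the two pure parameters, and a case-dependent identity expressing the $\seq{\gamma}$-parameter $\lambda_n$ through $\lambda_n^\alpha$ or $\lambda_n^\beta$. The location then comes for free: as noted right after Theorem~\ref{thmrec}, $\lambda_n^\alpha$ is the value at $\alpha_{n-1}\in\DD$ of a finite Blaschke product with all zeros in $\DD$, so $\lambda_n^\alpha\in\DD$, and by the first identity $\lambda_n^\beta=\overline{\lambda_n^\alpha}\in\DD$ as well. Hence a same-type case gives $\lambda_n\in\DD$, while a mixed-type case gives the reciprocal of a point of $\DD$, i.e.\ a point of $\EE$.

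\textbf{Step 1: the identity $\lambda_n^\beta=\overline{\lambda_n^\alpha}$.} I would reflect the $\nu=\alpha$ recurrence of Theorem~\ref{thmrec} in the circle by applying $(\ )_*$ to both sides. By Lemma~\ref{lem5}, $(\phi_n^\alpha)_*=\phi_n^\beta$; and since $\phi_n^{\alpha*}=B_n^\alpha\phi_{n*}^\alpha$ with $(B_n^\alpha)_*=B_n^\beta$ and $\phi_n^\alpha=\phi_{n*}^\beta$, also $(\phi_n^{\alpha*})_*=B_n^\beta\phi_n^\alpha=\phi_n^{\beta*}$. Under $(\ )_*$ the constant matrix $H_n^\alpha$ becomes $\overline{H_n^\alpha}$ (still a nonzero scalar times a diagonal unitary), the constant $\lambda_n^\alpha$ becomes $\overline{\lambda_n^\alpha}$, and $\zeta_{n-1}^\alpha$ becomes $\zeta_{n-1}^\beta$; a one-line computation from $\varpi_{j*}^\alpha=-(\alpha_j/z)\varpi_j^\beta$ gives
\begin{gather*}
\left(\frac{\varpi_{n-1}^\alpha}{\varpi_n^\alpha}\right)_*=\frac{\alpha_{n-1}}{\alpha_n}\,\frac{\varpi_{n-1}^\beta}{\varpi_n^\beta}.
\end{gather*}
Thus the reflected identity is precisely the recurrence of Theorem~\ref{thmrec} for $\nu=\beta$, except that its central core is $\left[\begin{matrix}1&\overline{\lambda_n^\alpha}\\\lambda_n^\alpha&1\end{matrix}\right]$ in place of $\left[\begin{matrix}1&\lambda_n^\beta\\\overline{\lambda_n^\beta}&1\end{matrix}\right]$, all surrounding factors sharing the same (scalar)$\times$(diagonal unitary)$\times$(fixed $\zeta_{n-1}^\beta$-diagonal) shape. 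Since $\zeta_{n-1}^\beta\phi_{n-1}^\beta$ and $\phi_{n-1}^{\beta*}$ are linearly independent, the two flanked constant matrices must agree, and a symmetric core with unit diagonal is uniquely determined; comparison then forces $\lambda_n^\beta=\overline{\lambda_n^\alpha}$.

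\textbf{Step 2: passing from $\seq{\gamma}$ to the pure sequences.} Here I would substitute Lemma~\ref{lemnum} into the numerator formula $\lambda_n=\eta_n\,p_n(\gamma_{n-1})/\overline{p_n^*(\gamma_{n-1})}$ of Theorem~\ref{thmrec}. If $\gamma_n$ and $\gamma_{n-1}$ are of the same type, say both $\alpha$, then $p_n=p_n^\alpha\dprod_{j\in\mathbbm{b}_n}(-|\beta_j|)$ with a real constant, hence $p_n^*=p_n^{\alpha*}$ times the same real constant; the constant cancels in the ratio, the identical cancellation at level $n-1$ (where $\gamma_{n-1}=\alpha_{n-1}$) yields $\eta_n=\eta_n^\alpha$, and therefore $\lambda_n=\lambda_n^\alpha$. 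The two-$\beta$ case is symmetric and gives $\lambda_n=\lambda_n^\beta$. If $\gamma_n$ and $\gamma_{n-1}$ are of different type, say $\gamma_n=\alpha_n$ and $\gamma_{n-1}=\beta_{n-1}$, I would use instead the second expression $p_n=\varsigma_n\,p_n^{\beta*}\dprod_{j\in\mathbbm{a}_n}(-|\alpha_j|)$ of Lemma~\ref{lemnum}; then $p_n^*=\overline{\varsigma}_n\,p_n^{\beta}$ times the same real constant, so that at $z=\beta_{n-1}$ the unimodular $\varsigma_n$ and the real constant cancel, leaving
\begin{gather*}
\frac{p_n(\beta_{n-1})}{\overline{p_n^*(\beta_{n-1})}}=\frac{p_n^{\beta*}(\beta_{n-1})}{\overline{p_n^{\beta}(\beta_{n-1})}}=\frac{1}{\overline{\rho_n}},\qquad \rho_n:=\frac{p_n^{\beta}(\beta_{n-1})}{\overline{p_n^{\beta*}(\beta_{n-1})}}.
\end{gather*}
Since at level $n-1$ one has $\gamma_{n-1}=\beta_{n-1}$, the same cancellation gives $\eta_n=\eta_n^\beta$; as $\lambda_n^\beta=\eta_n^\beta\rho_n$ with $\eta_n^\beta\in\TT$, this produces $\lambda_n=\eta_n^\beta/\overline{\rho_n}=1/\overline{\lambda_n^\beta}$, which by Step~1 equals $1/\lambda_n^\alpha\in\EE$. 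The remaining mixed case $\gamma_n=\beta_n$, $\gamma_{n-1}=\alpha_{n-1}$ is symmetric and yields $\lambda_n=1/\overline{\lambda_n^\alpha}=1/\lambda_n^\beta\in\EE$.

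\textbf{Main difficulty.} The conceptual input is modest; the real work lies in the constant bookkeeping. The delicate points are the reduction in Step~1 of the reflected prefactor to the genuine $\beta$-prefactor, so that uniqueness of the core applies, and the verification in Step~2 that the real products $\dprod_{j\in\mathbbm{b}_n}(-|\beta_j|)$, $\dprod_{j\in\mathbbm{a}_n}(-|\alpha_j|)$ of Lemma~\ref{lemnum} together with the unimodular $\varsigma_n$ cancel exactly in both the $p$-ratio and in $\eta_n$, leaving precisely $\lambda_n^\nu$ or $1/\overline{\lambda_n^\nu}$. Finally, the degenerate situations $\gamma_j\in\{0,\infty\}$ would be treated with the conventions $1-\infty z=-z$, $z-\infty=-1$, and ``evaluate a polynomial at $\infty$ by its leading coefficient,'' exactly as in the proof of Theorem~\ref{thmrec}.
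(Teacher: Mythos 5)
Your proof is correct, and its Step 2 is essentially the paper's own argument: the paper likewise substitutes Lemma~\ref{lemnum} into the numerator formula $\lambda_n=\eta_n\,p_n(\gamma_{n-1})/\overline{p_n^*(\gamma_{n-1})}$ of Theorem~\ref{thmrec}, lets the real constants of Lemma~\ref{lemnum} and the unimodular $\varsigma_n$ cancel (note $\varsigma_{n-2}$ is the same for all three sequences since $\sigma_j=\mathfrak{u}(\alpha_j)=\mathfrak{u}(\beta_j)=\mathfrak{u}(\gamma_j)$, so $\eta_n$ indeed collapses to $\eta_n^\alpha$ or $\eta_n^\beta$), and obtains $\lambda_n=\lambda_n^\alpha$ resp.\ $\lambda_n^\beta$ in the same-type cases and $\lambda_n=\eta_n^\beta\,p_n^{\beta*}(\beta_{n-1})/\overline{p_n^{\beta}(\beta_{n-1})}=1/\overline{\lambda_n^\beta}$ in the mixed cases, exactly as you do. Where you genuinely diverge is Step~1, the pure-sequence identity $\lambda_n^\beta=\overline{\lambda_n^\alpha}$. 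The paper proves it by a direct numerator computation: it inserts the relation $p_n^\alpha=\varsigma_n\,p_n^{\beta*}\cdot(\text{real constant})$ (a by-product of $\phi_n^\alpha=\phi_{n*}^\beta$), evaluates at $1/\ob_{n-1}$, and tracks the powers $\ob_{n-1}^{\,n-1}/\beta_{n-1}^{\,n-1}$ and $\beta_{n-1}^{\,n}/\ob_{n-1}^{\,n}$ generated by the $(\ )^*$ transform together with $\sigma_{n-1}^2$, which collapse to $1$. You instead apply $(\ )_*$ to the whole $\alpha$-recurrence, use Lemma~\ref{lem5} to recognize the reflected functions as $\phi_n^\beta$ and $\phi_n^{\beta*}$, and pin down the core parameter by linear independence of $\zeta_{n-1}^\beta\phi_{n-1}^\beta$ and $\phi_{n-1}^{\beta*}$ (clear on evaluating at $\beta_{n-1}$, where $\zeta_{n-1}^\beta$ vanishes and $\phi_{n-1}^{\beta*}$ does not). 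Your route buys conceptual transparency — the conjugation symmetry is visibly just reflection in $\TT$ — and avoids the power-of-$\beta_{n-1}$ bookkeeping; the paper's route stays inside one explicit formula and never needs a uniqueness-of-coefficients argument. Both treatments defer the degenerate values $\gamma_j\in\{0,\infty\}$ to the conventions of Theorem~\ref{thmrec} (your prefactor identity $\bigl(\varpi_{n-1}^\alpha/\varpi_n^\alpha\bigr)_*=(\alpha_{n-1}/\alpha_n)\varpi_{n-1}^\beta/\varpi_n^\beta$ needs the $\alpha_j\ne0$ proviso you acknowledge), and both obtain the $\DD$ versus $\EE$ location from the same Blaschke-product observation preceding the theorem, so nothing essential is missing.
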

\begin{proof}
Suppose $\gamma_n=\alpha_n$ and $\gamma_{n-1}=\alpha_{n-1}$, then
by Theorems~\ref{thmrec} and \ref{thm1}, or better still by Lemma~\ref{lemnum},
\begin{gather*}
\lambda_n=\left(\overline{\varsigma}_{n-2}
\frac{\overline{p_{n-1}^*(\alpha_{n-1})}}{p_{n-1}^*(\alpha_{n-1})}\right)\;
\frac{p_{n}(\alpha_{n-1})}{\overline{p_n^*(\alpha_{n-1})}}=
\left(\overline{\varsigma}_{n-2}
\frac{\overline{p_{n-1}^{\alpha*}(\alpha_{n-1})}}{p_{n-1}^{\alpha*}(\alpha_{n-1})}\right)\;
\frac{p_n^\alpha(\alpha_{n-1})}{\overline{p_n^{\alpha*}(\alpha_{n-1})}}=\lambda_n^\alpha.
\end{gather*}
When using $p_n^\alpha(z)=\varsigma_n\,p_n^{\beta*}(z)\mathop\dprod_{j=1}^n(-|\alpha_j|)$ and
$\alpha_j=1/\ob_j$, the previous relation
becomes
\begin{gather*}
\lambda_n=\left(\overline{\varsigma}_{n-2}
\frac{\varsigma_{n-1}\overline{p_{n-1}^\beta(1/\ob_{n-1})}}{\overline{\varsigma}_{n-1}p_{n-1}^\beta(1/\ob_{n-1})}\right)
\frac{p_n^{\beta*}(1/\ob_{n-1})}{\overline{p_n^\beta(1/\ob_{n-1})}}
\\
\hphantom{\lambda_n}{}= \left(\sigma_{n-1}^2\varsigma_{n-2}\frac{p_n^{\beta*}(\beta_{n-1})}{\overline{p_n^{\beta*}(\beta_{n-1})}} \frac{\ob_{n-1}^{n-1}}{\beta_{n-1}^{n-1}}\right)
\frac{\overline{p_n^\beta(\beta_{n-1})}}{p_n^{\beta*}(\beta_{n-1})}
\frac{\beta_{n-1}^{n}}{\ob_{n-1}^{n}}\\
\hphantom{\lambda_n}{}= \sigma_{n-1}^2\frac{\beta_{n-1}}{\ob_{n-1}}\overline{\lambda_n^\beta}=\overline{\lambda_n^\beta}.
\end{gather*}
The proof for $\gamma_n=\beta_n$ and $\gamma_{n-1}=\beta_{n-1}$, $n\ge 1$ is similar.

Next consider $\gamma_n=\alpha_n$ and $\gamma_{n-1}=\beta_{n-1}$, then
\begin{gather*}
\lambda_n=\left(\overline{\varsigma}_{n-2}
\frac{\overline{p_{n-1}^{\beta*}(\beta_{n-1})}}{p_{n-1}^{\beta*}(\beta_{n-1})}\right)
\frac{p_n^{\alpha}(\beta_{n-1})}{\overline{p_n^{\alpha*}(\beta_{n-1})}}=
\eta_n^\beta\frac{p_n^{\beta*}(\beta_{n-1})}{\overline{p_n^{\beta}(\beta_{n-1})}}=
\frac{\eta_n^\beta\overline{\eta_n^\beta}}{\overline{\lambda_n^\beta}}=
\frac{1}{\overline{\lambda_n^\beta}}=\frac{1}{\lambda_n^\alpha}.
\end{gather*}
The remaining case $\gamma_n=\beta_n$ and $\gamma_{n-1}=\alpha_{n-1}$, is again similar.
\end{proof}

\begin{Remark} \label{remlambda}
It should also be clear that the expression for the parameters $\lambda_n^\nu$ of Theorem~\ref{thmrec} are for
theoretical use only. They are expressed in terms of $p_n^\nu$, which is the numerator of $\phi_n^\nu$, the object
that should be the result of the computation, and hence unknown at the moment of computing $\lambda_n^\nu$.
For practical use, these parameters $\lambda_n^\nu$ should be obtained in a dif\/ferent way.
In inverse eigenvalue problems the recursion is used backwards, i.e., for decreasing degrees of the ORFs
and then these expressions can be used of course.

Even if we know the $\lambda_n^\nu$, then in Theorem~\ref{thmrec}, we still need the normalizing factor
$\eta_{n1}^\nu\in\TT$ which is characterized by ``$\eta_{n1}^\nu$ is chosen such that the
normalization condition or the ORFs is maintained''. We could compute $\phi_n^\nu$ with $\eta_{n1}^\nu=1$,
check the value of $\phi_n^{\nu*}(\gamma_{n-1})$, and derive $\eta_{n1}^\nu$ from it.
What this means is shown in the next lemma.
\end{Remark}

\begin{Lemma}\label{lemeta}
For $\nu\in\{\alpha,\beta\}$,
the phase $\theta_n^\nu$ of the unitary factor $\eta_{n1}^\nu=e^{i\theta_n^\nu}$ is given by
\begin{gather*}
\theta_n^\nu=\mathrm{arg}\bigl(\overline{\sigma}_{n-1}\sigma_n\overline{\varpi^\nu_{n}(\nu_{n-1})}\overline{\phi_n^{\nu*}(\nu_{n-1})}\bigr)
\end{gather*}
or equivalently
\begin{gather*}
\eta_{n1}^\nu=\overline{\sigma}_{n-1}\sigma_n\mathfrak{u}\bigl(\varpi_n^\nu(\nu_{n-1})\phi_n^{\nu*}(\nu_{n-1})\bigr).
\end{gather*}
$($Recall $\mathfrak{u}(z)=\overline{z}/|z|$.$)$
\end{Lemma}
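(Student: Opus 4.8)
The plan is to read the unimodular factor $\eta_{n1}^\nu$ directly off the recurrence of Theorem~\ref{thmrec}, since by construction $\eta_{n1}^\nu$ is exactly the phase that restores the normalization $\phi_n^{\nu*}(\nu_n)>0$. First note that the two displayed forms of the claim are equivalent: because $\mathfrak{u}(w)=\overline{w}/|w|$ and $|\overline\sigma_{n-1}\sigma_n|=1$, the number $e^{i\theta_n^\nu}$ with $\theta_n^\nu=\arg\bigl(\overline\sigma_{n-1}\sigma_n\overline{\varpi_n^\nu(\nu_{n-1})}\,\overline{\phi_n^{\nu*}(\nu_{n-1})}\bigr)$ is precisely $\overline\sigma_{n-1}\sigma_n\,\mathfrak{u}\bigl(\varpi_n^\nu(\nu_{n-1})\phi_n^{\nu*}(\nu_{n-1})\bigr)$ once the unimodular factor $\overline\sigma_{n-1}\sigma_n$ is pulled out of the argument. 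So it suffices to identify $\eta_{n1}^\nu$ up to this rewriting.

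The cleanest route is to evaluate the \emph{second} component of the recurrence at $z=\nu_{n-1}$. That row reads
\begin{gather*}
\phi_n^{\nu*}(z)=e_n^\nu\eta_{n2}^\nu\frac{\varpi_{n-1}^\nu(z)}{\varpi_n^\nu(z)}\bigl[\overline{\lambda}_n^\nu\zeta_{n-1}^\nu(z)\phi_{n-1}^\nu(z)+\phi_{n-1}^{\nu*}(z)\bigr],
\end{gather*}
and since $\zeta_{n-1}^\nu(\nu_{n-1})=0$ while $\varpi_{n-1}^\nu(\nu_{n-1})=1-|\nu_{n-1}|^2$, the first bracketed term drops out, leaving
\begin{gather*}
\phi_n^{\nu*}(\nu_{n-1})=e_n^\nu\eta_{n2}^\nu\frac{1-|\nu_{n-1}|^2}{\varpi_n^\nu(\nu_{n-1})}\,\phi_{n-1}^{\nu*}(\nu_{n-1}).
\end{gather*}
Here $e_n^\nu>0$ by (\ref{eqe2}) and $\phi_{n-1}^{\nu*}(\nu_{n-1})>0$ by the chosen normalization, so both are phase-neutral; solving for $\eta_{n2}^\nu$ and taking arguments isolates its phase in terms of $\varpi_n^\nu(\nu_{n-1})\phi_n^{\nu*}(\nu_{n-1})$. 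I would then pass to $\eta_{n1}^\nu$ through the identity $\eta_{n2}^\nu=\overline{\eta}_{n1}^\nu\overline\sigma_{n-1}\sigma_n$ from Theorem~\ref{thmrec}, conjugate, and use $w/|w|=\overline{\mathfrak{u}(w)}$ to land on the stated formula. Equivalently, one may start from the explicit constant in the proof of Theorem~\ref{thmrec}, where $\eta_{n1}^\nu=\overline\sigma_{n-1}\,d_n^\nu/|d_n^\nu|$ with $d_n^\nu=\overline{p_n^{\nu*}(\nu_{n-1})}/\bigl[(1-|\nu_{n-1}|^2)\overline{p_{n-1}^{\nu*}(\nu_{n-1})}\bigr]$; converting numerators via $p_n^{\nu*}=\overline\varsigma_n\pi_n^\nu\phi_n^{\nu*}$ and $\pi_n^\nu/\pi_{n-1}^\nu=\varpi_n^\nu$ yields the same expression by a more computational but fully self-contained path.

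The main obstacle is the phase bookkeeping carried by the \emph{real} factor $1-|\nu_{n-1}|^2$, which is positive for $\nu=\alpha$ (as $\alpha_{n-1}\in\DD$) but negative for $\nu=\beta$ (as $\beta_{n-1}\in\EE$); its sign must be propagated carefully through the normalization of $\eta_{n2}^\nu$ and hence into $\eta_{n1}^\nu$, and this is where the $\alpha$ and $\beta$ cases genuinely differ. A secondary source of care is the degenerate configurations $\nu_{n-1}\in\{0,\infty\}$ or $\nu_n\in\{0,\infty\}$, where $\zeta_{n-1}^\nu$ and $\varpi_n^\nu$ assume their special forms (for instance $\varpi_j^\beta=-z$ when $\beta_j=\infty$) and the conventions $1-\infty z=-z$, $z-\infty=-1$, and ``evaluate a polynomial at $\infty$ by its leading coefficient'' must be applied exactly as in the proof of Theorem~\ref{thmrec}. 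In each such case the vanishing of $\zeta_{n-1}^\nu(\nu_{n-1})$ and the positivity of $e_n^\nu$ and $\phi_{n-1}^{\nu*}(\nu_{n-1})$ persist, so the skeleton of the argument is unchanged and only the explicit value of $\varpi_n^\nu(\nu_{n-1})$ needs adjusting.
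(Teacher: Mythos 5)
Your core idea is exactly the paper's: evaluate the recurrence of Theorem~\ref{thmrec} at $z=\nu_{n-1}$, where $\zeta_{n-1}^\nu(\nu_{n-1})=0$, and strip off the phase using $e_n^\nu>0$ and the normalization $\phi_{n-1}^{\nu*}(\nu_{n-1})>0$. The only real difference is which row you use. The paper takes the \emph{first} row, which leaves the product $\lambda_n^\nu\phi_{n-1}^{\nu*}(\nu_{n-1})$ behind and therefore forces a substitution of $\lambda_n^\nu=\eta_n^\nu\phi_n^\nu(\nu_{n-1})/\overline{\phi_n^{\nu*}(\nu_{n-1})}$ from the remark following Theorem~\ref{thmrec}; you take the \emph{second} row, where the $\lambda_n^\nu$-term dies together with $\zeta_{n-1}^\nu$, and then pass from $\eta_{n2}^\nu$ to $\eta_{n1}^\nu$ via $\eta_{n2}^\nu=\overline{\eta}_{n1}^\nu\overline{\sigma}_{n-1}\sigma_n$. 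Your variant is in fact slightly cleaner, since it never needs the expression for $\lambda_n^\nu$ at all; your fallback through $d_n^\nu$ from the proof of Theorem~\ref{thmrec} is also sound and amounts to the same computation.

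However, the obstacle you flag but do not resolve --- the sign of the real factor $\varpi_{n-1}^\nu(\nu_{n-1})=1-|\nu_{n-1}|^2$ --- is a genuine gap, and it cannot be closed in favor of the displayed formula. Carrying the sign through your route (or the paper's, or the $d_n^\nu$ route; all three agree) gives
\begin{gather*}
\eta_{n1}^\nu=\mathrm{sign}\big(1-|\nu_{n-1}|^2\big)\,\overline{\sigma}_{n-1}\sigma_n\,
\mathfrak{u}\big(\varpi_n^\nu(\nu_{n-1})\phi_n^{\nu*}(\nu_{n-1})\big),
\end{gather*}
so the formula of Lemma~\ref{lemeta} is exact for $\nu=\alpha$ (where $1-|\alpha_{n-1}|^2>0$) but acquires an extra factor $-1$ for $\nu=\beta$ (where $1-|\beta_{n-1}|^2<0$, and likewise $\varpi_{n-1}^\beta(\beta_{n-1})=-1$ under the conventions when $\beta_{n-1}=\infty$). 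A quick sanity check with Lebesgue measure and all $\beta_j=\infty$ gives $\phi_n^\beta=z^{-n}$, $\phi_n^{\beta*}=1$, hence $\eta_{n1}^\beta=1$, while the lemma's display evaluates to $\mathfrak{u}(-1)=-1$; the sign factor above restores agreement. The paper's own proof hides precisely this point in the phrase ``leaving out all the factors with phase zero'': for $\nu=\beta$ the real factor $1-|\beta_{n-1}|^2$ has phase $\pi$, not zero. So your instinct that this is ``where the $\alpha$ and $\beta$ cases genuinely differ'' was correct, but to finish the argument you must either carry $\mathrm{sign}(1-|\nu_{n-1}|^2)$ into the final statement or restrict the clean formula to $\nu=\alpha$; the sign cannot be made to disappear.
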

\begin{proof}
Take the f\/irst relation of Theorem~\ref{thmrec} and evaluate for $z=\nu_{n-1}$, then because\linebreak
$\varpi_{n-1}^{\nu*}(\nu_{n-1})=0$ we get
\begin{gather*}
\phi_n^\nu(\nu_{n-1})\varpi_n^{\nu}(\nu_{n-1})=e_n^\nu\eta_{n1}^\nu\varpi_{n-1}^\nu(\nu_{n-1})\lambda_n^\nu\phi_{n-1}^{\nu*}(\nu_{n-1})
\end{gather*}
or
\begin{gather*}
\eta_{n1}^\nu=
\frac{\varpi_n^{\nu}(\nu_{n-1})\phi_n^{\nu}(\nu_{n-1})}{e_n^\nu\varpi_{n-1}^\nu(\nu_{n-1})\lambda_n^\nu\phi_{n-1}^{\nu*}(\nu_{n-1})}.
\end{gather*}
Use the def\/inition of $\lambda_n^\nu=\eta_n^\nu\frac{\phi_n^\nu(\nu_{n-1})}{\overline{\phi_n^{\nu*}(\nu_{n-1})}}$
with $\eta_n^\nu=\sigma_{n-1}\overline{\sigma}_n\frac{\varpi_n^\nu(\nu_{n-1})}{\varpi_{n-1}^\nu(\nu_n)}$ and knowing that $\phi_{n-1}^{\nu*}(\nu_{n-1})>0$ we obtain
after simplif\/ication and leaving out all the factors with phase zero
\begin{gather*}
\theta_n^\nu=\mathrm{arg}\big(
\overline{\sigma}_{n-1}\sigma_n\overline{\varpi^\nu_{n}(\nu_{n-1})}\overline{\phi_n^{\nu*}(\nu_{n-1})}
\big)
\end{gather*}
as claimed.
\end{proof}

Note that this expression for $\eta_{n1}^\nu$ is well def\/ined because $\phi_n^{\nu*}(\nu_{n-1})\ne0$ if $\nu\in\{\alpha,\beta\}$.

For $\nu=\gamma$, the expression is a bit more involved but it can be obtained in a similar way from the
normalization conditions given in Theorem~\ref{thm1}. We skip the details.

Another solution of the recurrence relation is formed by the functions of the second kind.
Like in the classical case (i.e., for $\nu=\alpha$)
we can introduce them for $\nu\in\{\alpha,\beta,\gamma\}$ by (see \cite[p.~83]{BooksBGHN99})
\begin{gather*}
\psi_n^\nu(z)=\int_{\TT} [E(t,z)\phi_n^\nu(t)-D(t,z)\phi_n^\nu(z)]\d\mu(t).
\end{gather*}
where $D(t,z)=\frac{t+z}{t-z}$ and $E(t,z)=D(t,z)+1=\frac{2t}{t-z}$.
This results in
\begin{gather*}
\psi_0^\nu=1\qquad\mbox{and}\qquad\psi_n^\nu(z)=\int_{\TT} D(t,z)[\phi_n^\nu(t)-\phi_n^\nu(z)]\d\mu(t),\qquad n\ge 1,
\end{gather*}
which may be generalized to (see \cite[Lemma~4.2.2]{BooksBGHN99})
\begin{gather*}
\psi_n^\nu(z)f(z)=\int_{\TT} D(t,z)[\phi_n^\nu(t)f(t)-\phi_n^\nu(z)f(z)]\d\mu(t),\qquad n\ge1
\end{gather*}
with $f$ arbitrary in $\L_{(n-1)*}^\nu$.
It also holds that (see \cite[Lemma~4.2.3]{BooksBGHN99})
\begin{gather*}
\psi_n^{\nu*}(z)g(z)=\int_{\TT} D(t,z)[\phi_n^{\nu*}(t)g(t)-\phi_n^{\nu*}(z)g(z)]\d\mu(t),\qquad n\ge1
\end{gather*}
with $g$ arbitrary in $\L_{n*}^\nu(\nu_{n*})$.
Recall that $\L_{n*}^\nu(\nu_{n*})$ is the space of all functions in $\L_{n*}^\nu$ that vanish for $z=\nu_{n*}=1/\overline{\nu}_n$.
This space is spanned by $\{B_k^\nu/B_n^\nu\colon k=0,\ldots,n-1\}$ if $\nu\in\{\alpha,\beta\}$. For $\nu=\gamma$, the space can be characterized as (see Lemma~\ref{lem1})
\begin{gather*}
\L_{n*}(\gamma_{n*})=
\Span\left\{\frac{B_k}{\dB_n^\alpha\dB_n^\beta}\right\}_{k=0}^{n-1}=
\Span\left\{\frac{B_k^\alpha}{\zeta_n\dB_{n-1}^\alpha}\right\}_{k=0}^{n-1}=
\Span\left\{\frac{B_k^\beta}{\zeta_n\dB_{n-1}^\beta}\right\}_{k=0}^{n-1}.
\end{gather*}
\begin{Theorem}\label{thmrel2knd}
The following relations for the functions of the second kind hold for $n\ge0$:
\begin{gather*}
\psi_n=
(\psi_n^\alpha)\dB_n^\beta=
(\psi_n^\beta)^*\dB_n^\alpha
\qquad\mbox{and}\qquad \psi_n^*=
(\psi_n^\alpha)^*\dB_n^\beta=
(\psi_n^\beta)\dB_n^\alpha\qquad \mbox{if~~$\gamma_n=\alpha_n$},
\end{gather*}
while
\begin{gather*}
\psi_n=(\psi_n^\beta)\dB_n^\alpha=
(\psi_n^\alpha)^*\dB_n^\beta
\qquad \mbox{and}\qquad \psi_n^*=
(\psi_n^\beta)^*\dB_n^\alpha=
(\psi_n^\alpha)\dB_n^\beta\qquad \mbox{if~~$\gamma_n=\beta_n$}.
\end{gather*}
We assume the normalization of Theorem~{\rm \ref{thm1}}.
\end{Theorem}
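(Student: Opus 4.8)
The plan is to transfer the whole chain of arguments behind Theorem~\ref{thm1} to the functions of the second kind, exploiting that for every $\nu$ the function $\psi_n^\nu$ is produced from $\phi_n^\nu$ by one and the same integral transform, depending only on $\mu$ and not on the choice $\alpha$, $\beta$ or $\gamma$. As in the proof of Theorem~\ref{thm1}, it is enough to establish one ``primary'' identity in each of the cases $\gamma_n=\alpha_n$ and $\gamma_n=\beta_n$, and then to read off the three companion identities by applying the $(\ )^*$ operation and the $\alpha\leftrightarrow\beta$ reflection. The step $n=0$ is immediate, since $\psi_0=\psi_0^\alpha=\psi_0^\beta=1$ and $\dB_0^\alpha=\dB_0^\beta=1$, so I concentrate on $n\ge1$ and, by the $\alpha\leftrightarrow\beta$ symmetry, on $\gamma_n=\alpha_n$.

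For the primary relation $\psi_n=\psi_n^\alpha\dB_n^\beta$ I would start from the simplified representation
\begin{gather*}
\psi_n^\nu(z)=\int_\TT D(t,z)\big[\phi_n^\nu(t)-\phi_n^\nu(z)\big]\d\mu(t),\qquad n\ge1,
\end{gather*}
which holds because $\int_\TT\phi_n^\nu\,\d\mu=\SP{1}{\phi_n^\nu}=0$. Inserting $\phi_n=\phi_n^\alpha\dB_n^\beta$ from Theorem~\ref{thm1} and subtracting $\dB_n^\beta(z)\psi_n^\alpha(z)$, the terms carrying $\phi_n^\alpha(z)$ cancel and one is left with the single cross term
\begin{gather*}
\psi_n(z)-\dB_n^\beta(z)\psi_n^\alpha(z)=\int_\TT D(t,z)\,\phi_n^\alpha(t)\big[\dB_n^\beta(t)-\dB_n^\beta(z)\big]\d\mu(t).
\end{gather*}
The whole matter reduces to showing that this integral vanishes. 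Since $\phi_n^\alpha\perp\L_{n-1}^\alpha$, it suffices to verify that, for fixed $z$, the function $t\mapsto\overline{D(t,z)[\dB_n^\beta(t)-\dB_n^\beta(z)]}$ lies in $\L_{n-1}^\alpha$ on $\TT$. Using $\overline{\dB_n^\beta(t)}=1/\dB_n^\beta(t)$ on $\TT$, its poles in $t$ sit only at $\{\beta_j\colon j\in\mathbbm{b}_n\}\subset\EE$, with $\mathbbm{b}_n\subseteq\{1,\dots,n-1\}$ because $n\in\mathbbm{a}_n$; the one apparent extra pole, at $t=z_*$ coming from $\overline{D(t,z)}$, is cancelled since $1/\dB_n^\beta(z_*)=\overline{\dB_n^\beta(z)}$, so that the bracketed factor $\overline{\dB_n^\beta(t)}-\overline{\dB_n^\beta(z)}$ vanishes there. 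A degree count then places the kernel in $\L_{n-1}^\alpha$, which yields the orthogonality and hence $\psi_n=\psi_n^\alpha\dB_n^\beta$.

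The reciprocal identity $\psi_n^*=(\psi_n^\alpha)^*\dB_n^\beta$ then follows by applying $(\ )^*$ to the primary relation exactly as for $\phi$: from $\psi_n^*=\dB_n^\alpha\dB_n^\beta(\psi_n)_*$, $\dB_{n*}^\beta=1/\dB_n^\beta$ and $B_n^\alpha=\dB_n^\alpha/\dB_n^\beta$ one gets $\psi_n^*=\dB_n^\alpha\psi_{n*}^\alpha=(\psi_n^\alpha)^*\dB_n^\beta$. The two identities that involve $\psi_n^\beta$ must instead be obtained from the reflection symmetry between the $\alpha$- and $\beta$-functions of the second kind, and this is where I expect the real difficulty to lie. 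In contrast to the sign-free Lemma~\ref{lem5} for the ORF, the kernel satisfies $\overline{D(t,1/\overline z)}=-D(t,z)$ on $\TT$, and together with $\phi_n^\beta=\phi_{n*}^\alpha$ this forces $\psi_{n*}^\beta=-\psi_n^\alpha$ for $n\ge1$ while equality persists at $n=0$. The main obstacle is therefore to carry this sign correctly through $(\psi_n^\beta)^*=B_n^\beta\psi_{n*}^\beta$ and to reconcile it with the stated relations; once the correct reflection identity is fixed, the remaining two equalities for $\gamma_n=\alpha_n$, and by symmetry the entire case $\gamma_n=\beta_n$, follow from the same bookkeeping with $\dB_n^\alpha$, $\dB_n^\beta$, $B_n^\alpha$ and $B_n^\beta$ that was already used for $\phi$. (A recurrence-based alternative, using that $\psi_n^\nu$ obeys the recursion of Theorem~\ref{thmrec} together with Theorem~\ref{thmcon}, is available but runs into precisely the same sign bookkeeping.)
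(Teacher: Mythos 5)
The part of your proposal that you actually carry out is correct, and it is in substance the paper's own route: the paper obtains $\psi_n=\psi_n^\alpha\dB_n^\beta$ (case $\gamma_n=\alpha_n$) by applying the quoted representation \cite[Lemma~4.2.2]{BooksBGHN99} with the multiplier $f=\dB_n^\beta=\dB_{n-1}^\beta\in\L_{(n-1)*}^\alpha$, and your orthogonality-plus-pole-cancellation argument is a correct, self-contained proof of exactly that special case; likewise your derivation of $\psi_n^*=(\psi_n^\alpha)^*\dB_n^\beta$ is the same substar--superstar bookkeeping as in the paper.

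The genuine gap is that the two equalities involving $\psi_n^\beta$ are never proved: you stop at an ``obstruction''. The paper settles them by rewriting the integrand via Lemma~\ref{lem5} as $\phi_n^\alpha\dB_n^\beta=\phi_{n*}^\beta\dB_n^\beta=\phi_n^{\beta*}\dB_n^\alpha$ and then invoking the starred companion representation quoted from \cite[Lemma~4.2.3]{BooksBGHN99}, namely $\psi_n^{\nu*}(z)g(z)=\int_\TT D(t,z)\bigl[\phi_n^{\nu*}(t)g(t)-\phi_n^{\nu*}(z)g(z)\bigr]\d\mu(t)$ for $g\in\L_{n*}^\nu(\nu_{n*})$, used with $\nu=\beta$ and $g=\dB_n^\alpha$ (admissible because $n\in\mathbbm{a}_n$ forces $\dB_n^\alpha(\alpha_n)=0$); the remaining equality $\psi_n^*=\psi_n^\beta\dB_n^\alpha$ then follows from that one by the same superstar calculus. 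That is the step your proposal lacks. At the same time, your sign analysis is correct and cuts deeper than you indicate: $\psi_{n*}^\beta=-\psi_n^\alpha$ does hold for $n\ge1$, and consequently, if $(\psi_n^\beta)^*$ and $\psi_n^*$ are read as the plain superstars $B_n^\beta(\psi_n^\beta)_*$ and $\dB_n^\alpha\dB_n^\beta(\psi_n)_*$, then in fact $\psi_n=-(\psi_n^\beta)^*\dB_n^\alpha$ and $\psi_n^*=-\psi_n^\beta\dB_n^\alpha$, i.e., the stated equalities are off by a sign. (Test $n=1$, $\gamma_1=\alpha_1=0$, $\phi_1^\alpha(z)=c(z-m)$ with $c>0$ and $m=\int_\TT t\,\d\mu(t)$: then $\psi_1=c(z+m)$, $\psi_1^*=c(1+\overline{m}z)$, $\dB_1^\alpha(z)=z$, while the defining integral gives $\psi_1^\beta(z)=-c(1+\overline{m}z)/z$, so $\psi_1^\beta\dB_1^\alpha=-\psi_1^*$.) So the sign cannot be ``reconciled by bookkeeping'' as you hope: the theorem holds only under the sign convention for starred functions of the second kind that is implicitly fixed by the sign-free form of Lemma~4.2.3 as quoted in the paper, equivalently $(\psi_n^\nu)^*:=-B_n^\nu(\psi_n^\nu)_*$ --- a convention that is moreover in tension with Theorem~\ref{thm2knd}, whose recurrence with initial condition $\psi_0^{\nu*}=1$ generates the plain superstars. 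Your attempt therefore proves half of the statement and correctly detects, but does not resolve, the convention issue on which the other half (and the paper's own proof of it) rests.
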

\begin{proof}
This is trivial for $n=0$, hence
suppose $n\ge1$ and $\gamma_n=\alpha_n$ then
\begin{gather*}
\psi_n(z)=\int_{\TT} D(t,z)[\phi_n(t)-\phi_n(z)]\d\mu(t)=\int_{\TT} D(t,z)[\phi_n^\alpha(t)\dB_n^\beta(t)-\phi_n^\alpha(z)\dB_n^\beta(z)]\d\mu(t)\\
\hphantom{\psi_n(z)}{}= \psi_n^\alpha(z)\dB_n^\beta(z),
\end{gather*}
because
\begin{gather*}
\dB_n^\beta(z)=\dB_{n-1}^\beta(z)=\prod_{j\in\mathbbm{b}_{n-1}}\zeta_j^\beta(z)=
\prod_{j\in\mathbbm{b}_{n-1}}\zeta_{j*}^\alpha(z)\in\L_{(n-1)*}^\alpha.
\end{gather*}
Moreover, using $\phi_{n*}^\alpha=\phi_n^\beta$ we also have
\begin{gather*}
\psi_n(z)=\int_{\TT} D(t,z)[\phi_n(t)-\phi_n(z)]\d\mu(t)=\int_{\TT} D(t,z)[\phi_n^\alpha(t)\dB_n^\beta(t)-\phi_n^\alpha(z)\dB_n^\beta(z)]\d\mu(t)\\
\hphantom{\psi_n(z)}{}=
\int_{\TT} D(t,z)[\phi_{n*}^\beta(t)\dB_n^\beta(t)-\phi_{n*}^\beta(z)\dB_n^\beta(z)]\d\mu(t)\\
\hphantom{\psi_n(z)}{}=\int_{\TT} D(t,z)[\phi_{n}^{\beta*}(t)\dB_n^\alpha(t)-\phi_{n}^{\beta*}(z)\dB_n^\alpha(z)]\d\mu(t),
\end{gather*}
and since $\dB_n^\alpha\in\L_{n*}^\beta(\beta_{n*})$, we also get the second part:
$\psi_n=(\psi_n^\beta)^*\dB_n^\alpha$.

Moreover
$\psi_n^*=[\psi_n^\alpha\dB_n^\beta]_*\dB_n^\alpha\dB_n^\beta=
\psi_{n*}^{\alpha}\dB_n^\alpha\dB_n^\beta\dB_{n*}^\beta=
[\psi_{n*}^{\alpha}\dB_n^\alpha/\dB_n^\beta]\dB_n^\beta
=\psi_n^{\alpha*}\dB_n^\beta$.
It follows in a~similar way that $\psi_n^*=\psi_n^\beta\dB_n^\alpha$.

The case $\gamma_n=\beta_n$ is proved similarly.
\end{proof}

With these relations, it is not dif\/f\/icult to mimic the arguments of \cite[Theorem~4.2.4]{BooksBGHN99}
and obtain the following.
\begin{Theorem}\label{thm2knd}
These functions satisfy the recurrence relation
\begin{gather*}
\left[\begin{matrix}\psi_n^\nu(z)\\-\psi_n^{\nu*}(z)\end{matrix}\right]=
H_n^\nu\frac{\varpi_{n-1}^\nu(z)}{\varpi_n^\nu(z)}
\left[\begin{matrix}1&{\lambda}_n^\nu\\\overline{\lambda}_n^\nu & 1\end{matrix}\right]
\left[\begin{matrix}\zeta_{n-1}^\nu(z)&0\\0 & 1\end{matrix}\right]
\left[\begin{matrix}\psi_{n-1}^\nu(z)\\-\psi_{n-1}^{\nu*}(z)\end{matrix}\right],\qquad n\ge1
\end{gather*}
with $\psi_0^\nu=\psi_0^{\nu*}=1$ and all other quantities as in Theorem~{\rm \ref{thmrec}}.
\end{Theorem}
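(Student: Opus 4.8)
The plan is to transport the recurrence of Theorem~\ref{thmrec} from the ORFs $\phi_n^\nu$ to the functions of the second kind $\psi_n^\nu$ through the Herglotz-type transform $f\mapsto\int_\TT D(t,z)[f(t)-f(z)]\,\d\mu(t)$ that defines them, which is exactly the route of \cite[Theorem~4.2.4]{BooksBGHN99}. As throughout, it suffices to argue for $\nu=\gamma$ (the cases $\nu\in\{\alpha,\beta\}$ being specializations) and to assume first that $\gamma_n,\gamma_{n-1}\notin\{0,\infty\}$; the degenerate poles are then absorbed by the same conventions already used in Theorem~\ref{thmrec} (replace $1-\infty z$ by $-z$, read evaluation at $\infty$ as the leading coefficient, and make the usual substitution for the transfer matrix when $\lambda_n=\infty$). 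The one structural fact that makes everything work is that on the circle the Herglotz kernel is skew, $\overline{D(t,z)}=-D(t,z)$ for $t,z\in\TT$ with $t\ne z$. This is what converts the reciprocation $(\ )^*$ into a sign change on the transform side: under the transform $\phi_n^\nu\mapsto\psi_n^\nu$, whereas $\phi_n^{\nu*}\mapsto-\psi_n^{\nu*}$. Hence the object that inherits the recurrence is the pair $(\psi_n^\nu,-\psi_n^{\nu*})$ in place of $(\phi_n^\nu,\phi_n^{\nu*})$, which already accounts for the position of the minus sign in the asserted matrix identity.

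To push the transform through the recurrence I would write Theorem~\ref{thmrec} in cleared form, $\varpi_n^\nu\phi_n^\nu=e_n^\nu\eta_{n1}^\nu[\sigma_{n-1}\varpi_{n-1}^{\nu*}\phi_{n-1}^\nu+\lambda_n^\nu\varpi_{n-1}^\nu\phi_{n-1}^{\nu*}]$, multiply by the test functions singled out by the admissible spaces $\L_{(n-1)*}^\nu$ and $\L_{n*}^\nu(\nu_{n*})$ of the two generalized integral identities stated above (the analogues of \cite[Lemmas~4.2.2 and 4.2.3]{BooksBGHN99}), and pull every term inside a single integral $\int_\TT D(t,z)[\cdots]\,\d\mu(t)$. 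For each coefficient $m$, which is rational of the shape $c\,\varpi_{n-1}^{\nu(\ast)}(t)/\varpi_n^\nu(t)$, the splitting $m(t)\phi(t)-m(z)\phi(z)=m(z)[\phi(t)-\phi(z)]+[m(t)-m(z)]\phi(t)$ separates a main part, which reassembles the right-hand side with each $\phi_{n-1}^{(\ast)}$ replaced by $\psi_{n-1}^{(\ast)}$, from a remainder; an elementary computation gives $D(t,z)[m(t)-m(z)]$ proportional to $(t+z)/(\varpi_n^\nu(t)\varpi_n^\nu(z))$, so the artificial pole at $t=z$ cancels and the remainder reduces to an integral of $\phi_{n-1}^\nu$ (resp.\ $\phi_{n-1}^{\nu*}$) against a fixed function that the admissibility conditions are meant to place in $\L_{n-2}^\nu$ (resp.\ in $\zeta_{n-1}^\nu\L_{n-2}^\nu$), whence it vanishes by orthogonality and by Corollary~\ref{cor0}. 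Collecting the surviving terms yields $\varpi_n^\nu\psi_n^\nu=e_n^\nu\eta_{n1}^\nu[\sigma_{n-1}\varpi_{n-1}^{\nu*}\psi_{n-1}^\nu-\lambda_n^\nu\varpi_{n-1}^\nu\psi_{n-1}^{\nu*}]$, and the same reasoning applied to $\phi_n^{\nu*}$ produces the companion row; together these are the claimed recursion, with the base case immediate from $\psi_0^\nu=\psi_0^{\nu*}=1$.

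The hard part, I expect, is precisely this remainder bookkeeping: verifying that the functions one integrates $\phi_{n-1}^\nu,\phi_{n-1}^{\nu*}$ against genuinely lie in $\L_{n-2}^\nu$ respectively $\zeta_{n-1}^\nu\L_{n-2}^\nu$ — i.e.\ that the pole at $\nu_{n*}$ introduced by $1/\varpi_n^\nu$ is really cancelled once the generalized identities are used in place of a naive term-by-term expansion — and then confirming that the survivors leave exactly one sign change, on the $\lambda_n^\nu$-term and nowhere else. One must also keep every evaluation meaningful when $\gamma_{n-1}$ coincides with $1/\overline{\gamma}_n$ or with a pole of $\phi_n$, which is exactly where the numerator formulation of Theorem~\ref{thmrec} and the relations of Theorem~\ref{thmrel2knd} are needed. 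A cleaner alternative that avoids the delicate integral estimates altogether is to take the statement for $\nu=\alpha$ as the classical \cite[Theorem~4.2.4]{BooksBGHN99} ($\nu=\beta$ following by reflection), and then to deduce the $\gamma$-case directly from the Blaschke-shift relations of Theorem~\ref{thmrel2knd}, the four sign-and-location cases being governed precisely by the identifications $\lambda_n=\lambda_n^\alpha\in\DD$ or $\lambda_n=1/\lambda_n^\alpha\in\EE$ of Theorem~\ref{thmcon}.
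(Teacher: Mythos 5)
Your plan coincides, in both of its variants, with what the paper itself offers: the paper gives no detailed proof of this theorem, only the remark that one can ``mimic the arguments of \cite[Theorem~4.2.4]{BooksBGHN99}'' with the help of Theorem~\ref{thmrel2knd}. But in both of your routes the decisive step --- the sign bookkeeping at a step where $\gamma_{n-1}$ and $\gamma_n$ lie on opposite sides of $\TT$ --- is asserted rather than carried out, and that is exactly where the argument does not close. In Route B the trouble is that, when $\gamma_{n-1}=\beta_{n-1}$, Theorem~\ref{thmrel2knd} \emph{swaps} starred and unstarred functions ($\psi_{n-1}=\psi_{n-1}^{\alpha*}\dB_{n-1}^\beta$, $\psi_{n-1}^{*}=\psi_{n-1}^{\alpha}\dB_{n-1}^\beta$), while the $\alpha$-recurrence of the second kind carries its minus sign on the \emph{starred} term. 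Rewriting the $\alpha$-recurrence in $\gamma$-quantities you must factor out the coefficient multiplying $\zeta_{n-1}\psi_{n-1}$: on the $\phi$-side that coefficient is $+\lambda_n^\alpha\varpi_{n-1}^\alpha$ and the extracted constant is, by Theorem~\ref{thmrec}, $e_n\eta_{n1}$; on the $\psi$-side it is $-\lambda_n^\alpha\varpi_{n-1}^\alpha$, so the extracted constant is $-e_n\eta_{n1}$. What the transfer actually proves in the mixed cases is therefore the recurrence with $-H_n^\nu$ in place of $H_n^\nu$; the identifications $\lambda_n=1/\lambda_n^\alpha$ of Theorem~\ref{thmcon} are correct but do not repair this sign, contrary to your claim that they ``govern'' the four cases.

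This is not a reparable slip in your write-up: the statement read literally, with ``all other quantities as in Theorem~\ref{thmrec}'', fails in those steps, so no execution of either route can deliver it. Take $n=1$, $\gamma_0=\beta_0=\infty$, $\gamma_1=\alpha_1\in\DD$, with $\lambda:=\lambda_1^\alpha\ne0$. Then $\mathbbm{b}_1=\emptyset$, so $\phi_1=\phi_1^\alpha$ and hence $\psi_1=\psi_1^\alpha$, since the defining transform depends only on $\phi_1$ and $\mu$. Writing $\phi_1^\alpha(z)=e\,(z+\lambda)/(1-\oa_1z)$ with $e=e_1^\alpha\eta_{11}^\alpha$, the undisputed case $\nu=\alpha$ of the theorem (equivalently a one-line computation from the definition, using $\int_\TT\phi_1\,\d\mu=0$ and $\mu(\TT)=1$) gives
\begin{gather*}
\psi_1(z)=\psi_1^\alpha(z)=e\,\frac{z-\lambda}{1-\oa_1 z}.
\end{gather*}
On the other hand, matching $\phi_1=\phi_1^\alpha$ against the first row of Theorem~\ref{thmrec} for the $\gamma$-sequence (where now $\varpi_0(z)=-z$, $\zeta_0(z)=1/z$) forces $\lambda_1=1/\lambda$ and $e_1\eta_{11}=-e\lambda$, so the first row of the statement you are proving would give
\begin{gather*}
\psi_1(z)=e_1\eta_{11}\,\frac{-z}{1-\oa_1z}\left[\frac{1}{z}-\lambda_1\right]
=-e\,\frac{z-\lambda}{1-\oa_1 z}=-\psi_1(z),
\end{gather*}
a contradiction. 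So the recurrence as stated holds only when $\gamma_{n-1}$ and $\gamma_n$ lie on the same side of $\TT$ (in particular for $\nu\in\{\alpha,\beta\}$); in the transition steps, where $(1-|\gamma_n|)(1-|\gamma_{n-1}|)<0$, it requires $H_n^\nu$ replaced by $-H_n^\nu$. Route A cannot evade this either: its asserted output is the literal statement, so the ``remainder bookkeeping'' you yourself flag as the hard part must break down at precisely this point.
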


\section{Para-orthogonal rational functions}\label{secPORF}

Before we move on to quadrature formulas, we def\/ine para-orthogonal functions (PORF) by
\begin{gather}\label{defPORF}
Q_n^\nu(z,\tau_n^\nu)=\phi_n^\nu(z)+\tau_n^\nu\phi_n^{\nu*}(z),\qquad
\tau_n^\nu\in\TT,\qquad
\nu\in\{\alpha,\beta,\gamma\}.
\end{gather}
The PORF $Q_n^\nu(z,\tau_n^\nu)$ is in $\L_n^\nu$ and it is called para-orthogonal because it is not orthogonal
to~$\L_{n-1}^\nu$ but it is orthogonal to a particular subspace of $\L_{n-1}^\nu$ of dimension $n-1$.
\begin{Theorem}\label{thm2}
The para-orthogonal rational function $Q_n(z)=Q_n(z,\tau_n)$, $\tau_n\in\TT$ with $n\ge2$ is orthogonal to
$\dzeta_n^\alpha\L_{n-1}\cap\dzeta_n^\beta\L_{n-1}=\zeta_n\L_{n-1}\cap\L_{n-1}=\L_{n-1}(\gamma_n)$ with
\begin{gather*}
\L_{n-1}(\gamma_n)=\{f\in\L_{n-1}\colon f(\gamma_n)=0\}=\left\{\frac{\varpi_n^*(z)p_{n-2}(z)}{\pi_{n-1}(z)}\colon p_{n-2}\in\P_{n-2}\right\}.
\end{gather*}
Recall that $\varpi_n^*(z)=z-\gamma_n$ if $\gamma_n\ne\infty$, and $\varpi_n^*(z)=-1$ if $\gamma_n=\infty$.
\end{Theorem}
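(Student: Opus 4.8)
The plan is to separate the statement into two independent parts: the chain of set equalities, and the orthogonality of $Q_n$ to the resulting space. I would dispose of the set equalities first, since the orthogonality then follows almost immediately. For the first equality $\dzeta_n^\alpha\L_{n-1}\cap\dzeta_n^\beta\L_{n-1}=\zeta_n\L_{n-1}\cap\L_{n-1}$, I would simply split into the cases $\gamma_n=\alpha_n$ and $\gamma_n=\beta_n$. In the first case $\dzeta_n^\alpha=\zeta_n^\alpha=\zeta_n$ and $\dzeta_n^\beta=1$, while in the second case $\dzeta_n^\alpha=1$ and $\dzeta_n^\beta=\zeta_n^\beta=\zeta_n$; either way the two factors are $\zeta_n$ and $1$, so the intersection is literally $\zeta_n\L_{n-1}\cap\L_{n-1}$.

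The real work is the second equality $\zeta_n\L_{n-1}\cap\L_{n-1}=\L_{n-1}(\gamma_n)$ together with the stated parametrization. Writing $\zeta_n=\sigma_n\varpi_n^*/\varpi_n$ and $\pi_n=\pi_{n-1}\varpi_n$, any $f=\zeta_n g$ with $g=q/\pi_{n-1}\in\L_{n-1}$ becomes $f=\sigma_n\varpi_n^*q/\pi_n$. Requiring in addition that $f\in\L_{n-1}$, i.e.\ $f=p/\pi_{n-1}$ with $p\in\P_{n-1}$, forces the polynomial identity $\varpi_n^*q=p\,\varpi_n$. Since $\gamma_n\notin\TT$ we have $\gamma_n\ne\gamma_{n*}=1/\overline{\gamma}_n$, so $\varpi_n^*(z)=z-\gamma_n$ and $\varpi_n(z)=1-\overline{\gamma}_nz$ are coprime; hence $\varpi_n\mid q$, say $q=\varpi_n\tilde q$ with $\tilde q\in\P_{n-2}$, and then $f=\sigma_n\varpi_n^*\tilde q/\pi_{n-1}$. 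This proves $\zeta_n\L_{n-1}\cap\L_{n-1}\subseteq\L_{n-1}(\gamma_n)$ and simultaneously yields the parametrization, the constant $\sigma_n$ being absorbed into $p_{n-2}$. The reverse inclusion is direct: for $f=\varpi_n^*p_{n-2}/\pi_{n-1}$ one checks $f\in\L_{n-1}$ and $f=\zeta_n\big(\overline{\sigma}_n\varpi_n p_{n-2}/\pi_{n-1}\big)$ with the factor in parentheses again in $\L_{n-1}$. The identification with $\{f\in\L_{n-1}\colon f(\gamma_n)=0\}$ is then the observation that $\varpi_n^*(\gamma_n)=0$, and conversely that a numerator in $\P_{n-1}$ vanishing at $\gamma_n$ factors through $\varpi_n^*$.

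With the spaces identified, the orthogonality is short. By definition $\phi_n\perp\L_{n-1}$, and by Corollary~\ref{cor0} applied with $\nu=\gamma$ we have $\phi_n^*\perp\zeta_n\L_{n-1}$. Any $f\in\L_{n-1}(\gamma_n)=\zeta_n\L_{n-1}\cap\L_{n-1}$ lies in both $\L_{n-1}$ and $\zeta_n\L_{n-1}$, so $\SP{\phi_n}{f}=0$ and $\SP{\phi_n^*}{f}=0$; hence $\SP{Q_n}{f}=\SP{\phi_n}{f}+\overline{\tau_n}\SP{\phi_n^*}{f}=0$ for every $\tau_n\in\TT$, which is the claimed orthogonality.

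I expect the main obstacle to be the bookkeeping in the exceptional cases $\gamma_n\in\{0,\infty\}$, and implicitly the situation where $\gamma_n$ coincides with one of the reflected earlier poles $\gamma_{j*}$, $j<n$. The coprimality argument above is purely algebraic and survives these cases once the usual conventions are applied: replacing a linear factor by its leading coefficient when the relevant point is $\infty$ (so $\varpi_n^*=-1$, $\varpi_n=-z$, $\zeta_n=1/z$), and reading the condition $f(\gamma_n)=0$ through the numerator identity rather than through naive evaluation when $\pi_{n-1}(\gamma_n)=0$. I would therefore present the generic case $\gamma_n\notin\{0,\infty\}$ in full and then indicate that the degenerate cases follow by exactly these substitutions, as was already done in the proof of Theorem~\ref{thmrec}.
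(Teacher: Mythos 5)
Your proposal is correct and takes essentially the same route as the paper: the paper's proof is exactly your final paragraph (split into the cases $\gamma_n=\alpha_n$ and $\gamma_n=\beta_n$, note $\phi_n\perp\L_{n-1}$ by definition and $\phi_n^*\perp\zeta_n\L_{n-1}$ by Corollary~\ref{cor0}, and conclude $Q_n\perp\L_{n-1}\cap\zeta_n\L_{n-1}$ by linearity). The paper asserts the set identities $\zeta_n\L_{n-1}\cap\L_{n-1}=\L_{n-1}(\gamma_n)$ and the polynomial parametrization without proof, so your coprimality argument for $\varpi_n^*$ and $\varpi_n$ merely supplies details the paper leaves implicit.
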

\begin{proof}
Suppose $\gamma_n=\alpha_n$ then $\dzeta_n^\alpha=\zeta_n^\alpha$ and $\dzeta_n^\beta=1$.
Hence $\phi_n\perp\L_{n-1}$ and $\phi_n^*\perp \zeta_n^\alpha\L_{n-1}$
and therefore $Q_n\perp \L_{n-1}\cap\zeta_n^\alpha\L_{n-1}=\L_{n-1}(\alpha_n)$.
The proof for $\gamma_n=\beta_n$ is similar.
\end{proof}

One may also def\/ine associated functions of the second kind as
\begin{gather*}
P_n^\nu(z,\tau_n)=\psi_n^\nu(z)-\tau_n^\nu\psi_n^{\nu*}(z),\qquad \nu\in\{\alpha,\beta,\gamma\}.
\end{gather*}
From Theorems~\ref{thm1} and \ref{thmrel2knd} we can easily obtain the following corollary.
\begin{Corollary}\label{cor2knd}
With the notation $Q_n$ and $P_n$ for the PORF and the associated functions just introduced, we have for $n\ge1$
\begin{gather*}
Q_n(z,\tau_n)=\dB_n^\beta(z)Q_n^\alpha(z,\tau_n)=
\tau_n \dB_n^\alpha(z) Q_n^\beta(z,\overline{\tau}_n)
\qquad \mbox{if~~$\gamma_n=\alpha_n$},
\end{gather*}
while
\begin{gather*}
Q_n(z,\tau_n)=\dB_n^\alpha(z)Q_n^\beta(z,\tau_n)=
\tau_n \dB_n^\beta(z) Q_n^\alpha(z,\overline{\tau}_n)
\qquad \mbox{if~~$\gamma_n=\beta_n$}.
\end{gather*}
Similarly
\begin{gather*}
P_n(z,\tau_n)=\dB_n^\beta(z)P_n^\alpha(z,\tau_n)=
-\tau_n \dB_n^\alpha(z) P_n^\beta(z,\overline{\tau}_n)
\qquad \mbox{if~~$\gamma_n=\alpha_n$},
\end{gather*}
while
\begin{gather*}
P_n(z,\tau_n)=\dB_n^\alpha(z)P_n^\beta(z,\tau_n)=
-\tau_n \dB_n^\beta(z) P_n^\alpha(z,\overline{\tau}_n)
\qquad \mbox{if~~$\gamma_n=\beta_n$}.
\end{gather*}
\end{Corollary}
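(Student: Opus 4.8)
The plan is to substitute the defining relations from Theorems~\ref{thm1} and~\ref{thmrel2knd} directly into the definitions $Q_n=\phi_n+\tau_n\phi_n^*$ and $P_n=\psi_n-\tau_n\psi_n^*$, then factor out the appropriate Blaschke product. I would treat the case $\gamma_n=\alpha_n$ in full and obtain $\gamma_n=\beta_n$ by the $\alpha\leftrightarrow\beta$ symmetry that pervades the whole development.

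For the first $Q_n$ relation I would insert $\phi_n=\phi_n^\alpha\dB_n^\beta$ and $\phi_n^*=(\phi_n^\alpha)^*\dB_n^\beta$ from Theorem~\ref{thm1}; factoring out $\dB_n^\beta$ gives $\dB_n^\beta[\phi_n^\alpha+\tau_n(\phi_n^\alpha)^*]=\dB_n^\beta Q_n^\alpha(z,\tau_n)$, which is exactly the claim. For the second $Q_n$ relation I would instead use $\phi_n=(\phi_n^\beta)^*\dB_n^\alpha$ and $\phi_n^*=\phi_n^\beta\dB_n^\alpha$, so that $Q_n=\dB_n^\alpha[(\phi_n^\beta)^*+\tau_n\phi_n^\beta]$. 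The only nontrivial manoeuvre here is to recognize, using $\tau_n\overline{\tau}_n=1$ since $\tau_n\in\TT$, that $(\phi_n^\beta)^*+\tau_n\phi_n^\beta=\tau_n[\phi_n^\beta+\overline{\tau}_n(\phi_n^\beta)^*]=\tau_n Q_n^\beta(z,\overline{\tau}_n)$, which produces the stated factor $\tau_n\dB_n^\alpha Q_n^\beta(z,\overline{\tau}_n)$.

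The two $P_n$ relations follow the same pattern, now drawing on Theorem~\ref{thmrel2knd} in place of Theorem~\ref{thm1}. The one bookkeeping point is the minus sign in $P_n=\psi_n-\tau_n\psi_n^*$: substituting $\psi_n=\psi_n^\alpha\dB_n^\beta$ and $\psi_n^*=(\psi_n^\alpha)^*\dB_n^\beta$ yields $\dB_n^\beta P_n^\alpha(z,\tau_n)$ at once, while substituting $\psi_n=(\psi_n^\beta)^*\dB_n^\alpha$ and $\psi_n^*=\psi_n^\beta\dB_n^\alpha$ gives $\dB_n^\alpha[(\psi_n^\beta)^*-\tau_n\psi_n^\beta]=-\tau_n\dB_n^\alpha[\psi_n^\beta-\overline{\tau}_n(\psi_n^\beta)^*]=-\tau_n\dB_n^\alpha P_n^\beta(z,\overline{\tau}_n)$, which accounts for the extra minus sign in the second $P_n$ formula.

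Since every step is a direct substitution followed by factoring, there is no genuine analytic obstacle; the only place demanding care is the phase bookkeeping, namely correctly pairing $\tau_n$ with $\overline{\tau}_n$ through $|\tau_n|=1$ and tracking the sign inherited from the definition of $P_n$. Finally the $\gamma_n=\beta_n$ case is obtained verbatim by interchanging the roles of $\alpha$ and $\beta$ (and correspondingly $\dB_n^\alpha\leftrightarrow\dB_n^\beta$) throughout, now invoking the second lines of Theorems~\ref{thm1} and~\ref{thmrel2knd}.
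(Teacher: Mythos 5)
Your proposal is correct and follows essentially the same route as the paper: substitute the identities of Theorems~\ref{thm1} and~\ref{thmrel2knd} into the definitions of $Q_n$ and $P_n$, factor out $\dB_n^\beta$ (resp.\ $\dB_n^\alpha$), and pull out the unimodular $\tau_n$ using $\tau_n\overline{\tau}_n=1$, treating the sign in $P_n$ and the $\gamma_n=\beta_n$ case by the same symmetry the paper invokes. No gaps.
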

\begin{proof}
Assume that $\gamma_n=\alpha_n$, then $\phi_n=\phi_n^\alpha\dB_n^\beta$ and $\phi_n^*=(\phi_n^\alpha)^*\dB_n^\beta$.
Thus
\begin{gather*}
Q_n(\cdot,\tau_n)=\dB_n^\beta\left[\phi_n^\alpha+\tau_n(\phi_n^\alpha)^*\right]=
\dB_n^\beta Q_n^\alpha(\cdot,\tau_n).
\end{gather*}
In a similar way one has
\begin{gather*}
Q_n(\cdot,\tau_n)=\phi_n^{\beta*}\dB_n^\alpha+\tau_n \phi_n^\beta\dB_n^\alpha
=\tau_n \dB_n^\alpha\left[\phi_n^\beta+\overline{\tau}_n\phi_n^{\beta*}\right]
=\tau_n \dB_n^\alpha Q_n^\beta(\cdot,\overline{\tau}_n).
\end{gather*}
The proofs for $P_n$ and for $\gamma_n=\beta_n$ are similar.
\end{proof}

We are now ready to state that the zeros of the para-orthogonal rational functions
$Q_n^\nu(z,\tau_n^\nu)$
will be simple and on $\TT$ no matter whether $\nu=\alpha$, $\beta$ or $\gamma$.

\begin{Corollary}\label{corzeros}
Take $n\ge1$, $\tau_n\in\TT$ and $\nu\in\{\alpha,\beta,\gamma\}$ then the zeros of $Q_n^\nu(z,\tau_n)$
are all simple and on $\TT$.
\end{Corollary}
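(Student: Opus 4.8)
The plan is to split into the three cases $\nu=\alpha$, $\nu=\beta$ and $\nu=\gamma$, and to reduce the new case $\nu=\gamma$ to the classical one via Corollary~\ref{cor2knd}. First I would dispose of $\nu=\alpha$ by invoking the classical theory: the PORF $Q_n^\alpha(z,\tau_n)$ has exactly $n$ zeros, all simple and on $\TT$ (see \cite[Theorem~5.2.1]{BooksBGHN99}). The case $\nu=\beta$ then follows from the reflection symmetry in $\TT$ relating the two sequences, namely $\beta_j=1/\oa_j$ together with $\phi_n^\beta=\phi_{n*}^\alpha$; equivalently one may run the classical argument verbatim with the roles of $\DD$ and $\EE$ interchanged.

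The heart of the matter is $\nu=\gamma$, and here the tool is Corollary~\ref{cor2knd}. If $\gamma_n=\alpha_n$ it gives $Q_n(z,\tau_n)=\dB_n^\beta(z)Q_n^\alpha(z,\tau_n)$, while if $\gamma_n=\beta_n$ it gives $Q_n(z,\tau_n)=\dB_n^\alpha(z)Q_n^\beta(z,\tau_n)$. The key observation is that the multiplier $\dB_n^\beta$ (resp.\ $\dB_n^\alpha$) is a finite Blaschke product, so each of its factors $\zeta_j$ has modulus one on $\TT$; hence $\dB_n^\beta$ is analytic and nonvanishing on a neighborhood of $\TT$, its zeros sitting at $\{\gamma_j\colon j\in\mathbbm{b}_n\}\subset\EE$ and its poles at $\{1/\overline{\gamma}_j\colon j\in\mathbbm{b}_n\}\subset\DD$, all strictly off $\TT$. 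Consequently, near any point $z_0\in\TT$ the function $Q_n$ differs from $Q_n^\alpha$ by a nonzero analytic factor, so the two have the same order of vanishing at $z_0$. Since the classical case already guarantees that $Q_n^\alpha$ has $n$ simple zeros and that all of them lie on $\TT$, these $n$ simple zeros are inherited verbatim by $Q_n$.

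It then remains to check that $Q_n$ has no further zeros. Reading the product $\dB_n^\beta Q_n^\alpha$ as an identity of rational functions on $\hat{\CC}$, the zeros of $\dB_n^\beta$ at $\{\gamma_j\colon j\in\mathbbm{b}_n\}$ are absorbed by cancelling the poles that $Q_n^\alpha\in\L_n^\alpha$ has at those same points (cf.\ Corollary~\ref{cor1}), so the multiplication introduces no new zeros off $\TT$; the zero set of $Q_n$ therefore coincides with that of $Q_n^\alpha$. As a sanity check one may note that $Q_n\in\L_n$ has numerator of degree at most $n$, hence at most $n$ zeros in $\CC$, and the $n$ simple zeros already located on $\TT$ exhaust them.

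The main obstacle I anticipate is precisely this last bookkeeping: one must verify that multiplication by the Blaschke product neither displaces a zero off $\TT$, nor raises a multiplicity, nor creates or destroys a zero through cancellation with poles. All three concerns are controlled by the two facts highlighted above, that $|\dB_n^\beta|=1$ on $\TT$ and that the only poles and zeros $\dB_n^\beta$ contributes lie at positive distance from $\TT$; the case $\gamma_n=\beta_n$ is handled identically with $\dB_n^\alpha$ and $Q_n^\beta$ in place of $\dB_n^\beta$ and $Q_n^\alpha$.
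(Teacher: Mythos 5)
Your proof is correct and follows essentially the same route as the paper: dispose of $\nu=\alpha$ via \cite[Theorem~5.2.1]{BooksBGHN99}, get $\nu=\beta$ by symmetry, and reduce $\nu=\gamma$ to the classical case through the factorization $Q_n=\dB_n^\beta Q_n^\alpha$ (resp.\ $\dB_n^\alpha Q_n^\beta$) of Corollary~\ref{cor2knd}. The only difference is that you spell out the cancellation bookkeeping (zeros and poles of the Blaschke factor lie off $\TT$, and the degree-$n$ numerator bound rules out extra zeros), which the paper compresses into the parenthetical remark ``and hence will not cancel against any of the poles.''
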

\begin{proof}
The case $\nu=\alpha$ was proved in \cite[Theorem~5.2.1]{BooksBGHN99} and by symmetry, this also
holds for $\nu=\beta$. For $\nu=\gamma$,
assume that $\gamma_n=\alpha_n$, then by the previous corollary $Q_n(z,\tau_n)=\dB_n^\beta(z)Q_n^\alpha(z,\tau_n)$
so that the zeros of
$Q_n(z,\tau_n)$ are the zeros of $Q_n^\alpha(z,\tau_n)$
(and hence will not cancel against any of the poles).

The proof for $\gamma_n=\beta_n$ is similar.
\end{proof}
These PORF properties are important because the zeros will deliver the nodes
for the rational Szeg\H{o} quadrature as we will explain in the next section.

For further reference we give the following property.
\begin{Proposition}\label{propPORF}
For $n\ge1$ and $\nu\in\{\alpha,\beta,\gamma\}$, the PORF satisfy, using the notation of Theorem~{\rm \ref{thmrec}}
\begin{gather*}
Q_n^\nu(z,\tau_n)=c_n^\nu\frac{\varpi_{n-1}^\nu(z)}{\varpi_n^\nu(z)}
\big[\zeta_{n-1}^\nu(z)\phi_{n-1}^\nu(z)+\tilde{\tau}_n^\nu\phi_{n-1}^{\nu*}(z)\big]
\end{gather*}
with
\begin{gather*}
c_n^\nu=e_n^\nu\big(\eta_{n1}+\eta_{n2}\tau_n\overline{\lambda}_n^\nu\big),\qquad
\tilde{\tau}_n^\nu=\frac{\hat{\tau}_n+\lambda_n^\nu}{1+\hat{\tau}_n\overline{\lambda}_n^\nu}\in\TT,\qquad
\hat{\tau}_n=\overline{\eta}_{n1}\eta_{n2}\tau_n.
\end{gather*}
\end{Proposition}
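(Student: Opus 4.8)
The plan is to feed the recurrence of Theorem~\ref{thmrec} directly into the definition~\eqref{defPORF}. First I would multiply out the $2\times2$ matrix product in Theorem~\ref{thmrec} and record the two scalar identities obtained from the top and bottom rows of the vector recursion:
\begin{gather*}
\phi_n^\nu=e_n^\nu\eta_{n1}^\nu\frac{\varpi_{n-1}^\nu}{\varpi_n^\nu}\big[\zeta_{n-1}^\nu\phi_{n-1}^\nu+\lambda_n^\nu\phi_{n-1}^{\nu*}\big],\qquad
\phi_n^{\nu*}=e_n^\nu\eta_{n2}^\nu\frac{\varpi_{n-1}^\nu}{\varpi_n^\nu}\big[\overline{\lambda}_n^\nu\zeta_{n-1}^\nu\phi_{n-1}^\nu+\phi_{n-1}^{\nu*}\big].
\end{gather*}
Here the $\eta_{n1}^\nu,\eta_{n2}^\nu$ are the unimodular entries of $H_n^\nu$; in the statement of the proposition the superscript $\nu$ on them has simply been suppressed.

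Next I would form $Q_n^\nu=\phi_n^\nu+\tau_n\phi_n^{\nu*}$ and collect the two surviving building blocks $\zeta_{n-1}^\nu\phi_{n-1}^\nu$ and $\phi_{n-1}^{\nu*}$, which gives
\begin{gather*}
Q_n^\nu=e_n^\nu\frac{\varpi_{n-1}^\nu}{\varpi_n^\nu}\Big[\big(\eta_{n1}^\nu+\eta_{n2}^\nu\tau_n\overline{\lambda}_n^\nu\big)\zeta_{n-1}^\nu\phi_{n-1}^\nu+\big(\eta_{n1}^\nu\lambda_n^\nu+\eta_{n2}^\nu\tau_n\big)\phi_{n-1}^{\nu*}\Big].
\end{gather*}
The coefficient of $\zeta_{n-1}^\nu\phi_{n-1}^\nu$ is exactly $c_n^\nu$, so I would factor it out; this is legitimate because $|\lambda_n^\nu|\ne1$ (by Theorem~\ref{thmcon} it lies in $\DD$ or $\EE$), which forces $c_n^\nu\ne0$. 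What remains in front of $\phi_{n-1}^{\nu*}$ is the ratio $\big(\eta_{n1}^\nu\lambda_n^\nu+\eta_{n2}^\nu\tau_n\big)/\big(\eta_{n1}^\nu+\eta_{n2}^\nu\tau_n\overline{\lambda}_n^\nu\big)$, and dividing numerator and denominator by the unimodular factor $\eta_{n1}^\nu$ turns it into the stated M\"obius expression $\tilde{\tau}_n^\nu=(\hat{\tau}_n+\lambda_n^\nu)/(1+\hat{\tau}_n\overline{\lambda}_n^\nu)$ with $\hat{\tau}_n=\overline{\eta}_{n1}^\nu\eta_{n2}^\nu\tau_n$.

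Finally I would verify $\tilde{\tau}_n^\nu\in\TT$. Since $\eta_{n1}^\nu,\eta_{n2}^\nu,\tau_n\in\TT$ we have $|\hat{\tau}_n|=1$, and then
\begin{gather*}
|\hat{\tau}_n+\lambda_n^\nu|^2=1+|\lambda_n^\nu|^2+2\Re\big(\hat{\tau}_n\overline{\lambda}_n^\nu\big)=\big|1+\hat{\tau}_n\overline{\lambda}_n^\nu\big|^2,
\end{gather*}
so the numerator and denominator of $\tilde{\tau}_n^\nu$ have equal modulus and $|\tilde{\tau}_n^\nu|=1$; note this uses only $|\hat{\tau}_n|=1$ and is insensitive to the size of $\lambda_n^\nu$.

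I expect the only friction to be bookkeeping rather than a genuine obstacle, since every step is a mechanical rearrangement of the recurrence. The one case needing separate attention is $\nu=\gamma$ with $\phi_n$ non-regular, where $\lambda_n=\infty$: there the coupling matrix $\left[\begin{matrix}1&\lambda_n^\nu\\\overline{\lambda}_n^\nu&1\end{matrix}\right]$ is replaced by $\left[\begin{matrix}0&1\\1&0\end{matrix}\right]$, as explained after Theorem~\ref{thmrec}. Rerunning the same collection step with this replacement yields $c_n^\nu=e_n^\nu\eta_{n2}^\nu\tau_n$ and $\tilde{\tau}_n^\nu=\eta_{n1}^\nu/(\eta_{n2}^\nu\tau_n)=\overline{\hat{\tau}}_n\in\TT$, the natural $\lambda_n=\infty$ specialization of the formulas above, so the conclusion persists.
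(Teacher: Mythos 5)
Your proposal is correct and is exactly the paper's own argument: the paper's proof consists of premultiplying the recurrence of Theorem~\ref{thmrec} by the row vector $[1\ \ \tau_n]$ and rearranging, which is precisely your collection of the two scalar rows into $Q_n^\nu=\phi_n^\nu+\tau_n\phi_n^{\nu*}$ followed by factoring out $c_n^\nu$. You merely spell out the details the paper leaves implicit (the unimodularity check for $\tilde{\tau}_n^\nu$, the nonvanishing of $c_n^\nu$, and the non-regular case $\lambda_n=\infty$), all of which are handled correctly.
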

\begin{proof}
Just take the recurrence relation of Theorem~\ref{thmrec} and premultiply it with $[1~~\tau_n]$.
After re-arrangement of the terms, the expression above will result.
\end{proof}

The importance of this property is that, up to a constant nonzero factor $c_n^\nu$, we can compute~$Q_n^\nu(z,\tau_n)$ by exactly the same recurrence that gives $\phi_k^\nu$ in terms of $\phi_{k-1}^\nu$ and
$\phi_{k-1}^{\nu*}$ for $k=1,2,\ldots,n$, except that we have to replace the trailing parameter $\lambda_n^\nu$
by a unimodular constant~$\tilde{\tau}_n^\nu$ to get~$Q_n^\nu(z,\tau_n)$.
When we are interested in the zeros of~$Q_n^\nu(z,\tau_n)$, then this normalizing factor $c_n^\nu$
does not matter since it is nonzero.

\section{Quadrature}\label{secQF}
We start by proving that the subspace of rational functions in which the
quadrature formulas will be exact only depends on the points
$\{\alpha_k\colon k=0,\ldots,n-1\}$ no matter whether the points $\alpha_k$
are introduced as a pole $\alpha_k$ or $\alpha_{k*}=1/\oa_k$ in the sequence
$\seq{\gamma}_n=(\gamma_k)_{k=0}^n$.

\begin{Lemma}
$\R_n\ed\L_n\cdot\L_{n*}=\R_n^\alpha\ed\L_n^\alpha\cdot\L_{n*}^\alpha=\R_n^\beta\ed\L_n^\beta\cdot\L_{n*}^\beta$.
\end{Lemma}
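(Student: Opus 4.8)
The plan is to prove $\R_n = \R_n^\alpha = \R_n^\beta$ by showing that each of the product spaces $\L_n\cdot\L_{n*}$, $\L_n^\alpha\cdot\L_{n*}^\alpha$, and $\L_n^\beta\cdot\L_{n*}^\beta$ admits the same intrinsic description as a space of rational functions whose numerator is a polynomial of controlled degree and whose denominator is fixed by the points $\{\alpha_k\colon k=0,\ldots,n\}$ alone. The key structural fact I would exploit is Lemma~\ref{lem1}, which relates $\L_n$ to $\L_n^\alpha$ and $\L_n^\beta$ through multiplication by the Blaschke products $\dB_n^\beta$ and $\dB_n^\alpha$. Since $|\dB_n^\beta(z)|=1$ on $\TT$ is not quite the issue here — rather, the point is that $\dB_n^\beta$ and its reciprocal are unimodular constants' worth of genuine rational factors — multiplying $\L_n$ by such a factor and then by its $(\ )_*$-conjugate should exactly cancel out, leaving a space that no longer remembers whether a given point entered as $\alpha_k$ or $\beta_k$.

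\textbf{Reduction via Lemma~\ref{lem1}.}
First I would write $\L_n = \dB_n^\beta\,\L_n^\alpha$ from Lemma~\ref{lem1}. Applying the $(\ )_*$ operation, and using the reflection identity $\dB_{n*}^\beta = 1/\dB_n^\beta$ together with $B_n^\alpha=(B_n^\beta)_*$, I get $\L_{n*} = \dB_{n*}^\beta\,\L_{n*}^\alpha = (1/\dB_n^\beta)\,\L_{n*}^\alpha$. Therefore
\begin{gather*}
\R_n = \L_n\cdot\L_{n*} = \left(\dB_n^\beta\,\L_n^\alpha\right)\cdot\left(\tfrac{1}{\dB_n^\beta}\,\L_{n*}^\alpha\right) = \L_n^\alpha\cdot\L_{n*}^\alpha = \R_n^\alpha,
\end{gather*}
where the factors $\dB_n^\beta$ and $1/\dB_n^\beta$ cancel. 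The identical argument with $\L_n=\dB_n^\alpha\,\L_n^\beta$ and $\L_{n*} = (1/\dB_n^\alpha)\,\L_{n*}^\beta$ yields $\R_n = \R_n^\beta$. That is the whole idea, and it collapses the three-way equality to a single cancellation performed twice.

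\textbf{Care points and the main obstacle.}
The step that needs genuine care — and where I expect the main obstacle — is the handling of the products $\L_n^\alpha\cdot\L_{n*}^\alpha$ as \emph{sets}, since a product of two spans is the span of all pairwise products, and I must make sure the cancellation of $\dB_n^\beta$ against $1/\dB_n^\beta$ is legitimate factorwise rather than merely formally. Concretely, a typical element of $\R_n$ is a finite sum $\sum_i (\dB_n^\beta f_i)(\tfrac{1}{\dB_n^\beta} g_i)$ with $f_i\in\L_n^\alpha$ and $g_i\in\L_{n*}^\alpha$; each summand equals $f_i g_i$, so the sum lies in $\L_n^\alpha\cdot\L_{n*}^\alpha$, giving $\R_n\subseteq\R_n^\alpha$, and the reverse inclusion follows by running the factorization backwards. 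I would also double-check the edge case $\alpha_k=0$ (hence $\beta_k=\infty$), where $\dB_n^\beta$ picks up the factor $\zeta_j^\beta = 1/z$ and the reflection identities carry an extra sign or inversion; here the remarks after the definitions of $\varpi_j^{\beta*}$ and $\zeta_j^\beta$ in Section~\ref{secnot} confirm that the unimodular and $(\ )_*$ bookkeeping still produces exact cancellation. Once these set-theoretic and degenerate-point checks are dispatched, the two displayed cancellations complete the proof.
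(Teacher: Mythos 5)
Your proof is correct, but it follows a genuinely different route from the paper's. The paper proves the lemma by giving an intrinsic description of all three product spaces: $\L_n$ consists of rational functions of degree at most $n$ with poles in $\{\beta_j\colon j\in\mathbbm{a}_n\}\cup\{\alpha_j\colon j\in\mathbbm{b}_n\}$, while $\L_{n*}$ has the complementary (reflected) pole set, so the product $\R_n$ consists of rational functions of degree at most $2n$ with poles in the full set $\{\alpha_j,\beta_j\colon j=1,\ldots,n\}$; the same description holds verbatim for $\R_n^\alpha$ and $\R_n^\beta$, whence equality. You instead reduce everything to Lemma~\ref{lem1}, writing $\L_n=\dB_n^\beta\L_n^\alpha$ and $\L_{n*}=\big(1/\dB_n^\beta\big)\L_{n*}^\alpha$ (via the factorwise reflection $\zeta_{j*}^\beta=1/\zeta_j^\beta$, which indeed survives the case $\beta_j=\infty$), and cancel the Blaschke factors pairwise inside the product space; your care point about performing the cancellation factorwise on products $g_*h$ rather than formally on spans is exactly the right one, and both inclusions go through as you describe. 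The trade-off: your argument is tighter and reuses established machinery, avoiding the paper's somewhat informal ``contains rational functions whose denominators have zeros in\dots'' phrasing, which strictly speaking needs a degree/dimension count to upgrade containment to equality; the paper's argument, on the other hand, yields as a by-product the explicit characterization of $\R_n$ as a space of degree-$2n$ rational functions with the fixed full pole set, which is the description actually invoked later in the quadrature discussion, whereas your cancellation proof establishes the equalities without exhibiting what the common space is.
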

\begin{proof}
The space $\L_n$ contains rational functions of degree at most $n$ whose denominators
have zeros in $\{\beta_j\colon j\in\mathbbm{a}_n\}\cup \{\alpha_j\colon j\in\mathbbm{b}_n\}$.
The space $\L_{n*}$ contains rational functions of degree at most $n$ whose denominators
have zeros in $\{\beta_j\colon j\in\mathbbm{b}_n\}\cup \{\alpha_j\colon j\in\mathbbm{a}_n\}$.
Thus the space $\R_n$ contains rational functions of degree at most $2n$ whose denominators
have zeros in $\{\beta_j\colon j=1,\ldots,n\}\cup \{\alpha_j\colon j=1,\ldots,n\}$.
Since the denominators of functions in $\L_n^\alpha$ have zeros in
$\{\beta_j\colon j=1,\ldots,n\}$ and
functions in $\L_{n*}^\alpha$ have zeros in $\{\alpha_j\colon j=1,\ldots,n\}$,
the denominator of functions in $\R_n^\alpha$ have zeros in $\{\beta_j\colon j=1,\ldots,n\}\cup \{\alpha_j\colon j=1,\ldots,n\}$ so that $\R_n=\R_n^\alpha$.
Of course a similar argument shows that also $\R_n=\R_n^\beta$.
\end{proof}

As is well known from the classical case associated with the sequence
$\seq{\alpha}$ discussed for example in \cite[Chapter~5]{BooksBGHN99}, the rational Szeg\H{o} quadrature formulas are of the form
\begin{gather*}
I_n^\alpha(f)=\sum_{j=0}^{n-1} w_{nj}^\alpha f(\xi_{nj}^\alpha),\qquad n\ge1,
\end{gather*}
where $\xi_{nj}^\alpha=\xi_{nj}^\alpha(\tau_n^\alpha)$, $j=0,\ldots,n-1$
are the zeros of the para-orthogonal rational function $Q_n^\alpha(z,\tau_n^\alpha)$ with $\tau_n^\alpha\in\TT$ and the weights are given by
\begin{gather*}
w_{nj}^\alpha=w_{nj}^\alpha(\tau_n)=\frac{1}{\sum\limits_{k=0}^{n-1} |\phi_k^\alpha(\xi_{nj}^\alpha(\tau_n^\alpha))|^2}>0.
\end{gather*}
These formulas have maximal degree of exactness, meaning that
\begin{gather*}
\int_{\TT} f(t)\d\mu(t)=I_n^\alpha(f),\qquad \forall\, f\in\R_{n-1}^\alpha
\end{gather*}
and that no $n$-point interpolatory quadrature formula with nodes on $\TT$
and positive weights can be exact in a larger $(2n-1)$-dimensional space of the form $\L_n^\alpha\cdot\L_{(n-1)*}^\alpha$ or $\L_{n*}^\alpha\cdot\L_{n-1}^\alpha$.
They are however exact in a subspace of dimension $2n-1$ of a dif\/ferent form as shown in~\cite{NPNSL07} and~\cite{NPCBDMPPJ17} for Laurent polynomials and in~\cite{ArtBHGN07} for the rational case.

Our previous results show that taking an arbitrary order of selecting the $\seq{\gamma}$ does not add new quadrature formulas.
Indeed, if $\gamma_n=\alpha_n$ then the zeros of $Q_n(z,\tau_n)$, $Q_n^\alpha(z,\tau_n)$ and $Q_n^\beta(z,\overline{\tau}_n)$ are all the same,
i.e., $\xi_{nj}(\tau_n)=\xi_{nj}^\alpha(\tau_n)=\xi_{nj}^\beta(\overline{\tau}_n)$, $j=1,\ldots,n$. Dropping the dependency on $\tau$ from the notation, it
is directly seen that
\begin{gather*}
\sum_{k=0}^{n-1}|\phi_k(\xi_{nj})|^2=
\sum_{k=0}^{n-1}|\phi_k^\alpha(\xi_{nj})\dB_k^\beta(\xi_{nj})|^2=
\sum_{k=0}^{n-1}|\phi_k^\alpha(\xi_{nj})|^2=
\sum_{k=0}^{n-1}|\phi_k^\alpha(\xi_{nj}^\alpha)|^2
\end{gather*}
and thus we also have $w_{nj}=w_{nj}^\alpha$ and similarly
$w_{nj}=w_{nj}^\beta$.
Therefore $I_n=I_n^\alpha=I_n^\beta$ for appropriate choices of the def\/ining
parameters, i.e.,
\begin{gather*}
\tau_n^\alpha=
\tau_n
\qquad\mbox{and}\qquad
\tau_n^\beta=\overline{\tau}_n
\qquad\mbox{if~~$\gamma_n=\alpha_n$},\\
\tau_n^\beta=
\tau_n
\qquad\mbox{and}\qquad
\tau_n^\alpha=\overline{\tau}_n
\qquad\mbox{if~~$\gamma_n=\beta_n$}.
\end{gather*}

An alternative expression for the weights is also known (\cite[Theorem~5.4.1]{BooksBGHN99}) and it will obviously
also coincide for $\nu\in\{\alpha,\beta,\gamma\}$:
\begin{gather*}
w_{nj}^\nu=\frac{1}{2\xi_{nj}^\nu}\left.\frac{P_n^\nu(z)}{\frac{\d}{\d z}Q_n^\nu(z)}\right|_{z=\xi_{nj}^\nu},
\end{gather*}
where $Q_n^\nu$ is the PORF with zeros $\{\xi_{nj}^\nu\}_{j=0}^{n-1}$
and $P_n^\nu$ the associated functions
of the second kind as in Corollary~\ref{cor2knd}.
We have dropped again the obvious dependence on the parameter $\tau_n^\nu$ from the notation.

A conclusion to be drawn from this section is that whether we choose the sequence
$\seq{\gamma}$, $\seq{\alpha}$ or~$\seq{\beta}$, the resulting quadrature formula we obtain
is an $n$-point formula that is exact in the space $\R_{n-1}=\L_{n-1}\cdot\L_{(n-1)*}$.
Such a quadrature formula is called an $n$-point rational Szeg\H{o} quadrature and these are the
only positive interpolating quadrature formulas exact in this space. They are unique up to the choice of the parameter $\tau_n\in\TT$.
Thus, to obtain the nodes and weights for a~general sequence $\seq{\gamma}$ and a choice for
$\tau_n\in\TT$, we can as well compute them for the sequence~$\seq{\alpha}$ and an appropriate
$\tau_n^\alpha\in\TT$ or for the sequence that alternates between one $\alpha$ and one $\beta$. The resulting quadrature will be the same.

In practice nodes and weights of a quadrature formula are computed by solving an eigenvalue
problem. It will turn out that also in these computations, nothing is gained from adopting a~general order
for the $\gamma$ sequence but that a sequence alternating between $\alpha$ and $\beta$ (the balanced situation) is
the most economical one.
We shall come back to this in Section~\ref{secSA} after we provide in the next sections the structure of the matrices whose eigenvalues will give the nodes of the quadrature formula.

Rational interpolatory and Szeg\H{o} quadrature formulas with arbitrary order of the poles including error estimates and numerical experiments were considered in \cite{MISCCYGV07}.

\section{Block diagonal with Hessenberg blocks}\label{secHB}
The orthogonal polynomials are a special case
of ORF obtained by choosing a sequence $\seq{\gamma}=\seq{\alpha}$
that contains only zeros.
Another special case is given by the orthogonal Laurent polynomials (OLP)
obtained by choosing an alternating sequence $\seq{\gamma}=(0,\infty,0,\infty,0,\infty,\ldots)$.
In \cite{MISCVel07}, Vel\'azquez described spectral methods for
ORF on the unit circle that were based on these two special choices.
The result was that the shift operator
$\T_\mu\colon L_\mu^2\to L_\mu^2\colon f(z)\mapsto zf(z)$ has a~matrix representation
that is a matrix M\"obius transform of a structured matrix.
The structure is a Hessenberg structure in the case of
$\seq{\gamma}=\seq{\alpha}$
and it is a f\/ive-diagonal matrix (a so-called CMV matrix) for the sequence
$\seq{\gamma}=(0,\beta_1,\alpha_2,\beta_3,\alpha_4,\ldots)$.
In the case of orthogonal (Laurent) polynomials the
M\"obius transform turns out to be just the identity and we get the
plain Hessenberg matrix for the polynomials and the plain CMV matrix for the Laurent polynomials.

In \cite{MISCBD07}, the OLP case was discussed using an alternative linear algebra approach
when the~$0$,~$\infty$ choice did not alternate nicely, but the order in
which they were added was arbitrary.
Then the structured matrix generalized to a so-called snake-shape matrix referring
to the zig-zag shape of the graphical representation that f\/ixed the order in which the
elementary factors of the matrix had to be multiplied.
This structure is a generalization of both the Hessenberg and the CMV structures
mentioned above. It is a block diagonal where the blocks alternate
between upper and lower Hessenberg structure.

To illustrate this for our ORF, we start in this section by using
the approach of Vel\'azquez in~\cite{MISCVel07}, skipping all the details
just to obtain the block structure of the matrix that will represent the shift operator.
In the next sections we shall use the linear algebra approach of \cite{MISCBD07}
to analyze the computational aspects of obtaining the quadrature formulas.

We start from the recurrence for the $\phi_k^\alpha$ and transform it into a
recurrence relation for the~$\phi_k$, which will eventually result in a
representation of the shift operator.

To avoid a tedious book-keeping of normalizing constants, we will just exploit the fact
that there are some constants such that certain dependencies hold.
For example the recurrence
\begin{gather*}
\phi_n^\alpha(z)=e_n\eta_{n1}\frac{\varpi_{n-1}^\alpha(z)}{\varpi_n^\alpha(z)}\big[\zeta_{n-1}^\alpha(z)\phi_{n-1}^\alpha(z)+\lambda_n^\alpha \phi_{n-1}^{\alpha*}(z)\big]
\end{gather*}
or equivalently
\begin{gather*}
\phi_n^\alpha(z)=
 e_n\eta_{n1}\left[
 \sigma_{n-1}
 \frac{\varpi_{n-1}^{\alpha*}(z)}{\varpi_n^\alpha(z)}
 \phi_{n-1}^\alpha(z)+
 \lambda_n^\alpha
 \frac{\varpi_{n-1}^{\alpha}(z)}{\varpi_n^\alpha(z)}
 \phi_{n-1}^{\alpha*}(z)\right]
\end{gather*}
or
\begin{gather}\label{eqs0}
\varpi_{n-1}^{\alpha*}(z)\phi_{n-1}^\alpha(z)=
e_n^{-1}\overline{\eta}_{n1}\overline{\sigma}_{n-1}
 \varpi_n^\alpha(z)\phi_n^\alpha(z)-
\lambda_n^\alpha\overline{\sigma}_{n-1}
 \varpi_{n-1}^\alpha(z)\phi_{n-1}^{\alpha*}(z)
\end{gather}
will be expressed as
\begin{gather}\label{eqs1}
\varpi_{n-1}^{\alpha*}\phi_{n-1}^\alpha\in
\Span\big\{\varpi_{n}^\alpha\phi_n^\alpha,
 \varpi_{n-1}^\alpha\phi_{n-1}^{\alpha*}\big\}.
\end{gather}
Similarly, by combining the recurrence for $\phi_n^\alpha$ and $\phi_n^{\alpha*}$ from Theorem~\ref{thmrec} and eliminating $\phi_{n-1}^\alpha$,
we have{\samepage
\begin{gather}\label{eqs2}
\varpi_{n}^{\alpha}\phi_{n}^{\alpha*}\in
\Span\big\{\varpi_{n}^\alpha\phi_n^\alpha,
 \varpi_{n-1}^\alpha\phi_{n-1}^{\alpha*}\big\}.
\end{gather}
Note that this relation always holds whether or not $\lambda_n^\alpha\ne0$.}

Now suppose that our sequence $\seq{\gamma}$ has the following conf\/iguration{\samepage
\begin{gather*}
\seq{\gamma}=(
\ldots,
\beta_{k-1},\alpha_{k},\ldots,\alpha_{n},\ldots,\alpha_{m-1},\beta_{m},\ldots,\beta_{l-1},\alpha_{l},\dots)
\end{gather*}
with $2\le k\le n < m<l$ where $k$, $m$, $l$ are f\/ixed and $n$ varying from $k$ to $m-1$.}

Using (\ref{eqs1}) and then repeatedly (\ref{eqs2}) we get
\begin{gather*}
\varpi_{n}^{\alpha*}\phi_{n}^\alpha\in
\Span\big\{
\varpi_{n+1}^\alpha\phi_{n+1}^\alpha
,\ldots,
\varpi_{k}^\alpha\phi_{k}^\alpha,
\varpi_{k-1}^\alpha\phi_{k-1}^{\alpha*}
\big\}.
\end{gather*}
Multiply this with $\dB_{n}^\beta=\dB_{n-1}^\beta=\cdots=\dB_{k-1}^\beta$
\begin{gather*}
\varpi_{n}^{\alpha*}\dB_{n}^\beta\phi_{n}^\alpha\in
\Span\big\{
\varpi_{n+1}^\alpha\dB_{n}^\beta\phi_{n+1}^\alpha
,\ldots,
\varpi_{k}^\alpha\dB_{k}^\beta\phi_{k}^\alpha,
\varpi_{k-1}^\alpha\dB_{k-1}^\beta\phi_{k-1}^{\alpha*}\big\},
\end{gather*}
and since by Theorem~\ref{thm1},
$\phi_{p}=\dB_{p}^\beta\phi_{p}^\alpha$
for $p=n,n-1,\ldots,k$
and $\phi_{k-1}=\dB_{k-1}^\beta\phi_{k-1}^{\alpha*}$, this becomes
\begin{gather}\label{eqsp1}
\varpi_{n}^{\alpha*}\phi_{n}\in
\Span\big\{
\varpi_{n+1}^\alpha\dB_{n}^\beta\phi_{n+1}^\alpha,
\varpi_{n}^\alpha\phi_{n}
,\ldots,
\varpi_{k}^\alpha\phi_{k},
\varpi_{k-1}^\alpha\phi_{k-1}\big\}.
\end{gather}
Thus if $n+1<m$, then $\gamma_{n+1}=\alpha_{n+1}$, hence $\dB_{n+1}^\beta=\dB_{n}^\beta$ and $\phi_{n+1}=\dB_{n+1}^\beta\phi_{n+1}^\alpha$, so that
\begin{gather}\label{eqnp1ltm}
\varpi_{n}^{\alpha*}\phi_{n}\in
\Span\{
\varpi_{p}^\alpha\phi_{p}\}_{p=k-1}^{n+1},\qquad k\le n<m-1.
\end{gather}
If $n+1=m$, then $\gamma_{n+1}=\beta_m$ and we start dealing with the subsequence
of $\beta$'s in
\begin{gather*}
\seq{\gamma}=(
\ldots,
\alpha_{m-1},\beta_m,
\ldots,
\beta_{l-1},
\alpha_{l},\ldots).
\end{gather*}
Therefore we note that
\begin{gather}\label{eqto*}
\varpi_m^\alpha(z)=
(z-\alpha_m)\frac{1-\oa_mz}{z-\alpha_m}=(z-\alpha_m)\frac{z-\beta_m}{1-\ob_m z}\sigma_m^2=\sigma_m\varpi_m^{\alpha*}(z)\zeta_m^\beta(z).
\end{gather}
Thus
\begin{gather*}
\varpi_m^\alpha\dB_{m-1}^\beta\phi_m^\alpha=\sigma_m\varpi_m^{\alpha*}\dB_m^\beta\phi_m^\alpha.
\end{gather*}
Using (\ref{eqs1}) again repeatedly we then see that
\begin{gather*}
\varpi_m^\alpha\dB_{m-1}^\beta\phi_m^\alpha\in
\dB_m^\beta\Span\big\{\varpi_m^\alpha\phi_m^{\alpha*},\varpi_{m+1}^\alpha\phi_{m+1}^\alpha\big\}=
\dB_m^\beta\Span\big\{\varpi_m^\alpha\phi_m^{\alpha*},\varpi_{m+1}^{\alpha*}\zeta_{m+1}^\beta\phi_{m+1}^\alpha\big\}\\
\hphantom{\varpi_m^\alpha\dB_{m-1}^\beta\phi_m^\alpha}{}=
\Span\big\{\varpi_m^\alpha\dB_m^\beta\phi_m^{\alpha*},
 \varpi_{m+1}^{\alpha*} \dB_{m+1}^\beta\phi_{m+1}^{\alpha}\big\}\\
\hphantom{\varpi_m^\alpha\dB_{m-1}^\beta\phi_m^\alpha}{}= \cdots \\
\hphantom{\varpi_m^\alpha\dB_{m-1}^\beta\phi_m^\alpha}{}=
\Span\big\{\varpi_m^\alpha\dB_m^\beta\phi_m^{\alpha*},\ldots,
\varpi_{l-1}^\alpha\dB_{l-1}^\beta\phi_{l-1}^{\alpha*},\varpi_{l}^\alpha\dB_{l}^\beta\phi_{l}^\alpha\big\}\\
\hphantom{\varpi_m^\alpha\dB_{m-1}^\beta\phi_m^\alpha}{}=
\Span\big\{\varpi_m^\alpha\phi_m,\ldots,\varpi_{l-1}^\alpha\phi_{l-1},
\varpi_{l}^\alpha\phi_{l}\big\},
\end{gather*}
so that by plugging this into (\ref{eqsp1}) with $n+1=m$ gives
\begin{gather}\label{eqsp2}
\varpi_{m-1}^{\alpha*}\phi_{m-1}\in
\Span\big\{\varpi_{p}^\alpha\phi_{p}\big\}_{p=k-1}^{l}.
\end{gather}
The two relations (\ref{eqsp1}) and (\ref{eqsp2}) tell us how we should
express $\varpi_{n}^{\alpha*}\phi_{n}$ in terms of the $\{\varpi_k^\alpha\phi_k\colon k\le n+1\}$ in the case that $\gamma_{n}=\alpha_{n}$.

The next step is to express $\varpi_n^{\alpha*}\phi_n$ in terms of $\{\varpi_p^\alpha\phi_p\}$, $p\ge n-1$
when $\gamma_n=\beta_n$. This goes along the same lines. Let us consider the mirror situation
\begin{gather*}
\seq{\gamma}=
(\ldots,
\alpha_{m-1},\beta_{m},\ldots,\beta_{n},\ldots,\beta_{l-1},\alpha_{l},\ldots,\alpha_{r-1},\beta_r,\ldots)
\end{gather*}
with $2\le m\le n<l<r$ where $m$, $l$, $r$ are again f\/ixed and $n$ is varying from $m$ to $l-1$.

We f\/irst use (\ref{eqto*}) for $m=n$ to write
\begin{gather*}
\varpi_n^{\alpha*}\phi_n=\varpi_n^{\alpha*}\dB_n^\beta\phi_n^{\alpha*}=
\overline{\sigma}_n\varpi_n^{\alpha}\dB_{n-1}^\beta\phi_n^{\alpha*},
\end{gather*}
which implies that we shall need expressions for $\varpi_n^{\alpha}\phi_n^{\alpha*}$.
We use repeatedly (\ref{eqs2}) in combination with the previous relation to get
\begin{gather*}
\varpi_n^{\alpha*}\phi_n
\in \dB_{n-1}^\beta
\Span\big\{ \varpi_{n-1}^{\alpha}\phi_{n-1}^{\alpha*}, \varpi_n^{\alpha}\phi_n^{\alpha} \big\} \\
\hphantom{\varpi_n^{\alpha*}\phi_n}{}= \Span\big\{
\dB_{n-1}^\beta\varpi_{n-1}^{\alpha}\phi_{n-1}^{\alpha*}, \dB_n^\beta\varpi_n^{\alpha*}\phi_n^\alpha \big\} \\
\hphantom{\varpi_n^{\alpha*}\phi_n}{}= \Span\big\{
\dB_{n-1}^\beta\varpi_{n-1}^{\alpha}\phi_{n-1}^{\alpha*},
\dB_n^\beta\varpi_n^{\alpha}\phi_n^{\alpha*},
\dB_n^\beta\varpi_{n+1}^\alpha\phi_{n+1}^\alpha
\big\} \\
\hphantom{\varpi_n^{\alpha*}\phi_n}{}= \Span\big\{
\dB_{n-1}^\beta\varpi_{n-1}^{\alpha}\phi_{n-1}^{\alpha*},
\dB_n^\beta\varpi_n^{\alpha}\phi_n^{\alpha*},
\dB_{n+1}^\beta\varpi_{n+1}^{\alpha*}\phi_{n+1}^\alpha
\big\} \\
\hphantom{\varpi_n^{\alpha*}\phi_n}{}= \Span\big\{
\dB_{n-1}^\beta\varpi_{n-1}^{\alpha}\phi_{n-1}^{\alpha*},
\dB_n^\beta\varpi_n^{\alpha}\phi_n^{\alpha*},
\dB_{n+1}^\beta\varpi_{n+1}^{\alpha}\phi_{n+1}^{\alpha*},
\dB_{n+1}^\beta\varpi_{n+2}^{\alpha}\phi_{n+2}^{\alpha}
\big\} \\
\hphantom{\varpi_n^{\alpha*}\phi_n}{}=\cdots\\
\hphantom{\varpi_n^{\alpha*}\phi_n}{}=
\Span\big\{
\dB_{n-1}^\beta\varpi_{n-1}^{\alpha}\phi_{n-1}^{\alpha*},
\ldots,
\dB_{m-1}^\beta\varpi_{l-1}^{\alpha}\phi_{l-1}^{\alpha*},
\dB_{m-1}^\beta\varpi_{l}^{\alpha}\phi_{l}^{\alpha}
\big\} \\
\hphantom{\varpi_n^{\alpha*}\phi_n}{}=
\Span\big\{
\dB_{n-1}^\beta\varpi_{n-1}^{\alpha}\phi_{n-1}^{\alpha*},
\varpi_{n}^{\alpha}\phi_{n},
\ldots,
\varpi_{l-1}^{\alpha}\phi_{l-1},
\varpi_{l}^{\alpha}\phi_{l}\big\},
\end{gather*}
where in the last step we used $\gamma_l=\alpha_l$ and hence $\dB_{l-1}^\beta=\dB_l^\beta$ and
$\dB_{l-1}^\beta\phi_l^\alpha=\phi_l$.
Thus if $n>m$, also $\gamma_{n-1}=\beta_{n-1}$ so that also the f\/irst term is $\varpi_{n-1}^{\alpha}\phi_{n-1}$ and we have found that
\begin{gather}\label{eqsp1b}
\varpi_n^{\alpha*}\phi_n\in\Span\big\{\varpi_p^\alpha\phi_p^\alpha\big\}_{p=n-1}^l,
\qquad m<n\le l-1.
\end{gather}
Now we are left with the remaining case $n=m$, i.e., $\gamma_{n-1}=\alpha_{n-1}$ while $\gamma_n=\beta_n$.
This is the missing link that should connect a $\beta$ block to the previous $\alpha$ block at the
boundary $\ldots,\alpha_{m-1},\beta_m,\ldots$.

Using (\ref{eqs2}) repeatedly we get
\begin{gather*}
\varpi_{m-1}^\alpha\phi_{m-1}^{\alpha*}
\in
\Span\big\{
\varpi_{m-1}^\alpha\phi_{m-1}^\alpha,
\varpi_{m-2}^\alpha\phi_{m-2}^{\alpha*}
\big\} \\
\hphantom{\varpi_{m-1}^\alpha\phi_{m-1}^{\alpha*}}{}=
\Span\big\{
\varpi_{m-1}^\alpha\phi_{m-1}^\alpha,
\varpi_{m-2}^\alpha\phi_{m-2}^{\alpha},
\varpi_{m-2}^\alpha\phi_{m-2}^{\alpha*}
\big\} \\
\hphantom{\varpi_{m-1}^\alpha\phi_{m-1}^{\alpha*}}{}= \cdots\\
\hphantom{\varpi_{m-1}^\alpha\phi_{m-1}^{\alpha*}}{}=
\Span\big\{
\varpi_{m-1}^\alpha\phi_{m-1}^\alpha,
\varpi_{m-2}^\alpha\phi_{m-2}^{\alpha},\ldots,
\varpi_{k}^\alpha\phi_{k}^{\alpha},
\varpi_{k-1}^\alpha\phi_{k-1}^{\alpha*}
\big\}.
\end{gather*}
After multiplying with $\dB_{k}^\beta=\dB_{k+1}^\beta=\cdots=\dB_{n-1}^\beta$ we get
\begin{gather*}
\varpi_{m-1}^\alpha\phi_{m-1}^{\alpha*}\in
\Span\big\{\varpi_{p}^\alpha\phi_{p}\big\}_{p=k-1}^{n-1}.
\end{gather*}
We plug this in our previous expression for $\varpi_{n}^{\alpha*}\phi_{n}$ and we arrive at
\begin{gather}\label{eqsp2b}
\varpi_{m}^{\alpha*}\phi_{m}\in\Span\big\{\varpi_{p}^\alpha\phi_{p}\big\}_{p=k-1}^l.
\end{gather}
To summarize, (\ref{eqsp1}) and (\ref{eqsp2}) in the case $\gamma_n=\alpha_n$
and (\ref{eqsp1b}) and (\ref{eqsp2b}) in the case $\gamma_n=\beta_n$ show short recurrences
for the $\phi_p$ that fully rely on the recursion for the $\alpha$-related quantities:~$\phi_n^\alpha$~and~$\phi_n^{\alpha*}$.
They use only factors~$\varpi_p^\alpha$ and~$\varpi_p^{\alpha*}$ and in the
recurrence relations only the rational Szeg\H{o} parameters $\lambda_p^\alpha$ for the $\alpha$ sequence are used. The longest recursions are needed to switch from a~$\alpha$ to a~$\beta$ block, which is for
$n\in\{m-1,m\}$ as given in~(\ref{eqsp2}) and~(\ref{eqsp2b}) since these need all elements form both blocks.

This analysis should illustrate that as long as we are in a succession of $\alpha$'s
we are building up an upper Hessenberg block.
At the instant the $\alpha$ sequence switches to a $\beta$ sequence one starts building
a lower Hessenberg block, which switches back to upper Hessenberg when again $\alpha$'s enter the
$\gamma$ sequence etc. See Fig.~\ref{fig1}.
Of course if there are only $\alpha$'s in the sequence, we end up with just an upper Hessenberg
as in the classical case.
If we alternate between one~$\alpha$ and one~$\beta$'s we get the so called CMV type matrix which is
a~f\/ive-diagonal matrix with a characteristic block structure a given in Fig.~\ref{fig2}.

Suppose we start with a set of $\alpha$'s, and set
\begin{gather*}
\A=\diag(\alpha_0,\alpha_1,\ldots)
\end{gather*}
and let us recycle our earlier notations
$\varpi_n(z)=1-\oa_n z$ and $\varpi_n^*(z)=z-\alpha_n$
to now act on an arbitrary operator $\T$ by doing the transform for the whole sequence $\seq{\alpha}$ simultaneously
as follows: $\varpi_{\A}(\T)=\I-\T\A^*$ and $\varpi_{\A}^*(\T)=\T-\A$ where $\I$ is the identity operator.
Then (at least formally) we have
\begin{gather}\label{eqR}
[\phi_0,\phi_1,\ldots] \varpi_{\A}^*(z\I) =[\phi_0,\phi_1,\ldots]\varpi_{\A}(z\I)\hat{\G}
\end{gather}
with $\hat{\G}$ having the structure shown in Fig.~\ref{fig1}.
\begin{figure}[t]\centering
\raisebox{-3cm}{\includegraphics[scale=0.5]{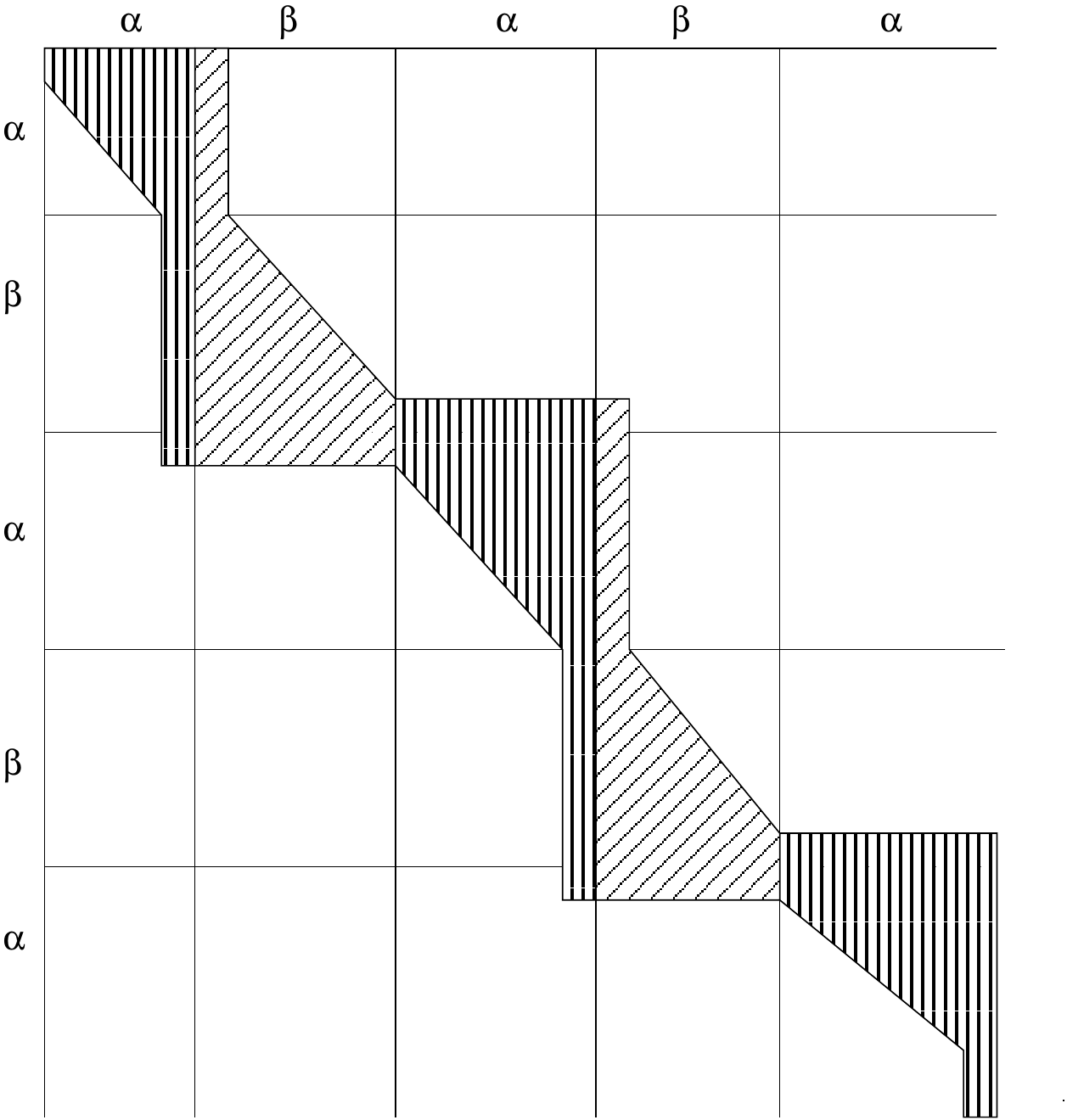}}~$=$~\raisebox{-3cm}{\includegraphics[scale=0.5]{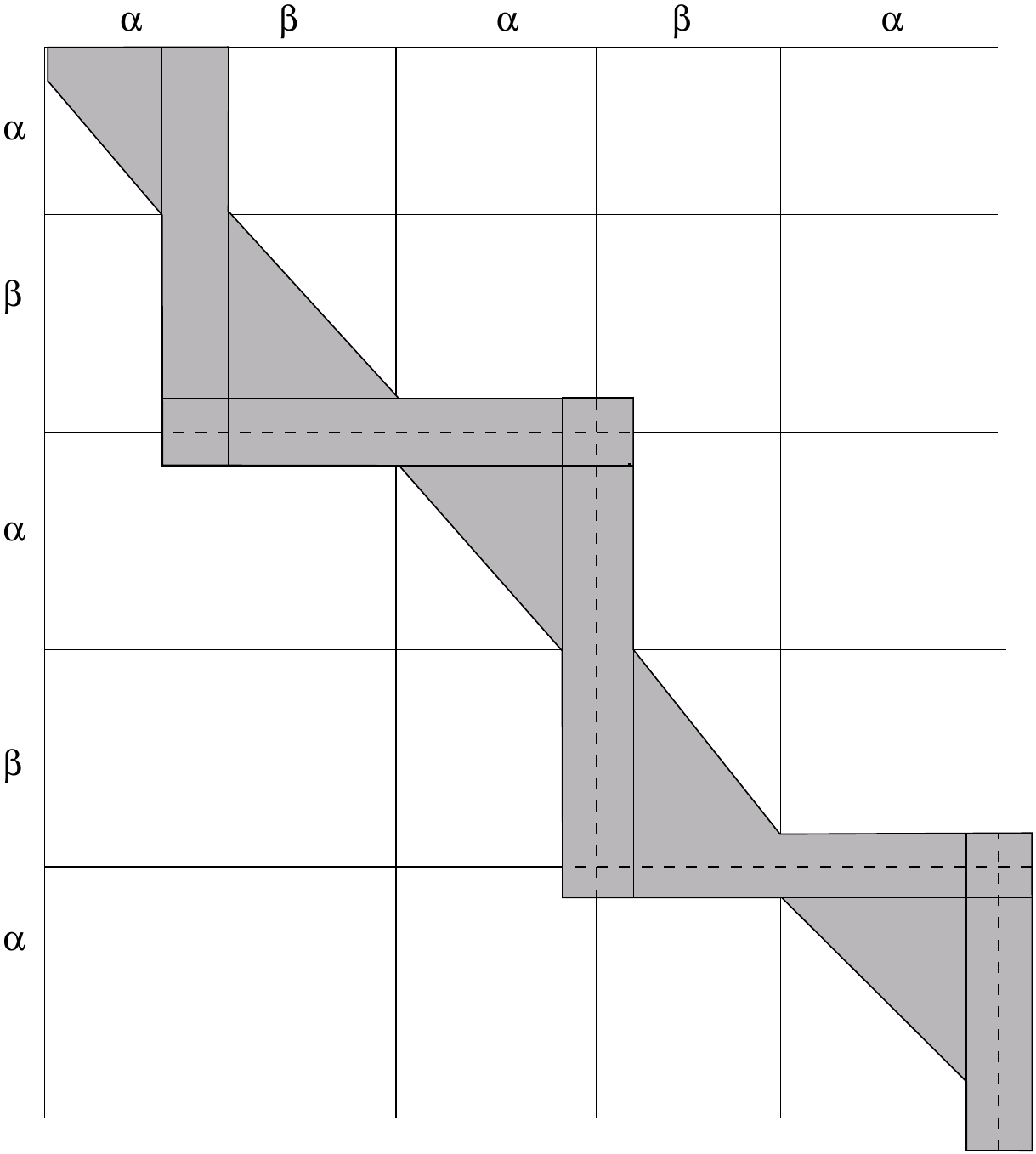}}
\caption{Structure of the matrix $\hat{\G}$ which is the same as the structure of the matrix $\G_{\A}$.}\label{fig1}
\end{figure}
In the special case that the $\alpha$'s and $\beta$'s alternate as in
\begin{gather*}
\seq{\gamma}=(\alpha_1,\beta_2,\alpha_3,\beta_4,\alpha_5,\ldots),
\end{gather*}
we get the f\/ive-diagonal matrix as in \cite{MISCCMV05} for polynomials and for ORF
in \cite{MISCVel07}.
\begin{figure}[t]\centering
\includegraphics[scale=0.5]{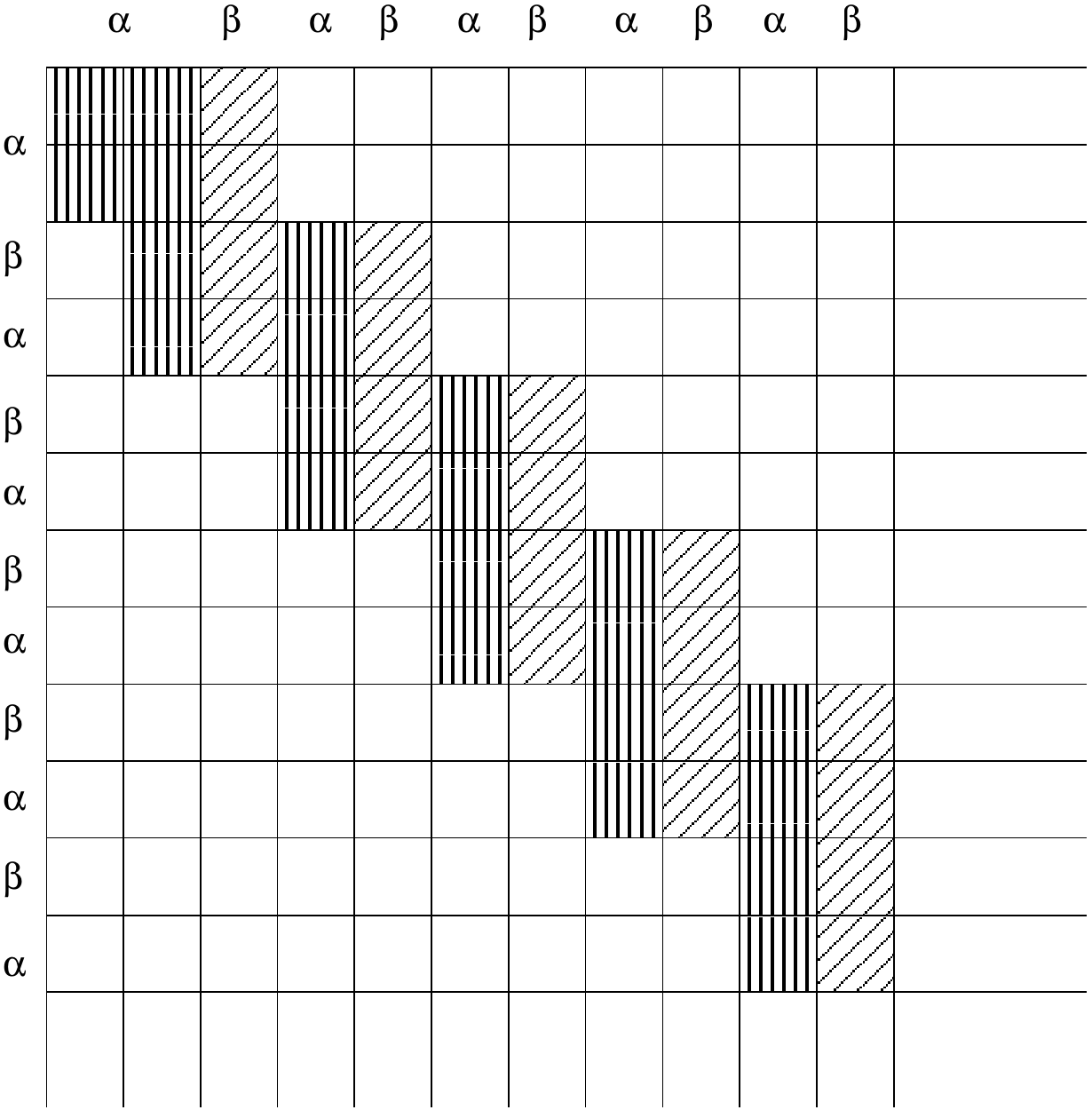}
\caption{Structure of the matrix $\hat{\G}_m$ when $\alpha$'s and $\beta$'s alternate, which is the
f\/ive-diagonal CMV matrix given in \cite{MISCCMV05,MISCVel07}.}\label{fig2}
\end{figure}
Since for $k\ge1$ each $\alpha_{2k-1}$-column is the last in an $\alpha$ block and each $\beta_{2k}$-column
is the f\/irst in a $\beta$ block, we get a succession of $4\times2$ blocks that shift
down by two rows as illustrated by Fig.~\ref{fig2}.

The particular role of the last column in an $\alpha$ block and the f\/irst column in
a $\beta$ block is due to the fact that we have chosen to derive these relations for the $\phi_k$
from the recurrence for the~$\phi_k^\alpha$ leading to factors $\varpi_{\A}$ and $\varpi_{\A}^*$
and the use of parameters $\lambda_k^\alpha$.
A symmetric derivation could have been given by starting from the $\phi_k^\beta$ recursion,
in which case the $\beta$ blocks will correspond to upper Hessenberg blocks and the
$\alpha$ blocks to lower Hessenberg blocks. Then the longer recurrence would occur in the last column of the $\beta$ block and the
f\/irst column of the $\alpha$ block.
The shape of the matrix would be the transposed shape of the f\/igures shown here.
However while it is rather simple to deal with $\alpha_j=0$ in the current derivation, it
requires dealing with more technical issues to write down the formulas with the corresponding
$\beta_j$ equal to $\infty$. Therefore we stick to the present form.

We can def\/ine as in \cite{MISCVel07} an operator M\"obius transform of an operator $\T$ by
\begin{gather*}
{\zeta}_{\A}(\T)=\eta_{\A}\varpi_{\A}(\T)^{-1}\varpi^*_{\A}(\T)\eta_{\A^*}^{-1},\qquad
\varpi_{\A}(\T)=\I-\T\A^*, \qquad
\varpi_{\A}^*(\T)=\T-\A,
\end{gather*}
and its inverse
\begin{gather*}
\tilde{\zeta}_{\A}(\T)=\eta_{\A}^{-1}\tilde{\varpi}_{\A}^*(\T)\tilde{\varpi}_{\A}(\T)^{-1}\eta_{\A^*},\qquad
\tilde{\varpi}_{\A}(\T)=\I+\A^*\T, \qquad
\tilde{\varpi}_{\A}^*(\T)=\T+\A,
\end{gather*}
where
\begin{gather*}
\eta_{\A}:=\varpi_{\A}(\A)^{1/2}=\sqrt{\I-\A\A^*}=\sqrt{\I-\A^*\A}=\eta_{\A^*}=\varpi_{\A^*}(\A^*)^{1/2}\\
\hphantom{\eta_{\A}}{} =\diag\big(1,\sqrt{1-|\alpha_1|^2},\sqrt{1-|\alpha_2|^2},\ldots\big).
\end{gather*}
Recall the meaning of the $(\ )^*$ notation for a (constant) matrix from Remark~\ref{rem3.1}.
Note that $\tilde{\varpi}_\A$ is like $\varpi_{\A}$, except that the minus sign is changed into
a plus sign and $\T\A^*$ is changed into~$\A^*\T$.
It should also be clear that the alternating case (i.e., the case of the CMV
representation) gives the smallest possible bandwidth, as was also reported in~\cite{MISCCMV05}.

Now it is not dif\/f\/icult to see that (\ref{eqR}) is equivalent to
\begin{gather*}
 [\phi_0(z),\phi_1(z),\phi_2(z),\ldots]\big(z\I-\tilde{\zeta}_{\A}(\G)\big)=0,\qquad
\hat{\G}=\eta_{\A}^{-1}{\G}\,\eta_{\A^*}.
\end{gather*}
To see this, exactly the same argument as used in \cite{MISCVel07} to derive relation (19) of that paper can be applied here.
Instead we give a direct derivation.
The relation (\ref{eqR}) can be written as
\begin{gather*}
0=\Phi\big[\varpi_{\A}^*(z\I)-\varpi_{\A}(z\I)\hat{\G}\big]=\Phi\big[(z\I-\A)-(\I-z\A^*)\hat{\G}\big]
=\Phi\big[z\big(\I+\A^*\hat{\G}\big)-\big(\A+\hat{\G}\big)\big].
\end{gather*}
Since $\tilde{\varpi}_\A(\hat{\G})=(\I+\A^*\hat{\G})$ is invertible, this is equivalent with
\begin{gather*}
0=\Phi\big[z\I-\tilde{\varpi}_\A^*\big(\hat{\G}\big)\tilde{\varpi}_{\A}\big(\hat{\G}\big)^{-1}\big]=
\Phi\big[z\I-\eta_\A^{-1}\tilde{\varpi}_\A^*(\G)\tilde{\varpi}_{\A}(\G)^{-1}\eta_{\A^*}\big]=
\Phi\big[z\I-\tilde{\zeta}_{\A}(\G)\big],
\end{gather*}
where we used the fact that the matrix $\eta_A=\eta_{\A^*}$ and its inverse commute with any
diagonal matrix.

The matrices $\G$ and $\hat{\G}$ have the same shape, but $\G$ is rescaled to make it isometric.
So we see that on $\L=\Span\{\phi_0,\phi_1,\ldots\}$
the operator $z\I$ has a matrix representation $\tilde{\zeta}_{\A}(\G)$
with respect to the $\phi$-basis.
We shall have a more careful discussion of this fact in the next section.

\section{Factorization of a general CMV matrix}\label{secMF}

It is well known (see \cite{ArtBC08, MISCVel07} in the rational case and \cite{MISCBD07} for the polynomial case) that the Hessenberg matrix
$\G=\cH$ (obtained when $\seq{\gamma}=\seq{\alpha}$) can be written as an inf\/inite product
of elementary matrices, i.e.,
\begin{gather*}
\G=\cH=G_{1}G_{2}G_{3}G_{4}\cdots,
\end{gather*}
where
\begin{gather}\label{eqG0}
G_{k}:=\left[
\begin{matrix}
I_{k-1} &0 &0 \\ 0 & \tilde{G}_{k} &0 \\ 0 & 0 & I_{\infty}
\end{matrix}\right], \qquad
\tilde{G}_{k}:=
\left[ \begin{matrix} -\delta_k & \eta_k \\ \eta_k & \overline{\delta_k}\end{matrix}\right],
\qquad \forall\, k \geq 1,
\end{gather}
where $I_{k-1}$ and $I_{\infty}$ are the identity matrices of sizes
$(k-1)$ and $\infty$ respectively, $I_0$ is the empty matrix, $\delta_k=\lambda_k^\alpha$ is the $k$-th Szeg\H{o}
parameter and $\eta_k:=\sqrt{1-|\delta_k|^2}$.

Also the CMV matrix $\C^\varepsilon$
associated with the alternating sequence $\seq{\gamma}=\seq{\varepsilon}$
where $\gamma_{2k-1}=\alpha_{2k-1}$ and $\gamma_{2k}=\beta_{2k}$, for all $k\ge1$
uses \emph{the same} elementary transforms, but they are multiplied in a~dif\/ferent order:
\begin{gather*}
\G=\C_o\C_e=
\left( \cdots G_{9}G_{7}G_{5}G_{3}G_{1}\right)
\cdot
\left( G_{2}G_{4}G_{6}G_{8}G_{10}\cdots \right)\\
\hphantom{G}{}=
\left(G_{1}G_{3}G_{5}G_{7}G_{9} \cdots\right)
\cdot
\left( G_{2}G_{4}G_{6}G_{8}G_{10}\cdots \right).
\end{gather*}

To f\/ind explicit expressions for these elementary factors in our case, taking into account the proper normalization, requires a more detailed analysis.
We start with the following
\begin{Theorem}\label{thmSA1}
In the case $\seq{\gamma}=\seq{\alpha}$, the recurrence relation of Theorem~{\rm \ref{thmrec}}
can be rewritten in this form
\begin{gather}\label{eqT9.1a}
\left[\begin{matrix}
\DS\frac{\varpi_{n-1}^{\alpha*}\phi_{n-1}^\alpha}{\sqrt{1-|\alpha_{n-1}|^2}}
&
\DS\frac{\varpi_{n}^{\alpha}\phi_{n}^{\alpha*}}{\sqrt{1-|\alpha_{n}|^2}}
\end{matrix}\right] =
\left[\begin{matrix}
\DS\frac{\varpi_{n-1}^{\alpha}\phi_{n-1}^{\alpha*}}{\sqrt{1-|\alpha_{n-1}|^2}}
&
\DS\frac{\varpi_{n}^{\alpha}\phi_{n}^\alpha}{\sqrt{1-|\alpha_{n}|^2}}
\end{matrix}\right]
\tilde{G}_n^\alpha
\end{gather}
with
\begin{gather*}
\tilde{G}_n^\alpha=\overline{\sigma}_{n-1}\overline{\eta}^\alpha_{n1}
\left[\begin{matrix}
-\lambda_n^\alpha\eta_{n1}^\alpha & \sqrt{1-|\lambda_n^\alpha|^2}\\
\sqrt{1-|\lambda_n^\alpha|^2} & \overline{\lambda_n^\alpha}\overline{\eta}^\alpha_{n1}
\end{matrix}\right]
\left[\begin{matrix}
1 & 0\\0 & \sigma_n
\end{matrix}\right]
\end{gather*}
or
\begin{gather}\label{eqT9.1b}
\left[\begin{matrix}\varpi_{n-1}^{\alpha*}\phi_{n-1}^\alpha & \varpi_{n}^{\alpha}\phi_{n}^{\alpha*}\end{matrix}\right] =
\left[\begin{matrix}\varpi_{n-1}^{\alpha}\phi_{n-1}^{\alpha*} & \varpi_{n}^{\alpha}\phi_{n}^\alpha\end{matrix}\right] \hat{G}_n^\alpha
\end{gather}
with
\begin{gather*}
\hat{G}_n^\alpha=\left[\begin{matrix}\big(1-|\alpha_{n-1}|^2\big)^{-1/2} & 0\\
0 & \big(1-|\alpha_{n}|^2\big)^{-1/2}\end{matrix}\right]
\tilde{G}_n^\alpha
\left[\begin{matrix}\big(1-|\alpha_{n-1}|^2\big)^{1/2} & 0\\
0 & \big(1-|\alpha_{n}|^2\big)^{1/2}\end{matrix}\right].
\end{gather*}
\end{Theorem}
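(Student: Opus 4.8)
The plan is to turn the two scalar rows of Theorem~\ref{thmrec} (specialized to $\nu=\alpha$) into a linear relation among the four weighted functions $\varpi_{n-1}^{\alpha*}\phi_{n-1}^\alpha$, $\varpi_{n-1}^{\alpha}\phi_{n-1}^{\alpha*}$, $\varpi_n^\alpha\phi_n^\alpha$ and $\varpi_n^\alpha\phi_n^{\alpha*}$, and then simply read off the $2\times2$ transition matrix. First I would clear the denominators in the recurrence using $\varpi_{n-1}^\alpha\zeta_{n-1}^\alpha=\sigma_{n-1}\varpi_{n-1}^{\alpha*}$, which converts the first row of Theorem~\ref{thmrec} into
\[
\varpi_n^\alpha\phi_n^\alpha=e_n^\alpha\eta_{n1}^\alpha\big[\sigma_{n-1}\varpi_{n-1}^{\alpha*}\phi_{n-1}^\alpha+\lambda_n^\alpha\varpi_{n-1}^\alpha\phi_{n-1}^{\alpha*}\big]
\]
and the second row into the companion identity for $\varpi_n^\alpha\phi_n^{\alpha*}$. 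Solving the displayed relation for $\varpi_{n-1}^{\alpha*}\phi_{n-1}^\alpha$ is precisely~\eqref{eqs0}; it already expresses the first left-hand quantity as a combination of $\varpi_n^\alpha\phi_n^\alpha$ and $\varpi_{n-1}^\alpha\phi_{n-1}^{\alpha*}$, hence supplies the first column of the transition matrix.

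Next I would produce the second column. Substituting~\eqref{eqs0} into the cleared second row to eliminate $\varpi_{n-1}^{\alpha*}\phi_{n-1}^\alpha$ (this is exactly the elimination that yields~\eqref{eqs2}) and grouping $-|\lambda_n^\alpha|^2+1$ into $1-|\lambda_n^\alpha|^2$ gives
\[
\varpi_n^\alpha\phi_n^{\alpha*}=e_n^\alpha\eta_{n2}^\alpha\big(1-|\lambda_n^\alpha|^2\big)\varpi_{n-1}^\alpha\phi_{n-1}^{\alpha*}+\frac{\eta_{n2}^\alpha}{\eta_{n1}^\alpha}\,\overline{\lambda}_n^\alpha\,\varpi_n^\alpha\phi_n^\alpha.
\]
Taken together with~\eqref{eqs0}, these two identities are exactly the unnormalized statement~\eqref{eqT9.1b}, with $\hat G_n^\alpha$ the matrix of the four coefficients just computed.

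Finally I would pass to the normalized form~\eqref{eqT9.1a}. Since the normalizations only rescale the columns and rows by $\sqrt{1-|\alpha_{n-1}|^2}$ and $\sqrt{1-|\alpha_n|^2}$, conjugating $\hat G_n^\alpha$ by $D=\diag\big(\sqrt{1-|\alpha_{n-1}|^2},\sqrt{1-|\alpha_n|^2}\big)$ leaves the diagonal entries unchanged and merely rescales the two off-diagonal entries, so it remains to simplify those. Here the decisive input is the explicit value $e_n^\alpha=\frac{\sqrt{1-|\alpha_n|^2}}{\sqrt{1-|\alpha_{n-1}|^2}}\big(1-|\lambda_n^\alpha|^2\big)^{-1/2}$ from~\eqref{eqe2}: it turns the factor $1-|\lambda_n^\alpha|^2$ in the $(1,2)$ entry and the factor $1/e_n^\alpha$ in the $(2,1)$ entry into the common $\sqrt{1-|\lambda_n^\alpha|^2}$. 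Using $|\eta_{n1}^\alpha|=1$ and $\eta_{n2}^\alpha=\overline{\eta}_{n1}^\alpha\overline{\sigma}_{n-1}\sigma_n$ then casts all four entries into the factored form claimed for $\tilde G_n^\alpha$, and~\eqref{eqT9.1b} follows from $\hat G_n^\alpha=D^{-1}\tilde G_n^\alpha D$. The computation is almost entirely mechanical bookkeeping of the phases $\sigma_{n-1}$, $\eta_{n1}^\alpha$, $\eta_{n2}^\alpha$; the one step carrying the real content — and the place I expect to have to be careful — is the substitution of~\eqref{eqe2}, without which the off-diagonal entries would not collapse to the symmetric $\sqrt{1-|\lambda_n^\alpha|^2}$ that exhibits $\tilde G_n^\alpha$ as a scaled, phase-adjusted elementary core transformation.
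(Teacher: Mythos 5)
Your proposal is correct and follows essentially the same route as the paper's own proof: clear the denominators in the two rows of Theorem~\ref{thmrec} via $\varpi_{n-1}^\alpha\zeta_{n-1}^\alpha=\sigma_{n-1}\varpi_{n-1}^{\alpha*}$, solve the first row to obtain~\eqref{eqs0}, eliminate $\varpi_{n-1}^{\alpha*}\phi_{n-1}^\alpha$ from the second row, and then invoke~\eqref{eqe2} together with $\eta_{n2}^\alpha=\overline{\eta}_{n1}^\alpha\overline{\sigma}_{n-1}\sigma_n$ to identify the entries of $\tilde G_n^\alpha$. The only difference is cosmetic ordering --- you assemble the unnormalized form~\eqref{eqT9.1b} first and then conjugate by the diagonal $D$, whereas the paper inserts $e_n^\alpha$ immediately to land on~\eqref{eqT9.1a} and calls~\eqref{eqT9.1b} obvious; both computations are the same.
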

\begin{proof}
This is in fact an explicit form of what in the previous section was expressed as (\ref{eqs1}) and (\ref{eqs2}).

Taking the f\/irst line of the recurrence relation from Theorem~\ref{thmrec} and solving for $\varpi^{\alpha*}_{n-1}\phi^{\alpha}_{n-1}$
we get the explicit from of (\ref{eqs0}), i.e.,
\begin{gather*}
\varpi_{n-1}^{\alpha*}\phi_{n-1}^\alpha=
(e_n^\alpha)^{-1}\overline{\eta}^\alpha_{n1}\overline{\sigma}_{n-1}
 \varpi_n^\alpha\phi_n^\alpha-
\lambda_n^\alpha\overline{\sigma}_{n-1}
 \varpi_{n-1}^\alpha\phi_{n-1}^{\alpha*}.
\end{gather*}
Using $e_n^\alpha=\frac{\sqrt{1-|\alpha_{n}|^2}}{\sqrt{1-|\alpha_{n-1}|^2}}\frac{1}{\sqrt{1-|\lambda_n^\alpha|^2}}$
and $\eta_{n2}^\alpha=\overline{\eta}_{n1}^\alpha\overline{\sigma}_{n-1}\sigma_n$
this becomes
\begin{gather}\label{eqeta}
\frac{\varpi_{n-1}^{\alpha*}\phi_{n-1}^{\alpha}}{\sqrt{1-|\alpha_{n-1}|^2}}=
\overline{\eta}^\alpha_{n1}\overline{\sigma}_{n-1}
\left[-\lambda_n^\alpha\eta^\alpha_{n1}\frac{\varpi_{n-1}^\alpha\phi_{n-1}^{\alpha*}}{\sqrt{1-|\alpha_{n-1}|^2}}
+ \sqrt{1-|\lambda_n^\alpha|^2}\frac{\varpi_n^\alpha\phi_n^\alpha}{\sqrt{1-|\alpha_n|^2}}\right].
\end{gather}
The second line of the recurrence relation in Theorem~\ref{thmrec} is
\begin{gather*}
\varpi_{n}^{\alpha}\phi_{n}^{\alpha*}=
e^\alpha_n{\eta}^\alpha_{n2}{\sigma}_{n-1}
 \overline{\lambda_n^\alpha}\varpi_{n-1}^{\alpha*}\phi_{n-1}^\alpha+
e_n^\alpha\eta_{n2}^\alpha
 \varpi_{n-1}^\alpha\phi_{n-1}^{\alpha*}.
\end{gather*}
Eliminate the term $\varpi_{n-1}^{\alpha*}\phi_{n-1}^\alpha$ with the f\/irst line of the recurrence
and after inserting the value of $e_n^\alpha$ from (\ref{eqe2}) and rearranging, we get
\begin{gather*}
\frac{\varpi_n^\alpha\phi_n^\alpha}{\sqrt{1-|\alpha_n|^2}}=
\overline{\eta}_{n1}^\alpha\overline{\sigma}_{n-1}
\left[
\frac{\varpi_{n-1}^\alpha\phi_{n-1}^\alpha}{\sqrt{1-|\alpha_{n-1}|^2}}
\sqrt{1-|\lambda_n^\alpha|^2}\sigma_n
+\frac{\varpi_{n}^\alpha\phi_{n}^\alpha}{\sqrt{1-|\alpha_{n}|^2}}
\overline{\lambda_n^\alpha}\overline{\eta}_{n1}^\alpha\sigma_n
\right].
\end{gather*}
This is equivalent with the formula (\ref{eqT9.1a}).
Derivation of the other form (\ref{eqT9.1b}) is obvious.
\end{proof}

Note that $\tilde{G}_n^\alpha$ is unitary but $\hat{G}_n^\alpha$ is not
unless $|\lambda_n^\alpha|=1$ or $\alpha_n=\alpha_{n-1}$.

Now suppose that $n\in\mathbbm{a}_n$, i.e., $\gamma_n=\alpha_n$ then $\dB_n^\beta=\dB_{n-1}^\beta$ and hence
\begin{gather}\label{eq71}
\left[\begin{matrix}
\varpi_{n-1}^{\alpha*}\phi_{n-1}^\alpha\dB_{n-1}^\beta
&
\varpi_{n}^{\alpha}\phi_{n}^{\alpha*}\dB_n^\beta
\end{matrix}\right] =
\left[\begin{matrix}
\varpi_{n-1}^{\alpha}\phi_{n-1}^{\alpha*}\dB_{n-1}^\beta
&
\varpi_{n}^{\alpha}\phi_{n}^\alpha\dB_n^\beta
\end{matrix}\right]
\hat{G}^\alpha_n.
\end{gather}

If $n\in\mathbbm{b}_n$, i.e., $\gamma_n=\beta_n$, then $\dB_n^\beta=\dB_{n-1}^\beta\zeta_n^\beta=
\overline{\sigma}_n\dB_{n-1}^\beta\frac{\varpi_n^{\alpha}}{\varpi_n^{\alpha*}}$.
Def\/ine then
\begin{gather}\label{eqab}
\tilde{G}_n^\beta=
\left[\begin{matrix}\overline{\sigma}_n&0\\0&1\end{matrix}\right]
\tilde{G}_n^\alpha
\left[\begin{matrix}\sigma_n&0\\0&1\end{matrix}\right]=
\tilde{S}_n^*\tilde{G}_n^\alpha\tilde{S}_n,\qquad \tilde{S}_n=\diag(\sigma_n,1),
\end{gather}
and as for $\hat{G}_n^\alpha$
\begin{gather*}
\hat{G}_n^\beta =\left[\begin{matrix}\big(1-|\alpha_{n-1}|^2\big)^{-1/2} & 0\\
0 & \big(1-|\alpha_{n}|^2\big)^{-1/2}\end{matrix}\right]
\tilde{G}_n^\beta
\left[\begin{matrix}\big(1-|\alpha_{n-1}|^2\big)^{1/2} & 0\\
0 & \big(1-|\alpha_{n}|^2\big)^{1/2}\end{matrix}\right] \\
\hphantom{\hat{G}_n^\beta}{}=\left[\begin{matrix}\overline{\sigma}_n&0\\0&1\end{matrix}\right] \hat{G}_n^{\alpha} \left[\begin{matrix}\sigma_n&0\\0&1\end{matrix}\right],
\end{gather*}
so that
\begin{gather*}
\left[\begin{matrix}
\varpi_{n-1}^{\alpha}\phi_{n-1}^{\alpha*}\dB_{n-1}^\beta
&
\varpi_{n}^{\alpha*}\phi_{n}^\alpha\dB_n^\beta\end{matrix}\right]
\hat{G}^\beta_n\\
\qquad{}=
\left[\begin{matrix}
\varpi_{n-1}^{\alpha}\phi_{n-1}^{\alpha*}\dB_{n-1}^\beta
&
\varpi_{n}^{\alpha}\phi_{n}^\alpha\dB_{n-1}^\beta\end{matrix}\right]\overline{\sigma}_n\hat{G}_n^\alpha\left[\begin{matrix}\sigma_n&0\\0&1\end{matrix}\right]\\
\qquad{}=
\left[\begin{matrix}
\varpi_{n-1}^{\alpha*}\phi_{n-1}^{\alpha}\dB_{n-1}^\beta
&
\varpi_{n}^{\alpha}\phi_{n}^{\alpha*}\dB_{n-1}^\beta\end{matrix}\right]\left[\begin{matrix}1&0\\0&\overline{\sigma}_n\end{matrix}\right]\\
\qquad{}=
\left[\begin{matrix}
\varpi_{n-1}^{\alpha*}\phi_{n-1}^{\alpha}\dB_{n-1}^\beta
&
\varpi_{n}^{\alpha*}\phi_{n}^{\alpha*}\dB_{n}^\beta\end{matrix}\right].
\end{gather*}
Thus
\begin{gather}\label{eq72}
\left[\begin{matrix}
\varpi_{n-1}^{\alpha*}\phi_{n-1}^{\alpha}\dB_{n-1}^\beta
&
\varpi_{n}^{\alpha*}\phi_{n}^{\alpha*}\dB_{n}^\beta\end{matrix}\right]
=\left[\begin{matrix}
\varpi_{n-1}^{\alpha}\phi_{n-1}^{\alpha*}\dB_{n-1}^\beta
&
\varpi_{n}^{\alpha*}\phi_{n}^\alpha\dB_n^\beta\end{matrix}\right]
\hat{G}^\beta_n.
\end{gather}

We build the general CMV matrix as a product of $G$-factors.
Set $\alpha_0=0$ and
suppose we consider for $1 \leq n < k < m < l < \cdots\le\infty$
\begin{gather*}
\alpha_0,\alpha_1,\ldots,\alpha_{n-1}|\beta_n,\ldots,\beta_{k-1}|\alpha_k,\ldots,\alpha_{m-1}|\beta_m,\ldots,\beta_{\ell-1}|\alpha_\ell,\ldots.
\end{gather*}
We use vertical bars to separate the dif\/ferent blocks. These separations are of course clear from the notation
$\alpha$ and $\beta$, but since that will not so obvious when we denote the corresponding functions in the vectors below, we introduced them here and keep them at the corresponding positions in the vectors that we are about to introduce now.
That should improve readability of the formulas.

We consider the vector $\Phi=[\phi_0,\phi_1,\phi_2,\ldots]$ so that
\begin{gather*}
\Phi(\I-\A^*z)=
\big[
\varpi_0^\alpha\phi_0,\ldots,\varpi_{n-1}^\alpha\phi_{n-1}
\big|
\varpi_{n}^\alpha\phi_{n},\ldots,
\varpi_{k-1}^\alpha\phi_{k-1}\big|
\varpi_{k}^\alpha\phi_{k},\ldots
\\
\hphantom{\Phi(\I-\A^*z)=\big[}{}
\ldots,\varpi_{m-1}^\alpha\phi_{m-1}\big|
\varpi_{m}^\alpha\phi_{m},\ldots,
\varpi_{\ell-1}^\alpha\phi_{\ell-1}\big|
\varpi_{\ell}^\alpha\phi_{\ell},\ldots
\big]\\
\hphantom{\Phi(\I-\A^*z)}{}=
\big[
\varpi_0^\alpha\phi^\alpha_0,\ldots,\varpi_{n-1}^\alpha\phi_{n-1}^\alpha\dB_{n-1}^\beta
\big|
\varpi_{n}^\alpha\phi_{n}^{\alpha*}\dB_{n}^\beta,\ldots,
\varpi_{k-1}^\alpha\phi_{k-1}^{\alpha*}\dB_{k-1}^\beta\big|
\varpi_{k}^\alpha\phi_{k}^{\alpha}\dB_{k}^\beta,\ldots
\\
\hphantom{\Phi(\I-\A^*z)=\big[}{}
\ldots,\varpi_{m-1}^\alpha\phi_{m-1}^{\alpha}\dB_{m-1}^\beta\big|
\varpi_{m}^\alpha\phi_{m}^{\alpha*}\dB_{m}^\beta,\ldots,
\varpi_{\ell-1}^\alpha\phi_{\ell-1}^{\alpha*}\dB_{\ell-1}^\beta\big|
\varpi_{\ell}^\alpha\phi_{\ell}^{\alpha}\dB_{\ell}^\beta,\ldots
\big].
\end{gather*}

Def\/ine for $\nu\in\{\alpha,\beta\}$ (recall $\eta_\A=\sqrt{\I-\A^*\A}$)
\begin{gather}\label{eqG}
G_n^\nu=\left[\begin{matrix} I_{n-1}& 0 & 0\\ 0 &\tilde{G}_n^\nu & 0 \\
0 & 0 & I_\infty\end{matrix}\right]=\diag(I_{n-1},\tilde{G}_n^\nu,I_\infty),\qquad\text{and}\qquad
\hat{G}_n^\nu=\eta_\A^{-1}G_n^\nu \eta_\A.
\end{gather}

Apply successively the $G_n^\nu$ to the right on the $\Phi$ vector.
Keep an increasing order for the factors {\it within} an $\alpha$ block
and a decreasing order for the factors {\it covering} a $\beta$ block.
Thus for the example given above the order will be
\begin{gather}
\seq{\gamma}= (\alpha_0,\alpha_1,\ldots,\alpha_{n-1}\|\beta_n,\ldots,\beta_{k-1}|\alpha_k\|\alpha_{k+1},
\ldots,\alpha_{m-1}\|\beta_m,\ldots,\beta_{\ell-1}|\alpha_\ell\|\alpha_{\ell+1},\ldots ),\nonumber\\
\hat{\G}=
\underbrace{(\hat{G}_1^\alpha\hat{G}_2^\alpha\cdots \hat{G}_{n-1}^\alpha)}_{\DS\hat{G}_\alpha^1}
\underbrace{(\hat{G}_{k}^\alpha\hat{G}_{k-1}^\beta\cdots \hat{G}_n^\beta)}_{\DS\hat{G}_\beta^1}
\underbrace{(\hat{G}_{k+1}^\alpha\cdots \hat{G}_{m-1}^\alpha)}_{\DS\hat{G}_\alpha^2}
\underbrace{(\hat{G}_{\ell}^\alpha\hat{G}_{\ell-1}^\beta\cdots \hat{G}_m^\beta)}_{\DS\hat{G}_\beta^2}
(\hat{G}_{\ell+1}^\alpha\cdots)\label{eqGblocks}\\
\hphantom{\hat{\G}}{}=
\eta_\A^{-1}
\underbrace{
\underbrace{({G}_1^\alpha{G}_2^\alpha\cdots {G}_{n-1}^\alpha)}_{\DS{G}_\alpha^1}
\underbrace{({G}_{k}^\alpha{G}_{k-1}^\beta\cdots {G}_n^\beta)}_{\DS{G}_\beta^1}
\underbrace{({G}_{k+1}^\alpha\cdots {G}_{m-1}^\alpha)}_{\DS{G}_\alpha^2}
\underbrace{({G}_{\ell}^\alpha{G}_{\ell-1}^\beta\cdots {G}_m^\beta)}_{\DS{G}_\beta^2}
({G}_{\ell+1}^\alpha\cdots)}
\eta_\A \nonumber\\
\hphantom{\hat{\G}}{}= \eta_{\A}^{-1}\rule{64mm}{0pt}\G\rule{64mm}{0pt} \eta_\A \nonumber
\end{gather}
with $\hat{G}_n^\nu$ and $G_n^\nu$, $\nu\in\{\alpha,\beta\}$ as in (\ref{eqG}) and similarly
\begin{gather}\label{eqcalG}
\hat{G}_\nu^n=\eta_\A^{-1}G_\nu^n\eta_\A,\qquad \text{for $n=1,2,\ldots$; \ \ $\nu\in\{\alpha,\beta\}$}\qquad \text{and}\qquad
\hat{\G}=\eta_\A^{-1}\G\eta_\A.
\end{gather}
Note that we have aligned these formulas properly so that a shift in the blocks from the f\/irst to the second line in~(\ref{eqGblocks}) is made clear.
The rationale is that in a $G_\beta^n$ group the indices are decreasing from left to right and in a~$G_\alpha^n$
group the indices increase from left to right. This is what def\/ines the shape of the result.
We used the vertical bars to separate $\alpha$ and $\beta$ blocks in the sequence $\seq{\gamma}$ as before.
Now we use double vertical bars to indicate what corresponds to the~$G_\alpha^n$ and $G_\beta^n$ blocks on the third line.
On the second line in~(\ref{eqGblocks}) you may note that the f\/irst $\alpha$ in an~$\alpha$ block of the $\gamma$ sequence
corresponds to a~$\hat{G}^\alpha$ factor that migrates to the front of the product group of previous~$\beta$ block.
This is due to the fact that we analysed the $\gamma$ sequence using the parameters $\lambda_k^\alpha$ of the $\alpha$ recurrence (recall the def\/initions of the $\hat{G}_k^\alpha$ and $\hat{G}_k^\beta$ from Theorem~\ref{thmSA1} and equation~(\ref{eqab})).

To see the ef\/fect of applying all these elementary operations, we start with the f\/irst block~$G_\alpha^1$.
Using (\ref{eq71}) when multiplying $\Phi(\I-\A^*z)$ with $\hat{G}_\alpha^1$ gives (note $\phi_0=\phi_0^*$)
\begin{gather*}
\big[
\varpi_0^{\alpha*}\phi_0,\ldots,\varpi_{n-2}^{\alpha*}\phi_{n-2},
\varpi_{n-1}^\alpha\phi_{n-1}^{\alpha*}\dB_{n-1}^\beta\big|\varpi_n^\alpha\phi_n,\ldots
\big].
\end{gather*}
While multiplying this result with $\hat{G}_k^\alpha \hat{G}_{k-1}^\beta\cdots \hat{G}_{n+1}^\beta$, we make use of (\ref{eq71}) and (\ref{eq72}) to obtain
\begin{gather*}
\big[
\varpi_0^{\alpha*}\phi_0,\ldots,\varpi_{n-2}^{\alpha*}\phi_{n-2},
\varpi_{n-1}^\alpha\phi_{n-1}^{\alpha*}\dB_{n-1}^\beta\big|\\
\qquad{}
\varpi_{n}^{\alpha*}\phi_{n}^\alpha\dB_n,\varpi_{n+1}^{\alpha*}\phi_{n+1},\ldots,\varpi_{k-1}^{\alpha*}\phi_{k-1}\big|
\varpi_k^{\alpha}\phi_k^*,\varpi_{k+1}^\alpha\phi_{k+1},\ldots\big]
\end{gather*}
and the remaining multiplication $\hat{G}_n^\beta$ of $\hat{G}_\beta^1$ links the current $\beta$ block to the previous $\alpha$ block, resulting in
\begin{gather*}
\big[
\varpi_0^{\alpha*}\phi_0,\ldots,\varpi_{n-1}^{\alpha*}\phi_{n-1}\big|
\varpi_{n}^{\alpha*}\phi_{n},\ldots,\varpi_{k-1}^{\alpha*}\phi_{k-1}\big|
\varpi_k^{\alpha}\phi_k^*,\varpi_{k+1}^\alpha\phi_{k+1},\ldots\big].
\end{gather*}
The next block is again an $\alpha$ block treated by the product $\hat{G}_\alpha^2$,
and one may continue like this to f\/inally get
\begin{gather}\label{eq73}
\Phi(\I-\A^*z)\hat{\G}=\Phi(z\I-\A)\qquad \text{or}\qquad
\Phi(\I-\A^*z){\G}=\Phi(z\I-\A),~~\hat{\G}=\eta_\A^{-1}\hat{\G}\eta_\A,
\end{gather}
since $\eta_\A$ is invertible.
This $\hat{\G}$ can be factored as the product of elementary matrices having only one non-trivial $2\times2$ diagonal block: $\hat{\G}=\hat{G}_1\hat{G}_2\cdots$.
By setting $\hat{\G}=\eta_\A^{-1}{\G}\eta_\A=(\eta_\A^{-1}G_1\eta_\A)(\eta_\A^{-1}G_2\eta_\A)\cdots=G_1G_2\cdots$, we made $\G$ as well as these elementary factors $G_k$ unitary.

These elementary factors are grouped together in blocks as a consequence of the $\alpha$ and
$\beta$ blocks in the $\gamma$ sequence.
For example $G_\beta^2=G_\ell^\alpha G_{\ell-1}^\beta\cdots G_m^\beta$
corresponds to a $\beta$ block. Because $\ell>m$ there must be at least two
factors in such a $\beta$ block: the f\/irst one is $G_l^\alpha$ and the last one is a $G_m^\beta$.
However for an $\alpha$ block we have the product
$G_\alpha^2=G_{k+1}^\alpha\cdots G_{m-1}^\alpha$ thus if $k=m-1$ then the
initial index $k+1$ is larger than the end index $m-1$. Thus if an $\alpha$ block has only one element
then there are no factors in this product, which means that this $G_\alpha$ is just the identity.

In the case of $\seq{\gamma}=\seq{\alpha}$ (no $\beta$'s), then there is of course only one inf\/inite
block so $n=\infty$ and there is no $k,m,\ell,\dots$ and
\begin{gather*}
\G=G_1^\alpha G_2^\alpha G_3^\alpha\cdots =:\mathcal{H}
\end{gather*}
is the familiar upper Hessenberg matrix, and in the case of alternating $\alpha$-$\beta$ sequence,
we have in the case $\alpha_1,\beta_2,\alpha_3,\beta_4,\ldots$
\begin{gather*}
\G=G_1^\alpha\big(G_3^\alpha G_2^\beta\big)I_\infty\big(G_5^\alpha G_4^\beta\big)I_\infty\big(G_7^\alpha G_6^\beta\big)\cdots = \big(G_1^\alpha G_3^\alpha G_5^\alpha \cdots\big)\big(G_2^\beta G_4^\beta G_6^\beta\cdots\big)=\C=\C_o^\alpha \C_e^\beta,
\end{gather*}
where we grouped the product of the $G_k^\alpha$ factors as $\C_o^\alpha$ and of the $G_k^\beta$ factors as $\C_e^\beta$.
This rearrangement of the factors is possible because the blocks commute when their subscript dif\/fers by at least two.
If we def\/ine $\cS_e=\diag\big(1,\tilde{S}_2,\tilde{S}_4,\tilde{S}_6,\ldots\big)$, $\tilde{S}_{2k}=\diag(\sigma_{2k},1)$,
then of course $\C=\C_o^\alpha\cS_e^*\C_e^\alpha\cS_e$.

In the case of an alternating sequence of the form $\beta_1,\alpha_2,\beta_3,\alpha_4,\ldots$, then
\begin{gather*}
\G=\big(G_2^\alpha G_1^\beta\big)I_\infty\big(G_4^\alpha G_3^\beta\big)I_\infty\big(G_6^\alpha G_5^\beta\big)\cdots = \big(G_2^\alpha G_4^\alpha G_6^\alpha \cdots\big)\big(G_1^\beta G_3^\beta G_5^\beta\cdots\big)=\C=\C_e^\alpha\C_o^\beta.
\end{gather*}
As in the previous case this is $\C=\C_e^\alpha\cS_o^*\C_o^\alpha\cS_o$
with
\begin{gather*}
\cS_o=\diag\big(\tilde{S}_1,\tilde{S}_3,\tilde{S}_5,\ldots\big), \qquad \tilde{S}_{2k-1}=\diag(\sigma_{2k-1},1).
\end{gather*}

These are the classical CMV matrices as also given in \cite{MISCVel07},
except for the $\cS$ factors, which were not in \cite{MISCVel07}.
That is because we have used a particular normalization for the $\phi_n$-basis.
A~slightly dif\/ferent normalization of the $\phi_n$-basis will remove the $\cS$ factors.
Indeed, replacing all~$\phi_n$ by $\varphi_n^\alpha=\varsigma^\beta_n\phi_n$, where
\begin{gather*}
\varsigma_n^\beta=\frac{\dB_n^\beta}{|\dB_n^\beta|}=\mathop{\prod}_{j\in\mathbbm{b}_n}\sigma_j,
\end{gather*}
will do.

To see this, note that the relation (\ref{eqab}) between $\tilde{G}_n^\beta$
and $\tilde{G}_n^\alpha$ can also be written as (multiply with $\sigma_n\overline{\sigma}_n=1$)
\begin{gather}\label{eqab2}
\tilde{G}_n^\beta=
\left[\begin{matrix}1 & 0\\0 & \sigma_n\end{matrix}\right]
\tilde{G}_n^\alpha
\left[\begin{matrix} 1 & 0 \\ 0 & \overline{\sigma}_n\end{matrix}\right].
\end{gather}
The f\/irst and the last of these factors can then be moved to the $\varphi_n^\alpha$
so that the $\beta$-relation (\ref{eq72}) now becomes
\begin{gather}\label{eq72b}
\left[\begin{matrix}
\varpi_{n-1}^{\alpha}\dvarsigma_{n-1}^\beta\phi_{n-1}^{\alpha}\dB_{n-1}^\beta
&
\varpi_{n}^{\alpha*}\dvarsigma_{n}^\beta\phi_{n}^{\alpha*}\dB_{n}^\beta\end{matrix}\right]
=\left[\begin{matrix}
\varpi_{n-1}^{\alpha}\dvarsigma_{n-1}^\beta\phi_{n-1}^{\alpha*}\dB_{n-1}^\beta
&
\varpi_{n}^{\alpha*}\dvarsigma_{n}^\beta\phi_{n}^\alpha\dB_n^\beta\end{matrix}\right]
\hat{G}^\alpha_n.
\end{gather}
Note that the multiplication on the right is now with $\hat{G}^\alpha_n$ and not with $\hat{G}^\beta_n$ anymore.

For the $\alpha$-case, i.e., $\gamma_n=\alpha_n$ nothing essentially changes since then $\dvarsigma_n^\beta=\dvarsigma_{n-1}^\beta$.

It should be clear that following the same arguments used above,
the relation (\ref{eq73}) then becomes
\begin{gather*}
\Phi^\alpha(\I-\A^*z)\hat{\G}^\alpha=\Phi^\alpha(z\I-\A)
\qquad
\text{with}\quad
\Phi^\alpha=[{\varphi}^\alpha_0,{\varphi}^\alpha_1,{\varphi}^\alpha_2,\ldots]
\end{gather*}
and $\hat{\G}^\alpha$ is exactly like $\hat{\G}$ in (\ref{eqcalG}), except that
all $G_j^\beta$ should be replaced by a $G_j^\alpha$, that is, replace all the $G_k^\beta$ factors in $\G$
by the corresponding $G_k^\alpha$ by removing the $S_k$ factors in (\ref{eqab}) and $\G$ becomes $\G^\alpha$.

From (\ref{eq73}) we derive that
\begin{gather*}
z\I=\big(\hat{\G}+\A\big)\big(\I+\A^*\hat{\G}\big)^{-1}=\eta_\A^{-1}(\G+\A)\big(\I+\A^*\G\big)^{-1}\eta_\A=\tilde{\zeta}_\A(\G),
\end{gather*}
which is the matrix representation of the shift operator in $\L$ with respect to the basis
$(\phi_0,\phi_1$, $\phi_2,\ldots)$.
With respect to the basis $\Phi^\alpha$, the expression is the same except that $\G$ should be replaced by~$\G^\alpha$.

We summarize our previous results.

\begin{Theorem}
In the general case of a sequence $\seq{\gamma}$, then multiplying the vector of basis functions $\Phi(z)=[\phi_0,\phi_1(z),\phi_2(z),\ldots]$
by $z$ corresponds to multiplying it from the right with the infinite matrix
\begin{gather*}
\big(\hat{\G}+\A\big)\big(\I+\A^*\hat{\G}\big)^{-1}=\eta_\A^{-1}(\G+\A)(\I+\A^*\G)^{-1}\eta_\A,
\end{gather*}
where $\A=\diag(\alpha_0,\alpha_1,\ldots)$, $\eta_\A=\sqrt{\I-\A^*\A}$ and $\G$ is a product of unitary
factors $G_k$ defined in~\eqref{eqG} where the order of multiplication is from left to right in a block of successive $\alpha$-values and from right to left in a block of successive $\beta$-values as explained in detail above.

For the sequence $\seq{\alpha}$ we get the classical Hessenberg matrix
\begin{gather*}
\G=G_1^\alpha G_2^\alpha G_3^\alpha \cdots =\mathcal{H}.
\end{gather*}
The classical CMV matrices are obtained for
$\alpha_1,\beta_2,\alpha_3,\beta_4,\ldots$ as
\begin{gather*}
\G=G_1^\alpha \big(G_3^\alpha G_2^\beta\big)\big(G_5^\alpha G_4^\beta\big)\big(G_7^\alpha G_6^\beta\big)\cdots = \big(G_1^\alpha G_3^\alpha G_5^\alpha \cdots\big)\big(G_2^\beta G_4^\beta G_6^\beta\cdots\big)=\C=\C_o^\alpha\C_e^\beta
\end{gather*}
and in the case $\beta_1,\alpha_2,\beta_3,\alpha_4,\ldots$, then
\begin{gather*}
\G=\big(G_2^\alpha G_1^\beta\big)\big(G_4^\alpha G_3^\beta\big)\big(G_6^\alpha G_5^\beta\big)\cdots = \big(G_2^\alpha G_4^\alpha G_6^\alpha \cdots\big)\big(G_1^\beta G_3^\beta G_5^\beta\cdots\big)=\C=\C_e^\alpha\C_o^\beta.
\end{gather*}
It we use the slightly different orthonormalized basis $(\varphi_n^\alpha)_{n\in\NN}$,
then the previous relations still hold true, except that all $G_j^\beta$ can be replaced by $G_j^\alpha$.
\end{Theorem}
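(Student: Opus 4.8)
The plan is to assemble the statement from the block-by-block computation already carried out in this section; almost everything is in place, and the only genuinely new step is the algebraic inversion that converts the recurrence into an explicit representation of the shift. First I would recall the master relation~\eqref{eq73}, $\Phi(\I-\A^*z)\G=\Phi(z\I-\A)$, obtained by expanding $\Phi(\I-\A^*z)$ in the $\alpha$-related functions $\varpi_p^\alpha\phi_p^\alpha\dB_p^\beta$ and $\varpi_p^\alpha\phi_p^{\alpha*}\dB_p^\beta$ and then applying the elementary factors $\hat{G}_n^\nu$ in the prescribed order---increasing within an $\alpha$-block, decreasing across a $\beta$-block---while invoking the action rules~\eqref{eq71} and~\eqref{eq72} at each stage. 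This is exactly the derivation preceding the theorem and only needs to be cited.

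Next I would solve~\eqref{eq73} for $z\I$. Rearranging gives $\Phi\big[(\hat{\G}+\A)-z(\I+\A^*\hat{\G})\big]=0$, and since $\tilde{\varpi}_\A(\hat{\G})=\I+\A^*\hat{\G}$ is invertible this yields $\Phi\, z=\Phi(\hat{\G}+\A)(\I+\A^*\hat{\G})^{-1}$, the asserted right-multiplication rule. The second form is purely formal: from $\hat{\G}=\eta_\A^{-1}\G\eta_\A$ and the fact that $\A$, $\A^*$ and $\eta_\A$ are all diagonal and hence mutually commuting, one checks $\hat{\G}+\A=\eta_\A^{-1}(\G+\A)\eta_\A$ and $\I+\A^*\hat{\G}=\eta_\A^{-1}(\I+\A^*\G)\eta_\A$, so the conjugating factors telescope into $\eta_\A^{-1}(\G+\A)(\I+\A^*\G)^{-1}\eta_\A=\tilde{\zeta}_\A(\G)$.

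The special cases are then read off from the block structure~\eqref{eqGblocks}. When $\seq{\gamma}=\seq{\alpha}$ there is a single infinite $\alpha$-block, the order is strictly increasing, and $\G=G_1^\alpha G_2^\alpha\cdots=\mathcal{H}$. For each alternating sequence every block has length one, so the grouping degenerates into a succession of shifted $4\times2$ blocks; since $G_j$ and $G_k$ commute whenever $|j-k|\ge2$, the odd- and even-indexed factors separate to give $\C=\C_o^\alpha\C_e^\beta$ and $\C=\C_e^\alpha\C_o^\beta$ respectively. Finally, passing to the basis $\varphi_n^\alpha=\varsigma_n^\beta\phi_n$ absorbs the diagonal factors of~\eqref{eqab2} into the basis vectors, so that by~\eqref{eq72b} each $\beta$-factor $G_j^\beta$ is replaced by the corresponding $G_j^\alpha$.

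Since every ingredient is in place, no serious obstacle remains; the only point requiring care is the inversion step, where I must confirm that $\I+\A^*\hat{\G}$ is invertible and that the conjugation by $\eta_\A$ passes through the diagonal $\A$ and $\A^*$, so that the two displayed forms genuinely coincide.
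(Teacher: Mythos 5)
Your proposal is correct and essentially coincides with the paper's own proof: the theorem is stated there precisely as a summary of the block-by-block computation that yields \eqref{eq73}, after which the paper performs the same rearrangement $\Phi\big[z\big(\I+\A^*\hat{\G}\big)-\big(\hat{\G}+\A\big)\big]=0$ followed by conjugation with the diagonal $\eta_\A$ (the algebra you describe is spelled out at the end of Section~\ref{secHB}), and it reads off the Hessenberg, CMV and $\varphi_n^\alpha$ special cases exactly as you do, invoking commutativity of $G_j$ and $G_k$ for $|j-k|\ge 2$ and the relations \eqref{eqab2}, \eqref{eq72b}. The invertibility of $\I+\A^*\hat{\G}$ that you flag as the delicate point is likewise asserted rather than proved in the paper, so your treatment matches it in both route and level of rigour.
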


Note that $\lambda_n^\alpha=0$ does not give any problem for these formulas of the $G$-factors.
If $\seq{\gamma}=\seq{\alpha}$ then all $\phi_n^\alpha$ are regular, but for
a general sequence $\seq{\gamma}$, it is possible that $\phi_n$ is not regular. Recall that then
$\lambda_n=\infty$ but by Theorem~\ref{thmcon} this means that $\lambda_n^\alpha=0$ and thus there is
no problem in using the $G$-matrices introduced above, even if the ORF sequence is not regular.

Another thing to note here is that, as we remarked before, the factors
${G}_k^\nu$ are unitary, although the factors $\hat{G}_k^\nu=\eta_{\A}^{-1}G_k^\nu\eta_{\A}$ are
in general not unitary.
Moreover
\begin{gather*}
\tilde{\zeta}_{\A}(\G)=\eta_{\A}^{-1}(\G+\A)(\I+\A^*\G)^{-1}\eta_{\A}
\end{gather*}
is unitary when $\G$ is unitary as can be directly verif\/ied in the f\/inite-dimensional case.
For example $[\tilde{\zeta}_{\A}(\G)][\tilde{\zeta}_{\A}(\G)]^*=\I$ if and only if
\begin{gather*}
(\I+\A\G^*)\eta_{\A}^{-2}(\I+\G\A)=(\G^*+\A^*)\eta_\A^{-2}(\G+\A).
\end{gather*}
Note that $\eta_\A$ is invertible but $\A$ is not (since for example $\alpha_0=0$) and that $\A$ and $\eta_\A$ commute.
To see that the relation holds, we multiply out and bring everything to one side, which gives
\begin{gather*}
\G^*\big(\eta_\A^{-2}\A-\A\eta_\A^{-2}\big)+\big(A^*\eta_\A^{-1}-\eta_\A^{-1}\A^*\big)
+\G^*\eta_\A^{-1}\G-\eta_\A^{-2}-\G^*\A\eta_\A^{-2}\A^*\G+\A^*\eta_\A^{-2}\A.
\end{gather*}
So we have to prove that this is zero.
Because $\eta_\A^{-2}\A=\A\eta_\A^{-2}$ and $\eta_\A^{-2}\A^*=\A^*\eta_\A^{-2}$, the f\/irst two terms vanish.
The four remaining terms can be multiplied from the left and the right with $\eta_\A=\eta_\A^*$ to give
\begin{gather*}
\eta_\A\G^*\eta_\A^{-2}\G\eta_\A -\I -\eta_\A\G^*\A\eta_\A^{-2}\A^*\G\eta_\A+\eta_\A\A^*\eta_\A^{-2}\A\eta_\A.
\end{gather*}
Because $\eta_\A\A^*=\A^*\eta_\A$ and $\A\eta_\A=\A\eta_\A$, the last term equals $\A^*\A$ and combined with the second term $-\I$ it equals $-\eta_\A^2$. Multiply everything again with $\eta_\A^{-1}$ from the left and from the right
gives
\begin{gather*}
\G^*\eta_\A^{-2}\G - \G^*\A\eta_\A^{-2}\A^*\G-\eta_\A^{-1}\eta_A^2\eta_\A^{-1}=
\G^*\eta_\A^{-2}\G - \G^*\A\eta_\A^{-2}\A^*\G-\I\\
\hphantom{\G^*\eta_\A^{-2}\G - \G^*\A\eta_\A^{-2}\A^*\G-\eta_\A^{-1}\eta_A^2\eta_\A^{-1}}{}=
(\G^*\eta_\A^{-1})(\I-\eta_\A\A\eta_\A^{-2}\A^*\eta_\A)(\eta_\A^{-1}\G)-\I\\
\hphantom{\G^*\eta_\A^{-2}\G - \G^*\A\eta_\A^{-2}\A^*\G-\eta_\A^{-1}\eta_A^2\eta_\A^{-1}}{}=
(\G^*\eta_\A^{-1})(\I-\A\eta_\A\eta_\A^{-2}\eta_\A\A^*)(\eta_\A^{-1}\G)-\I\\
\hphantom{\G^*\eta_\A^{-2}\G - \G^*\A\eta_\A^{-2}\A^*\G-\eta_\A^{-1}\eta_A^2\eta_\A^{-1}}{}=
(\G^*\eta_\A^{-1})(\I-\A\A^*)(\eta_\A^{-1}\G)-\I\\
\hphantom{\G^*\eta_\A^{-2}\G - \G^*\A\eta_\A^{-2}\A^*\G-\eta_\A^{-1}\eta_A^2\eta_\A^{-1}}{}=
\G^*\eta_\A^{-1}\eta_\A^2\eta_\A^{-1}\G-\I\\
\hphantom{\G^*\eta_\A^{-2}\G - \G^*\A\eta_\A^{-2}\A^*\G-\eta_\A^{-1}\eta_A^2\eta_\A^{-1}}{}=
\G^*\G-\I,
\end{gather*}
which is zero because $\G$ is unitary.
A similar calculation verif\/ies that $[\tilde{\zeta}_{\A}(\G)]^*[\tilde{\zeta}_{\A}(\G)]=\I$.
The same holds for the inf\/inite case, i.e., in the whole of $L_\mu^2$ if $\L$ is dense (see also \cite{MISCVel07}).

When using the recursion for the $\phi_n$ with respect to a general sequence $\seq{\gamma}$,
Theorem~\ref{thmrec} completed by Lemma~\ref{lemeta} says what the Szeg\H{o} parameters $\lambda_n$ are.
Note however that for the basis $\Phi$ corresponding to a general sequence $\seq{\gamma}$, the matrix $\G$
had factors $G_k^\alpha$ and $G_k^\beta$ that were all def\/ined in terms of $\lambda_k^\alpha$ and not $\lambda_k$
(see Theorem~\ref{thmSA1} and equation (\ref{eqab})).
Theorem~\ref{thmcon} explains that, depending on $\gamma_n$ and $\gamma_{n-1}$ an $\lambda_n$ is either equal to
$\lambda_n^\alpha$ or $\overline{\lambda_n^\alpha}$ or the inverses of these.
Thus it should be no problem to obtain all the $\lambda_n^\alpha$ from the $\lambda_n$ or vice versa.

To illustrate how to use the quantities for a general
$\seq{\gamma}$ sequence in generating the $G_n^\alpha$ matrix we consider the following example.
If $n-1\in\mathbbm{b}_n$ and $n\in\mathbbm{a}_n$, then $\gamma_n=\alpha_n$, $\gamma_{n-1}=\beta_{n-1}$, $\lambda_n^\alpha=1/\overline{\lambda}_n$,
$\phi_n^\alpha=\phi_n/\dB_n^\beta$, and $\phi_n^{\alpha*}=\phi_n^*/\dB_n^\beta$. This allows us to express $\eta_{n1}^\alpha$ and all the other elements of $G_n$ in terms of the $\seq{\gamma}$-related elements.
If we assume that $\phi_n$ is regular, then
\begin{gather*}
\tilde{G}_n^\alpha=\overline{\sigma}_{n-1}\overline{\eta}_{n1}^\alpha
\left[\begin{matrix}
-\eta_{n1}^\alpha/\overline{\lambda}_n & \sqrt{1-1/|\lambda_n|^2}\\
\sqrt{1-1/|\lambda_n|^2} & \overline{\eta}_{n1}^\alpha/{\lambda}_n
\end{matrix}\right]
\left[\begin{matrix}1&0\\0&\sigma_n
\end{matrix}\right]\\
\hphantom{\tilde{G}_n^\alpha}{}=\frac{\overline{\sigma}_{n-1}\overline{\eta}_{n1}^\alpha}{|\lambda_n|}
\left[\begin{matrix}u_n&0\\0&1\end{matrix}\right]
\left[\begin{matrix}
-1 & \sqrt{|\lambda_n|^2-1}\\
\sqrt{|\lambda_n|^2-1} & 1
\end{matrix}\right]
\left[\begin{matrix}1&0\\0&\overline{u}_n\sigma_n
\end{matrix}\right],
\end{gather*}
where $u_n=(\lambda_n/|\lambda_n|)\eta_{n1}^\alpha\in\TT$.

It still requires $\eta_{n1}^\alpha$ in terms of $\seq{\gamma}$-elements. This is a bit messy
but still possible using $\phi_n^{\alpha*}=\phi_n^*/\dB_n^\beta$ and Lemma~\ref{lemeta}.
The factor $1/\dB_n^\beta$ has an essential role here because we have to evaluate $\phi_n^{\alpha*}(z)$ for $z=\alpha_{n-1}$,
and if $1/\overline{\alpha}_{n-1}=\beta_{n-1}\in\{\gamma_k\colon k\in\mathbbm{b}_n\}$, which means that $\alpha_{n-1}$
 has been introduced as a pole in a previous step, then this pole in $\phi_n^*$ will cancel against the same pole in $\dB_n^\beta$.
Thus we should use a limiting process or expand $\phi_n^{\alpha*}=\phi_n^*/\dB_n^\beta$ or make the cancellation explicit as we did for example in equation
(\ref{eqstar}). We skip further details.

If $n$ is not a regular index, then $\lambda_n=\infty$ and in that case the matrix $\tilde{G}_n^\alpha$ has the form
\begin{gather*}
\tilde{G}_n^\alpha=\overline{\sigma}_{n-1}\overline{\eta}_{n1}^\alpha
\left[\begin{matrix}
0 & 1\\
1 & 0
\end{matrix}\right]
\left[\begin{matrix}1&0\\0&\sigma_n
\end{matrix}\right]
=\overline{\sigma}_{n-1}\overline{\eta}_{n1}^\alpha
\left[\begin{matrix}
0 & \sigma_n\\
1 & 0
\end{matrix}\right].
\end{gather*}

\section{Spectral analysis}\label{secSA}

The full spectral analysis of the matrices $\tilde{\zeta}_\A(\G)$ and how this relates to the shift operator in $L^2_\mu$
is given in \cite{MISCVel07} for the case $\seq{\gamma}=\seq{\alpha}$ (then $\G=\cH$ is a Hessenberg matrix)
and in the case
of the alternating $\seq{\gamma}=\seq{\varepsilon}$ where either $\varepsilon_{2k}=\alpha_{2k}$ and $\varepsilon_{2k+1}=\beta_{2k+1}$ or $\varepsilon_{2k}=\beta_{2k}$ and $\varepsilon_{2k+1}=\alpha_{2k+1}$
(then $\G=\C$ is a CMV matrix).

Vel\'azquez shows for the cases $\nu\in\{\alpha,\varepsilon\}$ that if $\T_\mu$ is the shift operator on $L^2_\mu$,
(i.e., multiplication with $z$ in $L^2_\mu$)
and if $\L$ is the closure of $\L_\infty$ in $L^2_\mu$,
then the matrix representation of~$T_\mu\upharpoonright \L$ with respect to the basis $\Phi^\alpha=[\varphi_0^\alpha,\varphi_1^\alpha,\varphi_2^\alpha,\ldots]$ is given by $\tilde{\zeta}_\A(\G^\alpha)$
(see \cite[Theorems~4.1, 4.2, 5.1, 5.4]{MISCVel07}).

We mention here the basis $(\varphi_k^\alpha)_{k\in\NN}$ because in \cite{MISCVel07} only one type of
$G_k^\alpha$ matrices is used. As we have explained, this is related to the precise normalization used for the basis functions, but as long
as the basis is orthogonal and has norm 1, a further normalization with a unimodular constant will not inf\/luence the spectral properties of the matrix representations so that similar results hold for the basis $\Phi$ and the matrix
$\tilde{\zeta}_\A(\G)$.

If moreover $\L$ is dense in $L_\mu^2$ and hence $\L=L_\mu^2$, then $\tilde{\zeta}_\A(\G)$ is not only isometric
but it is a~representation of the full matrix.
For $\set{\alpha}$ compactly included in $\DD$
this happens for $\seq{\gamma}=\seq{\alpha}$ when $\log\mu'\not\in L^1$ and in the case $\seq{\gamma}=\seq{\varepsilon}$
when $\sum\limits_{k=1}^\infty(1-|\alpha_{2k}|)=\infty=\sum\limits_{k=1}^\infty(1-|\alpha_{2k+1}|)$
(see \cite[Theorems~4.1, Proposition~5.3]{MISCVel07}).

In fact, the proof of \cite[Proposition~5.3]{MISCVel07} needs only minor adaptations to be applicable to the present situation. So we can state without further details:
\begin{Theorem}\label{thmbasis2}
If the sequence $\seq{\gamma}$ is such that $\sum\limits_{k\in\mathbbm{a}_\infty}(1-|\alpha_k|)=\infty$ and
$\sum\limits_{k\in\mathbbm{b}_\infty}(1-|\alpha_k|)=\infty$, then both $(\phi_k)_{k\in\mathbbm{a}_\infty}$ and
$(\phi_k)_{k\in\mathbbm{b}_\infty}$ are bases of $L^2_\mu$.
\end{Theorem}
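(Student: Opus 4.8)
The plan is to prove that each of the two orthonormal subfamilies is total in $L^2_\mu$, i.e.\ that the only $f\in L^2_\mu$ annihilated by all inner products $\SP{\phi_k}{f}$, $k$ ranging over the prescribed index set, is $f=0$. I would treat $\mathbbm a_\infty$ in detail and obtain $\mathbbm b_\infty$ at the very end from the reflection $z\mapsto z_*$, which interchanges the two pole families and, via Lemma~\ref{lem5} ($\phi_n^\alpha=\phi_{n*}^\beta$), interchanges the two subfamilies together with their hypotheses $\sum_{\mathbbm a_\infty}(1-|\alpha_k|)=\infty$ and $\sum_{\mathbbm b_\infty}(1-|\alpha_k|)=\infty$. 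The whole argument is an adaptation of \cite[Proposition~5.3]{MISCVel07}: Velázquez's proof for the strictly alternating sequence already uses the two one-sided sums $\sum(1-|\alpha_{2k}|)$ and $\sum(1-|\alpha_{2k+1}|)$ \emph{separately}, and these become our two hypotheses once the even/odd split is replaced by the split $\mathbbm a_\infty\,/\,\mathbbm b_\infty$.

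First I would pass to the pure $\alpha$-picture. By Theorem~\ref{thm1}, for every $k\in\mathbbm a_\infty$ one has $\phi_k=\phi_k^\alpha\,\dB_k^\beta$ with $\dB_k^\beta$ a finite Blaschke product, hence unimodular on $\TT$; moreover $\dB_k^\beta$ is constant along each maximal run of consecutive $\mathbbm a_\infty$-indices, since no new $\beta$-factor is added there. Thus, run by run, orthogonality of $f$ to the block $\{\phi_k\}$ is equivalent to orthogonality of $\overline{\dB^\beta}\,f$ to the corresponding block of genuine $\alpha$-ORFs $\{\phi_k^\alpha\}$, and these unimodular twists can be bookkept without changing $L^2_\mu$-norms because $\SP{f}{g}=\SP{\dB^\beta f}{\dB^\beta g}$. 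The hypothesis $\sum_{k\in\mathbbm a_\infty}(1-|\alpha_k|)=\infty$ says precisely that the Blaschke product built from $\{\alpha_k:k\in\mathbbm a_\infty\}$ diverges, so its partial products tend to $0$ locally uniformly on $\DD$. This divergence is the ingredient that, in the present two-sided setting, replaces the failure of the Szegő condition needed in the plain Hessenberg case $\seq\gamma=\seq\alpha$.

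The density itself I would extract from the Christoffel--Darboux machinery of Section~\ref{secCD}. Writing the partial reproducing kernels of the closed span $M_a=\cl\Span\{\phi_k:k\in\mathbbm a_\infty\}$ as $K_N(z,w)=\sum_{k\in\mathbbm a_\infty,\,k\le N}\phi_k(z)\overline{\phi_k(w)}$, the reproducing identity $\SP{K_N(\cdot,w)}{g}=0$ for any $g\perp M_a$ shows that it suffices to force the diagonal $K_N(w,w)$ to blow up for $\mu$-a.e.\ $w\in\TT$. Using $|\phi_k|=|\phi_k^\alpha|$ on $\TT$, Corollary~\ref{corCCD} ($k_n^\alpha(\alpha_n,\alpha_n)=|\phi_n^{\alpha*}(\alpha_n)|^2$) and the second CD relation
\begin{gather*}
k_n^\alpha(z,w)=\frac{\phi_{n+1}^{\alpha*}(z)\overline{\phi_{n+1}^{\alpha*}(w)}-\phi_{n+1}^{\alpha}(z)\overline{\phi_{n+1}^{\alpha}(w)}}{1-\zeta_{n+1}^\alpha(z)\overline{\zeta_{n+1}^\alpha(w)}},
\end{gather*}
I would propagate the growth of the diagonal through the recurrence of Theorem~\ref{thmrec}: the divergence of the Blaschke product makes the accumulated factors $\prod(1-|\lambda_k^\alpha|^2)^{-1}$, equivalently the $e_k$ from \eqref{eqe2}, drive $K_N(w,w)\to\infty$ everywhere on $\TT$, which is incompatible with a nonzero $g\perp M_a$. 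Undoing the unimodular factors $\dB_k^\beta$ run by run then yields $M_a=L^2_\mu$, and the reflection $z\mapsto z_*$ gives the same for $\mathbbm b_\infty$.

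The main obstacle is exactly this last, density, step: turning the Blaschke divergence into triviality of $M_a^\perp$ \emph{uniformly over the interleaving} of $\mathbbm a$- and $\mathbbm b$-indices, so that the intervening $\beta$-runs (and the twists $\dB_k^\beta$) do not spoil the kernel blow-up. In Velázquez's alternating model this is encoded in the statement that $\tilde\zeta_\A(\G)$ is a deficiency-free unitary representation of the shift; here one must check that the block structure of $\G$ described in Section~\ref{secMF}, in which the $\mathbbm a$-indices assemble the upper-Hessenberg parts, still produces a cyclic, deficiency-free representation once the single two-sided condition is decoupled into the two separate sums. I expect this spectral verification, rather than the routine algebra of the $\dB_k^\beta$-bookkeeping, to be the crux; the non-regular steps ($\lambda_n=\infty$, i.e.\ $\lambda_n^\alpha=0$ by Theorem~\ref{thmcon}) and the degenerate cases $\gamma_n\in\{0,\infty\}$ are absorbed by the conventions already fixed after the proof of Theorem~\ref{thmrec}.
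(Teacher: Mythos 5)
The paper does not actually write out a proof of Theorem~\ref{thmbasis2}: it only asserts that the proof of \cite[Proposition~5.3]{MISCVel07} ``needs only minor adaptations'', so the benchmark is Vel\'azquez's density argument. Measured against that, your proposal has two genuine problems, and the first is fatal to its very goal. The functions $(\phi_k)_{k\ge0}$ form an orthonormal system ($\phi_n\perp\L_{n-1}\supseteq\L_m\ni\phi_m$ for $m<n$), and under the stated hypotheses the disjoint index sets $\mathbbm{a}_\infty$ and $\mathbbm{b}_\infty$ are both infinite. Hence if $M_a=\cl\Span\{\phi_k\colon k\in\mathbbm{a}_\infty\}$ were all of $L^2_\mu$, every $\phi_j$ with $j\in\mathbbm{b}_\infty$ would be a nonzero vector orthogonal to a total set --- a contradiction. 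So the literal claim you set out to prove, ``each subfamily separately is total in $L^2_\mu$'', is provably false; the statement of the theorem is loosely worded, and what the adaptation of \cite[Proposition~5.3]{MISCVel07} delivers, and what is actually used in Theorem~\ref{thmspec}, is that the \emph{complete} sequence $(\phi_k)_{k\in\NN}$ is an orthonormal basis, i.e., that $\L_\infty=\bigcup_n\L_n$ is dense in $L^2_\mu$, the hypothesis on $\mathbbm{a}_\infty$ controlling the poles $\beta_j\in\EE$ and the one on $\mathbbm{b}_\infty$ the poles $\alpha_j\in\DD$.

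Second, and independently of the reading, your density engine is broken. For $g\perp M_a$ the identity $\SP{K_N(\cdot,w)}{g}=0$ holds vacuously, and blow-up of the diagonal $K_N(w,w)$ for $\mu$-a.e.\ $w\in\TT$ does not force $g=0$: take $\mu$ Lebesgue and all $\alpha_k=0$, so that $K_N(w,w)=N+1\to\infty$ at \emph{every} $w\in\TT$, while $\overline{z}$ is orthogonal to all polynomials. The same example shows why any argument that, like yours, uses only the $\mathbbm{a}$-side divergence to conclude totality of an analytic-type family must fail: one-sided Blaschke divergence gives density only in the analytic part, which is exactly why the Hessenberg case $\seq{\gamma}=\seq{\alpha}$ additionally requires $\log\mu'\notin L^1$. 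The genuine adaptation of Vel\'azquez's proof is approximation-theoretic, not kernel- or spectrum-based: $\L_\infty$ contains $\dL_\infty^\alpha+\dL_\infty^\beta$; by an Akhiezer/Walsh-type density theorem, $\sum_{k\in\mathbbm{a}_\infty}(1-|\alpha_k|)=\infty$ makes $\dL_\infty^\alpha$ uniformly dense among the functions analytic in $\DD$ and continuous on $\hat{\DD}$, and symmetrically the $\mathbbm{b}_\infty$-condition handles the functions analytic in $\EE$; hence every trigonometric polynomial, and therefore every element of $C(\TT)$, is a uniform limit from $\L_\infty$, which gives density in $L^2_\mu$ for an \emph{arbitrary} measure $\mu$. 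In particular the step you flag as the crux --- verifying that $\tilde{\zeta}_\A(\G)$ is a deficiency-free unitary representation --- plays no role in this proof: density comes first, from classical approximation theory, and the spectral statements of Theorem~\ref{thmspec} are consequences of it, not ingredients. Your bookkeeping observations ($|\phi_k|=|\phi_k^\alpha|$ on $\TT$, constancy of $\dB_k^\beta$ along runs of $\mathbbm{a}$-indices, the reflection symmetry exchanging the two hypotheses) are correct and are indeed the ``minor adaptations'' the paper alludes to, but they decorate an argument whose target is impossible and whose core implication is false.
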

Now also \cite[Theorem~5.4]{MISCVel07} can be directly translated to our situation:
\begin{Theorem}\label{thmspec}
For the general sequence $\seq{\gamma}$ as in the previous theorem,
$\U=\tilde{\zeta}_\A({\G})$ is the matrix representation of the shift operator $\T_\mu$ of $L^2_\mu$ and
$\sigma(\U)=\supp\mu$. Moreover, if $\Phi=[\phi_0,\phi_1,\ldots]$ then $\xi$ is a mass point of $\mu$ if and only if
$\|\Phi(\xi)\|_{\ell^2}<\infty$, $\mu(\{\xi\})=1/\|\Phi(\xi)\|_{\ell^2}^2$ and $\Phi(\xi)$ is a~$($left$)$ eigenvector for the eigenvalue $\xi$ of $\U$.
\end{Theorem}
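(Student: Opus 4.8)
The plan is to transcribe the argument of \cite[Theorem~5.4]{MISCVel07}, leaning on everything assembled above. First I would recall the identity established in the previous section (equation~\eqref{eq73} and the theorem following it): on $\L=\Span\{\phi_0,\phi_1,\ldots\}$, multiplication by $z$ is represented with respect to the basis $\Phi$ by $\U=\tilde{\zeta}_\A(\G)$, that is, $\Phi(z)\,\U=z\,\Phi(z)$ holds formally; and we already checked that $\tilde{\zeta}_\A(\G)$ is unitary whenever $\G$ is. The genuinely new ingredient is density: under the two summability hypotheses, Theorem~\ref{thmbasis2} makes even the subsequences $(\phi_k)_{k\in\mathbbm{a}_\infty}$ and $(\phi_k)_{k\in\mathbbm{b}_\infty}$ bases of $L^2_\mu$, so a fortiori $\L=L^2_\mu$ and $\U$ is a unitary operator representing the \emph{full} shift $\T_\mu\colon f\mapsto zf$ rather than a compression to a proper subspace.

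From here $\sigma(\U)=\supp\mu$ would be immediate: unitarily equivalent operators share spectra, and for the multiplication operator $\T_\mu$ on $L^2_\mu$ it is classical that its spectrum is the essential range of $z\mapsto z$, namely $\supp\mu$.

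For the mass-point assertion I would proceed spectrally. A scalar $\xi$ is an eigenvalue of $\T_\mu$ exactly when $(\T_\mu-\xi)$ has nontrivial kernel, i.e.\ when $\mathbbm{1}_{\{\xi\}}\neq0$ in $L^2_\mu$, which happens iff $\mu(\{\xi\})>0$; the eigenfunction is then $\mathbbm{1}_{\{\xi\}}$. Expanding it in the orthonormal basis with the inner product $\SP{f}{g}=\int_\TT\overline{f}g\,\d\mu$ gives the coefficients $\SP{\phi_k}{\mathbbm{1}_{\{\xi\}}}=\overline{\phi_k(\xi)}\,\mu(\{\xi\})$, and Parseval then yields
\begin{gather*}
\mu(\{\xi\})=\big\|\mathbbm{1}_{\{\xi\}}\big\|^2=\sum_{k\ge0}\big|\SP{\phi_k}{\mathbbm{1}_{\{\xi\}}}\big|^2=\mu(\{\xi\})^2\,\|\Phi(\xi)\|_{\ell^2}^2,
\end{gather*}
so $\|\Phi(\xi)\|_{\ell^2}<\infty$ and $\mu(\{\xi\})=1/\|\Phi(\xi)\|_{\ell^2}^2$. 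Conversely, if $\|\Phi(\xi)\|_{\ell^2}<\infty$ then $\Phi(\xi)$ is a bona fide $\ell^2$ vector, and evaluating $\Phi(z)\,\U=z\,\Phi(z)$ at $z=\xi$ gives $\Phi(\xi)\,\U=\xi\,\Phi(\xi)$, exhibiting $\Phi(\xi)$ as a left eigenvector for the eigenvalue $\xi$, so $\xi$ must be a mass point.

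The main obstacle is the passage from the formal, entrywise relation $\Phi(z)\,\U=z\,\Phi(z)$ to a statement about a bounded unitary operator on all of $L^2_\mu$; this is exactly where density is indispensable and where the convergence of $\sum_k|\phi_k(\xi)|^2$ must be controlled. The crux is therefore adapting \cite[Proposition~5.3]{MISCVel07} to the interlaced $\seq{\gamma}$ sequence, replacing its single condition by the two separate conditions over $\mathbbm{a}_\infty$ and $\mathbbm{b}_\infty$ so as to secure the basis property for both subsequences. Once that is settled, the spectral and mass-point claims follow as routine consequences of the standard theory of multiplication operators.
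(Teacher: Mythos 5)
Your proposal takes essentially the same route as the paper: the paper offers no independent proof of this theorem, stating only that \cite[Theorem~5.4]{MISCVel07} ``can be directly translated to our situation'' once Theorem~\ref{thmbasis2} (the adaptation of \cite[Proposition~5.3]{MISCVel07}) secures that $\L$ is dense in $L^2_\mu$, so that $\U=\tilde{\zeta}_\A(\G)$ represents the full shift rather than a compression. The details you supply --- unitarity of $\tilde{\zeta}_\A(\G)$ from Section~\ref{secMF}, the identification $\sigma(\T_\mu)=\supp\mu$, and the Parseval computation with $\mathbbm{1}_{\{\xi\}}$ together with the left-eigenvector relation $\Phi(\xi)\U=\xi\Phi(\xi)$ --- are exactly the content of the cited Vel\'azquez argument, so your write-up is a correct fleshing-out of the paper's intended proof.
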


In \cite{MISCVel07} it is also explained how the spectrum of $\tilde{\zeta}_\A({\G})=\eta_\A^{-1}\tilde{\varpi}^*_\A(\G)\tilde{\varpi}_\A(\G)^{-1}\eta_\A$ relates to
the spectrum of the pencil $(\varpi^*_\A(\G),\varpi_\A(\G))$.
In the case of an alternating sequence $\seq{\varepsilon}$, the matrix $\G$ was a CMV matrix which we denoted as $\C$.
It was then possible to regroup the even and odd elementary factors so that $\C=\C_o^\alpha\C_e^\beta$ or
$\C=\C_e^\alpha\C_o^\beta$ with $\C_o^\nu=G_1^\nu G_3^\nu G_5^\nu\cdots$ and $\C_e=G_2^\nu G_4^\nu G_6^\nu\cdots$ for $\nu\in\{\alpha,\beta\}$. As a consequence, because all factors are unitary, the previous result could be formulated in terms of the spectral
properties for the pencils $(\A\C_e^{\beta*}+\C_o^\alpha,\C_e^{\beta*}+\A^*\C_o^\alpha)$ or $(\A\C_o^{\beta*}+\C_e^\alpha,\C_o^{\alpha*}+\A^*\C_e^\alpha)$.
In the general case, such a regrouping is possible, but rather complicated because it will
depend on the sizes of the $\alpha$ and $\beta$ blocks in the $\gamma$ sequence, i.e., on the number of elementary $G$-factors that appears in the product in increasing or decreasing order
(see (\ref{eqGblocks})).

There are also spectral interpretations for the eigenvalues of the f\/initely truncated sections of the matrices.
Again, a careful discussion is given in \cite{MISCVel07} for this. If for an operator $\T$ we denote its truncation to
$\L_n$ as $\T_n=\mathcal{P}_n\T\upharpoonright \L_n$ with $\mathcal{P}_n$ the orthogonal projection operator on~$\L_n$,
then Vel\'azquez shows that the zeros of the $\phi_{n+1}$ are the eigenvalues of the f\/inite matrices
$\V_n=\tilde{\zeta}_{\A_n}(\G_n)$
with $\G_n$ the leading principle submatrix of size $(n+1)$ of the matrix $\G$ and $\A_n=\diag(\alpha_0,\ldots,\alpha_n)$.
As illustrated in
\cite[equation~(19)]{MISCVel07}, $\G_n$ is the product of unitary matrices $G_k$ of the form (\ref{eqG0}) for $k=1,\ldots,n+1$ except that $G_{n+1}$ is `truncated' and is of the form
$\diag(I_n\oplus -\delta_{n+1})$ where $\delta_{n+1}$ (up to a unimodular constant) is $\lambda_{n+1}$ and thus
this truncated factor $G_{n+1}$ is not unitary because $\lambda_{n+1}\not\in\TT$ in general.
For $\seq{\gamma}=\seq{\alpha}$ this is proved in \cite[Theorem~4.5]{MISCVel07}
and for $\seq{\gamma}=\seq{\varepsilon}$ in \cite[Theorem~5.8]{MISCVel07}.

The same arguments can be used for a general sequence $\seq{\gamma}$.
Indeed (\ref{eq71}) and (\ref{eq72}) show that in the truncated version we shall end up with
\begin{gather*}
[\phi_0,\ldots,\phi_n](\varpi_{\A_{n}}^*-\varpi_{\A_{n}}\G_n)=[0,\ldots,0,\chi_{n+1}],
\end{gather*}
where $\chi_{n+1}$ is of the form $c_{n+1}\phi^\alpha_{n+1}\dB_{n+1}^\beta$ or $c_{n+1}\phi^{\alpha*}_{n+1}\dB_{n+1}^\beta$
depending on whether the truncation falls inside and $\alpha$ block or a $\beta$ block.
In both cases, this is a constant multiple of $\phi_{n+1}$ (see Theorem~\ref{thm1}).
Like in \cite{MISCVel07} this can be transformed into
\begin{gather*}
[\phi_0(z),\ldots,\phi_n(z)]\big(z-\tilde{\zeta}_{\A_{n}}(\G_n)\big)=[0,\ldots,0,c_{n+1}\phi_{n+1}(z)],
\end{gather*}
which shows that a zero of $\phi_{n+1}$ corresponds to an eigenvalue of $\V_n=\tilde{\zeta}_{\A_{n}}(\G_n)$.
Thus the following theorem is still valid for a general $\seq{\gamma}$.
\begin{Theorem}\label{thmzerosphi}
For the general sequence $\seq{\gamma}$ let $\G_n$ be the truncation of size $n+1$ of the matrix~$\G$,
and define $\V_n=\tilde{\zeta}_{\A_n}(\G_n)$, then the eigenvalues of $\V_n$ will be the zeros of
$\phi_{n+1}$.
The $($left$)$ eigenvector corresponding to an eigenvalue $\xi$ is given by
$\Phi_n(\xi)=[\phi_0(\xi),\ldots,\phi_n(\xi)]$.
\end{Theorem}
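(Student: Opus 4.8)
The plan is to transplant the infinite–dimensional computation that produced \eqref{eq73} to the $(n+1)$–dimensional truncation, the only genuinely new feature being a boundary term created by cutting off the last elementary factor. I would first record the truncated pencil identity, then convert it by the operator Möbius transform exactly as in Section~\ref{secHB}, and finally read off the eigenvalues and eigenvectors.

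First I would run the local relations \eqref{eq71} and \eqref{eq72} for $k=1,\dots,n$ to push $[\phi_0,\dots,\phi_n]\,\varpi_{\A_n}(z\I)$ through the truncation $\hat\G_n$ of the hatted factors $\hat G^\nu_k$, converting each entry $\varpi^\alpha_k\phi_k$ into $\varpi^{\alpha*}_k\phi_k$ precisely as in the passage leading to \eqref{eqR}. Because the factor $\hat G_{n+1}$ enters only in truncated form (its coupling to index $n+1$ is discarded), a single boundary term survives, giving
\[
[\phi_0,\dots,\phi_n]\bigl(\varpi^*_{\A_n}(z\I)-\varpi_{\A_n}(z\I)\hat\G_n\bigr)=[0,\dots,0,\chi_{n+1}],
\]
where, according to whether the cut falls inside an $\alpha$–block or a $\beta$–block, $\chi_{n+1}$ equals $c_{n+1}\phi^\alpha_{n+1}\dB^\beta_{n+1}$ or $c_{n+1}\phi^{\alpha*}_{n+1}\dB^\beta_{n+1}$ with $c_{n+1}\ne0$; by Theorem~\ref{thm1} either expression is a nonzero constant multiple of $\phi_{n+1}$.

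Next I would repeat verbatim the algebra that took \eqref{eqR} to \eqref{eq73}: rewrite the bracket as $z(\I+\A_n^*\hat\G_n)-(\A_n+\hat\G_n)$, factor the invertible matrix $\tilde\varpi_{\A_n}(\hat\G_n)=\I+\A_n^*\hat\G_n$ out on the right (invertibility being inherited from the infinite–dimensional case), and use $\hat\G_n=\eta_{\A_n}^{-1}\G_n\eta_{\A_n}$ together with the commutation of $\A_n$ and $\eta_{\A_n}$ with diagonal matrices to identify $\tilde\varpi^*_{\A_n}(\hat\G_n)\tilde\varpi_{\A_n}(\hat\G_n)^{-1}=\tilde\zeta_{\A_n}(\G_n)=\V_n$. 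As in \cite{MISCVel07} this yields
\[
[\phi_0(z),\dots,\phi_n(z)]\bigl(z\I-\V_n\bigr)=[0,\dots,0,c_{n+1}\phi_{n+1}(z)].
\]
Using the hatted matrix $\hat\G_n$ rather than $\G_n$ in the pencil is exactly what makes the untwisted vector $\Phi_n$, and not $\Phi_n\eta_{\A_n}$, appear on the left. Evaluating at any zero $\xi$ of $\phi_{n+1}$ annihilates the right–hand side, so $\Phi_n(\xi)\bigl(\xi\I-\V_n\bigr)=0$; since $\phi_0\equiv1$ the first entry of $\Phi_n(\xi)$ is $1$, whence $\Phi_n(\xi)\ne0$ is a genuine left eigenvector of $\V_n$ for the eigenvalue $\xi$.

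It remains to see that these exhaust the spectrum. Here I would match dimensions: $\V_n$ is $(n+1)\times(n+1)$ and the numerator $p_{n+1}$ of $\phi_{n+1}$ has degree $n+1$. A Cramer/cofactor computation on the displayed identity—its first component reads $1=c_{n+1}\phi_{n+1}(z)\,[\mathrm{adj}(z\I-\V_n)]_{n+1,1}/\det(z\I-\V_n)$—identifies the characteristic polynomial of $\V_n$, up to a nonzero constant, with $p_{n+1}$; hence the spectrum of $\V_n$ is exactly the zero set of $\phi_{n+1}$ counted with algebraic multiplicity (for instance, $\seq\gamma=\seq\alpha$ with Lebesgue measure gives $\phi_{n+1}=z^{n+1}$ and the nilpotent shift $\V_n$, with the single eigenvector $\Phi_n(0)=[1,0,\dots,0]$), while each distinct zero $\xi$ contributes the eigenvector $\Phi_n(\xi)$. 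I expect the main obstacle to be precisely this last bookkeeping step—keeping the boundary term localized in the final slot through the Möbius transformation and confirming that the $\eta_{\A_n}$–conjugation returns $\V_n=\tilde\zeta_{\A_n}(\G_n)$ together with the eigenvector $\Phi_n$ rather than $\Phi_n\eta_{\A_n}$—since the eigenvalue–existence half is an almost mechanical truncation of the argument for \eqref{eq73}.
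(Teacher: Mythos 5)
Your first three steps coincide with the paper's own proof: the paper derives the truncated identity $[\phi_0,\ldots,\phi_n](\varpi^*_{\A_n}-\varpi_{\A_n}\G_n)=[0,\ldots,0,\chi_{n+1}]$ from (\ref{eq71}) and (\ref{eq72}), performs the same M\"obius-transform algebra, and concludes that each zero of $\phi_{n+1}$ yields an eigenvalue of $\V_n$ with left eigenvector $\Phi_n(\xi)$ --- and it stops there. For that half, you and the paper even share the same imprecisions: when the cut falls inside a $\beta$ block the surviving function is $c\,\varpi^{\alpha*}_{n+1}\phi^{\alpha}_{n+1}\dB^{\beta}_{n+1}$, and by Theorem~\ref{thm1} (case $\gamma_{n+1}=\beta_{n+1}$) the product $\phi^{\alpha}_{n+1}\dB^{\beta}_{n+1}$ equals $\phi^*_{n+1}$, \emph{not} $\phi_{n+1}$; so what is captured there are the zeros of $\phi^*_{n+1}$ (the reflections in $\TT$ of the zeros of $\phi_{n+1}$ --- consistent with the eigenvalues of $\V_n$ lying in $\hat{\DD}$), and the discarded terms then occupy several columns, not only the last one. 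Since these defects come from the paper itself, I do not count them against you.

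The genuine new gap is in your exhaustion step, both of whose premises fail. First, the boundary term is not $c_{n+1}\phi_{n+1}(z)$ with $c_{n+1}$ constant: reading off (\ref{eq71}), it is $c_{n+1}\varpi^{\alpha}_{n+1}(z)\phi_{n+1}(z)=c_{n+1}\,p_{n+1}(z)/\pi_n(z)$. Already for $n=0$, $\seq{\gamma}=\seq{\alpha}$, $\alpha_1\neq0$ this is unavoidable: $\Phi_0(z)(z\I-\V_0)=z-v$ (writing $\V_0=[v]$) is a polynomial, whereas $c\,\phi_1(z)=c\,p_1(z)/(1-\oa_1 z)$ is not, for any constant $c\neq 0$; by Theorem~\ref{thmrec} the identity actually reads $z+\lambda_1=p_1(z)/(e_1\eta_{11})$. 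Second, passing $(\I+\A_n^*\hat{\G}_n)^{-1}$ to the right smears the remainder over the whole last row: the correct identity is $\Phi_n(z)(z\I-\V_n)=\chi_{n+1}(z)\,w^T$ with $w^T=e_{n+1}^T(\I+\A_n^*\hat{\G}_n)^{-1}$, and $w^T$ is not proportional to $e_{n+1}^T$ unless $\alpha_n=0$. Consequently your Cramer computation gives $\pi_n(z)\det(z\I-\V_n)=c\,p_{n+1}(z)\,Q(z)$ with $Q(z)=w^T\mathrm{adj}(z\I-\V_n)e_1$ a \emph{nonconstant} polynomial of degree up to $n$; this neither identifies the characteristic polynomial with $p_{n+1}$ up to a constant, nor excludes eigenvalues of $\V_n$ that are zeros of $Q$ rather than of $p_{n+1}$, which is exactly the direction you were trying to prove. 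The exhaustion can be rescued without Cramer: $p^{\alpha}_{n+1}$ has degree exactly $n+1$ (Remark~\ref{rem3.2} gives $p^{\alpha*}_{n+1}(0)\neq0$), and if $\xi$ is a zero of multiplicity $m$ of the boundary function, differentiating $\Phi_n(z)(z\I-\V_n)=\chi_{n+1}(z)w^T$ yields $\Phi_n^{(j)}(\xi)(\xi\I-\V_n)=-j\,\Phi_n^{(j-1)}(\xi)$ for $j<m$, i.e., a left Jordan chain of length $m$ starting at $\Phi_n(\xi)\neq0$; summing multiplicities over the distinct zeros gives $n+1$, the size of $\V_n$, so the spectrum is exactly this zero set, multiplicities included.
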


For the unitary truncation, we f\/irst form a unitary truncation $\G_{n}^u$ of $\G_{n}$
by keeping all $G_k$ with $k<n+1$ and replacing $G_{n+1}$ by $G_{n+1}^u=\diag(1,\ldots,1,\tau)$
for some $\tau\in\TT$. We shall then get (recall the def\/inition of the PORF $Q_{n+1}(z,\tau)$
in (\ref{defPORF}))
\begin{gather*}
[\phi_0,\ldots,\phi_n]\big(\varpi_{\A_{n}}^*-\varpi_{\A_{n}}\G_n^u\big)=[0,\ldots,0,X_{n+1}],
\end{gather*}
with $X_{n+1}$ of the form $c_{n+1}Q_{n+1}^\alpha(z,{\tau})\dB_{n+1}^\beta(z)$
or $c_{n+1}Q_{n+1}^\alpha(z,\overline{\tau})\dB_{n+1}^\beta(z)$
depending on whether the truncation falls inside and $\alpha$ block or a $\beta$ block.
By Corollary~\ref{cor2knd} this is proportional to~$Q_{n+1}(z,\tau')$ in both cases
for some appropriate choice of $\tau'$.
Therefore eigenvalues of $\U_n=\tilde{\zeta}_{\A_n}(\G_n^u)$ will give the nodes of the
$(n+1)$-point rational Szeg\H{o} quadrature formula for some particular choice of the
$\tau$-parameter.

Again this is worked out in detail in \cite{MISCVel07} for the Hessenberg and CMV matrices.
As in the previous theorem, also for the unitary truncations we can use similar arguments for a
general sequence and we therefore have also
\begin{Theorem}\label{thmzerosPORF}
For the general sequence $\seq{\gamma}$ let $\G_n^u$ be the unitary truncation of size $n+1$
of the matrix~$\G$,
and define $\U_n=\tilde{\zeta}_{\A_n}(\G_n^u)$, then the eigenvalues of $\U_n$ will be the zeros of
the PORF~$Q_{n+1}$ for some value of the $\tau$-parameter.
The $($left$)$ eigenvector corresponding to an eigenvalue~$\xi$ is given by
$\Phi_n(\xi)=[\phi_0(\xi),\ldots,\phi_n(\xi)]$.
\end{Theorem}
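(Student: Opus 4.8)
The plan is to closely mimic the proof of Theorem~\ref{thmzerosphi}, replacing the non-unitary truncation $\G_n$ by its unitary version $\G_n^u$ and tracking how this modifies the trailing entry of the residual vector. First I would apply the block recurrence of Section~\ref{secMF}, that is the relations \eqref{eq71} and \eqref{eq72}, up to index $n$, but with the final factor $G_{n+1}$ replaced by the unitary block $G_{n+1}^u=\diag(1,\ldots,1,\tau)$, $\tau\in\TT$. This yields
\begin{gather*}
[\phi_0,\ldots,\phi_n]\big(\varpi_{\A_n}^*-\varpi_{\A_n}\G_n^u\big)=[0,\ldots,0,X_{n+1}].
\end{gather*}
Only the last slot differs from Theorem~\ref{thmzerosphi}: replacing the trailing Szeg\H{o} parameter by a unimodular constant is exactly the operation that, by Proposition~\ref{propPORF}, turns $\phi_{n+1}^\alpha$ into a nonzero multiple of the para-orthogonal function $Q_{n+1}^\alpha$. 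Hence $X_{n+1}$ equals $c_{n+1}Q_{n+1}^\alpha(z,\tau)\dB_{n+1}^\beta(z)$ or $c_{n+1}Q_{n+1}^\alpha(z,\overline\tau)\dB_{n+1}^\beta(z)$ according to whether the truncation index falls inside an $\alpha$ block or a $\beta$ block; by Corollary~\ref{cor2knd} each of these is, up to a nonzero factor, the general PORF $Q_{n+1}(z,\tau')$ for a suitable $\tau'\in\TT$. Thus $X_{n+1}=\tilde{c}_{n+1}Q_{n+1}(z,\tau')$ with $\tilde{c}_{n+1}\ne0$.

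Next, exactly as in the preceding theorem and in \cite{MISCVel07}, I would pass from the pencil to the eigenvalue formulation. Since $\tilde\varpi_{\A_n}(\G_n^u)=\I+\A_n^*\G_n^u$ is invertible, the displayed identity is equivalent to
\begin{gather*}
[\phi_0(z),\ldots,\phi_n(z)]\big(z\I-\tilde\zeta_{\A_n}(\G_n^u)\big)=[0,\ldots,0,\tilde{c}_{n+1}Q_{n+1}(z,\tau')].
\end{gather*}
Consequently, whenever $\xi$ is a zero of $Q_{n+1}(\cdot,\tau')$ the row vector $\Phi_n(\xi)=[\phi_0(\xi),\ldots,\phi_n(\xi)]$ annihilates $\xi\I-\U_n$ from the left, so it is a left eigenvector of $\U_n=\tilde\zeta_{\A_n}(\G_n^u)$ for the eigenvalue $\xi$; it is nonzero because $\phi_0=1$. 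To see that the correspondence is exhaustive I would count dimensions: $\U_n$ is a unitary matrix of size $n+1$, while by Corollary~\ref{corzeros} the PORF $Q_{n+1}(\cdot,\tau')$ has exactly $n+1$ simple zeros on $\TT$, so these zeros already account for the whole spectrum of $\U_n$ and the map between zeros and eigenvalues is a bijection.

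The main obstacle is the first step, namely verifying rigorously that the unitary replacement of the final factor produces precisely a PORF in the trailing entry, with the correct $\tau$ versus $\overline\tau$ bookkeeping across an $\alpha/\beta$ block boundary. This requires combining the block-multiplication analysis of Section~\ref{secMF} with Proposition~\ref{propPORF} and Corollary~\ref{cor2knd}, and handling the degenerate cases ($\gamma_{n+1}\in\{0,\infty\}$, or a non-regular index at which the relevant Szeg\H{o} parameter is infinite) by the same conventions adopted elsewhere in the paper. Once $X_{n+1}$ is identified as a constant multiple of $Q_{n+1}(\cdot,\tau')$, the remaining manipulations are identical to those already performed for Theorem~\ref{thmzerosphi}.
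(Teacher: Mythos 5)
Your proposal matches the paper's own proof essentially step for step: the paper likewise replaces the trailing factor by $G_{n+1}^u=\diag(1,\ldots,1,\tau)$, identifies the trailing residual entry $X_{n+1}$ as $c_{n+1}Q_{n+1}^\alpha(z,\tau)\dB_{n+1}^\beta(z)$ or $c_{n+1}Q_{n+1}^\alpha(z,\overline{\tau})\dB_{n+1}^\beta(z)$ depending on whether the truncation falls in an $\alpha$ or a $\beta$ block, invokes Corollary~\ref{cor2knd} to see this is proportional to $Q_{n+1}(z,\tau')$, and then converts the pencil identity into the eigenvalue equation exactly as in Theorem~\ref{thmzerosphi}. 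Your closing dimension count via Corollary~\ref{corzeros} (the $n+1$ simple zeros on $\TT$ exhaust the spectrum of the unitary $\U_n$) is a detail the paper leaves implicit, but it is a completion of the same argument rather than a different route.
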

\begin{Remark}\label{renunitary}
Since $\U_n$ is unitary, its right eigenvectors will be the conjugate transpose of the left
eigenvectors.
Thus if $F$ is the matrix whose entry at position $(i,j)$ is $\overline{\phi_j(\xi_i)}$,
and $\Lambda=\diag(\xi_0,\ldots,\xi_n)$ contains the eigenvalues of $\U_n$,
then $F\U_n = \Lambda F$.
A row of $F$ is $[\phi_0(\xi_i),\ldots,\phi_n(\xi_i)]$ which is not normalized in $\ell^2$ sense, but
we know that $\phi_0=1$.
In linear algebra it is more usual to write the eigenvalue decomposition of a unitary (or normal)
matrix $\U_n$ as $U^*\U_n=\Lambda U^*$ with~$U$ unitary, thus with orthonormal rows and columns.
Thus if $D$ is the diagonal matrix whose diagonal entries are the elements on
the f\/irst row of $U$ (which in our case are all nozero) then $F=D^{-1}U^*$.
So $FF^*=D^{-1}U^* U D^{-*}=D^{-1}D^{-*}$ is diagonal with
entries $\sum\limits_{k=0}^n |\phi_k(\xi_i)|^2$.
In other words, the weights $w_i$ of the quadrature formula are given by the modulus
square of the elements in the f\/irst row of $U$.
\end{Remark}

Note that the full matrices never need to be constructed explicitly. There exist ef\/f\/icient algorithms to
perform all the required operations when the matrix is stored as a product of elementary $2\times2$ factors
(for example \cite{MISCAMVW15} or \cite{MISCBD07}).
In fact these techniques will be able to handle an arbitrary order in the product with the same
complexity (in terms of f\/loating point operations) as for example the Hessenberg or CMV ordering.
It will however reduce the programming complexity slightly requiring less bookkeeping,
if we use for example a CMV structure.
In the case of the CMV matrix, we do not even need a product at all to explicitly write down
the matrices involved.
Indeed, since the odd and even factors $\C_{on}^u$ and $\C_{en}^u$ of the unitarily truncated CMV matrix $\C_n^u$
are just block diagonals with all blocks of size at most $2\times2$ with exception of $G_{n+1}$
which is diagonal.
Hence a pencil like $(\A_n\C_{en}^{u*}+\C_{on}^{u},\C_{en}^{u*}+\A_n^*\C_{en}^u)$ that will allow to compute
the quadrature is extremely simple for numerical computations \cite[Theorem~7.3]{ArtBCCB14}.

We mentioned already in Section~\ref{secQF} that an arbitrary sequence $\seq{\gamma}$ did not give rise to new
quadrature formulas than already given by the sequence $\seq{\alpha}$ or the alternating sequence $\seq{\varepsilon}$.
We now f\/ind that there is no computational advantage either
it is not very rewarding in a practical computation of rational Szeg\H{o}
quadrature to work out all the details in the general case.
The CMV matrix obviously has the smallest possible bandwidth; it is conceptually simpler and somewhat easier to implement.

There is however a remarkable linear algebra side ef\/fect that
became clear from the previous analysis.
As far as we know, this was previously unknown in the linear algebra community and we will discussed
some preliminary results in the next section.

\section{AMPD matrices}\label{secAMPD}

In the previous section, it was recalled from \cite{MISCVel07} that under the appropriate conditions, the spectrum
of the unitary shift $\T_\mu$ can be recovered from the matrix representation $\tilde{\zeta}_{\A}(\G^\alpha)$
 with respect to the basis $[\varphi_0^\alpha,\varphi_1^\alpha,\ldots]$, whatever the sequence
$\seq{\gamma}$ is. Now the matrix $\G^\alpha$ is a product of elementary matrices that dif\/fer from the identity
only by one $2\times2$ block of the general form~(\ref{eqG0}). The only dif\/ference between the Hessenberg and the
CMV form is the order in which these factors are multiplied.

Something of that form was known for a f\/inite unitary Hessenberg matrix.
It can be decomposed as a product of {\it unitary} matrices of the type (\ref{eqG0}) and
multiplying these factors in any order will not change the spectrum of the matrix
(see, e.g., \cite{MISCAMVW15}).

Since the zeros of the ORFs are given as the eigenvalues of the matrix $\tilde{\zeta}_{\A_n}(\G_n)$ where $\G_n$ is
either a f\/inite Hessenberg or a CMV matrix and hence they dif\/fer only by the order of the elementary factors,
we might expect that if $\G_n$ is any matrix written as the product of $n$ elementary factors of the form
(\ref{eqG0}), then the spectrum of $\tilde{\zeta}_{\A_n}(\G_n)$ will not change if we change the order of the
elementary factors in the product.
This is a consequence of the fact that the spectrum of $\G_n$ will not depend on the order of its elementary factors.

Since this section is partly detached from the ORF theory, the notation used is also local and should not
be confused with the same notation used in other sections, unless stated explicitly.

We def\/ine an AMPD matrix as a matrix of the form $AM+D$
where $A$ and $D$ are diagonal matrices and $M$ is the product of $G$-matrices
that are matrices that dif\/fer from the identity by one
$2\times2$ diagonal block like ($I_0$ is the empty matrix)
\begin{gather}\label{eqG1}
G_{k}:=\left[
\begin{matrix}
I_{k-1} &0 &0 & 0 \\ 0 & \alpha_k & \beta_k & 0 \\ 0 & \gamma_k & \delta_k & 0\\ 0 & 0 & 0 & I_{n-k}
\end{matrix}\right],
\qquad 1\leq k \le n,
\end{gather}
which is more general than the matrices def\/ined in (\ref{eqG0}).

The purpose is f\/irst to show that the spectrum of an AMPD matrix does not
depend on the order in which the $G$-matrices are multiplied.
The next proof was provided by Marc Van Barel in private discussion with the f\/irst author.

\begin{Lemma}\label{lemAMPD2}
Suppose $M'\in\CC^{n\times n}$ is an arbitrary matrix with $n\ge2$ and set
\begin{gather*}
M=\left[\begin{matrix} M'& 0 \\ 0 & 1\end{matrix}\right]
\qquad\text{and}\qquad
G=\left[\begin{matrix}
I_{n-1} & 0 & 0\\
0 & \alpha & \beta \\
0 & \gamma & \delta
\end{matrix}\right]
\end{gather*}
and let $A,D\in\CC^{(n+1)\times(n+1)}$ be two arbitrary diagonal matrices.
Then the determinant and the eigenvalues of $AGM+D$ and $AMG+D$
are the same.
\end{Lemma}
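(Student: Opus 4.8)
The plan is to collapse both assertions into a single determinant identity and then evaluate it by a Schur-complement computation. First I would observe that it suffices to prove $\det(AGM+D)=\det(AMG+D)$ for \emph{every} diagonal matrix $D$: replacing $D$ by $D-\lambda I$ (which is again diagonal) turns this into the equality of characteristic polynomials $\det(\lambda I-(AGM+D))=\det(\lambda I-(AMG+D))$, giving the eigenvalue claim, while $\lambda=0$ gives the determinant claim. So only the determinant identity needs to be established.

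Next I would make the block structure explicit. Split $\{1,\dots,n+1\}$ as $(n-1)+1+1$ and write
\begin{gather*}
M'=\left[\begin{matrix}M''&u\\v^{T}&m\end{matrix}\right],\qquad
A=\diag(A'',a_n,a_{n+1}),\qquad
D=\diag(D'',d_n,d_{n+1}),
\end{gather*}
with $M''\in\CC^{(n-1)\times(n-1)}$, $u,v\in\CC^{n-1}$ and $m\in\CC$. A direct multiplication gives
\begin{gather*}
GM=\left[\begin{matrix}M''&u&0\\\alpha v^{T}&\alpha m&\beta\\\gamma v^{T}&\gamma m&\delta\end{matrix}\right],\qquad
MG=\left[\begin{matrix}M''&\alpha u&\beta u\\v^{T}&\alpha m&\beta m\\0&\gamma&\delta\end{matrix}\right],
\end{gather*}
so that $AGM+D$ and $AMG+D$ share the same leading $(n-1)\times(n-1)$ block $C:=A''M''+D''$ and differ only in the last two rows and columns.

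The heart of the argument is a Schur complement with respect to $C$. Since both sides are polynomials in all the entries involved, it suffices to prove the identity on the Zariski-dense set where $C$ is invertible and then invoke continuity. There $\det(AGM+D)=\det C\cdot\det S_1$ and $\det(AMG+D)=\det C\cdot\det S_2$, where $S_1,S_2\in\CC^{2\times2}$ are the Schur complements of $C$. In both matrices the inverse $C^{-1}$ enters only through the single scalar $s:=v^{T}C^{-1}A''u$, since in each border one factor is a scalar multiple of $v^{T}$ and the other of $A''u$ (or $0$). Setting $t:=m-s$, a short computation yields
\begin{gather*}
S_1=\left[\begin{matrix}a_n\alpha t+d_n&a_n\beta\\a_{n+1}\gamma t&a_{n+1}\delta+d_{n+1}\end{matrix}\right],\qquad
S_2=\left[\begin{matrix}a_n\alpha t+d_n&a_n\beta t\\a_{n+1}\gamma&a_{n+1}\delta+d_{n+1}\end{matrix}\right],
\end{gather*}
and both determinants equal $(a_n\alpha t+d_n)(a_{n+1}\delta+d_{n+1})-a_na_{n+1}\beta\gamma t$. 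Hence $\det S_1=\det S_2$, and the determinant identity follows.

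The only delicate point, which I would flag as the main obstacle, is that $C$ need not be invertible, so the Schur-complement formula is not unconditionally available. I would dispose of this by the polynomial-identity principle just described rather than by threading a regularization through the whole calculation; everything else reduces to the bookkeeping of two $2\times2$ determinants, which is routine once the border blocks have been identified.
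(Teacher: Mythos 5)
Your proof is correct, and it follows the paper's overall skeleton — the same reduction of the eigenvalue claim to a determinant identity by replacing $D$ with $D-\lambda I$, the same $(n-1)+1+1$ partition, and the same explicit block forms of $AGM+D$ and $AMG+D$ — but it settles the central determinant identity by a genuinely different device. The paper expands $\det(AGM+D)$ along the last column and $\det(AMG+D)$ along the last row, obtaining the common unconditional expression $[aa'(\alpha\delta-\beta\gamma)+a'd\alpha]\det\tilde{M}+(a\delta+d)d'\det(A''M''+D'')$ with $\tilde{M}=\diag(A'',1)M'+\diag(D'',0)$; this costs some cofactor bookkeeping but needs no hypothesis whatsoever, and it produces as a byproduct the observation (that $A''M''+D''$ is $\tilde{M}$ with its last row and column deleted) which the paper reuses in the induction proving Theorem~\ref{thmAMPD3}. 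You instead take Schur complements with respect to $C=A''M''+D''$, which makes the mechanism transparent — both bordered determinants collapse to $2\times2$ determinants depending on $C^{-1}$ only through the single scalar $t=m-v^{T}C^{-1}A''u$, and these are visibly equal — but requires $C$ invertible, which you correctly repair by the polynomial-identity/Zariski-density argument (both sides are polynomials in the entries agreeing on the nonempty open set $\det C\neq0$). So your route trades an extra genericity step for cleaner $2\times2$ algebra and a more conceptual explanation of \emph{why} the two products agree; the paper's route is unconditional and feeds directly into its later induction. Your $S_1$, $S_2$ and their common determinant $(a_n\alpha t+d_n)(a_{n+1}\delta+d_{n+1})-a_na_{n+1}\beta\gamma t$ check out exactly.
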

\begin{proof}
Let us write
\begin{gather*}
M'=
\left[\begin{array}{@{}c|c@{}}
 & \\
~~~M''~~~& \mathbf{c} \\
 & \\ \hline
 \mathbf{r} & m
\end{array}\right]
\qquad \text{and}\qquad
A=\diag(A'',a',a), \qquad
D=\diag(D'',d',d).
\end{gather*}
Then
\begin{gather*}
AG M +D = \left[\begin{array}{@{}c|cc@{}}
 & & \\
A''M''+D'' & A''\mathbf{c} & 0 \\
 & & \\ \hline
a'\alpha\mathbf{r} & a'\alpha m +d' & a'\beta \\
a\gamma\mathbf{r} & a\gamma m & a\delta +d
\end{array}\right]
\end{gather*}
and
\begin{gather*}
AMG+D = \left[\begin{array}{@{}c|cc@{}}
 & & \\
A''M''+D'' & \alpha A''\mathbf{c} & \beta A''\mathbf{c} \\
 & & \\\hline
a'\mathbf{r} & a'\alpha m + d' & a'\beta m \\
0 & a\gamma & a\delta+d
\end{array}\right].
\end{gather*}
Taking the determinant of $AGM+D$ gives (expand along the last column)
\begin{gather*}
\det(AGM+D) =
(a\delta+d)a'\alpha\det\tilde{M}
+ (a\delta+d) d' \det (A''M''+D'')
-a'\beta a\gamma\det\tilde{M}
\\
\hphantom{\det(AGM+D)}{}=
aa'(\alpha\delta-\beta\gamma)\det\tilde{M}
+a'd\alpha\det \tilde{M}
+ (a\delta+d) d' \det (A''M''+D'') \\
\hphantom{\det(AGM+D)}{}= [aa'\det(G)+a'd\alpha]\det\tilde{M} + (a\delta+d) d' \det (A''M''+D'')
\end{gather*}
with $\tilde{M}$ a matrix independent of $G$, given by
\begin{gather*}
\tilde{M} = \left[\begin{array}{@{}c|c@{}} & \\ A''M''+D'' & A''\mathbf{c} \\ & \\\hline \mathbf{r} & m \end{array}\right]
= \diag(A'',1)M'+\diag(D'',0).
\end{gather*}
Evaluating the determinant of $AMG+D$ in a similar way, expanding along the last row, gives exactly the same result. Hence the determinants are the same:
$\det(AGM+D)=\det(AMG+D)$.

Note that $D-\lambda I$ is still a diagonal if $D$ is, so that if we replace
$D$ by $D-\lambda I$ in the determinants, we obtain the characteristic
polynomials of the matrices $AGM+D$ and \smash{$AMG+D$} respectively.
Equality of these polynomials implies the
equality of their roots, i.e., of the eigenvalues.
\end{proof}

In fact the result is more general since we could replace $D$ as well as $A$ by any diagonal matrices
that are functions of $\lambda$ and the determinants would still be the same.

It is interesting to note that $A''M''+D''$ is the same as the matrix $\tilde{M}$
in which the last row and the last column are deleted.
This will be used in Theorem~\ref{thmAMPD3} below.

\begin{Example}\label{ex1}
If in the previous lemma we set
\begin{gather*}
M=G_1=\left[\begin{matrix}
\alpha_1 & \beta_1 & 0 \\
\gamma_1 & \delta_1 & 0 \\
0 & 0 & 1
\end{matrix}\right]
\qquad \text{and}\qquad
G=G_2=\left[\begin{matrix}
1 & 0 & 0 \\
0 & \alpha_2 & \beta_2 \\
0 & \gamma_2 & \delta_2
\end{matrix}\right],
\end{gather*}
then for arbitrary diagonal matrices $A=\diag(a_1,a_2,a_3)$
and $D=\diag(d_1,d_2,d_3)$ a direct evaluation of the matrices
$AG_1G_2$ and $AG_2G_1$ gives
\begin{gather*}
AG_1G_2=\left[\begin{matrix}
a_1\alpha_1 & a_1\beta_1\alpha_2 & a_1\beta_1\beta_2\\
a_2\gamma_1 & a_2\delta_1\alpha_2 & a_2\delta_1\beta_2 \\
0 & a_3\gamma_2 & a_3\delta_2
\end{matrix}\right]
\qquad \text{and}\qquad
AG_2G_1=\left[\begin{matrix}
a_1\alpha_1 & a_1\beta_1 & 0\\
a_2\alpha_2\gamma_1 & a_2\alpha_2\delta_1 & a_2\beta_2 \\
a_3\gamma_2\gamma_1 & a_3\gamma_2\delta_1 & a_3\delta_2
\end{matrix}\right].
\end{gather*}
Note that the diagonal elements are the same. If we add $D$ to this diagonal, then
with the Laplace rule it is immediately seen that $\det(AG_1G_2+D)=\det(AG_2G_1+D)$.
\end{Example}

If $\pi=(\pi_1,\pi_2,\ldots,\pi_k)$ is a permutation
of the numbers $(1,2,\ldots,k)$, then with a~notation like $\prod\limits_{i\in\pi} G_i$ where $G_i\in\CC^{(n+1)\times (n+1)}$ are square matrices, we mean the product from left
to right in the order given by $\pi$. Thus
\begin{gather}\label{eqprodpi}
\prod_{i\in\pi} G_i = G_{\pi_1} G_{\pi_2} \cdots G_{\pi_k}.
\end{gather}

Recall that $G_i$ and $G_j$ of the form (\ref{eqG1}) commute whenever $|i-j|>1$.
This means that of the $k!$ possible permutations of the numbers $(1,2,\ldots,k)$, only $2^{k-1}$ dif\/ferent matrices will result when we consider all possible
permutations in the product $\prod\limits_{i\in\pi}G_i$.
Indeed, once the order is f\/ixed in which the f\/irst $k-1$ matrices
$\{G_i\colon i=1,2,\ldots,k-1\}$ are multiplied,
$G_k$ will have to multiply this product either from the right or from
the left depending on whether it is to the left or to the right of $G_{k-1}$
in the product (\ref{eqprodpi}).
This is because $G_k$ commutes with all previous~$G_i$, with $i<k-1$.

Let $P_n^N=[I_n~~0]\in\mathbb{C}^{n\times N}$ be the matrix projecting onto $\mathbb{C}^n$.
We def\/ine for any matrix $M\in\mathbb{C}^{N\times N}$ its truncation to $\mathbb{C}^{n\times n}$ as $P_n^N MP_n^{N*}$.
The following main theorem states that the determinant and hence the eigenvalues of the truncation
of an AMPD matrix does not depend on the order in which
its $G_i$ factors are multiplied.

\begin{Theorem}\label{thmAMPD3}
Let $\pi=(\pi_1,\pi_2,\ldots,\pi_{n})$ be an arbitrary permutation of $(1,2,\ldots,n)$ and
define
$M_\pi=\prod\limits_{i\in\pi}G_i$ with $G_i$ of the form \eqref{eqG1},
let $A$ and $D$ both be arbitrary diagonal matrices of size $n+1$ and finally set
$P=P_n^{n+1}$.
Then $\det(P(AM_\pi+D)P^*)$ will not depend on $\pi$.
\end{Theorem}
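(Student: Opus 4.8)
The plan is to deduce the statement from a stronger claim about \emph{full} (untruncated) determinants, which is better suited to induction. Namely, I would prove the following: for factors $G_1,\dots,G_k$ of the form \eqref{eqG1} of size $k+1$ and arbitrary diagonal matrices $A,D$ of size $k+1$, the quantity $\det(AM_\pi+D)$ does not depend on the permutation $\pi$. The theorem is the special case obtained by setting the last diagonal entries to $a_{n+1}=0$ and $d_{n+1}=1$: then the bottom row of $AM_\pi+D$ is $(0,\dots,0,1)$, and expanding the determinant along it gives exactly $\det\big(P(AM_\pi+D)P^*\big)$, because $P(AM_\pi+D)P^*=\diag(a_1,\dots,a_n)\,M_\pi''+\diag(d_1,\dots,d_n)$ with $M_\pi''=PM_\pi P^*$ (the diagonality of $A$ lets the left multiplication commute with the truncation). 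Thus it suffices to establish the full claim.

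I would prove the full claim by induction on the number of factors $k$, the cases $k=1$ (a single permutation) and $k=2$ (Example~\ref{ex1}) being immediate. For the inductive step the key structural fact is that $G_k$ is the only factor touching the last coordinate and that it commutes with every $G_i$ with $i\le k-2$. Writing $M_\pi=LG_kR$ with $L,R$ products of the remaining factors, the single non-commuting neighbour $G_{k-1}$ lies in exactly one of $L$ and $R$; all factors on the \emph{other} side commute with $G_k$, so I can slide $G_k$ to that end and obtain the exact matrix identity $M_\pi=G_kM'$ or $M_\pi=M'G_k$, where $M'=LR$ is a product of $G_1,\dots,G_{k-1}$ in some order. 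By Lemma~\ref{lemAMPD2} these two arrangements have equal determinants, so in every case $\det(AM_\pi+D)=\det(AG_kM'+D)$, and it remains only to show that the right-hand side is independent of the internal order of $M'$.

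For this I would invoke the explicit expansion obtained in the proof of Lemma~\ref{lemAMPD2},
\begin{gather*}
\det(AG_kM'+D)=\big[aa'\det(G_k)+a'd\,\alpha_k\big]\det\tilde M+(a\delta_k+d)\,d'\det(A''M''+D''),
\end{gather*}
whose coefficients depend only on $G_k$ and on the diagonal entries $a=a_{k+1}$, $a'=a_k$, $d=d_{k+1}$, $d'=d_k$, and not on the order inside $M'$. Here $\det\tilde M=\det\big(\diag(A'',1)\,\widetilde{M'}+\diag(D'',0)\big)$ is a \emph{full} determinant of size $k$ formed from the $k-1$ factors $\tilde G_1,\dots,\tilde G_{k-1}$, hence order-independent by the induction hypothesis; and $\det(A''M''+D'')$ is the top-left $(k-1)\times(k-1)$ truncation of that same product, which by the reduction of the first paragraph (setting the $k$-th diagonal entries to $0$ and $1$) equals a full size-$k$ determinant of the same $k-1$ factors and is therefore order-independent as well. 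Since both coefficients and both determinants are unaffected by the order of $M'$, so is $\det(AG_kM'+D)$, closing the induction.

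The main obstacle is the commutation bookkeeping in the inductive step: one must verify that $G_k$ can always be moved to an end of the product despite failing to commute with $G_{k-1}$, which succeeds precisely because $G_{k-1}$ occupies a single side of $G_k$ while everything on the opposite side commutes with it. The secondary subtlety is keeping the two recursive subproblems aligned with the induction — recognizing that the truncated term $\det(A''M''+D'')$ is itself an instance of the (truncated, hence full) claim for $k-1$ factors — so that one induction closes the full and truncated versions simultaneously.
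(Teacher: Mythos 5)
Your proof is correct, and it uses the same raw ingredients as the paper — the commutation of $G_k$ with all $G_i$, $i\le k-2$, to slide $G_k$ to an end of the product, Lemma~\ref{lemAMPD2} to swap it across, and induction on the number of factors — but you organize the induction the other way around, and this reorganization actually repairs a weakness in the paper's own argument. The paper inducts directly on the truncated statement: after sliding $G_n$ out, it identifies $\det\bigl(P(AM_\pi+D)P^*\bigr)$ with the \emph{full} determinant of a size-$n$ AMPD matrix built from the $n-1$ remaining factors (the $\alpha_n$ being absorbed into the diagonal) and declares this order-independent ``by induction''. Strictly, that quantity is not an instance of the truncated hypothesis; the full-determinant claim only appears as Corollary~\ref{corAMPD4}, which the paper in turn deduces from the theorem, so a literal reading of the paper's induction is circular. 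You instead take the full-determinant claim (arbitrary diagonals, untruncated) as the induction statement, close its step with the explicit expansion from the proof of Lemma~\ref{lemAMPD2}, whose two pieces you correctly recognize as a full instance ($\det\tilde M$) and a truncated instance ($\det(A''M''+D'')$) of the claim one level down, and you convert every truncated instance into a full one by the observation that setting $a_{n+1}=0$, $d_{n+1}=1$ turns the truncated determinant into an untruncated one — the same trick that yields the theorem itself from the full claim. (The one hypothesis of Lemma~\ref{lemAMPD2} you leave implicit, that $M'=LR$ has the block form $\mathrm{diag}(\,\cdot\,,1)$, does hold, since every $G_i$ with $i\le k-1$ leaves the last coordinate untouched.) This is precisely the full/truncated bookkeeping the paper gestures at in the remark after Lemma~\ref{lemAMPD2} — that $A''M''+D''$ equals $\tilde M$ with its last row and column deleted, ``to be used in Theorem~\ref{thmAMPD3}'' — but never carries out; your version makes that interplay explicit, so a single induction proves the stronger statement (wanted anyway as Corollary~\ref{corAMPD4}) and the theorem falls out as a special case.
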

\begin{proof}
The proof is by induction.
For $n=2$ we need to consider $P(AG_1G_2+D)P^*$ and $P(AG_2G_1+D)P^*$.
It can be checked with the expressions of $AG_1G_2$ and $AG_2G_1$ from
Example~\ref{ex1} that
\begin{gather*}
\det(P(AG_1G_2+D)P^*) =
\det\left[\begin{matrix}a_1\alpha_1+d_1 & a_1\alpha_2\beta_1\\
a_2\gamma_1& a_2\alpha_2\delta_1+d_2\end{matrix}\right]
\end{gather*}
and
\begin{gather*}
\det(P(AG_2G_1+D)P^*)
=\det\left[\begin{matrix}a_1\alpha_1+d_1 & a_1\beta_1\\
a_2\alpha_2\gamma_1& a_2\alpha_2\delta_1+d_2\end{matrix}\right].
\end{gather*}
Both expressions give the same determinant.

For the induction step we recall that $G_n$ commutes with all $G_k$ with $k<n-1$
thus $\prod\limits_{i\in\pi}G_i$
equals either $G_n\big(\prod\limits_{i\in\pi'}G_i\big)$ or $\big(\prod\limits_{i\in\pi'}G_i\big) G_n$
where $\pi'$ is the same as $\pi$ after removing the value $n$.

In the f\/irst case with $G_n$ at the beginning (see the proof of Lemma~\ref{lemAMPD2})
\begin{gather*}
P(M_\pi)P^*=
P\left[\begin{array}{@{}c|cc@{}}
&& \\ M'' & \mathbf{c}_{n-1} &0 \\ & & \\\hline \alpha_n\mathbf{r}_{n-1} & \alpha_n m_{n-1} & \beta_n \\
\gamma_n\mathbf{r}_{n-1} & \gamma_n m_{n-1} & \delta_n
\end{array}\right] P^*
=
P\left[\begin{array}{@{}c|cc@{}}
& & \\ M'' & \mathbf{c}_{n-1} & 0 \\ & & \\\hline \alpha_n\mathbf{r}_{n-1} & \alpha_n m_{n-1} & 0 \\
0&0 & 1
\end{array}\right]P^*.
\end{gather*}
Therefore $\det [P(AM_\pi +D)P^*]=\det(\hat{A}\alpha_n\hat{M}_{\pi'} +\hat{D})$
where we introduced for any matrix $F$ the hat-notation to mean $\hat{F}=PFP^*$.
Note that $\hat{A}\alpha_n\hat{M}_{\pi'} +\hat{D}$ is an AMPD matrix of size $n$ which has only $n-1$ factors
($\alpha_n$ can be assimilated in $\hat{A}$).
By induction it is independent of the permutation $\pi'$.

The second case, when $G_n$ is at the end of the product, is completely similar.
The $\alpha_n$ in front of $\mathbf{r}_{n-1}$ will move to $\mathbf{c}_{n-1}$
and we f\/ind the same expression. This implies that the determinant is not only independent of $\pi'$
but that we can also insert $n$ anywhere in $\pi'$ and still get the same result, which proves the induction step.
\end{proof}

We have as an immediate consequence that the truncation is not essential.

\begin{Corollary}\label{corAMPD4}
The determinant, and hence also the eigenvalues of an AMPD matrix $AM_\pi+D$ do not depend on the
permutation $\pi$.
\end{Corollary}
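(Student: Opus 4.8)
The plan is to deduce this from Theorem~\ref{thmAMPD3} by realizing the \emph{full} $(n+1)\times(n+1)$ matrix $AM_\pi+D$ as a truncation of a slightly larger AMPD matrix, so that the invariance of \emph{truncated} determinants already proved there can be applied directly. First I would embed each factor into one higher dimension: for $1\le i\le n$ set $\tilde G_i=\diag(G_i,1)\in\CC^{(n+2)\times(n+2)}$, which is again of the form \eqref{eqG1} (its nontrivial $2\times2$ block still occupies rows and columns $i,i+1\le n+1$), and append the trivial factor $\tilde G_{n+1}=I_{n+2}$, obtained from \eqref{eqG1} by letting its $2\times2$ block be the identity. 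I would likewise pad the diagonals, writing $\hat A=\diag(A,a)$ and $\hat D=\diag(D,d)$ with $a,d$ arbitrary scalars.

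Because every $\tilde G_i$ acts as the identity on the last coordinate, the product $\tilde M_\sigma=\prod_{i\in\sigma}\tilde G_i$ is block diagonal, $\tilde M_\sigma=\diag(M_{\sigma'},1)$, where $\sigma'$ is the ordering of $\{1,\dots,n\}$ induced by $\sigma$ after deleting the index $n+1$ (whose position is irrelevant since $\tilde G_{n+1}=I$). Hence $\hat A\tilde M_\sigma+\hat D=\diag(AM_{\sigma'}+D,\,a+d)$, and its truncation by $P=P_{n+1}^{n+2}$ is exactly $AM_{\sigma'}+D$. Now I would apply Theorem~\ref{thmAMPD3} in dimension $n+2$, with the $n+1$ core factors $\tilde G_1,\dots,\tilde G_{n+1}$: it gives that $\det\!\big(P(\hat A\tilde M_\sigma+\hat D)P^*\big)=\det(AM_{\sigma'}+D)$ does not depend on the permutation $\sigma$ of $(1,\dots,n+1)$. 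Letting $\sigma$ range over all orderings that keep $n+1$ in last position, $\sigma'$ runs through every permutation of $(1,\dots,n)$, so $\det(AM_\pi+D)$ is independent of $\pi$.

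For the eigenvalues I would simply rerun the argument with $D$ replaced by the diagonal matrix $D-\lambda I$: since the conclusion holds for \emph{every} diagonal matrix in the role of $D$, the characteristic polynomial $\det(AM_\pi+D-\lambda I)$ is $\pi$-independent, and therefore so is its set of roots, i.e.\ the spectrum of $AM_\pi+D$. This is exactly the extension already anticipated in the remark following Lemma~\ref{lemAMPD2}, where it was noted that $A$ and $D$ may be taken to be arbitrary diagonal functions of $\lambda$.

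The argument is in essence a bookkeeping reduction, so I do not expect a genuine obstacle. The only point that requires care is the choice of the extra factor as the \emph{identity} $\tilde G_{n+1}=I_{n+2}$ rather than a generic core transformation: a nontrivial $\tilde G_{n+1}$ would couple the appended coordinate into the product and the $(n+1)$-dimensional truncation would no longer reproduce $AM_\pi+D$ unchanged. One must also verify, as indicated above, that the padded matrices $\tilde G_i$ still have the admissible form \eqref{eqG1}, which holds precisely because each original block lies in rows and columns at most $n+1$.
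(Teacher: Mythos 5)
Your proposal is correct and follows essentially the same route as the paper: the paper likewise realizes the untruncated AMPD matrix as the truncation of a one-size-larger AMPD matrix whose extra factor is the identity, and then invokes Theorem~\ref{thmAMPD3} (you embed upward by padding $\tilde G_i=\diag(G_i,1)$, the paper instead sets the last factor $G_n=I$ in the theorem's setting, but the idea is identical). Your explicit treatment of the eigenvalues via $D\mapsto D-\lambda I$ matches the remark following Lemma~\ref{lemAMPD2} that the paper relies on.
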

\begin{proof}
To prove this for an AMPD matrix of size $n$ with $n-1$ factors, consider the AMPD matrix of the previous
theorem and assume that $G_n$ is just the identity matrix. It is then clear that the truncated AMPD matrix
$\hat{A}\hat{M}_\pi+\hat{D}$ is equal to an AMPD matrix of size $n$ with $\hat{M}_\pi=P M_{\pi'} P^*$
with $\pi'$ equal to $\pi$ in which $n$ is removed.
By the previous theorem, its determinant is independent of the permutation $\pi'$.
\end{proof}
As we mentioned earlier, we could have replaced $A$ and $D$ by any two diagonal matrices
that are functions of $\lambda$ and the determinants would still be independent of the permutation $\pi$.

RAMPD matrices are rational AMPD matrices.
They are the product of an AMPD matrix and an inverse of an AMPD matrix.
For example
\begin{gather*}
R= (AM+C)(BM+D)^{-1}
\end{gather*}
with $A$, $B$, $C$, $D$ all diagonal and $M$ a product of $G$-factors.
Again it can be shown that the spectrum of $R$ is independent of
the product order in the matrix $M$.

\begin{Theorem}\label{thmRAMPD}
Let $R=(AM_\pi+C)(BM_\pi+D)^{-1}$ be a RAMPD matrix as defined above, with $M_\pi=\prod\limits_{k\in\pi}G_k$, then the eigenvalues of $R$ do
not depend on the permutation $\pi$.
If $BM_\pi+D$ is not invertible, then a slightly more general formulation is that the pencil
$(AM_\pi+C,BM_\pi+D)$ has eigenvalues that do not depend on $\pi$.
\end{Theorem}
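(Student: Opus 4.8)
The plan is to reduce the assertion about eigenvalues to a single determinant identity that falls directly under Corollary~\ref{corAMPD4} together with the remark following it. First I would note that, when $BM_\pi+D$ is invertible, $\lambda$ is an eigenvalue of $R=(AM_\pi+C)(BM_\pi+D)^{-1}$ precisely when $\det(R-\lambda I)=0$. Factoring $BM_\pi+D$ out on the right gives
\begin{gather*}
R-\lambda I = \bigl[(AM_\pi+C)-\lambda(BM_\pi+D)\bigr](BM_\pi+D)^{-1}\\
= \bigl[(A-\lambda B)M_\pi + (C-\lambda D)\bigr](BM_\pi+D)^{-1},
\end{gather*}
and hence, by multiplicativity of the determinant,
\[
\det(R-\lambda I) = \frac{\det\bigl[(A-\lambda B)M_\pi+(C-\lambda D)\bigr]}{\det(BM_\pi+D)}.
\]

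The key observation is that $A-\lambda B$ and $C-\lambda D$ are again diagonal matrices, their entries being affine functions of $\lambda$. Therefore $(A-\lambda B)M_\pi+(C-\lambda D)$ is itself an AMPD matrix, merely with its two diagonal factors depending on the parameter $\lambda$. By the remark immediately following Corollary~\ref{corAMPD4}, where the diagonal matrices of an AMPD matrix are allowed to be arbitrary diagonal functions of $\lambda$ without affecting permutation-independence of the determinant, the numerator is independent of $\pi$. The denominator $\det(BM_\pi+D)$ is the determinant of an ordinary AMPD matrix (constant diagonal factors), hence $\pi$-independent by Corollary~\ref{corAMPD4} itself. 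Consequently the characteristic polynomial $\det(R-\lambda I)$ is independent of $\pi$, and so are its roots, namely the eigenvalues of $R$.

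For the pencil version I would avoid the division altogether and argue directly: by definition the eigenvalues of the pencil $(AM_\pi+C,\,BM_\pi+D)$ are the $\lambda$ for which $\det\bigl[(AM_\pi+C)-\lambda(BM_\pi+D)\bigr]=\det\bigl[(A-\lambda B)M_\pi+(C-\lambda D)\bigr]$ vanishes, and this determinant has already been shown $\pi$-independent. This formulation simultaneously disposes of the degenerate case in which $BM_\pi+D$ is singular and $R$ is not even defined.

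The only point requiring care, and the step I would check most explicitly, is the legitimacy of passing to $\lambda$-dependent diagonals in the AMPD result. Here I would simply recall that the proof of Theorem~\ref{thmAMPD3} (and of Lemma~\ref{lemAMPD2} underlying it) expands the relevant determinants along a final row or column and uses only that the two additive factors are \emph{diagonal}; since $A-\lambda B$ and $C-\lambda D$ are diagonal for each fixed $\lambda$, the entire computation goes through verbatim and the equality of determinants holds identically in $\lambda$. No further calculation beyond this observation is needed, so the theorem follows.
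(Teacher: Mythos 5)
Your proposal is correct and follows essentially the same route as the paper: both reduce to the AMPD case by absorbing $\lambda$ into the diagonal factors, writing $(AM_\pi+C)-\lambda(BM_\pi+D)=(A-\lambda B)M_\pi+(C-\lambda D)$ and invoking the permutation-independence of AMPD determinants (Corollary~\ref{corAMPD4} and the remark on $\lambda$-dependent diagonals). Your additional quotient formula for $\det(R-\lambda I)$ in the invertible case is a harmless elaboration of the same idea, not a different argument.
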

\begin{proof}
This problem can be conveniently solved by reducing it to the AMPD case.
Indeed, the eigenvalues of the pencil can be found by f\/inding the solutions of the
characteristic polynomial which is the determinant of the matrix
\begin{gather*}
(AM_\pi+C)-(BM_\pi+D)\lambda.
\end{gather*}
It is obvious that this can be rewritten as
\begin{gather*}
(A-B\lambda)M_\pi+(C-D\lambda)={A'}M_\pi+{D'}
\end{gather*}
in which ${A'}=A-B\lambda$ and ${D'}=C-D\lambda$ are diagonal matrices.
By what has been proved above, the determinant of the AMPD matrix ${A'}M_\pi+{D'}$
does not depend on the permutation $\pi$.
\end{proof}

In the context of ORF, the $G_k$ matrices were unitary, thus of the general form (\ref{eqG0}).
Note that then $M_\pi=\prod\limits_{k\in\pi}G_k$ and its unitary truncation will be unitary.
Conversely any unitary matrix can be written as a unitary truncation of such a product because
by a sequence of Givens transformations it can be brought into a~unitary Hessenberg form, which can be written
as a~product of such factors. All these elementary transforms can be merged into a unitary truncation of a~product of $n+1$ factors $G_k$ if the unitary matrix has size $n+1$.

The results about the zeros of the para-orthogonal functions suggested that the eigenvalues of the RAMPD matrix
\begin{gather}\label{eqRAMPDU}
R_\pi=\eta_A^{-1}(A+M_\pi)(A^* M_\pi+I)^{-1}\eta_A,\qquad \eta_A=(I-A^* A)^{1/2}
\end{gather}
are independent of the permutation $\pi$, and moreover, if $U$ is the unitary matrix of eigenvectors,
then the squared absolute values of the entries on the f\/irst row of $U$ are also independent of $\pi$.
This is because these values give the weights of the quadrature whose nodes are
the corresponding eigenvalues
and since these quadratures are uniquely def\/ined, the weights computed must be the same.

With the ORF interpretation of the previous section we can be more precise and prove that the
following theorem holds.
\begin{Theorem}\label{thmRAMPDu}
Let $\pi$ be a permutation of $(1,\ldots,n)$ and
$M_\pi=\prod\limits_{k\in\pi}G_k$ be the product of unitary factors $G_k$ of the form \eqref{eqG0} with all $\delta_k\in\DD$.
Let $A$ be a diagonal matrix of size $n+1$ with all entries in $\DD$.
Let $R_\pi$ be the unitary matrix defined by \eqref{eqRAMPDU} with eigenvalue decomposition
$R_\pi=V_\pi\Lambda_\pi V_\pi^*$ with $V_\pi$ the unitary matrix whose columns are the eigenvectors.
Then $\Lambda_\pi$ and the absolute values of all the entries in $V_\pi$ are independent of $\pi$.
\end{Theorem}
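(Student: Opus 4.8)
The plan is to treat the two assertions by different means: the invariance of the spectrum $\Lambda_\pi$ is pure linear algebra, while the invariance of the moduli $|V_\pi|$ rests on interpreting $R_\pi$ as a truncated shift operator.

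For $\Lambda_\pi$, I would note that conjugation by the invertible matrix $\eta_A$ in \eqref{eqRAMPDU} does not alter the spectrum, so $R_\pi$ has the same eigenvalues as the RAMPD matrix $(M_\pi+A)(A^*M_\pi+I)^{-1}$ (here $A^*M_\pi+I$ is invertible because $\|A\|<1$ and $M_\pi$ is unitary). By Theorem~\ref{thmRAMPD} these eigenvalues are independent of $\pi$; equivalently one may expand $\det\bigl((A-\lambda I)+(I-\lambda A^*)M_\pi\bigr)$ and apply Corollary~\ref{corAMPD4} to this AMPD matrix with $\lambda$-dependent diagonal coefficients. Since the $\delta_k$ lie in $\DD$, these eigenvalues are the simple zeros $\{\xi_i\}_{i=0}^n\subset\TT$ of a para-orthogonal function $Q_{n+1}(\cdot,\tau)$ (Corollary~\ref{corzeros}); because the family $\{Q_{n+1}(\cdot,\tau)\colon\tau\in\TT\}$ is parametrised injectively by its zero set, the parameter $\tau$, and hence the rational Szeg\H{o} weights $w_i$, are $\pi$-independent as well.

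Next I would realise $R_\pi$ inside the ORF picture. The factors $G_k$ of \eqref{eqG0} are exactly the $\alpha$-type factors $G_k^\alpha$ that build $\G^\alpha$ in the $\varphi^\alpha$-basis. Since $G_i$ and $G_j$ commute whenever $|i-j|>1$, the $n!$ permutations yield only $2^{n-1}$ distinct products $M_\pi$, and permutations sharing a product share $R_\pi$ and $V_\pi$ verbatim; it therefore suffices to treat these distinct products. Each of them coincides with the snake-ordered truncation $\G_n^u$ (with trailing parameter $\tau=1$) attached by \eqref{eqGblocks} to an appropriate block pattern of a sequence $\seq{\gamma}$, after choosing a measure and poles realising $A=\A_n$ and $\lambda_k^\alpha=\delta_k$. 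For such a sequence $R_\pi=\tilde{\zeta}_{\A_n}(\G_n^u)=\U_n$, so Theorem~\ref{thmzerosPORF} identifies the eigenvalues with the $\xi_i$ and the associated left eigenvectors with $\Phi_n^\alpha(\xi_i)=[\varphi_0^\alpha(\xi_i),\ldots,\varphi_n^\alpha(\xi_i)]$.

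Finally I would compute the moduli exactly as in Remark~\ref{renunitary}, performed in the $\varphi^\alpha$-basis: normalising these left eigenvectors to unit length produces the rows of $V_\pi^*$, and since the Gram factor is diagonal with entries $\sum_{k=0}^n|\varphi_k^\alpha(\xi_i)|^2=1/w_i$, one obtains $|V_\pi(j,i)|=\sqrt{w_i}\,|\varphi_j^\alpha(\xi_i)|$ (the eigenvalues being simple, only the phases of the columns are free, so these moduli are well defined and are matched across $\pi$ by eigenvalue). Because $\varphi_j^\alpha=\varsigma_j^\beta\phi_j$ with $\varsigma_j^\beta\in\TT$ a constant, and because on $\TT$ Theorem~\ref{thm1} together with $|\dB_j^\alpha|=|\dB_j^\beta|=1$ and $|(\phi_j^\alpha)^*|=|\phi_j^\alpha|$ gives $|\phi_j(\xi)|=|\phi_j^\alpha(\xi)|$ whether $\gamma_j=\alpha_j$ or $\gamma_j=\beta_j$, this simplifies to $|V_\pi(j,i)|=\sqrt{w_i}\,|\phi_j^\alpha(\xi_i)|$. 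Here $\xi_i$, $w_i$ and the fixed $\seq{\alpha}$-functions $\phi_j^\alpha$ are all manifestly independent of $\pi$, which finishes the proof. The main obstacle is the bookkeeping of the previous paragraph: one must check that every one of the $2^{n-1}$ distinct products indeed occurs as a snake ordering $\G_n^u$ for some admissible $\seq{\gamma}$, and that the prescribed data $(A,\{\delta_k\})$ can be realised as ORF data, so that the eigenvector description of Theorem~\ref{thmzerosPORF} becomes available; the identity $|\phi_j(\xi)|=|\phi_j^\alpha(\xi)|$ on $\TT$ is then precisely what erases the $\gamma$-dependence from the moduli.
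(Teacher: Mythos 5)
Your proposal is correct and takes essentially the same route as the paper's own proof: reduce to a unitary truncation of the shift operator, identify the rows of $V_\pi$ (up to normalization) with $[\varphi_0^\alpha(\xi_i),\ldots,\varphi_n^\alpha(\xi_i)]$ via the ORF interpretation, and then note that changing $\pi$ alters these entries only by unimodular factors, since $|\dB_k^\beta(\xi_i)|=1$ and $|\phi_k^{\alpha*}(\xi_i)|=|\phi_k^\alpha(\xi_i)|$ on $\TT$. You are somewhat more explicit than the paper on supporting points it leaves implicit (the AMPD/RAMPD argument for $\Lambda_\pi$, the $2^{n-1}$ counting of distinct products and their realization as snake orderings, and the Favard-type realization of $(A,\{\delta_k\})$ as ORF data), but the key ideas coincide.
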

\begin{proof}
First note that it is suf\/f\/icient to prove this when $M_\pi$ is a unitary truncation of a product of
$n+1$ unitary factors. As in Corollary~\ref{corAMPD4} it will then also hold for an untruncated
product of $n$ unitary factors by choosing factor $G_{n+1}$ to be the identity.

We shall work with only one type of $G_k$ factors (those of type $G_k^\alpha$).
Using the results of the previous section we can indeed replace all the $G_k^\beta$ by $G_k^\alpha$
if we use the basis $\{\varphi_k^\alpha=\dvarsigma_k^\beta\phi_k\colon k=0,1,2,\ldots\}$
where $\dvarsigma_k^\beta=\prod\limits_{j\in\mathbbm{b}_k}\sigma_j\colon k=0,1,2,\ldots$.
Furthermore we know that row $i$ of~$V_\pi$ is proportional to
$[\varphi_0^\alpha(\xi_i),\ldots,\varphi^\alpha_n(\xi_i)]$ where $\Lambda_\pi=\diag(\xi_0,\ldots,\xi_n)$ are the
eigenvalues of~$R_\pi$.
We also know that reordering the entries in~$\pi$ will not change the eigenvalues,
but it does change the eigenvectors.
Assume for example that $\pi=(1,2,\ldots,n)$, then all $\varphi^\alpha_k=\dvarsigma_k^\beta\phi_k^\alpha=\phi_k^\alpha$, $k=1,\ldots,n$.
If we change $\pi$, then some of the $\alpha_k\in\mathbbm{a}_{n}$ will be
replaced by $\beta_k=1/\oa_k$.
For the $k$ that remain in $\mathbbm{a}_{n}$,
we have $\varphi_k^\alpha=\dvarsigma_k^\beta\phi_n=\dvarsigma_k^\beta\phi_k^\alpha \dB_k^\beta$.
The entry $\varphi_k^\alpha(\xi_i)=\dvarsigma_n^\beta\phi^\alpha_k(\xi_i)$ will thus change into
$\varphi_k^\alpha(\xi_i) \dB_k^\beta(\xi_i)$, which means a multiplication with a unimodular constant.
For other indices $k$ that migrate to $\mathbbm{b}_{n}$, the $\varphi^\alpha_k$ becomes
$\varphi^\alpha_k=\dvarsigma_k^\beta\phi_n={\dvarsigma_k^\beta}\phi_k^{\alpha*} \dB_k^\beta$ (see Theorem~\ref{thm1}).
But when evaluated in $\xi_i\in\TT$, clearly $\dB_k^\beta(\xi_i)\in\TT$, while
also $|\phi_k^\alpha(\xi_i)|=|\phi_k^{\alpha*}(\xi_i)|$ because
$\phi_k^{\alpha*}(\xi_i)=\dB_k^\alpha(\xi_i)\dB_k^\beta(\xi_i)\phi_{k*}^\alpha(\xi_i)= \dB_k^\alpha(\xi_i)\dB_k^\beta(\xi_i)\overline{\phi_k^\alpha(\xi_i)}$.
This proves the theorem.
\end{proof}
\begin{Remark}
The condition $|\delta_k|<1$ in the previous theorem is important because $\delta_k\in\TT$ can result in a
matrix $M_\pi$ that is the direct sum of diagonal blocks. Suppose as an example $\pi=(1,2,\ldots,n)$
so that $M_\pi$ is Hessenberg, but if $\delta_n\in\TT$, then the last column of $M_\pi$ will be
$[0,0,\ldots,0,{\delta}_k]^*$ and this will give an eigenvector $[0,\ldots,0,1]^*$ and the f\/irst entry
is obviously zero, so that the link with $\varphi^\alpha_0=1$ cannot be made.
In general, diagonal $G_k$ factors result in a reducible $\M_\pi$, which means that it is a direct
sum of submatrices. This corresponds to a~breakdown of the recurrence relation of the ORFs.
\end{Remark}

Even in the case of orthogonal Laurent polynomials, that is when the diagonal matrix $A=0$,
the ref\/inement of Theorem~\ref{thmRAMPDu} was not observed in \cite{MISCBD07}.
Without our ORF interpretation, a~proof purely on linear algebra arguments seems to be dif\/f\/icult.

\section{Relation with direct and inverse eigenvalue problems}\label{secAF}
We are convinced that the generalization of the ORFs that was developed in the previous sections
can be ef\/f\/iciently used to analyse algorithms that are developed and used to solve
direct and inverse eigenvalue problems.
A direct problem is: given a matrix, f\/ind its eigenvalues and the eigenvectors, which allows for example,
as we have
explained to compute the quadrature formula, while in an inverse problem, we are given the
eigenvalues and the weights of the quadrature and we have to f\/ind the matrix.
Of course the latter makes only sense if some structure is imposed on the matrix.
The problem related to what has been discussed in this paper is the inverse problem
for unitary Hessenberg matrices (see \cite[Section 8]{MISCCG02} or much earlier papers like \cite{LDRAGR91,IRAH95,LDRGra82}).
The methods described there are restricted to orthogonal polynomials. The more general recursion for orthogonal Laurent polynomials of \cite{MISCBD07}
can be used to construct more general snake shaped matrices, including Hessenberg and CMV matrices.
The algorithms to solve these problems apply the recurrence relation for the orthogonal (Laurent) polynomials in reverse order, to descend from degree $n$ to degree 0.
These inverse problems relate to discrete least squares approximation.
This is described in a linearized form in \cite{ArtVBB92d}, but proper rational approximation with
preassigned poles could be
applied when not working with vector polynomials but with ORFs on the unit circle.
A proposal is made in \cite{LDRVBFGM02,LDRVBFGM04}, but the ORFs are dif\/ferent from ours.
We explain the dif\/ference in Section~\ref{ssec12b} below.

The direct methods apply the recursion in the forward direction.
The idea is simple: the power method for a matrix $A$ computes vectors $v_k=A^kv$ for some vector $v=v_0$ and $k=0,1,2,\ldots,q$. This will with proper normalization converge to an eigenvector corresponding to the
dominant eigenvalue, while the inverse power method computes $v_{-k}=A^{-k}v$ for $k=0,1,2,\ldots,p$ and
this will approximate the dominant eigenvalue of $A^{-1}$ which is the smallest eigenvalue of $A$.
For the spaces spanned by $\{v_k\}_{k=-p}^q$ one wants to f\/ind an orthogonal basis to project the
problem on a~space that is usually much smaller than the size of the matrix $A$.
In polynomial terms, this means f\/inding a set of orthogonal (Laurent) polynomials spanning the space generated by~$\{z^k\}_{k=-p}^q$.
In each step of the algorithm either $p$ or $q$ is increased by 1. We are thus in the case considered
in \cite{MISCBD07}, i.e., the sequence $\seq{\gamma}$ has only entries 0 and $\infty$.
For a unitary matrix, the eigenvalues are all on the unit circle, and then the poles of the Laurent polynomials 0 and $\infty$ are the
points inside and outside the circle that are as far as possible away from the boundary.
So one may not expect the best possible approximation of the spectrum when considering a~unitary truncation
of the Hessenberg or general snake shaped matrix. Therefore it is essential to select poles at strategic places much closer to the circle to put more emphasis on regions where the eigenvalues are more
important. The ef\/fect of the location of the poles can be observed for example in the numerical experiments reported in \cite{ArtBC08,ArtBCCB14,ArtBGHN09b}.

When the matrix is not unitary, then the underlying theory is for polynomials and ORFs that are orthogonal
with respect to a measure supported on the real line or a real interval and
convergence has been studied based on the asymptotic behaviour of the ORFs and the approximation error
(see for example \cite{LANBGV10,MISCCDVB10}).
For the unitary case such a theory is much less developed.
One of the reasons may be that the link with the ORFs as explained in this paper is not clear.
Part of the reason is that the onset in the literature is slightly dif\/ferent
since the Hessenberg matrix is replaced
by its inverse and an extra diagonal was introduced. Since a~unitary Hessenberg can be written as
the product of the elementary $G_k$ factors, its inverse is also the product of (inverse) $G_k$ factors
and therefore computationally as simple as a Hessenberg matrix.
The study of the inverse of a Hessenberg has a long history.
See the many references in~\cite{BKVBVB07,BKVBVBM08}.
Such an inverse is known to be related to semi-separable matrices.
That are matrices in which every rectangular block that can be taken from its lower (or upper) triangular part have a low rank. Several more detailed variants of the def\/inition exist in the literature
but a~simple variant goes as follows. A~matrix is lower (upper-) semi-separable of semi-separability rank $r$
if all submatrices which can be chosen out of the lower (upper) triangular part of the matrix
have a rank at most $r$ and rank $r$ is reached for at least one of them.
They can be characterized with few (say $O(n)$) parameters. For a simple example take two vectors
$v,w\in\mathbb{C}^n$ and form a~matrix $A$ whose upper triangular part (including the diagonal)
is equal to the upper triangular part of $vw^*$ and whose strictly lower triangular part is equal
to the strictly lower triangular part of $wv^*$, then $A$ is a semi-separable matrix of rank 1
(Hermitian, except for the diagonal which may not be real).
The vectors $v$ and $w$ are called its generators. The generators and the ranks for upper and lower
triangular parts may dif\/fer in general and several other more general def\/initions exist.
The inverse of an invertible lower semi-separable matrix of rank 1 is an upper Hessenberg matrix.
Any Hermitian matrix is unitary similar to a tridiagonal matrix, and the inverse of a tridiagonal
is again a semi-separable matrix.
That explains why the literature for unitary and Hermitian eigenvalue problems and inverse eigenvalue problems are involved with
semi-separable (plus-diagonal) matrices.
The extra diagonal appears because these papers do not rely on the M\"obius transform
$\tilde{\zeta}(\G)$ that we used above but they use a less symmetric framework.
On the other hand, when the symmetry with respect to the unit circle is lost,
the poles can be anywhere in the complex plane except at inf\/inity which means that the important
case of (Laurent) polynomials is excluded.

What we have developed in this paper is however f\/lexible enough to give up the symmetry with respect to the
unit circle too but without excluding the point at inf\/inity.
This is illustrated in the next section. In the subsequent one we shall explain why in the literature
the semi-separable-plus-diagonal algorithms are used.

\subsection{Using the general recursion while dealing with inf\/inity}\label{ssec12b}
Suppose $\seq{\gamma}=\seq{\alpha}$, then our derivation given in Sections~\ref{secHB} and \ref{secMF}
shows that
\begin{gather*}
\Phi(\I-\A^*z)\hat{\cH}_\alpha=\Phi(z\I-\A)
\end{gather*}
with $\hat{\cH}_\alpha$ an upper Hessenberg matrix.
It is irreducible because the subdiagonal elements are proportional to $\sqrt{1-|\lambda|^2}$.
If we go through the derivation, then the same arguments used for the sequence $\seq{\alpha}$
also apply when we do exactly the same steps using the recursion of Theorem~\ref{thmrec} for the general sequence $\seq{\gamma}$, where we
assume for the moment that $\gamma_i=\infty$ does not appear. Then we shall again arrive at the
above relation, except that all $\alpha$'s are replaced by $\gamma$'s. Thus (assume for simplicity that
the whole sequence is regular) and recall that all $|\gamma_k|\ne1$ then
\begin{gather}\label{eqHPgam}
\Phi(\I-\varGamma^*z)\hat{\cH}_\gamma=\Phi(z\I-\varGamma)
\qquad\text{and}\qquad
z\I=(\hat{\cH}_\gamma+\varGamma)\big(\I+\varGamma^*\hat{\cH}_\gamma\big)^{-1}
\end{gather}
where $\varGamma=\diag(\gamma_0,\gamma_1,\ldots)$ and $\eta_\gamma=(\I-\varGamma^*\varGamma)^{1/2}$ in $\hat{\cH}_\gamma=\eta_\gamma^{-1}\cH_\gamma\eta_\gamma$ which is again upper Hessenberg.
An important note is in order here. The previous notation is purely formal. The expressions
will involve quantities $1-|\lambda_k|^2$ and $1-|\gamma_k|^2$ that can be negative so that their
 square roots, for example in the def\/inition of
$\eta_\gamma$ is problematic. However, remember the relation (\ref{eqe2}), which was the consequence
of the fact that $1-|\lambda_n|^2$ can only be negative if $(1-|\gamma_n|^2)(1-|\gamma_{n-1}|^2)<0$.
Therefore bringing the factor $\eta_\gamma$ and $\eta_\gamma^{-1}$ inside the $G$-factors will
give square roots of positive elements. Just note that
\begin{gather*}
\left[\begin{matrix} 1/\sqrt{1-|\gamma_{n-1}|^2} & \\ & 1/\sqrt{1-|\gamma_{n}|^2} \end{matrix}\right]
\left[\begin{matrix} -\lambda_n\eta_{n1} & \sqrt{1-|\lambda_n|^2}\\\sqrt{1-|\lambda_n|^2} & \overline{\lambda}_n\overline{\eta}_{n1} \end{matrix}\right]
\left[\begin{matrix} \sqrt{1-|\gamma_{n-1}|^2} & \\ & \sqrt{1-|\gamma_{n}|^2} \end{matrix}\right]\\[2mm]
\qquad{}=
\left[\begin{matrix} -\lambda_n\eta_{n1} & \sqrt{\dfrac{(1-|\lambda_n|^2)(1-|\gamma_{n}|^2)}{(1-|\gamma_{n-1}|^2)}}\\
\sqrt{\dfrac{(1-|\lambda_n|^2)(1-|\gamma_{n-1}|^2)}{(1-|\gamma_n|^2)}} & \overline{\lambda}_n\overline{\eta}_{n1} \end{matrix}\right]
\end{gather*}
and all square roots are taken from positive numbers.
To keep the link with what was done for the $\seq{\alpha}$ sequence,
we used the previous notation and we shall continue doing that in the sequel.
In any case the representation $\hat{\cH}_\gamma$ in this setting is a Hessenberg
matrix that can be computed in a proper way and we do not need a more general snake shaped matrix.

By doing this we loose the possibility to include $\gamma_k=\infty$.
In our previous setting this was not a problem because we reduced everything to the $\alpha$ parameters
so that $\gamma_k=\infty$ was easily dealt with by mapping it to
$\alpha_k=0$ and this f\/itted seamlessly in the
factors $\I-\A^*z$ and $z\I-\A$.
In the current general setting however a choice $\gamma_k=\infty$ requires the diagonal element
$\varpi_k(z)=1-\overline{\gamma}_kz$ to be replaced by $-z$ and
similarly $\varpi_k^*(z)=z-\gamma_k$ will become $-1$ which means that some of the entries in the
$\I$ matrix of (\ref{eqHPgam}) have to be replaced by zero and the second relation does not hold anymore.
To avoid this, we can use the strategy that was used before when switching between the formalism
for the $\seq{\alpha}$ and for the $\seq{\beta}$ sequence, except that we now use this strategy
to switch between a f\/inite and an inf\/inite entry in the $\seq{\gamma}$ sequence.

Without going through all the details, the duality between $\seq{\alpha}$ and $\seq{\beta}$
can be transferred to a duality between $\set{\gamma}=\{\gamma_1,\gamma_2,\ldots\}\subset\hat{\CC}\setminus\TT$
and the reciprocal sequence $\seq{\check\gamma}=(\check\gamma_1,\check\gamma_2,\ldots)$
with $\check\gamma_k=1/\overline{\gamma}_k$, $k=1,2,\ldots$.
The ORF for the sequence $\seq{\gamma}$ we denote as $\phi_n$ and the ORF for the sequence $\seq{\check\gamma}$ we denote as $\check{\phi}_n$.
If $\seq{\nu}=(\nu_1,\nu_2,\ldots)$ is a sequence that picks $\nu_k=\gamma_k$ if $\gamma_k$ is f\/inite
and $\nu_k=\check{\gamma}_k=0$ if $\gamma_k=\infty$,
then, in analogy with Theorem~\ref{thm1}, with proper normalization, the ORFs for this $\seq{\nu}$ sequence, which are denoted as $\phi_k^\nu$ will be given by
\begin{gather*}
\phi_n^\nu=\phi_n \check{B}_n\qquad\text{if}\quad \nu_n=\gamma_n
\qquad\text{and}\qquad
\phi_n^\nu=(\phi_n)^*\check{B}_n\qquad\text{if}\quad \nu_n=\check\gamma_n,
\end{gather*}
where
\begin{gather*}
\check{B}_n=\prod_{\substack{k=1\\\nu_k=\check\gamma_k}}^n \zeta_k^{\nu}.
\end{gather*}

First note that as long as we are dealing with f\/inite $\gamma_k$'s we can use the recurrence relation
and write the analog of the relation given in Theorem~\ref{thmSA1}, just replacing the $\alpha$'s
by $\gamma$'s (which by our convention means to remove the superscripts) and write
\begin{gather}\label{eq71b}
\left[\begin{matrix}\varpi_{n-1}^*\phi_{n-1} &\varpi_{n}\phi_{n}^{*}\end{matrix}\right] =
\left[\begin{matrix}\varpi_{n-1}\phi_{n-1}^{*} &\varpi_{n}\phi_{n}\end{matrix}\right] \hat{G}_n^\gamma
\end{gather}
with
\begin{gather*}
\hat{G}_n^\gamma= (N_n^\gamma)^{-1}\tilde{G}_n^\gamma (N_n^\gamma),\qquad
N_n^\gamma=
\left[\begin{matrix}\big(1-|\gamma_{n-1}|^2\big)^{-1/2} & 0\\
0 & \big(1-|\gamma_{n}|^2\big)^{-1/2}\end{matrix}\right],
\\
\tilde{G}_n^\gamma=
\overline{\sigma}_{n-1}\overline{\eta}_{n1}
\left[\begin{matrix}
-\lambda_n\eta_{n1} & \sqrt{1-|\lambda_n|^2}\\
\sqrt{1-|\lambda_n|^2} & \overline{\lambda_n}\overline{\eta}_{n1}
\end{matrix}\right]
\left[\begin{matrix}
1 & 0\\0 & \sigma_n
\end{matrix}\right].
\end{gather*}

Now, if some $\gamma_k=\infty$ is involved, we shall switch to the $\seq{\check{\gamma}}$ sequence,
which means that we avoid $\gamma_k=\infty$ and replace it by $\check{\gamma}_k=0$.
The factor $\check{B}_n^\gamma$ will thus only pick up a $\zeta_k^\nu=1/z$ each time a $\gamma_k=\infty$,
hence when $\nu_k=0$.
\begin{gather*}
\check{B}_n(z)=\prod_{\substack{j=1\\\gamma_j=\infty}}^n z=\prod_{j\in\mathbbm{i}_n} z = z^{|\mathbbm{i}_n|},
\end{gather*}
where $\mathbbm{i}_n=\{j\colon \gamma_j=\infty, 1\le j\le n\}$.
Since in the previous relations, as long as $\gamma_n\ne\infty$, we have $\check{B}_{n-1}=\check{B}_n$ so that it is no problem to write
\begin{gather*}
\left[\begin{matrix}\varpi_{n-1}\check{B}_{n-1}\phi_{n-1} &\varpi_{n}\check{B}_n\phi_{n}^{*}\end{matrix}\right] =
\left[\begin{matrix}\varpi_{n-1}\check{B}_{n-1}\phi_{n-1}^{*} &\varpi_{n}\check{B}_n\phi_{n}\end{matrix}\right] \hat{G}_n^\nu
\end{gather*}
with $\hat{G}_n^\nu=\hat{G}_n^\gamma$.
If however $\gamma_n=\infty$, then $\varpi_n=-z$ and this $z$-factor we absorb in $\check{B}_n$.
This is to say that $\varpi_n\check{B}_{n-1}=\varpi_n^*\check{B}_n$. Hence
\begin{gather}
 \check{B}_{n-1}\left[\begin{matrix}\varpi_{n-1}\phi_{n-1}^{*} &\varpi_{n}\phi_{n}\end{matrix}\right] \hat{G}_n^\nu
= \check{B}_{n-1}\left[\begin{matrix}\varpi_{n-1}\phi_{n-1} &\varpi_{n}\phi_{n}^{*}\end{matrix}\right]\nonumber\\
\hphantom{\check{B}_{n-1}\left[\begin{matrix}\varpi_{n-1}\phi_{n-1}^{*} &\varpi_{n}\phi_{n}\end{matrix}\right] \hat{G}_n^\nu}{}
= \left[\begin{matrix}\varpi_{n-1}\check{B}_{n-1}\phi_{n-1} &\varpi_{n}^*\check{B}_n\phi_{n}^*\end{matrix}\right].\label{eq72c}
\end{gather}
Now $\hat{G}_n^\nu$ is like $\hat{G}_n^\gamma$ but with $\gamma_n=\infty$ replaced by 0.
We can proceed exactly as in the case of the $\alpha$-$\beta$ duality, but now apply it to the
f\/inite-inf\/inite duality.
Thus using $\nu_k=\gamma_k$ if $\gamma_k$ is f\/inite and $\nu_k=0$ when $\gamma_k=\infty$ and setting
\begin{gather}\label{eqcalGP}
\hat{\G}=\eta_\N^{-1}\G\eta_\N, \qquad \eta_\N=(\I-\N^*\N)^{1/2},\qquad \N=\diag(\nu_0,\nu_1,\nu_2,\ldots),
\end{gather}
then the analog of (\ref{eqGblocks}) becomes
(all the $\gamma_i$'s denoted explicitly as such are f\/inite values and denoted as $\infty$ when not f\/inite)
\begin{gather*}
\seq{\gamma} = (\gamma_0,\gamma_1,\ldots,\gamma_{n-1}~\|\;\infty,\ldots,\;\infty|\;\gamma_k\;\|\;\gamma_{k+1},\ldots,\;\gamma_{m-1}\|\;\infty\;,\ldots,\;\infty|\;\gamma_\ell\|\gamma_{\ell+1},\ldots ), \\
\seq{\nu} = (\gamma_0,\gamma_1,\ldots,\gamma_{n-1}~\|\;~0~,\ldots,\;~0|\;\gamma_k\;\|\;\gamma_{k+1},\ldots,\;\gamma_{m-1}\|\;~0~\;,\ldots,\;~0|\;\gamma_\ell\|\gamma_{\ell+1},\ldots ), \\
\hat{\G}= \underbrace{\big(\hat{G}_1^\gamma\hat{G}_2^\gamma\cdots \hat{G}_{n-1}^\gamma\big)}_{\hat{G}_\gamma^1}
\underbrace{\big(\hat{G}_{k}^\gamma\hat{G}_{k-1}^\nu\cdots \hat{G}_n^\nu\big)}_{\hat{G}_\infty^1}
\underbrace{\big(\hat{G}_{k+1}^\gamma\cdots \hat{G}_{m-1}^\gamma\big)}_{\hat{G}_\gamma^2}
\underbrace{\big(\hat{G}_{\ell}^\gamma\hat{G}_{\ell-1}^\nu\cdots \hat{G}_m^\nu\big)}_{\hat{G}_\infty^2}
\big(\hat{G}_{\ell+1}^\gamma\cdots\big)\\
\hphantom{\hat{\G}}{}=
\eta_\N^{-1}
\underbrace{\big({G}_1^\gamma{G}_2^\gamma\cdots {G}_{n-1}^\gamma\big)}_{{G}_\gamma^1}\!
\underbrace{\big({G}_{k}^\gamma{G}_{k-1}^\nu\cdots {G}_n^\nu\big)}_{{G}_\infty^1}\!
\underbrace{\big({G}_{k+1}^\gamma\cdots {G}_{m-1}^\gamma\big)}_{{G}_\gamma^2}\!
\underbrace{\big({G}_{\ell}^\gamma{G}_{\ell-1}^\nu\cdots {G}_m^\nu\big)}_{{G}_\infty^2}\!
\big({G}_{\ell+1}^\gamma\cdots\big)
\eta_\N.
\end{gather*}
Thus in the $\gamma$ blocks we multiply the successive factors $G_i^\gamma$ to the right while in
an $\infty$ block we multiply in reversed order like we did in the $\alpha$-$\beta$ case.

Use (\ref{eq71b}) and multiply $\Phi^\nu(\I-\N^*z)$ from the right with $\hat{G}_\gamma^1$ to get
(note $\phi_0^\nu=\phi_0^{\nu*}=1$ and $\check{B}_{n-1}=1$)
\begin{gather*}
\big[
\varpi_0^{*}\phi_0,\ldots,\varpi_{n-2}^{*}\phi_{n-2},
\varpi_{n-1}\phi_{n-1}^{*}\check{B}_{n-1}\big|\varpi_n\phi_n^*\check{B}_n,\ldots
\big].
\end{gather*}
While multiplying this result with $\hat{G}_k^\gamma \hat{G}_{k-1}^\nu\cdots \hat{G}_{n+1}^\nu$, we make use of (\ref{eq71b}) and (\ref{eq72c}) to obtain
\begin{gather*}
\big[
\varpi_0^{*}\phi_0,\ldots,\varpi_{n-2}^{*}\phi_{n-2},
\varpi_{n-1}\phi_{n-1}^{*}\check{B}_{n-1}\big|\\
\qquad{}\varpi_{n}^{*}\phi_{n}\check{B}_n,\varpi_{n+1}^{*}\phi_{n+1},\ldots,\varpi_{k-1}^{*}\phi_{k-1}\big|
\varpi_k\phi_k^*,\varpi_{k+1}\phi_{k+1},\ldots\big]
\end{gather*}
and the remaining multiplication $\hat{G}_n^\nu$ links the $\gamma$ block to the $\infty$ block,
so that after reintroducing the $\phi_i^\nu$ notation:
\begin{gather*}
\big[
\varpi_0^{*}\phi_0^\nu,\ldots,\varpi_{n-1}^{*}\phi_{n-1}^\nu\big|
\varpi_{n}^{*}\phi_{n}^\nu,\ldots,\varpi_{k-1}^{*}\phi_{k-1}^\nu\big|
\varpi_k\phi_k^{\nu*},\varpi_{k+1}\phi_{k+1}^\nu,\ldots\big].
\end{gather*}
The next block is again an $\gamma$ block treated by the product $\hat{G}_\gamma^2$,
and one may continue like this to f\/inally get
\begin{gather}\label{eq73b}
\Phi^\nu(\I-\N^*z)\hat{\G}=\Phi^\nu(z\I-\N).
\end{gather}
Note that to include $\gamma_k=\infty$ in the sequence, we had to give up the Hessenberg structure of~$\cH_\gamma$ in~(\ref{eqHPgam}) because the $G_k$ factors corresponding to inf\/inite $\gamma$'s are multiplied in reverse order. Thus we again end up with a snake shaped matrix~$\G$.
It like in Fig.~\ref{fig1} with upper Hessenberg blocks for the blocks with
f\/inite $\gamma$'s and lower Hessenberg blocks for the sequences corresponding to the blocks of
inf\/inite $\gamma$'s.

This might seem to be a relatively complicated way of dealing with a pole at the origin, which was included in
a more elegant way in the original setting, but this is the way how it is dealt with in the papers such as~\cite{MISCMVBVB13} and others. The matrix obtained there is an upper Hessenberg matrix with blocks that
bulge below the subdiagonal at those places where in our structure of Fig.~\ref{fig1} we also have a lower Hessenberg block.

Note that whatever the shape of the matrix is, in the rational case, one should analyse the
spectrum of its
M\"obius transform as in (\ref{eqHPgam}) or analyse it as a pencil.
This can be avoided at the expense of a somewhat more complicated matrix problem
in the form of a semi-separable-plus-diagonal matrix.

\subsection{Alternative: semi-separable-plus-diagonal matrices}

If we start from the f\/irst relation in (\ref{eqHPgam}) in which $\hat{\cH}_\gamma$ is upper Hessenberg we can see that
the $n$th element on the right, i.e., $\varpi_n^{*}(z)\phi_n(z)$
is written as a linear combination of the elements
\begin{gather*}
[\phi_0,\varpi_1\phi_1,\ldots,\varpi_{n+1}\phi_{n+1}]=
\left[1,p_1,\frac{p_2}{\pi_1},\frac{p_3}{\pi_2},\ldots,\frac{p_{n+1}}{\pi_n}\right],\\
\phi_k=\frac{p_k}{\pi_k},\qquad p_k\in\P_k.
\end{gather*}
These elements will generate all elements of the form $\frac{q_{n+1}}{\pi_n}$ with $q_{n+1}$
a polynomial of degree at most $n+1$.
It is clear that also $\phi_n$ belongs to that space.
Hence there must exist an upper Hessenberg matrix $\cH$ such that
\begin{gather*}
\Phi(\I-\varGamma^*z)\cH=\Phi.
\end{gather*}
A f\/inite truncation then is
\begin{gather}\label{eq122a}
\Phi_n(\I_n-\Gamma_n^*z)\cH_n=\Phi_n+c_{n+1}[0,\ldots,0,\varpi_{n+1}\phi_{n+1}]
\end{gather}
with $c_{n+1}$ a constant, $\I_n$ the identity matrix of size $n+1$ and $\Gamma_n^*=\diag(\overline{\gamma}_0,\ldots,\overline{\gamma}_n)$.
If $z$ is a~zero of $\phi_{n+1}$, then $\Phi_n(\I_n-\Gamma_n^*z)\cH_n=\Phi_n$ and this can be rearranged as
\begin{gather*}
z\Phi_n=\Phi_n\big(\I_n-\cH_n^{-1}\big)\check{\Gamma}_n
\end{gather*}
with $\check{\Gamma}_n=(\Gamma_n^{*})^{-1}=\diag(\check{\gamma}_0,\ldots,\check{\gamma}_n)$
where $\check{\gamma}_k=\gamma_{k*}=1/\overline{\gamma}_k$.
This clearly requires the restrictive conditions that $\cH_n$ and $\Gamma_n$ are invertible,
thus $\gamma_k=0$ is excluded. It shows that $z$ is and eigenvalue of the matrix $(\I_n-\cH_n^{-1})\check{\Gamma}_n$ and that the left eigenvector is $[\phi_0(z),\ldots,\phi_n(z)]$.
By a~similar argument if $\cH_n$ is replaced by $\cH_n^u$, a unitary truncation of $\cH$
then the eigenvalue~$z$ will be on~$\TT$.

This is the kind of arguments used in for example \cite{LDRVBFGM02,LDRVBFGM04,MISCMVBVB14} and others.
They work with a~slightly simplif\/ied formula because the ORF have denominators $\pi_n^*$ instead of our
$\pi_n$. If poles at~0 and~$\infty$ are excluded, this is simple to obtain.
Suppose $\check{\phi}_n=\frac{p_n}{\pi_n^*}$, $n=0,1,\ldots$, then (\ref{eq122a}) becomes
\begin{gather*}
\check{\Phi}_n\big(z\I_n-\check{\Gamma}_n\big)\check{\cH}_n=\check{\Phi}_n+\check{c}_{n+1}\big[0,\ldots,0,\varpi_{n+1}\check\phi_{n+1}\big].
\end{gather*}
Thus if $\check{z}$ is a zero of $\check\phi_{n+1}$, then
\begin{gather*}
\check{z}\check{\Phi}_n=\check\Phi_n\big(\check{\cH}_n^{-1}+\check{\Gamma}_n\big).
\end{gather*}
This shows that $\check{z}$ is an eigenvalue of $\check{\cH}_n^{-1}+\check{\Gamma}_n$ and the corresponding
left eigenvector is $\check{\Phi}_n(\check{z})=[\check{\phi}_0(\check{z}),\ldots,\check{\phi}_n(\check{z})]$.
This $\check{\cH}_n^{-1}+\check{\Gamma}_n$ is the semi-separable-plus-diagonal matrix that we
mentioned before.
This is the relation given in \cite[Theorem~2.7]{LDRVBFGM04} where it was obtained in
the context of an inverse eigenvalue problem. See also \cite{LDRFG03,LDRVBFGM02} which also rely
on this structure.

The matrix $\check{\cH}_n^{-1}$ is a semi-separable matrix, as the inverse of an unreducible Hessenberg matrix,
which is as structured as the Hessenberg matrix itself.
Indeed, if $\check{\cH}_n$ can be written as a product of unitary $G_k$ factors, except for the last one
which is truncated, then the semi-separable $\cH_n^{-1}$ is the product
of $G_k^{-1}$ in reverse order. Obviously, this has computationally the same complexity.

Since this approach allows only for f\/inite poles, this theory was later extended in
\cite{MISCMVBVB14} which is basically using the technique explained in the previous section to include
$\infty$ in the algorithm. The Hessenberg matrix with bulges below the diagonal is then called an
extended Hessenberg matrix and the associated Krylov algorithms are called extended Krylov methods.

A selection of poles in an arbitrary order is implemented in this way
to solve inverse and direct eigenvalue problems, QR factorization of unitary (Hessenberg) matrices,
rational Krylov type methods, rational Szeg\H{o} quadrature, and rational approximation problems
like in \cite{MISCAMVW15,LDRFG07,LDRGem05,MISCMVBVB13,MISCMVBVB14,MISCMer16,MISCMV14}.
We hope that future publications will illustrate that with the approach from the f\/irst
10 sections of this paper we have provided a more elegant framework to solve all these problems.

\section{Conclusion}\label{secconclude}
We have systematically developed the basics about orthogonal rational functions
on the unit circle whose poles can be freely chosen anywhere in the complex plane
as long as they are not on the unit circle $\TT=\{z\in\CC\colon |z|=1\}$.
The traditional cases of all poles outside the closed disk and the so-called
balanced situations where the poles alternate between inside and outside the disk
are included as special instances. The case where all the poles are inside the disk
is somewhat less manageable because it is not so easy to deal with the
poles at inf\/inity in this formalism, but it has been included anyway.
The link between the ORF with all poles outside, all poles inside, and the
general case with arbitrary location of the poles is clearly outlined.
It is important that previous attempts to consider the general situation
were assuming that once some pole $1/\overline{\gamma}$ is chosen, then
$\gamma$ may not appear anywhere in the sequence of remaining poles, which
would for example exclude the balanced situation.
We have shown how the classical Christof\/fel--Darboux formula,
the recurrence relation, and the relation to rational Szeg\H{o} formulas
can be generalized.
Finally we analyzed the matrix representation of the shift operator
with respect to the general basis as a matrix M\"obius transform of a
generalization of a snake-shape matrix,
and how the latter can be factorized as a
product of elementary unitary $2\times2$ blocks.
It is shown that there is no theoretical or computational advantage to compute
rational Szeg\H{o} quadrature formulas for a general sequence of poles.
It is however conceptually slightly simpler to consider the balanced
situation, i.e., the generalization of the CMV representation.
In the last two sections we have made a link with the linear algebra literature.
In Section~\ref{secAMPD} we have given an illustration by proving some properties of AMPD matrices that are
directly inspired by the ORF recursion.
A~relation with direct and inverse eigenvalue problems for unitary matrices
and the related rational Krylov methods is brief\/ly discussed in Section~\ref{secAF} illustrating
that the approach given in this paper is more general and hence might therefore be
a more appropriate choice.

As a f\/inal note we remark that a very similar approach can be taken for ORF on the real line
or a real interval
in which case the unitary Hessenberg matrices will be replaced by tridiagonal Jacobi matrices
and their generalizations in case poles are introduced in arbitrary order taken from anywhere in the
complex plane excluding the real line or outside the interval are in order.

\subsection*{Acknowledgements}
We thank the anonymous referees for their careful reading of the manuscript and their suggestions for improvement.

\pdfbookmark[1]{References}{ref}
\LastPageEnding

\end{document}